\title{Bounded deviations in higher genus II: minimal laminations}
\date{\today}
\author{Pierre-Antoine Guihéneuf}
\address{Pierre-Antoine Guihéneuf: Sorbonne Université, Université Paris Cité, CNRS, IMJ-PRG, F-75005 Paris, France --- 
IRL Jean-Christophe Yoccoz CNRS / IMPA, Estr. Dona Castorina, 110
Jardim Botânico, Rio de Janeiro, Brasil}
\email{pierre-antoine.guiheneuf@imj-prg.fr}
\thanks{P.-A.\ G.\ thanks the Jean-Christophe Yoccoz international laboratory CNRS/IMPA for the semester in Brazil during which the ideas of this work were born.}
\author{F\'abio Armando Tal}
\address{F\'abio Armando Tal: Instituto de Matem\'atica e Estat\'istica da Universidade de S\~ao Paulo, R. do Mat\~ao, 1010
- Vila Universitaria, S\~ao Paulo, Brasil}
\email{fabiotal@ime.usp.br}
\DeclareFontFamily{U}{mathx}{}
\DeclareFontShape{U}{mathx}{m}{n}{<-> mathx10}{} 
\DeclareSymbolFont{mathx}{U}{mathx}{m}{	n}
\DeclareMathAccent{\widehat}{0}{mathx}{"70}
\DeclareMathAccent{\widecheck}{0}{mathx}{"71}
\setlist{noitemsep}
\newtheorem{lemma}{Lemma}[section]
\newtheorem{theorem}[lemma]{Theorem}
\newtheorem{theo}{Theorem}
\newtheorem{corollary}[theo]{Corollary}
\newtheorem{prop}[lemma]{Proposition}
\newtheorem{claim}[lemma]{Claim}
\theoremstyle{definition}
\newtheorem{definition}[lemma]{Definition}
\theoremstyle{remark}
\newtheorem{rem}[lemma]{Remark}
\newcommand{\F}{\mathcal{F}}
\newcommand{\X}{\mathcal{X}}
\newcommand{\Hy}{\mathbf{H}}
\newcommand{\N}{\mathbf{N}}
\newcommand{\R}{\mathbf{R}}
\newcommand{\T}{\mathbf{T}}
\newcommand{\G}{\mathcal{G}}
\newcommand{\Q}{\mathbf{Q}}
\newcommand{\Z}{\mathbf{Z}}
\newcommand{\M}{\mathcal{M}^\textrm{erg}_{\vartheta>0}}
\newcommand{\varep}{\varepsilon}
\newcommand{\Homeo}{\operatorname{Homeo}}
\newcommand{\Fix}{\operatorname{Fix}}
\newcommand{\rot}{\operatorname{rot}}
\newcommand{\rote}{\operatorname{rot}_{\mathrm{erg}}}
\newcommand{\conv}{\operatorname{conv}}
\newcommand{\diam}{\operatorname{diam}}
\newcommand{\inte}{\operatorname{int}}
\newcommand{\dom}{\operatorname{dom}}
\newcommand{\Id}{\operatorname{Id}}
\newcommand{\fil}{\operatorname{fill}}
\newcommand{\dd}{\,\mathrm{d}}
\newcommand{\cl}{\mathcal{N}}
\newcommand{\wt}{\widetilde}
\newcommand{\wh}{\widehat}
\newcommand{\wc}{\widecheck}
\newcommand{\pr}{\operatorname{pr}}
\newcommand{\Me}{\mathcal{M}^{\mathrm{erg}}}
\newcommand{\Merg}{\mathcal{M}^{\mathrm{erg}}_{\vartheta>0}}
\begin{document}

\maketitle

\begin{abstract}
This article follows and completes \cite{paper1PAF}, where we study the problem of bounded deviations for homeomorphisms of closed surfaces of genus $\ge 2$. This second part deals with bounded deviations relative to geodesic minimal laminations that are not reduced to a closed geodesic. The combination of both articles generalises to the higher genus case most of the bounded deviations results already known for the torus.
\end{abstract}

\tableofcontents

\section{Introduction}

This article is the second part of our study of bounded deviations for homeomorphisms of closed hyperbolic surfaces of genus $g \ge 2$.  In Part~I \cite{paper1PAF} we treated the case of bounded deviations with respect to closed geodesics, which is the higher genus analogue of the case of torus homeomorphisms whose rotation set is a segment with rational slope.  
Here we address the complementary situation: bounded deviations relative to \emph{minimal geodesic laminations that are not reduced to a closed geodesic}. These laminations play the role of the ``irrational–slope'' directions in the rotation theory of the torus.

For homeomorphisms of $\T^2$ isotopic to the identity, the geometry of the rotation set provides sharp control on the long-term displacement of orbits. In the case where the rotation set is a nondegenerate segment with irrational slope and containing a rational point  (note that by \cite{lct1}, this rational point has to be an endpoint of the segment), two fundamental bounded deviation statements are known: bounded deviations in the direction orthogonal to the segment \cite{zbMATH07488214} and bounded deviations in the direction parallel to it
\cite{zbMATH07867510}.  Our aim here is to establish higher genus counterparts of these two phenomena in the setting of the tracking laminations introduced in
\cite{alepablo}.

To better illustrate the situation in higher genus surfaces, let us describe the case of nonwandering flows. A classical theorem (\cite{zbMATH03467479}, see also \cite[Theorem 3.1.7]{zbMATH01321275} and similar statements of Section 3.1) states that in this case, if the flow has a finite number of singularities, there exists a decomposition into finitely many integrable and quasi-minimal open pieces, plus some closure of heteroclinic orbits (orbits whose $\alpha$ and $\omega$-limit sets are made of singularities): an integrable piece is an annulus foliated by periodic orbits; a quasi-minimal piece is a sub-surface with genus in which any orbit having non fixed points in its accumulation set is dense. In the present article we are interested in the counterpart of quasi-minimal pieces for general surface homeomorphisms homotopic to the identity.

We postpone the more technical aspects of the definition of tracking laminations for the next section (see Subsection~\ref{SecTrack}), but let us give a brief description. Let $f$ be an homeomorphism of a closed surface $S$, isotopic to the identity. For every ergodic $f$-invariant probability measure $\mu$ one can define an (homotopical) rotation speed, and if $\mu$ has positive rotation speed, typical points for $\mu$ follow a so-called \emph{tracking geodesic}.  
The collection of all these geodesics forms a closed set  $\Lambda_\mu$, and the equivalence classes introduced in \cite{alepablo} provide a canonical decomposition of the rotational dynamics of $f$. In that context, if $\mu$ does not belong to a chaotic class, then $\Lambda_\mu$ is a minimal geodesic lamination of $T^1S$ (and in particular the associated tracking geodesics are simple). We are interested here in the case where $\mu$ is not in a chaotic class and $\Lambda_\mu$ is minimal non-closed, meaning it is not made of a single closed geodesic. In this case, one can associate to the class $\cl_i$ of $\mu$ a subsurface $S_i$ that plays a role analogous to the quasi-minimal pieces for the flows described above. The boundary of this connected sub-surface is made of a finite and disjoint union of simple closed geodesics (and $S_i$ is characterized as minimal for inclusion for these properties).

\subsection{Main results}

Let $S$ be a compact boundaryless orientable connected surface, and denote $\Homeo_0(S)$ the set of homeomorphisms of $S$ that are homotopic to the identity. Let $f\in \Homeo_0(S)$. 
We denote $\wt S$ the universal cover of the surface $S$ and $\wt f$ the (unique) lift of $f$ to $\wt S$ that commutes with deck transformations. The surface $S$ is equipped with a metric of constant curvature $-1$ that induces a distance $d$, which lifts to a distance on $\wt S$ that we also denote by $d$. 

Denote $\Me(f)$ the set of $f$-invariant ergodic Borel probability measures. 

\subsubsection*{Orientability of the lamination}

As a warm-up, we prove that the laminations associated to minimal non-closed classes are orientable.

\begin{theo}\label{TheoLaminMinim}
Let $S$ be a compact boundaryless hyperbolic surface and $f\in \Homeo_0(S)$.
Let $\cl_i$ be a minimal non-closed class of $\M(f)$ and $\Lambda_i$ the associated lamination.
Then $\Lambda_i$ is orientable.
\end{theo}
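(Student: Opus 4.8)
\emph{Plan.} The orientation of $\Lambda_i$ should be the one supplied by the dynamics: along a tracking geodesic the forward iterates of $\wt f$ drift towards one of its two endpoints in $\partial\wt S$, and declaring that endpoint to be the ``positive'' one orients the leaf. The plan is therefore to show that this \emph{a priori} pointwise-defined orientation is globally consistent, using the minimality of $\Lambda_i$ together with a forcing argument to exclude the only obstruction.

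First I would recall from Subsection~\ref{SecTrack} (following \cite{alepablo}) that for $\mu$-a.e.\ $x$ with $\mu\in\cl_i$, the sequences $\bigl(\wt f^{\,n}(\tilde x)\bigr)_{n\ge 0}$ and $\bigl(\wt f^{\,-n}(\tilde x)\bigr)_{n\ge 0}$ converge to distinct points $\xi^{+}(\tilde x),\xi^{-}(\tilde x)\in\partial\wt S$, with tracking geodesic the one joining $\xi^-(\tilde x)$ to $\xi^+(\tilde x)$, and that $\Lambda_i$ is the closure of the union of the projections to $S$ of these tracking geodesics as $\mu$ ranges over $\cl_i$. Since $\xi^{\pm}(\wt f(\tilde x))=\xi^{\pm}(\tilde x)$ and $\xi^{\pm}(\gamma\tilde x)=\gamma\,\xi^{\pm}(\tilde x)$ for every deck transformation $\gamma$, the assignment of the \emph{oriented} tracking geodesic (oriented from $\xi^-$ to $\xi^+$) is $f$-invariant and descends to a measurable map from $S$ to the space of oriented complete geodesics of $S$. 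Let $\wh\Lambda_i\subset T^1S$ be the compact set of unit vectors tangent to $\Lambda_i$; it is invariant under the geodesic flow $(\phi_t)_{t\in\R}$ and is a fibrewise two-point cover of $\Lambda_i$ with deck involution the flip $\iota\colon v\mapsto -v$. By minimality of $\Lambda_i$, the flow $(\phi_t)$ restricted to each connected component of $\wh\Lambda_i$ is minimal; hence either $\wh\Lambda_i$ has two components exchanged by $\iota$, in which case $\Lambda_i$ is orientable and we are done, or $\wh\Lambda_i$ is connected and $(\phi_t)|_{\wh\Lambda_i}$ is minimal. Assume the latter.

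Let $\wh\Lambda_i^{+}\subset\wh\Lambda_i$ be the closure of the set of unit vectors tangent to, and pointing in the positive direction of, the oriented tracking geodesic of some $\mu$-generic point with $\mu\in\cl_i$. It is nonempty, closed, $(\phi_t)$-invariant, and projects onto $\Lambda_i$; by minimality of $(\phi_t)|_{\wh\Lambda_i}$ it must equal $\wh\Lambda_i$. Consequently, for any leaf $\tilde\ell$ of the lift of $\Lambda_i$ to $\wt S$, with endpoints $p=\tilde\ell(-\infty)$ and $q=\tilde\ell(+\infty)$, there are $\mu$-generic points $\tilde x_n$ with $(\xi^-(\tilde x_n),\xi^+(\tilde x_n))\to(p,q)$ and $\mu$-generic points $\tilde y_n$ with $(\xi^-(\tilde y_n),\xi^+(\tilde y_n))\to(q,p)$: two families of orbits tracked along essentially the same geodesic but in opposite directions. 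Interpolating the finite orbit segments $\bigl(\wt f^{\,k}(\tilde x_n)\bigr)_{|k|\le N}$ and $\bigl(\wt f^{\,k}(\tilde y_n)\bigr)_{|k|\le N}$ by means of the identity isotopy, one obtains, for suitable $n,N$, two continuous paths in $\wt S$ that run anti-parallel to $\tilde\ell$. Concatenating translates of such segments under the deck transformations whose axes approximate $\tilde\ell$ (available because $\Lambda_i$ is minimal non-closed, so $\tilde\ell$ accumulates on itself), this anti-parallel configuration is topologically transverse in the sense of the forcing theory used in Part~I, and therefore produces a rotational horseshoe for $\wt f$ in the relevant homotopical direction; this forces $\cl_i$ to be chaotic, contrary to hypothesis. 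Hence $\wh\Lambda_i$ is disconnected and $\Lambda_i$ is orientable. I expect the main obstacle to be the passage from the ``opposite-direction tracking'' statement, which only controls endpoints at infinity and gives \emph{a priori} merely sublinear tracking, to a genuinely transverse intersection: one has to use the laminarity and minimality of the lift of $\Lambda_i$, and the finite-distance estimates valid along bounded orbit windows, to localise the two interpolated paths close enough to $\tilde\ell$ for the forcing machinery of Part~I to apply. The remaining steps are routine manipulations with the objects recalled from \cite{alepablo} and Part~I.
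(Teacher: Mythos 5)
The framing via the unit-tangent-bundle lift $\wh\Lambda_i\subset T^1S$ and the minimality of the geodesic flow there is a correct and attractive way to set up the dichotomy: $\wh\Lambda_i$ has two components iff $\Lambda_i$ is orientable. So far so good, and this is genuinely different from the paper's route, which never invokes the flow on $T^1S$ explicitly.

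The gap you flag at the end, however, is not a technical inconvenience but a structural obstruction: the transverse intersection you want to produce \emph{cannot} exist. The two anti-parallel families you build are (up to $\wt\F$-equivalence and deck transformations) both pieces of $I^\Z_{\wt\F}(\wt z)$ for a $\mu$-typical $z$, so an $\wt\F$-transverse crossing between them would project to an $\F$-transverse \emph{self}-intersection of $\beta_0 = I^\Z_\F(z)$. The paper proves exactly the opposite (Proposition~\ref{PropPasInterTrans}): $\beta_0$ has no $\F$-transverse self-intersection, and the proof uses forcing and Lemma~\ref{LemGeodNotAccuPerOrb} to show that such an intersection would create a periodic orbit whose closed tracking geodesic is too close to $\gamma_z$. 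So the forcing theory does enter the paper's argument, but pointing the \emph{other} way: it rules out transverse intersections rather than producing them, independently of any orientability hypothesis. Your plan needs ``non-orientable $\Rightarrow$ transverse intersection,'' and there is no mechanism for that; two paths fellow-travelling anti-parallel geodesics can pass each other on one side without any crossing. The paper's actual contradiction in the non-orientable case is of a different nature: after establishing the absence of transverse self-intersection, Lemmas~\ref{LemAlphaSimple}, \ref{LemAlphaNotCross} and~\ref{LemBandDisjoint} order the translated bands $T\wt B$, and one then picks a returning $T_1$ giving same-oriented $T_1\wt\gamma_{\wt z}$ and a $T_2$ between them with opposite orientation; the set $\wt\beta_0\cup T_1\wt\beta_0\cup\wt\phi_{T_1\wt f^n(\wt z)}$ separates the two ends of $T_2\wt\beta_0$, yet $T_2\wt\beta_0$ cannot meet any of these, a purely topological (separation) contradiction. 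Without that band-ordering machinery, your argument stalls at the step you already suspected; but more importantly, even with a great deal of localisation work, the crossing you need is precluded by the non-chaotic hypothesis itself.
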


It is a classical result that there is no minimal non-closed orientable lamination on a finite punctured sphere, hence the surface $S_i$ associated to such a class $\cl_i$ must have genus. This answers \cite[Remark 6.10]{alepablo}.
\bigskip

Recall that $\wt f$, the canonical lift of $f$ to the universal cover $\wt S$ of $S$, extends continuously with the identity to the boundary at infinity $\partial\wt S$ of $\wt S$.
In the sequel, for $\wt A\subset \wt S$ and $R>0$, we denote 
\[V_R({\wt A}) = \big\{\wt x\in \wt S\mid\exists \wt a\in\wt A : d(\wt x,\wt a)\le R\big\}\]
the $R$-neighbourhood of $\wt A$. We also denote $i:A\to S$ the inclusion map.
We say that two disjoint geodesics $\wt\gamma, \wt\gamma'$ of $\wt S$ \emph{have the same orientation} if either $L(\wt\gamma)\subset L(\wt\gamma')$ or $L(\wt\gamma')\subset L(\wt\gamma)$. Beware: the relation ``having the same orientation'' is not transitive.

\subsubsection*{Bounded deviations transverse to the lamination}

The following result is a higher genus counterpart of a theorem of Silva Salomão and Tal \cite{zbMATH07488214}, that states that for a torus homeomorphism having a segment with irrational slope and containing a rational point as a rotation set, there are bounded deviations in the direction orthogonal to the rotation set\footnote{Under a non-wandering hypothesis for \cite{zbMATH07488214}, that should be possible to remove using the same proof strategy as ours.}. 

\begin{theo}\label{ThmBndDevIrrat}
Let $S$ be a compact boundaryless hyperbolic surface and $f\in \Homeo_0(S)$.
Let $\cl_i$ be a minimal non-closed class of $\M(f)$, $\Lambda_i$ the associated lamination and $S_i$ the associated surface.

Then for any $\mu\in\cl_i$ and any $\mu$-typical point $z\in S$, there exists $C_0>0$ and a set $X\subset S$ with empty interior and satisfying $i_*(\pi_1(S_i)) \subset i_*(\pi_1(X))$ such that the following is false (see Figure~\ref{FigThmBndDevIrrat}):
There exists $\wt y_0$ belonging to a connected component $\wt {\mathcal X}^-$ of the complement of $\wt X$ (the union of the lifts of $X$) and $n_0\in\N$ such that $\wt f^{n_0}(\wt y_0)$ belongs to a connected component $\wt {\mathcal X}^+$ of the complement $\wt X$, with $\wt {\mathcal X}^- \subset L(V_{C_0}(\wt\gamma_{\wt z}))$ and $\wt {\mathcal X}^+ \subset R(V_{C_0}(\wt\gamma_{\wt z}))$.
\end{theo}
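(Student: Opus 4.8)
\emph{Strategy and the separating set.} I argue by contradiction: assuming that for the $C_0$ and $X$ constructed below the displayed crossing does occur, I produce an $\mathcal F$-transverse intersection of two transverse trajectories of $f$, one of them recurrent, and deduce from forcing theory that $\cl_i$ is chaotic --- a contradiction. Orientability of $\Lambda_i$ (Theorem~\ref{TheoLaminMinim}) is used from the start, to make ``left'' and ``right'' of $\wt\gamma_{\wt z}$ globally coherent and to dispose of a well-signed transverse class. Write $\lambda_i\subset S_i$ for the projection of $\Lambda_i$ to $S$; it is closed, nowhere dense, and $S_i\setminus\lambda_i$ is a finite union of ideal polygons together with one ``crown'' along each component of $\partial S_i$. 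Set $X:=\lambda_i\cup\partial S_i\cup A$, where $A$ is a finite union of disjoint simple arcs --- one per crown, joining $\partial S_i$ to $\lambda_i$, plus a spine of each component of $S\setminus\inte(S_i)$ --- chosen so that every component of $S\setminus X$ is an open disk. Then $X$ has empty interior, $i_*(\pi_1(S_i))\subset i_*(\pi_1(X))$ (as $\lambda_i$ carries almost all of $\pi_1(S_i)$ and $\partial S_i$ the peripheral classes), and $X$ depends only on $\cl_i$. Lifting to the universal cover, every component of the complement of the preimage of $X$ is a lift of one of these disks, hence an essentially ideal-polygonal region lying at positive distance and on a definite side of every leaf of $\wt\Lambda_i$ that misses its closure; by minimality of $\Lambda_i$ the leaves of $\wt\Lambda_i$ are dense, so present on both sides of $\wt\gamma_{\wt z}$ at every distance, whence for each $C_0>0$ there exist components $\wt{\mathcal X}^-\subset L(V_{C_0}(\wt\gamma_{\wt z}))$ and $\wt{\mathcal X}^+\subset R(V_{C_0}(\wt\gamma_{\wt z}))$. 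Thus the property in question is not vacuous and genuinely bounds the transverse deviation of orbits.

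\emph{Recurrence along the lamination.} Since $\mu\in\cl_i$, its homological rotation vector $\rot(\mu)$ is proportional to the homology class carried by the transverse measure of $\Lambda_i$ (the tracking geodesics of $\mu$-typical points are leaves of the minimal lamination $\Lambda_i$, all of which share the same asymptotic direction). Let $\omega\in H^1(S;\R)$ be the Poincar\'e dual of that class and $\psi\colon S\to\R$ the bounded measurable cocycle recording the $\omega$-displacement over one step (morally, the signed transverse intersection number with $\lambda_i$); then $\int_S\psi\,\dd\mu=\langle\omega,\rot(\mu)\rangle=0$ because the intersection form is alternating. By the recurrence property of zero-mean cocycles (Atkinson), the Birkhoff sums $\sum_{k=0}^{n-1}\psi(f^k z)$ return to every neighbourhood of $0$ for infinitely many $n$; together with Poincar\'e recurrence of the $\mu$-typical point $z$, this yields times $n_k\to\infty$ and deck transformations $T_k$ with $T_k^{-1}\wt f^{\,n_k}(\wt z)\to\wt z$, where in the limit $T_k$ is a hyperbolic translation with no transverse component (vanishing $\omega$-displacement) and axis $\wt\gamma_{\wt z}$ (tracking property). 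Along this subsequence the orbit of $z$ therefore behaves like a periodic orbit shadowing the bi-infinite geodesic $\wt\gamma_{\wt z}$, staying in some $V_{C_1}(\wt\gamma_{\wt z})$.

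\emph{Forcing and conclusion.} Fix an identity isotopy $I$ of $f$ (singular set inside $\Fix(f)$, which is disjoint from $\supp\mu$), a transverse foliation $\mathcal F$ of Le Calvez--Tal, and the associated width constant $C_2$. Take $C_0:=C_1+C_2$, and suppose the displayed crossing holds for this $C_0$ and the above $X$. The transverse trajectory $\Gamma$ of $\wt y_0$ runs from $\wt{\mathcal X}^-\subset L(V_{C_0}(\wt\gamma_{\wt z}))$ to $\wt{\mathcal X}^+\subset R(V_{C_0}(\wt\gamma_{\wt z}))$, hence crosses the strip $V_{C_0}(\wt\gamma_{\wt z})$ from left to right, whereas the transverse trajectory of $\wt z$ and its $T_k$-translates stay in $V_{C_1}(\wt\gamma_{\wt z})$ along the subsequence, and $C_0-C_1>C_2$ rules out a coherent crossing. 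A plane-topology argument in the universal cover --- using that the $z$-orbit is essentially bi-infinite and equivariantly periodic along $\wt\gamma_{\wt z}$ --- then produces an $\mathcal F$-transverse intersection of $\Gamma$ with a deck-translate of the transverse trajectory of $\wt z$. As the latter is recurrent ($z$ being $\mu$-typical, hence Poincar\'e recurrent), the forcing theory of Le Calvez--Tal, iterated, yields via the usual dichotomy either a transverse self-intersection of a tracking geodesic of $\cl_i$ (impossible, since these are leaves of the lamination $\Lambda_i$) or a rotational horseshoe whose homological data places $\mu$ in a chaotic class in the sense of \cite{alepablo}; both alternatives contradict the hypothesis that $\cl_i$ is minimal non-closed, and the theorem follows.

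\emph{Main difficulty.} The crux is the last step: upgrading the purely metric crossing of $\wt\gamma_{\wt z}$ by $\Gamma$ to an honest $\mathcal F$-transverse intersection, i.e.\ excluding coherent crossings. This is where the a priori only sublinear fellow-travelling of $\mu$-typical orbits with their tracking geodesics must be replaced, along the Atkinson subsequence, by a bounded one, and where $C_0$ must be calibrated against the geometry of $\mathcal F$ rather than of the hyperbolic metric. A subsidiary point --- that the complementary cells of $X$ really can be placed on either prescribed side of $V_{C_0}(\wt\gamma_{\wt z})$ --- rests on minimality and orientability (Theorem~\ref{TheoLaminMinim}) of $\Lambda_i$.
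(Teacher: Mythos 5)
Your overall architecture — argue by contradiction, extract a transverse intersection, invoke forcing theory to conclude that $\cl_i$ would be chaotic — matches the paper's. But the two intermediate steps you rely on are incorrect as stated, and they are not cosmetic: they are precisely where the paper's proof has all its content.

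First, the choice of $X$. You take $X=\lambda_i\cup\partial S_i\cup A$, a purely geometric object depending only on $\cl_i$. The paper instead takes $X$ to be a skeleton of the \emph{dynamically} defined set $A_\varep=\bigcup_{n\ge0}f^n(B(z,\varep))$ (Lemma~\ref{LemEssentialPoints}). This is not a matter of taste: the whole point of the skeleton construction is that every point of $X_\varep$ has a backward iterate landing in $B(z,\varep)$. This property is what drives the compactness argument of Lemma~\ref{LemAPlusNotEquiv}, which is the step that converts ``$\wt y\in\wt X_{\varep_0}$ is far from $\wt\gamma_{\wt z}$'' into ``a nearby iterate of $\wt y$ is close to a deck-translate of $\wt z$'', and thence (via Lemmas~\ref{LemNotEquivFiniteTime} and~\ref{LemIfFarThenNotEquiv}) into ``the transverse trajectory of $\wt y$ exits the band $\wt B$ on the correct side in bounded time''. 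A point of the lamination $\lambda_i$ has no reason whatsoever to have an orbit passing near $z$, so with your $X$ this mechanism is unavailable and there is no way to produce the required transverse intersection.

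Second, the claim that ``along this subsequence the orbit of $z$\dots stay[s] in some $V_{C_1}(\wt\gamma_{\wt z})$''. Atkinson's theorem gives you infinitely many return times $n_k$ at which $d_{\mathrm{alg}}(\wt f^{n_k}(\wt z),\wt\gamma_{\wt z})$ is small, but it gives you no control whatsoever on the excursion between consecutive return times: the orbit may wander arbitrarily far transversally and come back. Boundedness of the whole orbit of $\wt z$ in a neighbourhood of $\wt\gamma_{\wt z}$ is exactly the last assertion of Theorem~\ref{ThmBndDevIrrat2}, which requires the \emph{additional} hypothesis that $\Fix(f)$ is inessential, and which the paper explicitly notes (Remark after Theorem~D) fails without that hypothesis. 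You are implicitly invoking a statement that is both stronger and conditionally true to prove the weaker unconditional Theorem~\ref{ThmBndDevIrrat}. Relatedly, Atkinson's theorem does appear in the paper, but in Section~\ref{SecBoundDirec} for the \emph{parallel}-deviation result (Theorem~\ref{TheoBndedDirLam}), not here; the transverse bound is obtained by the essential-points/skeleton machinery, not by a zero-mean cocycle argument.

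Finally, the step you flag as the main difficulty — ``upgrading the purely metric crossing\dots to an honest $\mathcal F$-transverse intersection'' — is indeed the crux, and the sketch of ``a plane-topology argument'' does not address it. In the paper this is handled by Proposition~\ref{PropExistInterTransIrrat}, which in turn rests on Lemma~\ref{LemAPlusNotEquiv}, i.e.\ on the dynamical nature of $X_\varep$; and the ultimate contradiction comes from Proposition~\ref{LastPropBndDevIrrat} (the trajectory of $\wt z$ cannot $\wt\F$-transversally intersect \emph{any} path), not merely from absence of self-intersection. Without the dynamical $X$ and the Lemma~\ref{LemAPlusNotEquiv} mechanism, you have no means of excluding the coherent crossings, and the argument does not close.
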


\begin{figure}
\begin{center}

\tikzset{every picture/.style={line width=0.75pt}} 

\begin{tikzpicture}[x=0.75pt,y=0.75pt,yscale=-1,xscale=1]

\draw [color={rgb, 255:red, 208; green, 2; blue, 27 }  ,draw opacity=1 ] [dash pattern={on 0.84pt off 2.51pt}]  (280.77,154.32) .. controls (296.38,152.55) and (322.96,195.85) .. (309.55,199.26) ;
\draw [color={rgb, 255:red, 155; green, 155; blue, 155 }  ,draw opacity=1 ] [dash pattern={on 0.84pt off 2.51pt}]  (279.37,85.37) .. controls (266.54,84.62) and (262.99,136.4) .. (279.18,137.19) ;
\draw  [fill={rgb, 255:red, 74; green, 74; blue, 74 }  ,fill opacity=0.1 ][line width=1.5]  (49.43,143.1) .. controls (49.8,54.8) and (153.93,92.6) .. (199.43,93.1) .. controls (244.93,93.6) and (349.3,54.3) .. (349.43,143.1) .. controls (349.55,231.89) and (252.3,191.8) .. (199.43,193.1) .. controls (146.55,194.39) and (49.05,231.39) .. (49.43,143.1) -- cycle ;
\draw  [draw opacity=0][fill={rgb, 255:red, 74; green, 144; blue, 226 }  ,fill opacity=0.15 ] (199.43,93.1) .. controls (235.27,93.2) and (261.27,83.6) .. (284.04,85.63) .. controls (341.87,84) and (349.65,122.03) .. (349.04,146.63) .. controls (346.27,175.8) and (341.67,203.2) .. (278.54,201.13) .. controls (249.87,197.8) and (218.27,192.2) .. (199.43,193.1) .. controls (176.07,194.2) and (175.47,95) .. (199.43,93.1) -- cycle ;
\draw  [color={rgb, 255:red, 189; green, 16; blue, 224 }  ,draw opacity=0.2 ][fill={rgb, 255:red, 189; green, 16; blue, 224 }  ,fill opacity=0.1 ] (478.72,51.5) .. controls (503.61,73.5) and (507.39,192.39) .. (478.28,230.61) .. controls (443.61,192.39) and (446.5,75.94) .. (478.72,51.5) -- cycle ;
\draw  [draw opacity=0][fill={rgb, 255:red, 176; green, 248; blue, 28 }  ,fill opacity=0.5 ] (388.21,173.59) .. controls (382.67,160.21) and (381.84,133.66) .. (384.51,124.04) .. controls (388.58,125.29) and (389.58,126.21) .. (391.83,124.04) .. controls (394.33,123.71) and (395.25,122.71) .. (396.25,121.13) .. controls (400.17,126.54) and (414.42,120.96) .. (417.42,121.54) .. controls (412.08,133.96) and (425.67,136.88) .. (429.75,145.63) .. controls (417.17,146.88) and (411.33,156.21) .. (409.33,167.04) .. controls (402.33,165.38) and (398.08,170.29) .. (397.33,173.88) .. controls (393.58,170.88) and (391.25,172.38) .. (388.21,173.59) -- cycle ;
\draw [color={rgb, 255:red, 189; green, 16; blue, 224 }  ,draw opacity=1 ]   (478.72,51.5) .. controls (474.28,127.5) and (473.39,151.06) .. (478.28,230.61) ;
\draw [color={rgb, 255:red, 155; green, 155; blue, 155 }  ,draw opacity=1 ]   (431.08,149.46) .. controls (428.51,134.36) and (409.9,137.59) .. (419.13,117.28) ;
\draw [color={rgb, 255:red, 155; green, 155; blue, 155 }  ,draw opacity=1 ]   (408.6,171.6) .. controls (410.21,156.97) and (420.05,142.97) .. (434.36,146.21) ;
\draw [color={rgb, 255:red, 155; green, 155; blue, 155 }  ,draw opacity=1 ]   (397.91,178.96) .. controls (395.02,168.27) and (407.59,163.9) .. (413.44,169.28) ;
\draw [color={rgb, 255:red, 155; green, 155; blue, 155 }  ,draw opacity=1 ]   (400.05,178.67) .. controls (398.51,172.97) and (394.21,171.28) .. (388.21,173.59) ;
\draw [color={rgb, 255:red, 155; green, 155; blue, 155 }  ,draw opacity=1 ]   (394.97,118.97) .. controls (400.58,128.81) and (410.36,118.67) .. (424.67,121.9) ;
\draw [color={rgb, 255:red, 155; green, 155; blue, 155 }  ,draw opacity=1 ]   (388.51,120.36) .. controls (389.44,126.51) and (396.67,123.59) .. (398.21,116.97) ;
\draw [color={rgb, 255:red, 155; green, 155; blue, 155 }  ,draw opacity=1 ]   (383.59,123.59) .. controls (388.05,125.28) and (391.74,126.82) .. (393.28,120.21) ;
\draw  [draw opacity=0][fill={rgb, 255:red, 248; green, 231; blue, 28 }  ,fill opacity=0.5 ] (553.84,103.26) .. controls (560.45,116.15) and (563.43,142.54) .. (561.55,152.35) .. controls (557.39,151.43) and (556.32,150.6) .. (554.25,152.94) .. controls (551.79,153.48) and (550.95,154.55) .. (550.09,156.21) .. controls (545.74,151.13) and (531.99,157.85) .. (528.96,157.51) .. controls (533.26,144.7) and (519.49,142.9) .. (514.71,134.51) .. controls (527.15,132.24) and (532.2,122.47) .. (533.32,111.51) .. controls (540.43,112.6) and (544.26,107.35) .. (544.72,103.72) .. controls (548.7,106.41) and (550.91,104.72) .. (553.84,103.26) -- cycle ;
\draw [color={rgb, 255:red, 155; green, 155; blue, 155 }  ,draw opacity=1 ]   (513.07,130.8) .. controls (516.86,145.64) and (535.15,140.91) .. (527.6,161.9) ;
\draw [color={rgb, 255:red, 155; green, 155; blue, 155 }  ,draw opacity=1 ]   (533.67,106.9) .. controls (533.27,121.61) and (524.59,136.36) .. (510.07,134.31) ;
\draw [color={rgb, 255:red, 155; green, 155; blue, 155 }  ,draw opacity=1 ]   (543.73,98.7) .. controls (547.48,109.12) and (535.31,114.5) .. (529.05,109.61) ;
\draw [color={rgb, 255:red, 155; green, 155; blue, 155 }  ,draw opacity=1 ]   (541.62,99.16) .. controls (543.62,104.71) and (548.05,106.05) .. (553.84,103.26) ;
\draw [color={rgb, 255:red, 155; green, 155; blue, 155 }  ,draw opacity=1 ]   (551.53,158.25) .. controls (545.15,148.9) and (536.22,159.8) .. (521.7,157.75) ;
\draw [color={rgb, 255:red, 155; green, 155; blue, 155 }  ,draw opacity=1 ]   (557.86,156.34) .. controls (556.44,150.28) and (549.47,153.78) .. (548.48,160.5) ;
\draw [color={rgb, 255:red, 155; green, 155; blue, 155 }  ,draw opacity=1 ]   (561.55,152.35) .. controls (556.97,151.02) and (554.12,150.16) .. (553.12,156.88) ;
\draw  [line width=1.5]  (382.17,141.17) .. controls (382.17,91.46) and (422.46,51.17) .. (472.17,51.17) .. controls (521.87,51.17) and (562.17,91.46) .. (562.17,141.17) .. controls (562.17,190.87) and (521.87,231.17) .. (472.17,231.17) .. controls (422.46,231.17) and (382.17,190.87) .. (382.17,141.17) -- cycle ;
\draw [color={rgb, 255:red, 208; green, 2; blue, 27 }  ,draw opacity=1 ]   (418.5,140.79) .. controls (458.5,110.79) and (493.07,179.87) .. (533.07,149.87) ;
\draw [shift={(533.07,149.87)}, rotate = 323.13] [color={rgb, 255:red, 208; green, 2; blue, 27 }  ,draw opacity=1 ][fill={rgb, 255:red, 208; green, 2; blue, 27 }  ,fill opacity=1 ][line width=0.75]      (0, 0) circle [x radius= 1.34, y radius= 1.34]   ;
\draw [shift={(418.5,140.79)}, rotate = 323.13] [color={rgb, 255:red, 208; green, 2; blue, 27 }  ,draw opacity=1 ][fill={rgb, 255:red, 208; green, 2; blue, 27 }  ,fill opacity=1 ][line width=0.75]      (0, 0) circle [x radius= 1.34, y radius= 1.34]   ;
\draw  [fill={rgb, 255:red, 255; green, 255; blue, 255 }  ,fill opacity=1 ][line width=1.5]  (139.29,149.77) .. controls (127.81,153.89) and (116.02,156.31) .. (99.7,149.57) .. controls (109.68,133.05) and (129.57,132.94) .. (139.29,149.77) -- cycle ;
\draw [line width=1.5]    (89.4,143) .. controls (104.48,157.72) and (135.48,157.39) .. (149.4,143) ;

\draw  [fill={rgb, 255:red, 255; green, 255; blue, 255 }  ,fill opacity=1 ][line width=1.5]  (299.49,150.37) .. controls (288.01,154.49) and (276.22,156.91) .. (259.9,150.17) .. controls (269.88,133.65) and (289.77,133.54) .. (299.49,150.37) -- cycle ;
\draw [line width=1.5]    (249.6,143.6) .. controls (264.68,158.32) and (295.68,157.99) .. (309.6,143.6) ;

\draw [color={rgb, 255:red, 74; green, 144; blue, 226 }  ,draw opacity=1 ]   (199.43,93.1) .. controls (217.49,93.53) and (220.16,193.31) .. (199.43,193.1) ;
\draw [color={rgb, 255:red, 74; green, 144; blue, 226 }  ,draw opacity=1 ] [dash pattern={on 0.84pt off 2.51pt}]  (199.43,93.1) .. controls (175.63,93.63) and (175.06,193.63) .. (199.43,193.1) ;
\draw  [line width=1.5]  (49.43,143.1) .. controls (49.8,54.8) and (153.93,92.6) .. (199.43,93.1) .. controls (244.93,93.6) and (349.3,54.3) .. (349.43,143.1) .. controls (349.55,231.89) and (252.3,191.8) .. (199.43,193.1) .. controls (146.55,194.39) and (49.05,231.39) .. (49.43,143.1) -- cycle ;
\draw [color={rgb, 255:red, 155; green, 155; blue, 155 }  ,draw opacity=1 ]   (279.37,85.37) .. controls (293.88,84.62) and (294.1,137.29) .. (279.18,137.19) ;
\draw  [color={rgb, 255:red, 155; green, 155; blue, 155 }  ,draw opacity=1 ] (246.54,109.51) .. controls (266.54,99.51) and (292.1,95.51) .. (322.07,109.5) .. controls (352.05,123.49) and (329.43,162.18) .. (322.07,169.5) .. controls (314.71,176.82) and (262.99,196.4) .. (242.99,166.4) .. controls (222.99,136.4) and (226.54,119.51) .. (246.54,109.51) -- cycle ;
\draw [color={rgb, 255:red, 208; green, 2; blue, 27 }  ,draw opacity=1 ]   (280.77,154.32) .. controls (253.97,156.72) and (72.28,196.49) .. (69.08,141.29) .. controls (65.88,86.09) and (221.73,104.2) .. (277.88,106.09) .. controls (334.03,107.97) and (339.48,151.29) .. (307.08,167.69) .. controls (274.68,184.09) and (235.56,148.03) .. (241.88,132.09) .. controls (248.2,116.14) and (305.62,124.98) .. (306.28,124.49) ;
\draw [shift={(306.28,124.49)}, rotate = 323.13] [color={rgb, 255:red, 208; green, 2; blue, 27 }  ,draw opacity=1 ][fill={rgb, 255:red, 208; green, 2; blue, 27 }  ,fill opacity=1 ][line width=0.75]      (0, 0) circle [x radius= 1.34, y radius= 1.34]   ;
\draw [color={rgb, 255:red, 208; green, 2; blue, 27 }  ,draw opacity=1 ]   (265.63,176.2) .. controls (304.1,176.67) and (298.57,201.02) .. (309.55,199.26) ;
\draw [shift={(265.63,176.2)}, rotate = 0.7] [color={rgb, 255:red, 208; green, 2; blue, 27 }  ,draw opacity=1 ][fill={rgb, 255:red, 208; green, 2; blue, 27 }  ,fill opacity=1 ][line width=0.75]      (0, 0) circle [x radius= 1.34, y radius= 1.34]   ;

\draw (415,143.24) node [anchor=north west][inner sep=0.75pt]  [font=\small,color={rgb, 255:red, 178; green, 3; blue, 25 }  ,opacity=1 ]  {$\wt{y}_{0}$};
\draw (535,156) node [anchor=north] [inner sep=0.75pt]  [font=\small,color={rgb, 255:red, 178; green, 3; blue, 25 }  ,opacity=1 ]  {$\wt{f}^{n_{0}}(\wt{y}_{0})$};
\draw (474.56,81.87) node [anchor=south east] [inner sep=0.75pt]  [font=\small,color={rgb, 255:red, 177; green, 5; blue, 212 }  ,opacity=1 ]  {$V_{C}(\wt{\gamma })$};
\draw (411.44,115.48) node [anchor=south] [inner sep=0.75pt]  [font=\small,color={rgb, 255:red, 143; green, 210; blue, 5 }  ,opacity=1 ]  {$\wt{\mathcal X}^{-}$};
\draw (537.44,104.49) node [anchor=south east] [inner sep=0.75pt]  [font=\small,color={rgb, 255:red, 186; green, 173; blue, 0 }  ,opacity=1 ]  {$\wt{\mathcal X}^{+}$};
\draw (231.62,124.09) node [anchor=south east] [inner sep=0.75pt]  [color={rgb, 255:red, 155; green, 155; blue, 155 }  ,opacity=1 ]  {$X$};
\draw (307.41,128) node [anchor=north] [inner sep=0.75pt]  [font=\small,color={rgb, 255:red, 178; green, 3; blue, 25 }  ,opacity=1 ]  {$y_{0}$};
\draw (262.79,178) node [anchor=north] [inner sep=0.75pt]  [font=\small,color={rgb, 255:red, 178; green, 3; blue, 25 }  ,opacity=1 ]  {${f}^{n_{0}}(y_{0})$};

\end{tikzpicture}
\caption{The statement of Theorem~\ref{ThmBndDevIrrat}. The sets $\wt X^-$ and $\wt X^+$ are complement of the lift of $X$. Theorem~\ref{ThmBndDevIrrat} prevents from having a trajectory such as the one of $y_0$.}\label{FigThmBndDevIrrat}
\end{center}
\end{figure}
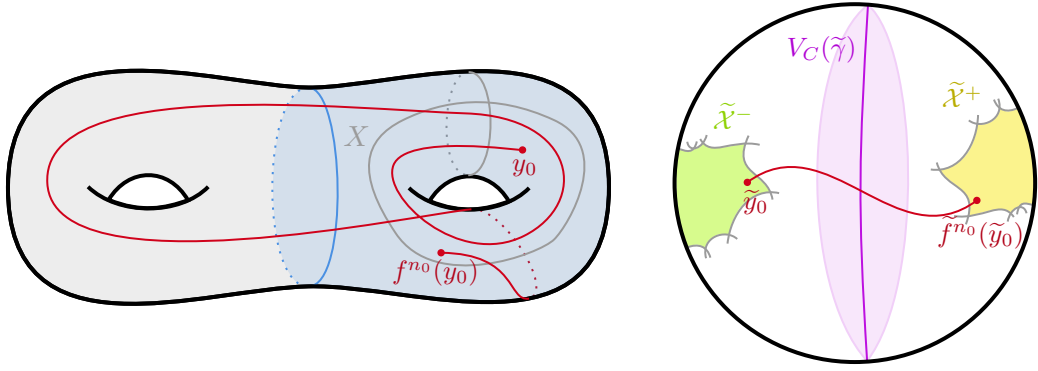

\begin{corollary}\label{CoroBndedDevIrrat}
Let $S$ be a compact boundaryless hyperbolic surface and $f\in \Homeo_0(S)$. Let $\cl_i$ be a minimal non-closed class of $\M(f)$, $\Lambda_i$ the associated lamination and $S_i$ the associated surface. Let $\gamma$ be a boundary component of this surface --- by \cite[Section 6.2]{alepablo} this is a closed geodesic. 
Then there exists $N>0$ such that an orbit of $\wt f$ cannot cross geometrically more than $N$ different lifts of $\gamma$.
\end{corollary}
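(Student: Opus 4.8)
The plan is to derive the statement from Theorem~\ref{ThmBndDevIrrat}, exploiting crucially that $\gamma$ is disjoint from the lamination and bounds the crown structure of $S_i$. Fix once and for all a measure $\mu\in\cl_i$, a $\mu$-typical point $z$, and let $C_0>0$ and $X\subset S$ be the objects produced by Theorem~\ref{ThmBndDevIrrat}, with $\wt\gamma_{\wt z}$ the tracking geodesic of $\wt z$ and $\gamma_z$ its projection, a leaf of $\Lambda_i$. Since $\wt f$ commutes with the deck transformations, the configuration excluded by Theorem~\ref{ThmBndDevIrrat} is also excluded for every deck translate $T\cdot\wt\gamma_{\wt z}$, $T\cdot\wt{\mathcal X}^-$, $T\cdot\wt{\mathcal X}^+$. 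Now $\gamma$ and $\Lambda_i$ are disjoint compact sets, so $d(\gamma,\Lambda_i)=d_0>0$, and by \cite[Section 6.2]{alepablo} the geodesic $\gamma$ is the core of a crown, i.e.\ of a complementary component $P$ of $\Lambda_i$ in $S_i$ whose frontier, besides $\gamma$, consists of leaves of $\Lambda_i$ converging to the two ends of $\gamma$. In particular no leaf of $\Lambda_i$ can be asymptotic to $\gamma$ (such a leaf would spiral onto $\gamma$, forcing $\gamma$ into the closed set $\Lambda_i$), so every lift $\wt\gamma'$ of $\gamma$ is disjoint from, and shares no endpoint at infinity with, any leaf of the lifted lamination $\wt\Lambda_i$; hence $\wt\gamma'$ lies strictly on one side of each translate $T\cdot\wt\gamma_{\wt z}$, and conversely each $T\cdot\wt\gamma_{\wt z}$ lies strictly on one of the two sides of $\wt\gamma'$. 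Since $\Lambda_i\subset\inte(S_i)$, the translates of $\wt\gamma_{\wt z}$ sitting close to $\wt\gamma'$ all lie on the ``$S_i$ side'' of $\wt\gamma'$.

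The second step quantifies this and extracts the forbidden configuration from a crossing. Because $\Lambda_i$ is minimal, $\gamma_z$ is dense in $\Lambda_i$ and therefore shadows, along arbitrarily long arcs, the frontier leaves of the crown $P$; combined with the compactness of $\overline{P}$ this yields a constant $R_1\ge d_0$ such that every point of every lift $\wt\gamma'$ of $\gamma$ lies within $R_1$ of some translate $T\cdot\wt\gamma_{\wt z}$ on the $S_i$ side of $\wt\gamma'$. Suppose an orbit $(\wt f^{n}(\wt x))_{n\in\Z}$ geometrically crosses a lift $\wt\gamma'$ of $\gamma$; then it has a point on the non-$S_i$ side of $\wt\gamma'$, and one argues that if this point lies at distance more than $C_0$ from the nearby translate $T\cdot\wt\gamma_{\wt z}$ it must lie in a complementary component of $\wt X$ contained in $L(V_{C_0}(T\cdot\wt\gamma_{\wt z}))$, playing the role of $T\cdot\wt{\mathcal X}^-$; as the orbit then proceeds along the $S_i$ side of $\wt\gamma'$ (following its asymptotic direction into $S_i$), it later enters a component contained in $R(V_{C_0}(T\cdot\wt\gamma_{\wt z}))$, contradicting Theorem~\ref{ThmBndDevIrrat}. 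Consequently every crossing of a lift of $\gamma$ by the orbit must be ``shallow'': the excursion of the orbit past $\wt\gamma'$ stays within $C_0+R_1$ of $\wt\gamma'$.

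It remains to bound the number of shallow crossings, and this is where the argument becomes delicate. On the soft side one has the standard tools: the orbit is a $D$-chain with $D:=\sup_{\wt y\in\wt S}d(\wt y,\wt f(\wt y))<\infty$, so each crossed lift is stepped across between two consecutive orbit points, and by the collar lemma distinct lifts of $\gamma$ are at distance at least $2w(\gamma)>0$, which limits how many crossed lifts can cluster near a single orbit point. The genuine obstacle is that, $\gamma_z$ being dense in $\Lambda_i$, the orbit comes $C_0$-close to infinitely many distinct lifts of $\gamma$, so a bound coming merely from ``the orbit stays within a bounded neighbourhood of its tracking geodesic'' is not available: one really has to use the structure of $X$ and of its complementary components in a neighbourhood of a lift of $\gamma$, controlling precisely \emph{where} along $\wt\gamma'$ the orbit steps across and ruling out, via Theorem~\ref{ThmBndDevIrrat}, all but finitely many shallow crossings. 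Finally, to pass from such a bound for a single orbit to a uniform $N$, I would run an equivariance and compactness argument: normalise each orbit crossing many lifts so that one crossed lift is a fixed lift of $\gamma$ met through a fixed fundamental domain, and extract a limiting orbit to which the preceding analysis still applies.
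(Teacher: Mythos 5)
Your proposal correctly identifies Theorem~\ref{ThmBndDevIrrat} as the main input and the disjointness of $\gamma$ from $\Lambda_i$ as the geometric fact to exploit, but it has a genuine gap which you yourself flag in the final paragraph: you do not explain how to force the complementary components $\wt{\mathcal X}^-$, $\wt{\mathcal X}^+$ of $\wt X$ to lie entirely inside $L(V_{C_0}(T'\wt\gamma_{\wt z}))$, resp.\ $R(V_{C_0}(T'\wt\gamma_{\wt z}))$. Showing that a single orbit point is far from $T'\wt\gamma_{\wt z}$ (your ``deep'' case) is not enough: Theorem~\ref{ThmBndDevIrrat} demands that the \emph{whole complementary component} of $\wt X$ containing that point stay on the correct side of $V_{C_0}(T'\wt\gamma_{\wt z})$, and a priori that component could spill back across $T'\wt\gamma_{\wt z}$. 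Your shallow/deep dichotomy plus a hypothetical compactness extraction does not address this, and the compactness step is itself suspect since $n_0$ and the orbit segments are unbounded, so there is no natural limiting object.

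The missing ingredient is the inclusion $i_*(\pi_1(S_i))\subset i_*(\pi_1(X))$ from Theorem~\ref{ThmBndDevIrrat}. Since $\gamma$ is a boundary component of $S_i$, there is a simple closed loop $\alpha\subset X$ freely homotopic to $\gamma$, and any lift $T\wt\alpha$ stays within a fixed Hausdorff distance $d_0$ of the corresponding lift $T\wt\gamma$ of $\gamma$. Because $\gamma$ is a simple closed geodesic, distinct lifts are uniformly separated, so crossing $M$ lifts of $\gamma$ produces three nested lifts pairwise $> C_0+d_0$ apart. The middle one gives a lift $T_1^-\wt\alpha\subset\wt X$ which both separates $\wt y_0$ from $T'\wt\gamma_{\wt z}$ and lies at distance $>C_0$ from $T'\wt\gamma_{\wt z}$; since a complementary component of $\wt X$ cannot cross $T_1^-\wt\alpha$, the component containing $\wt y_0$ is entirely at distance $\ge C_0$ from $T'\wt\gamma_{\wt z}$, which is exactly the hypothesis of Theorem~\ref{ThmBndDevIrrat}. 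Taking $N=2M+1$ then lets you split the crossings on both sides of a suitable $T'\wt\gamma_{\wt z}$ and apply this to each side. This is where the argument actually closes; the crown structure and the claim that no leaf is asymptotic to $\gamma$, which occupy most of your first two paragraphs, are not the load-bearing facts (the proof only needs that $\gamma_z$ is disjoint from $\gamma$ and that $\gamma$ is a simple closed geodesic bounding $S_i$).
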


This corollary, combined with \cite{paper1PAF}, implies a criterion of elliptic action on the fine curve graph for elements of $\Homeo_0(S)$ in terms of the ergodic rotation set \cite[Corollary C]{paper1PAF}. 

The following is an improvement of Theorem~\ref{ThmBndDevIrrat} under the additional assumption that the set of contractible fixed points of $f$ is inessential.

\begin{theo}\label{ThmBndDevIrrat2}
Let $S$ be a compact boundaryless hyperbolic surface and $f\in \Homeo_0(S)$ such that the set of contractible fixed points of $f$ is inessential.
Let $\cl_i$ be a minimal non-closed class of $\M(f)$.

Then for any $\mu\in\cl_i$ and any $\mu$-typical point $z\in S$, there exists ${\overline{C}}>0$ such that the following is false:
There exists $\wt y_0\in L(V_{{\overline{C}}}(\wt\gamma_{\wt z}))$ and $n_0\in \N$ such that $\wt f^{n_0}(\wt y_0)\in R(V_{{\overline{C}}}(\wt\gamma_{\wt z}))$.

Moreover, for any $n\in\Z$, we have $\wt f^n(\wt z)\in V_{\overline C}(\wt\gamma_{\wt z})$.
\end{theo}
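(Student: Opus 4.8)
The plan is to run a forcing argument in the spirit of Le Calvez--Tal, using the hypothesis that $\Fix_{\mathrm{cont}}(f)$ is inessential to produce a transverse foliation available on (essentially) all of $\wt S$, and then to feed it with the structural properties of the non-chaotic minimal non-closed class $\cl_i$ coming from \cite{alepablo}. First I would fix a maximal identity isotopy $I$ from $\Id$ to $f$ whose fixed point set $\Fix(I)=\Fix_{\mathrm{cont}}(f)$ is inessential, together with a foliation $\mathcal F$ of $S$ transverse to $I$ and singular exactly on $\Fix(I)$. Since $\Fix(I)$ is inessential, $\pi_1(S\setminus\Fix(I))\to\pi_1(S)$ is onto, so $I$ and $\mathcal F$ lift to $\wt S$ to an isotopy $\wt I$ compatible with $\wt f$ and a foliation $\wt{\mathcal F}$; every point $\wt x$ then has a well-defined $\wt{\mathcal F}$-transverse trajectory $\wt\Gamma_{\wt x}$, obtained by concatenating the transverse paths associated with consecutive iterates.

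I would treat the ``moreover'' part first. For a $\mu$-typical point $z$, the transverse trajectory $\Gamma_z$ of $z$ (downstairs, equivalently $\wt\Gamma_{\wt z}$ together with all its deck translates) has no $\wt{\mathcal F}$-transverse self-intersection: such a self-intersection at a recurrent point would, by the Le Calvez--Tal forcing theorem, produce a topological horseshoe certifying that $\mu$ lies in a chaotic class, contrary to $\mu\in\cl_i$. Combining this with the facts from \cite{alepablo} that the two ends of $\wt\Gamma_{\wt z}$ converge to the endpoints of the tracking geodesic $\wt\gamma_{\wt z}$ and that membership in $\cl_i$ forces linear growth with sublinear deviation, one gets that $\wt\Gamma_{\wt z}$ is a uniform quasigeodesic, hence lies within a bounded Hausdorff distance $\overline C$ of $\wt\gamma_{\wt z}$ (the bound being uniform over the $\pi_1$-orbit). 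Since each iterate $\wt f^n(\wt z)$ lies on $\wt\Gamma_{\wt z}$, this yields $\wt f^n(\wt z)\in V_{\overline C}(\wt\gamma_{\wt z})$ for all $n\in\Z$.

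For the first, false, assertion I would argue by contradiction, assuming that for a sequence $\overline C_k\to\infty$ there are orbit segments of $\wt f$ running from $L(V_{\overline C_k}(\wt\gamma_{\wt z}))$ to $R(V_{\overline C_k}(\wt\gamma_{\wt z}))$, with transverse trajectories $\wt\Gamma_k$. The key point is that the translates $T\wt\Gamma_{\wt z}$, $T\in\pi_1(S)$, pairwise have no $\wt{\mathcal F}$-transverse intersection, since such an intersection would descend to an $\wt{\mathcal F}$-transverse self-intersection of $\Gamma_z$ in $S$; moreover, by the previous step each $T\wt\Gamma_{\wt z}$ shadows the leaf $T\wt\gamma_{\wt z}$ of $\wt\Lambda_i$ within the constant $\overline C$. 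Minimality of $\wt\Lambda_i$ guarantees that the number of its leaves separating a deep point of $L(V_{\overline C_k})$ from a deep point of $R(V_{\overline C_k})$ tends to infinity with $k$, so $\wt\Gamma_k$ topologically crosses arbitrarily many of the translates $T\wt\Gamma_{\wt z}$. For $k$ large this should promote to a genuine $\wt{\mathcal F}$-transverse intersection of $\wt\Gamma_k$ with some $T\wt\Gamma_{\wt z}$, that is, an $\wt{\mathcal F}$-transverse intersection of the finite transverse path $\Gamma_{y_k}$ with $\Gamma_z$ in $S$; together with the recurrence of the $\mu$-orbit, and an intermediate step replacing the finite crossing by a recurrent one through shadowing by pieces of the $\mu$-orbit, the forcing theorem then produces a topological horseshoe whose rotational behaviour witnesses that $\mu$ belongs to a chaotic class --- the desired contradiction, which also retroactively justifies the no-self-intersection claim used in Step~2.

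The hard part, I expect, is the geometric--combinatorial bookkeeping in the last paragraph: quantifying how wide a crossing of $V_{\overline C}(\wt\gamma_{\wt z})$ forces how many crossings of translates of $\wt\Gamma_{\wt z}$ (this needs control on the complementary regions of the minimal lamination $\wt\Lambda_i$ together with the quasigeodesic estimate of Step~2), and --- more delicate --- upgrading a merely \emph{topological} crossing of two transverse paths into an honest $\wt{\mathcal F}$-\emph{transverse} intersection to which the forcing machinery applies, since a priori the crossing could be tangential with respect to the foliation. The reduction transforming a crossing realised by an arbitrary orbit segment into one involving the recurrent $\mu$-orbit, so that the forcing theorem returns a horseshoe rather than a single isolated intersection, is the other point that will require care.
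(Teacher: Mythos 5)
There is a genuine and serious gap in your proposal. Your argument for the ``moreover'' part asserts that the sublinear-deviation property of the tracking geodesic, combined with convergence of the ends of the transverse trajectory to $\alpha(\wt\gamma_{\wt z})$ and $\omega(\wt\gamma_{\wt z})$, promotes $\wt\Gamma_{\wt z}$ to a uniform quasigeodesic and hence to bounded Hausdorff distance from $\wt\gamma_{\wt z}$. That implication is false: Theorem~\ref{DefTrackGeod} only gives $\frac{1}{n}d\big(\wt f^{n}(\wt z),\wt\gamma_{\wt z}(n\vartheta_\mu)\big)\to 0$, which is entirely compatible with $d\big(\wt f^{n}(\wt z),\wt\gamma_{\wt z}\big)$ growing unboundedly (say like $\sqrt n$), and such a path is not a quasigeodesic and does not remain in a bounded neighbourhood of the geodesic. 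The whole point of Theorem~\ref{ThmBndDevIrrat2} is to promote this sublinear control to a uniform bound, and treating that as a free consequence of \cite{alepablo} removes the content of the theorem. Indeed, the paper derives the ``moreover'' clause from the first conclusion applied to $\wt y_0=\wt z$ (via Lemma~\ref{LemIfDevThenBothSides}), not the other way round.

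The first part of your proposal is a legitimate heuristic (count translates of $\wt\Gamma_{\wt z}$ that any deep crossing of $V_{\overline C}(\wt\gamma_{\wt z})$ must meet and hope to produce an $\wt{\mathcal F}$-transverse intersection), but you explicitly leave open the crucial step of upgrading a topological crossing to an $\wt{\mathcal F}$-transverse one, and that is exactly where all the work lies. The paper resolves this through a rather different mechanism: it builds the open invariant set $A_\varepsilon$ of fully essential points, takes a skeleton $X_\varepsilon$, and then runs a case analysis on how the orbit of $\wt y_0$ interacts with the lifts of the complement of $X_\varepsilon$; the inessentiality of $\Fix(I)$ is not used merely to ``lift the foliation'' (which is always possible for the singular foliation), but twice in an essential way --- first through \cite[Lemma~2.9]{paper1PAF} to bound the diameter of transverse trajectories uniformly, and second through Lemma~\ref{LemMeetPhiTwice}, which forces a large-displacement orbit to revisit a chart away from fixed points, yielding an essential transverse loop drawn by $I^{[0,n_0]}_{\wt\F}(\wt y_0)$ whose axis does not cross $\gamma_z$ (Lemma~\ref{LemDrawCrosses}). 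As written, your outline neither reproduces this structure nor supplies a workable substitute for it; in particular, without Lemma~\ref{LemMeetPhiTwice} or an analogue, there is no way to rule out the scenario where $\wt y_0$ and $\wt f^{n_0}(\wt y_0)$ lie in one large complementary component of the skeleton so that no transverse crossing of $\wt B$ is forced (this is exactly the paper's Case~\ref{Ca4}).
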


Note that this theorem improves the case of surface flows with a finite number of singularities, that was studied in the 90's \cite[Theorem~2]{anosovweil1}, \cite[Theorem~3.1]{anosovweil2}, \cite[Theorem~4.11]{zbMATH06859894}.

\begin{rem}
The hypothesis about inessentialness of the fixed point set is necessary in Theorem~D: One can easily produce counterexamples, even for area-preserving flows where the result does not hold, by adapting Koropecki-Tal technique \cite{zbMATH06345227, pa}.
\end{rem}

\subsubsection*{Bounded deviations in the direction of the lamination}

Our next result is a higher genus counterpart of a result of Liu and Tal \cite{zbMATH07867510} stating bounded deviation in the direction of the rotation set. More precisely, let $f\in\Homeo_0(\T^2)$ having a segment with irrational slope and containing a rational point as a rotation set (by \cite[Theorem~C]{lct1}, this rational point is one of the ends of this segment).
Up to taking a power of $f$ and choosing an appropriate lift, we can suppose that $\rot(\wt f) = \{t v \mid t\in [0,1]\}$ for some $v\in \R^2\setminus\{0\}$ having irrational slope. The main theorem of \cite{zbMATH07867510} is that there exists $C>0$ such that for any $\wt x\in \R^2$ and any $n\in\Z$, we have  
\[\langle \wt f^n(\wt x) - \wt x,\, v\rangle \ge -C.\]
In higher genus, the global idea is the same: orbits cannot fellow travel in the opposite direction of the oriented lamination.
More formally, the result is stated in terms of projections on the tracking geodesic.
Let $\pr_{\wt\gamma}$ denote the orthogonal projection on the geodesic $\wt\gamma$. 

\begin{theo}\label{TheoBndedDirLam}
Let $z$ be a typical point for an ergodic measure $\mu$ that belongs to the class of a minimal non-closed lamination. Let $\wt z$ be a lift of $z$ to $\wt S$ and $\wt\gamma_{\wt z}$ the oriented tracking geodesic of $\wt z$. 

Then there exists $C>0$ such that for any $\wt y_0\in \wt S$ and any $n\in\N$, we have (see Figure~\ref{FigTheoBndedDirLam})
\[\pr_{\wt\gamma_{\wt z}}\big(\wt f^n(\wt y_0)\big) - \pr_{\wt\gamma_{\wt z}}(\wt y_0) \ge -C.\]
\end{theo}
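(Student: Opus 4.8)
The plan is to argue by contradiction, following the scheme of Liu--Tal \cite{zbMATH07867510}. If the deviation of forward iterates in the direction of $\wt\gamma_{\wt z}$ were unbounded below, we would produce an orbit arc travelling a long way \emph{against} the oriented lamination, confront it with a long forward orbit arc of the typical point $z$ --- which, by the very definition of the tracking geodesic, travels \emph{with} the oriented lamination --- and derive a contradiction via forcing theory. So fix a maximal isotopy $I$ for $f$ and a foliation $\F$ transverse to $I$, provided by \cite{lct1}; for the canonical lift, $I$ lifts to an isotopy from $\Id$ to $\wt f$. Recall from Subsection~\ref{SecTrack} and \cite{alepablo} that, since the class $\cl_i$ of $\mu$ is minimal non-closed, the forward (resp. backward) orbit of $z$ tends to the endpoint $\xi^{+}$ (resp. $\xi^{-}$) of $\wt\gamma_{\wt z}$ at infinity, that $\wt\gamma_{\wt z}$ is a leaf of $\wt\Lambda_i$, and that the $\F$-transverse trajectory of $z$ shadows $\wt\gamma_{\wt z}$ with the correct orientation; in particular $\pr_{\wt\gamma_{\wt z}}(\wt f^{n}(\wt z))\to+\infty$ as $n\to+\infty$. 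Assuming the theorem false, fix lifts $\wt y_k\in\wt S$ and integers $n_k\ge 0$ with $\pr_{\wt\gamma_{\wt z}}(\wt f^{n_k}(\wt y_k))-\pr_{\wt\gamma_{\wt z}}(\wt y_k)\le -k$.

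\textbf{Localising the excursions.} The first step is to bring everything into a bounded region. Since $\pr_{\wt\gamma_{\wt z}}$ is $1$-Lipschitz and $\wt f$ moves points a bounded amount, and since Theorem~\ref{ThmBndDevIrrat} provides a ``wall'' set $X$, with $i_*(\pi_1(S_i))\subset i_*(\pi_1(X))$, whose lifts cannot be crossed by $f$-orbits from the $L$ to the $R$ side of $V_{C_0}(\wt\gamma_{\wt z})$, one shows that --- after translating by a deck transformation $T_k$ (which turns $\wt\gamma_{\wt z}$ into the tracking geodesic of the lift $T_k\wt z$ of $z$) and truncating --- the arc $\wt y_k,\wt f(\wt y_k),\dots,\wt f^{m_k}(\wt y_k)$ may be assumed to stay in a fixed-width neighbourhood $V_{C_1}(\ell_k)$ of a leaf $\ell_k$ of $\wt\Lambda_i$ meeting a fixed compact set, while still decreasing the projection onto $\ell_k$ by an amount tending to infinity. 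Passing to a subsequence, $\ell_k$ converges to a leaf $\ell_\infty$ of $\wt\Lambda_i$; using Poincaré recurrence for $\mu$ one further secures, in the same bounded region, a lift $\wt z'$ of $z$ whose tracking geodesic is $C_1$-close to $\ell_\infty$ and whose forward $\F$-transverse trajectory shadows $\ell_\infty$ positively for arbitrarily long.

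\textbf{Producing a linked intersection.} This is the heart of the proof, replacing the use of the irrational slope in \cite{zbMATH07867510}. One now has, near the leaf $\ell_\infty$, two long $\F$-transverse paths: one coming from $\wt z'$, crossing a long segment of $\ell_\infty$ in the positive direction; and one coming from the previous step, crossing a long segment of $\ell_\infty$ in the negative direction. Because the two paths stay within a fixed-width strip around a common long geodesic segment and traverse it in opposite senses, a Jordan curve argument forces them to intersect; minimality of $\Lambda_i$ (density of leaves, and the local product structure transverse to $\Lambda_i$) together with the fact that $S_i$ has positive genus (Theorem~\ref{TheoLaminMinim}), which make $\Lambda_i$ fill $S_i$, are then used to guarantee that this intersection is $\F$-\emph{linked} in the sense of \cite{lct1}, rather than being resolvable inside a complementary disc. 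One must also rule out the degenerate scenario where the backward arc escapes along a boundary leaf of $S_i$, which is a closed geodesic and is governed by Part~I \cite{paper1PAF}.

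\textbf{Conclusion and main difficulty.} By the forcing lemma of \cite{lct1}, an $\F$-linked transverse intersection produces a topological horseshoe for some iterate of $f$; the rotational behaviour carried by such a horseshoe spreads transversally to the direction of $\wt\gamma_{\wt z}$, which forces $\cl_i$ to be a chaotic class, contradicting the hypothesis and yielding the constant $C$. I expect the decisive obstacle to be the combination of the two middle steps: unlike on $\T^2$ there is no global linear ``lamination direction'' and $\wt\gamma_{\wt z}$ is not invariant under any deck transformation, so one cannot simply pass to a compact quotient respecting that direction; instead one must control the geometry of $\F$-transverse trajectories near the minimal lamination $\Lambda_i$ well enough both to confine the excursion and to force the linked crossing, while carefully separating off the closed-geodesic (boundary) behaviour that belongs to Part~I.
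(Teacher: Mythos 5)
Your proposal has the right high-level shape --- argue by contradiction, produce an arc whose transverse trajectory moves along $\wt\gamma_{\wt z}$ the ``wrong'' way, confront it with the typical trajectory of $z$, extract an $\wt\F$-transverse intersection, and use forcing to rule it out --- but the engine you supply to produce that intersection is not the paper's, and I do not believe it works as stated. The key ingredient you are missing is \textbf{Atkinson's theorem}.

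The paper's Section~\ref{SecBoundDirec} proceeds as follows. Applying Atkinson's theorem to the zero-mean cocycle given by the algebraic distance to $\wt\gamma_{\wt z}$ over the first-return map to a small chart (Lemma~\ref{LemRtoLandLtoR}), one produces \emph{two} simple periodic approximations $\wt\alpha_0$, $\wt\alpha_1$ of $I^\Z_{\wt\F}(\wt z)$ whose associated deck transformations have axes crossing $\wt\gamma_{\wt z}$ in \emph{opposite} senses. Translates of $\wt\alpha_0\cup\wt\alpha_1$ form a bi-infinite sequence of disjoint ``gates'' along $\wt\gamma_{\wt z}$ (Lemmas~\ref{LemSyndeticInter} and~\ref{LemInterAlphaiEmpty}), and Lemma~\ref{LemCrossDependOnProj} converts a large backward deviation of $\pr_{\wt\gamma_{\wt z}}$ into the statement that the transverse trajectory crosses some band $R_0^k\wt B_0$ in the direction opposite to the crossing of $\wt B_0$ by $I^\Z_{\wt\F}(\wt z)$ itself. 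Proposition~2.16 of Part~I then yields an $\wt\F$-transverse intersection of $I^\Z_{\wt\F}(\wt z)$ with the bad arc, contradicting Proposition~\ref{LastPropBndDevIrrat} --- which has already been established and says the typical transverse trajectory cannot $\wt\F$-transversally intersect \emph{any} path. That is the terminus; one does not need, as you propose, to build a horseshoe and argue that $\cl_i$ becomes chaotic.

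Your ``Jordan curve argument'' is not a valid substitute for this. Two $\F$-transverse paths travelling a strip around $\ell_\infty$ in opposite senses may meet as point sets, but a topological crossing is not an $\F$-transverse intersection: in the Le Calvez--Tal framework transversality is read off the foliation via a band (a deck-invariant union of leaves crossed by a transverse loop), not off the ambient geodesic geometry. Minimality of $\Lambda_i$ and positive genus of $S_i$ do not by themselves produce such a band, let alone show that the two arcs cross the \emph{same} band in opposite directions --- supplying exactly that is the role of the Atkinson construction. A secondary remark: the paper never invokes Theorem~\ref{ThmBndDevIrrat} here, and your localization step does not actually confine the arc, since Theorem~\ref{ThmBndDevIrrat} controls only transverse deviations while the bad arc remains unbounded in the lamination direction.
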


By passing to the limit among the minimal lamination $\Lambda_i$ associated to the class $\cl_i$ of the measure $\mu$, the result remains true by replacing the geodesic $\wt\gamma_{\wt z}$ by any lift of a geodesic of $\Lambda_i$. 

\begin{figure}
\begin{center}

\tikzset{every picture/.style={line width=0.75pt}} 

\begin{tikzpicture}[x=0.75pt,y=0.75pt,yscale=-1,xscale=1]

\draw  [draw opacity=0][fill={rgb, 255:red, 208; green, 2; blue, 27 }  ,fill opacity=0.08 ] (369.8,95.68) .. controls (345.51,124.54) and (310.94,123.68) .. (292.94,84.82) .. controls (318.08,75.4) and (348.65,81.11) .. (369.8,95.68) -- cycle ;
\draw [color={rgb, 255:red, 208; green, 2; blue, 27 }  ,draw opacity=1 ]   (292.94,84.82) .. controls (309.22,121.4) and (344.94,126.54) .. (369.8,95.68) ;
\draw  [draw opacity=0][fill={rgb, 255:red, 208; green, 2; blue, 27 }  ,fill opacity=0.08 ] (366.65,246.54) .. controls (340.94,261.11) and (326.08,263.4) .. (295.8,256.82) .. controls (312.08,220.54) and (343.42,221.77) .. (366.65,246.54) -- cycle ;
\draw [color={rgb, 255:red, 208; green, 2; blue, 27 }  ,draw opacity=1 ]   (295.8,256.82) .. controls (310.37,222.25) and (343.8,220.82) .. (366.65,246.54) ;
\draw [color={rgb, 255:red, 80; green, 0; blue, 139 }  ,draw opacity=1 ]   (330.4,80.8) .. controls (322.65,144.82) and (323.22,190.25) .. (331.08,258.71) ;
\draw [shift={(324.88,165.58)}, rotate = 89.64] [fill={rgb, 255:red, 80; green, 0; blue, 139 }  ,fill opacity=1 ][line width=0.08]  [draw opacity=0] (8.04,-3.86) -- (0,0) -- (8.04,3.86) -- (5.34,0) -- cycle    ;
\draw [color={rgb, 255:red, 74; green, 144; blue, 226 }  ,draw opacity=1 ]   (330.4,80.8) .. controls (336.04,90.14) and (322.71,83.92) .. (323.6,93.92) .. controls (324.49,103.92) and (338.71,105.7) .. (338.04,125.47) .. controls (337.38,145.25) and (307.16,152.59) .. (304.27,183.03) .. controls (301.38,213.47) and (315.38,228.36) .. (338.04,220.8) .. controls (360.71,213.25) and (327.38,247.69) .. (328.27,251.91) .. controls (329.16,256.14) and (335.16,253.91) .. (331.08,258.71) ;
\draw [color={rgb, 255:red, 74; green, 144; blue, 226 }  ,draw opacity=1 ]   (308.89,212.73) .. controls (302.57,200.83) and (302.67,187.04) .. (306.04,174.1) ;
\draw [shift={(303.76,190.46)}, rotate = 87.39] [fill={rgb, 255:red, 74; green, 144; blue, 226 }  ,fill opacity=1 ][line width=0.08]  [draw opacity=0] (8.04,-3.86) -- (0,0) -- (8.04,3.86) -- (5.34,0) -- cycle    ;
\draw    (331.02,121.31) .. controls (329.22,152.74) and (329.03,175.59) .. (333.89,223.66) ;
\draw [shift={(334.19,226.64)}, rotate = 264.11] [fill={rgb, 255:red, 0; green, 0; blue, 0 }  ][line width=0.08]  [draw opacity=0] (7.14,-3.43) -- (0,0) -- (7.14,3.43) -- (4.74,0) -- cycle    ;
\draw [shift={(331.19,118.31)}, rotate = 93.4] [fill={rgb, 255:red, 0; green, 0; blue, 0 }  ][line width=0.08]  [draw opacity=0] (7.14,-3.43) -- (0,0) -- (7.14,3.43) -- (4.74,0) -- cycle    ;
\draw [color={rgb, 255:red, 155; green, 155; blue, 155 }  ,draw opacity=1 ]   (349.06,242.24) .. controls (373.39,190.91) and (403.73,167.57) .. (399.73,151.91) .. controls (395.73,136.24) and (346.06,129.57) .. (354.73,99.91) ;
\draw [shift={(354.73,99.91)}, rotate = 286.28] [color={rgb, 255:red, 155; green, 155; blue, 155 }  ,draw opacity=1 ][fill={rgb, 255:red, 155; green, 155; blue, 155 }  ,fill opacity=1 ][line width=0.75]      (0, 0) circle [x radius= 2.01, y radius= 2.01]   ;
\draw [shift={(373.3,200.36)}, rotate = 304.57] [fill={rgb, 255:red, 155; green, 155; blue, 155 }  ,fill opacity=1 ][line width=0.08]  [draw opacity=0] (7.14,-3.43) -- (0,0) -- (7.14,3.43) -- (4.74,0) -- cycle    ;
\draw [shift={(373.9,132.03)}, rotate = 212.47] [fill={rgb, 255:red, 155; green, 155; blue, 155 }  ,fill opacity=1 ][line width=0.08]  [draw opacity=0] (7.14,-3.43) -- (0,0) -- (7.14,3.43) -- (4.74,0) -- cycle    ;
\draw [shift={(349.06,242.24)}, rotate = 295.36] [color={rgb, 255:red, 155; green, 155; blue, 155 }  ,draw opacity=1 ][fill={rgb, 255:red, 155; green, 155; blue, 155 }  ,fill opacity=1 ][line width=0.75]      (0, 0) circle [x radius= 2.01, y radius= 2.01]   ;
\draw  [line width=1.5]  (230,170) .. controls (230,120.29) and (270.29,80) .. (320,80) .. controls (369.71,80) and (410,120.29) .. (410,170) .. controls (410,219.71) and (369.71,260) .. (320,260) .. controls (270.29,260) and (230,219.71) .. (230,170) -- cycle ;

\draw (332.36,169.2) node [anchor=west] [inner sep=0.75pt]  [xscale=1.2,yscale=1.2]  {$C$};
\draw (325.04,128.79) node [anchor=east] [inner sep=0.75pt]  [color={rgb, 255:red, 80; green, 0; blue, 139 }  ,opacity=1 ,xscale=1.2,yscale=1.2]  {$\wt{\gamma }_{\wt{z}}$};
\draw (301.79,195.2) node [anchor=east] [inner sep=0.75pt]  [color={rgb, 255:red, 74; green, 144; blue, 226 }  ,opacity=1 ,xscale=1.2,yscale=1.2]  {$I_{\wt{\F}}^{\Z}(\wt{z})$};
\draw (414.19,144.8) node [anchor=west] [inner sep=0.75pt]  [color={rgb, 255:red, 101; green, 101; blue, 101 }  ,opacity=1 ,xscale=1.2,yscale=1.2]  {$I_{\wt{\F}}^{[ 0,n_{0}]}(\wt{y}_{0})$};
\draw (233.19,133.6) node [anchor=east] [inner sep=0.75pt]  [color={rgb, 255:red, 255; green, 255; blue, 255 }  ,opacity=1 ,xscale=1.2,yscale=1.2]  {$I_{\wt{\F}}^{[ 0,n_{0}]}(\wt{y}_{0})$};
\end{tikzpicture}

\caption{Theorem~\ref{TheoBndedDirLam} asserts that such a grey orbit cannot exist: an orbit cannot meet first the top red domain and then the bottom red domain (delimited by geodesics that are orthogonal to $\wt\gamma_{\wt z}$ and at a distance $C$).}\label{FigTheoBndedDirLam}
\end{center}
\end{figure}
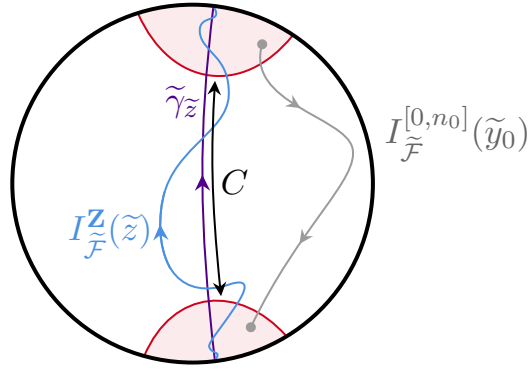

Let us finish with a corollary to Theorem~\ref{TheoBndedDirLam}.

\begin{corollary}\label{CoroBndedDirLam}
Suppose that $f\in\Homeo_0(S)$ and that there exist $\rho\in\rote(f) \setminus \R H_1(S,\Q)$ and a neighbourhood $V$ of $\rho$ such that $\rote (f) \cap V \subset \R \rho$. 
Then $\rote (f) \cap \R \rho \subset \R_+\rho$. 

If there exist $\rho\in\rot(f) \setminus \R H_1(S,\Q)$ and a neighbourhood $V$ of $\rho$ such that $\rot (f) \cap V \subset \R \rho$, then $\rot (f)$ is included in a half-space $\{\ell>0\}$, where $\ell\in H_1(S,\Q)^*$.
\end{corollary}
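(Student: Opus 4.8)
The plan is to deduce both assertions from Theorem~\ref{TheoBndedDirLam} together with the remark following it, the substantial new ingredient being the identification of the relevant class. First I would fix an ergodic measure $\mu$ with $\rote(\mu)=\rho$. Since $\rho\notin\R H_1(S,\Q)$ we have $\rho\neq 0$, so $\mu$-generic orbits have linearly growing homological displacement, hence $\vartheta(\mu)>0$ and $\mu\in\M(f)$; thus $\mu$ belongs to some class $\cl_i$. This class is not closed: in a closed class a $\mu$-generic lift stays within bounded distance of a single lifted closed geodesic $\wt\gamma$, forcing $\rote(\mu)$ to be a real multiple of the homology class of $\gamma$ and hence to lie in $\R H_1(S,\Q)$, a contradiction. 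It is not chaotic either: by \cite{alepablo} a chaotic class carries a rotational horseshoe whose periodic orbits realise, near each of its rotation vectors, a set of rotation vectors not contained in a line, contradicting $\rote(f)\cap V\subset\R\rho$. So $\cl_i$ is minimal non-closed, with lamination $\Lambda_i$ and subsurface $S_i$; by Theorem~\ref{TheoLaminMinim} $\Lambda_i$ is orientable, and I fix the orientation for which the tracking geodesic of a $\mu$-generic point runs towards $\rho$.

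Applying Theorem~\ref{TheoBndedDirLam} to a $\mu$-generic lift $\wt z$, and using its ``passing to the limit'' complement, one gets $C>0$ such that for every lift $\wt\delta$ of a leaf of $\Lambda_i$ carrying the fixed orientation and all $\wt y\in\wt S$, $n\in\N$,
\[
\pr_{\wt\delta}\big(\wt f^{\,n}(\wt y)\big)-\pr_{\wt\delta}(\wt y)\ \ge\ -C .
\]
Now suppose there were an ergodic $\nu$ with $\rote(\nu)=-t\rho$ for some $t>0$, and let $\wt w$ be a lift of a $\nu$-generic point. Since $\rote(\nu)\neq 0$, the forward orbit $\wt f^{\,n}(\wt w)$ converges in $\wt S\cup\partial\wt S$ to a point $\xi$ whose homological direction is $-t\rho$, opposite to the chosen orientation. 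The crux is to show that $\xi$ is the negative endpoint of some lifted leaf $\wt\delta_0$ of $\Lambda_i$: since $\rho$, hence $-t\rho$, is carried by $H_1(S_i,\R)$, Corollary~\ref{CoroBndedDevIrrat} forces the orbit of $\wt w$ to be eventually trapped in a single lift of $S_i$, inside which it must track $\Lambda_i$, necessarily in the reversed direction. Granting this, $\pr_{\wt\delta_0}\big(\wt f^{\,n}(\wt w)\big)\to-\infty$, contradicting the displayed inequality with $\wt y=\wt w$ and $\wt\delta=\wt\delta_0$. Hence $\rote(f)\cap\R\rho\subset\R_+\rho$.

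For the last assertion I would run the previous step for every ergodic measure: since $\rot(f)$ is the convex hull of the ergodic rotation vectors, whose extreme points are themselves ergodic rotation vectors, this yields $\rot(f)\cap\R\rho\subset\R_+\rho$. Combined with $\rot(f)\cap V\subset\R\rho$, this pins the compact convex set $\rot(f)$ against the ray $\R_+\rho$ near $\rho$, and since $\rho\notin\R H_1(S,\Q)$ the supporting directions one obtains at $\rho$ can be taken rational; elementary convex geometry then produces $\ell\in H_1(S,\Q)^*$ with $\rot(f)\subset\{\ell>0\}$. The genuinely delicate point in the whole argument is the trapping-and-tracking step above — showing that a $\nu$-generic orbit whose rotation vector is anti-parallel to $\rho$ asymptotically follows a reversed leaf of $\Lambda_i$; everything else is either contained in \cite{alepablo} or is routine.
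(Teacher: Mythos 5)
Your proof takes a genuinely different route from the paper's, and it contains a real gap that you yourself flag as ``the genuinely delicate point'' without resolving. The paper's own proof does not, in fact, invoke Theorem~\ref{TheoBndedDirLam} at all: instead it uses orientability (Theorem~\ref{TheoLaminMinim}) to construct a simple closed geodesic $\gamma\subset S_i$ \emph{positively} transverse to the oriented lamination $\Lambda_i$, then invokes the identity $\rho(\mu')=[\wt\gamma_{\wt z}]$ from \cite[Proposition~4.6]{alepablo} together with the pairwise disjointness of the subsurfaces $S_j$ to show that any $\mu'\in\M(f)$ with $\rho(\mu')\wedge[\gamma]\neq 0$ must lie in the same class $\cl_i$, hence has tracking geodesic in $\Lambda_i$, hence $\rho(\mu')\wedge[\gamma]>0$. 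This gives $\rote(f)\subset\{\cdot\wedge[\gamma]\ge 0\}$ in one stroke, with $\ell=\cdot\wedge[\gamma]$ already rational, and both conclusions of the corollary drop out.

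The gap in your argument is precisely the ``trapping-and-tracking'' step. You need to show that an ergodic $\nu$ with $\rote(\nu)=-t\rho$ ($t>0$) has a generic orbit that asymptotically follows a \emph{reversed} leaf of $\Lambda_i$, so that Theorem~\ref{TheoBndedDirLam} can be applied to that leaf and produce a contradiction. But $\nu$ a priori belongs to some other class $\cl_j$, whose subsurface $S_j$ and lamination $\Lambda_j$ are disjoint from $S_i$ and $\Lambda_i$. Corollary~\ref{CoroBndedDevIrrat} does not help here: it is a statement about the class $\cl_i$ and its boundary geodesics, and limits only the number of lifts of $\partial S_i$ an orbit can cross; it says nothing about an orbit of a point generic for a measure supported in a different class, and in particular does not force such an orbit into a lift of $S_i$. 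The assertion that $-t\rho$ being ``carried by $H_1(S_i,\R)$'' traps the orbit in $S_i$ does not follow --- rotation vectors are homological and two disjoint classes can perfectly well have proportional rotation vectors without further input. The paper's duality argument is exactly the device that rules out $j\neq i$ (a geodesic of $\Lambda_j\subset T^1S_j$ with nonzero algebraic intersection with $\gamma\subset\inte(S_i)$ would force $S_i\cap S_j\neq\emptyset$); you would need it, or something equivalent, to close your argument. Finally, the last sentence of your proposal (``elementary convex geometry then produces $\ell\in H_1(S,\Q)^*$'') is also not justified as written --- the issue of whether extreme points of $\rot(f)$ are themselves ergodic rotation vectors, and why the supporting hyperplane can be chosen rational, is exactly what the explicit construction of $\gamma$ avoids.
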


This gives a constraint about the possibilities for the shape of a rotation set, similar to the one of \cite[Theorem C]{lct1}. For instance, this shows that there is no $f\in \Homeo_0(S)$ whose rotation set is a segment with irrational direction and containing 0 in its interior. 

\bigskip

The collection of results in this paper will also be used in the forthcoming work \cite{GLCPT} to get pseudo-foliations with irrational direction on the surface for area-preserving homeomorphisms without horseshoe. They could also be used more generally to get invariant sub-surfaces associated to minimal non-closed classes, possibly under some non-wandering or measure preserving condition. 
They also could be used for studying the rotational dynamics of $C^r$ ($r\ge 1$) generic diffeomorphisms of $S$ as it was done in \cite{Addas-Zanata_2022} for the torus case.

\subsection{Proof strategy and plan of the paper}

The next section is devoted to the setting of some notations (the reader is advised to consult \cite{paper1PAF} for more precise preliminaries, in particular about the forcing theory of Le Calvez and Tal \cite{lct1, lct2}).

The first result we prove is the fact that laminations associated to classes are orientable (Theorem~\ref{SecLaminOrient}), it will occupy the whole Section~\ref{SecLaminOrient}. The first step is to prove in Lemma~\ref{LemAlphaSimple} that given a point $z$ that is typical for a measure of a minimal non-closed class, the transverse trajectory $I^\Z_{\wt \F}(\wt z)$ does not meet any leaf of $\wt\F$ twice or more (and in particular is simple). Denoting $\wt B$ the set of leaves met by this trajectory, we prove that the different images of $\wt B$ under deck transformations are well ordered (Lemmas~\ref{LemAlphaNotCross} and \ref{LemBandDisjoint}). Theorem~\ref{SecLaminOrient} then follows rather easily from these facts.

The next section is way longer and dedicated to the proof of results of bounded deviations in directions crossing the one of the lamination (Section~\ref{SecBoundCross}). The first step is to prove that the path $I^\Z_{\wt\F}(\wt z)$ cannot intersect $\wt\F$-transversally any other path (Proposition~\ref{LastPropBndDevIrrat}), improving the already proved fact that the path $I^\Z_{\wt\F}(\wt z)$ has no $\wt\F$-transverse intersection (Proposition~\ref{PropPasInterTrans}); it occupies Subsection~\ref{SubSecNoTrans}.

Subsection~\ref{SubSecEssen} uses the concept of fully essential points of \cite{zbMATH06294042, zbMATH06908424} to get an $f$-invariant open set $A_\varep$, made of points having an iterate coming $\varep$-close to the typical point $z$, and such that ${i}_*(\pi_1(S_\Lambda))\subset {i}_*(\pi_1(A_\varep))$ (Lemma~\ref{LemEssentialPoints}). The set $X$ of Theorem~\ref{ThmBndDevIrrat} is then taken as a skeleton of the set $A_\varep$. 
We then prove that the hypotheses of Theorem~\ref{ThmBndDevIrrat} implies that there is a point $\wt y\in \wt X$ on the left of $V_{C_0}(\check\gamma_{\check z})$ whose image $\wt f^{n_0}(\wt y)\in \wt X$ is on the right of $V_{C_0}(\wt\gamma_{\wt z})$. The fact that the distance to $\wt\gamma_{\wt z}$ is large enough combined with a compactness argument ensures that some iterate $\wt f^{n_-}(\wt y)$ is close to\footnote{Recall that $X$ is made of points whose iterates come close to $z$} $T_-\wt f^{m_-}(\wt z)$ for some $T_-\in\G\setminus\{\Id\}$; similarly $\wt f^{n_0+n_+}(\wt y)$ is close to $T_+\wt f^{m_+}(\wt z)$. The fact that the transverse trajectories of $\wt z$ and $T_-\wt z$ are neither equivalent nor accumulate one in the other (and the same for $\wt z$ and $T_+\wt z$), together with compactness arguments, implies that the trajectories of $\wt z$ and $\wt y$ have an $\wt \F$-transverse intersection (Proposition~\ref{PropExistInterTransIrrat}) contradicting Proposition~\ref{PropPasInterTrans}.
This whole strategy is inspired by the strategy of \cite{zbMATH07488214}.
 
If the fixed point set of $f$ is inessential, Theorem~\ref{ThmBndDevIrrat} can be improved into Theorem~\ref{ThmBndDevIrrat2}: this is the object of Section~\ref{SubSecThmBndDevIrrat2}. It uses the same strategy as before, but we have to carefully analyse what happens to the connected components of the complement of the set $X$. We use the hypothesis about inessentialness of fixed points twice: first through \cite[Lemma~2.9]{paper1PAF}, and second by the fact that orbits with big deviations have to visit twice some small open set far away from fixed points (Lemma~\ref{LemMeetPhiTwice}). We advise the reader to read first Case~\ref{Ca4} of the proof of Theorem~\ref{ThmBndDevIrrat2} that corresponds to the case where everything goes wrong in the previous strategy of the proof of Theorem~\ref{ThmBndDevIrrat}.

Finally, we prove Theorem~\ref{TheoBndedDirLam} in Section~\ref{SecBoundDirec}. The idea is to build two approximations of the trajectory of a $\mu$-typical point $z$, one whose associated geodesic (that lift closed geodesics) cross the tracking geodesic $\wt\gamma_{\wt z}$ from left to right, the second one from right to left (Subsection~\ref{SubsecApprox}). A crucial ingredient here is Atkinson theorem. The proof of Theorem~\ref{TheoBndedDirLam} (made in Subsection~\ref{SubsecTheoBndedDirLam}) then follows from the fact that the tracking geodesic of $\wt z$ crosses the axes of these approximations syndetically (Lemma~\ref{LemInterAlphaiEmpty}).

\section{Notations and preliminaries}

We suppose the reader has a certain familiarity with the forcing theory \cite{lct1, lct2} and its classical notations. We refer to Section 2 of \cite{paper1PAF} for more details about notations and preliminaries. 

Let us recall the main result the forcing theory is based on, obtained as a combination of \cite{lecalvezfoliations} and \cite{bguin2016fixed}. 

Given an isotopy $I$ from the identity to $f$, we denote $\dom(I) := S \backslash \Fix(I)$.

\begin{theorem}\label{ThExistIstop}
Let $S$ be a surface and $f\in\Homeo_0(S)$.
Then there exist an identity isotopy $I$ for $f$ and a transverse topological oriented singular foliation $\F$ of $S$ with $\dom(\F) = \dom(I)$, such that:
For any $z\in \dom(\F)$, there exists an $\F$-transverse path denoted by $\big(I_\F^t(z)\big)_{t\in[0,1]}$, linking $z$ to $f(z)$, that is homotopic in $\dom(\F)$, relative to its endpoints, to the arc $(I^t(z))_{t\in[0,1]}$.
\end{theorem}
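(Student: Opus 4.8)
The plan is to obtain the statement by assembling two deep results from surface dynamics, neither of which is reproved here: the existence of \emph{maximal identity isotopies} \cite{bguin2016fixed} and Le Calvez's \emph{equivariant foliated version of the Brouwer translation theorem} \cite{lecalvezfoliations}. Concretely, I would first pick any identity isotopy $I_0$ from $\Id$ to $f$ --- this exists precisely because $f\in\Homeo_0(S)$ --- and then enlarge its fixed-point set as much as possible: by \cite{bguin2016fixed} there is a \emph{maximal} identity isotopy $I$ for $f$, with $\Fix(I_0)\subset\Fix(I)$, whose trajectory at every point of $\dom(I)$ is homotopic relative to its endpoints, inside $\dom(I)$, to the corresponding trajectory of $I_0$, and such that $\Fix(I)$ cannot be strictly enlarged among isotopies with these compatibility properties.

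The role of maximality is to guarantee the hypothesis needed for the foliated Brouwer theorem: $f$ has no contractible fixed point in $\dom(I)$, equivalently the time-one map of the lift of $I$ to the universal cover of each connected component of $M:=\dom(I)$ (a topological plane, since $S$ is orientable) is fixed-point free. I would then invoke \cite{lecalvezfoliations}: there exists a topological oriented singular foliation $\F$ of $S$ whose singular set is exactly $\Fix(I)$, so that $\dom(\F)=\dom(I)$, and which is \emph{dynamically transverse} to $I$. This last property is the substance of the cited theorem: it provides, canonically for every $z\in\dom(\F)$, an $\F$-transverse path $\big(I_\F^t(z)\big)_{t\in[0,1]}$ from $z$ to $f(z)$ that is homotopic --- relative to its endpoints and inside $\dom(\F)$ --- to the arc $(I^t(z))_{t\in[0,1]}$. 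Taking $I$, $\F$ and $\big(I_\F^t(z)\big)_t$ to be exactly these objects yields all three required conclusions (the path links $z$ to $f(z)$, it is $\F$-transverse, and it is homotopic rel.\ endpoints in $\dom(\F)$ to $(I^t(z))_t$), so the argument is complete.

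The genuinely hard work is entirely internal to the two cited theorems --- the Brouwer-theoretic construction of the transverse foliation in \cite{lecalvezfoliations} and the existence of maximal isotopies in \cite{bguin2016fixed} --- so within the present paper there is no real obstacle to clear. The only points that require care are bookkeeping: one must check that the homotopy class (relative to endpoints, in $\dom(I)$) of the trajectory $(I^t(z))_t$ appearing in the statement is the one inherited from the original isotopy $I_0$ and preserved through the maximalization, which is exactly the compatibility clause in the definition of a maximal isotopy; and one must note that $\F$ can be taken oriented because $S$ is orientable and $f$, being isotopic to the identity, preserves orientation.
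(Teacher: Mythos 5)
Your proposal follows exactly the route the paper takes: the paper states this theorem as a recalled result, "obtained as a combination of \cite{lecalvezfoliations} and \cite{bguin2016fixed}," with no further proof, and your argument is precisely the unpacking of that citation (maximal identity isotopy from Béguin--Crovisier--Le Roux, then the equivariant foliated Brouwer theorem of Le Calvez applied on the universal cover of each component of $\dom(I)$). The substance and the references match, so the two are essentially the same. One small remark: your closing worry about tracking the homotopy class back to a pre-chosen $I_0$ is not actually needed for the statement as written, since it only asserts the existence of \emph{some} pair $(I,\F)$ with the stated transversality property, with no constraint tying $I$ to a pre-selected isotopy; and the parenthetical justification "a topological plane, since $S$ is orientable" is a shorthand --- what matters is that $\dom(I)$ is not $S^2$ (which follows from Lefschetz when $S=S^2$, and is automatic otherwise), so its universal cover is indeed a plane.
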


This allows us to define the path $I_{\F}^\Z (x)$ as the concatenation of the paths $\big(I_\F^t(f^n(z))\big)_{t\in[0,1]}$ for $n\in\Z$.

Recall that $\wt S$ is the universal cover of the surface $S$. We will denote $\G$ the group of deck transformations of the cover $\wt s\to S$. 
We will denote $\wh\dom(I)$ the universal cover of $\dom(I)$, and $\wh f$ the lift of $f$ to $\wh\dom(I)$.

\subsection{Tracking geodesics}\label{SecTrack}

All our results will involve the notion of tracking geodesic introduced in \cite{alepablo}. Let us recall the main results, already stated in \cite{paper1PAF} --- we refer to \cite{alepablo} for more details.

Let $S$ be a compact boundaryless orientable connected surface, and denote $\Homeo_0(S)$ the set of homeomorphisms of $S$ that are homotopic to the identity. Let $f\in \Homeo_0(S)$. 
We denote $\wt S$ the universal cover of the surface $S$ and $\wt f$ the (unique) lift of $f$ to $\wt S$ that commutes with deck transformations. The surface $S$ is equipped with a metric of constant curvature $-1$ that induces a distance $d$, which lifts to a distance on $\wt S$ that we also denote by $d$. 

Denote $\Me(f)$ the set of $f$-invariant ergodic Borel probability measures. 
The following is a direct consequence of Kingman's subadditive ergodic theorem \cite[Lemma~1.6]{alepablo}.

\begin{lemma}\label{LemErgoRotSpeed}
Let $\mu\in\Me(f)$. Then there exists a constant \(\vartheta_\mu\in\R_+\) --- called the \emph{rotation speed} of $\mu$ --- such that
\[\lim\limits_{n \to +\infty}\frac{1}{n}d\big(\wt z, \wt f^n(\wt z)\big) = \lim\limits_{n \to +\infty}\frac{1}{n}d\big(\wt z, \wt f^{-n}(\wt z)\big) = \vartheta_\mu,\]
for \(\mu\)-almost every point \(z \in S\).	
\end{lemma}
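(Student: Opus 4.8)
The plan is to deduce Lemma~\ref{LemErgoRotSpeed} from Kingman's subadditive ergodic theorem applied to a suitable cocycle measuring displacement in the universal cover. First I would fix a fundamental domain $D\subset\wt S$ for the action of $\G$ and, for each $z\in S$, choose a lift $\wt z\in D$; then for $n\ge 0$ set $a_n(z) = d\big(\wt z,\wt f^n(\wt z)\big)$. The point is that this sequence is subadditive over $f$-orbits: using that $\wt f$ commutes with the deck transformations and that $d$ is $\G$-invariant, if $\wt f^m(\wt z) = T\wt w$ with $T\in\G$ and $\wt w$ the chosen lift of $f^m(z)$, then
\[
d\big(\wt z,\wt f^{m+n}(\wt z)\big)\le d\big(\wt z,\wt f^m(\wt z)\big) + d\big(\wt f^m(\wt z),\wt f^{m+n}(\wt z)\big) = a_m(z) + d\big(T\wt w, T\wt f^n(\wt w)\big) = a_m(z) + a_n\big(f^m(z)\big),
\]
so $a_{m+n}(z)\le a_m(z) + a_n(f^m(z))$. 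I would also check the integrability hypothesis $a_1\in L^1(\mu)$: since $f$ and its (continuous) lift $\wt f$ move points a uniformly bounded amount over the compact surface $S$, in fact $a_1$ is bounded, so this is immediate; likewise each $a_n$ is bounded and the negative parts cause no trouble.

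Then Kingman's theorem gives the $\mu$-almost everywhere and $L^1$ convergence of $\frac1n a_n(z)$ to an $f$-invariant function, which by ergodicity of $\mu$ is a constant I call $\vartheta_\mu$; non-negativity of $\vartheta_\mu$ is clear since each $a_n\ge 0$. This handles the forward limit $\lim_n\frac1n d(\wt z,\wt f^n(\wt z)) = \vartheta_\mu$. For the backward limit I would run the same argument with the cocycle $b_n(z) = d\big(\wt z,\wt f^{-n}(\wt z)\big)$, which is subadditive for $f^{-1}$ in place of $f$ (and $\mu$ is also $f^{-1}$-invariant and ergodic), obtaining a constant $\vartheta'_\mu$. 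To see $\vartheta'_\mu = \vartheta_\mu$, note that for $\mu$-a.e.\ $z$ the point $w = f^{-n}(z)$ satisfies $d(\wt z,\wt f^{-n}(\wt z)) = d(\wt f^n(\wt w),\wt w)$ up to a bounded deck-transformation error coming from the change of chosen lift; dividing by $n$ and using that $w$ ranges over a set of full measure along which $\frac1n d(\wt w,\wt f^n(\wt w))\to\vartheta_\mu$ (Birkhoff/Kingman again, since $f^{-1}$-orbits are $f$-orbits reversed), one gets $\vartheta'_\mu = \vartheta_\mu$.

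The only genuinely delicate point is the bookkeeping of basepoints: the quantity $d(\wt z,\wt f^n(\wt z))$ depends a priori on the choice of lift $\wt z$, but any two choices differ by a deck transformation, under which $d$ is invariant, so $a_n(z)$ is in fact independent of that choice; and when I rewrite displacements along the orbit I must track which deck translate of the chosen lift of $f^m(z)$ equals $\wt f^m(\wt z)$ — this is exactly where $\wt f\circ T = T\circ\wt f$ and $\G$-invariance of $d$ get used to cancel $T$. Once this is set up cleanly the rest is a direct citation of Kingman's subadditive ergodic theorem, as the paper indicates. I expect no real obstacle beyond writing the cocycle identity carefully; the substance of the lemma is entirely contained in subadditivity plus ergodicity.
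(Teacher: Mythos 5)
Your setup, the subadditivity inequality $a_{m+n}(z)\le a_m(z)+a_n(f^m(z))$, the integrability check, and the appeal to Kingman's subadditive ergodic theorem together with ergodicity are all correct, and this is precisely the route the paper takes (the paper does not spell it out, it simply cites Lemma~1.6 of the tracking-geodesics reference). The only place where something goes wrong is the final identification $\vartheta'_\mu=\vartheta_\mu$, and there are two issues worth separating.

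First, a small point: the relation you state ``up to a bounded deck-transformation error'' is in fact an exact identity. If $\wt w$ is the chosen lift of $w=f^{-n}(z)$ and $\wt f^{-n}(\wt z)=S\wt w$ with $S\in\G$, then commutation of $\wt f$ with deck transformations gives $\wt z=\wt f^n(S\wt w)=S\wt f^n(\wt w)$, hence $d\big(\wt z,\wt f^{-n}(\wt z)\big)=d\big(S\wt f^n(\wt w),S\wt w\big)=d\big(\wt w,\wt f^n(\wt w)\big)=a_n\big(f^{-n}(z)\big)$, with no correction at all. Second, and this is the genuine gap: from $b_n(z)=a_n\big(f^{-n}(z)\big)$ you cannot conclude $\frac1n b_n(z)\to\vartheta_\mu$ by observing that ``$w=f^{-n}(z)$ ranges over a full-measure set on which $\frac1n a_n(w)\to\vartheta_\mu$.'' That pointwise convergence is a statement about a \emph{fixed} $w$ as $n\to\infty$, whereas here the basepoint $f^{-n}(z)$ moves with $n$; the two limits interact and nothing follows directly. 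The clean fix is to use the integral characterization built into Kingman's theorem: the a.e.\ limit of $\frac1n a_n$ equals $\lim_n\frac1n\int a_n\,\mathrm{d}\mu$ (equivalently $\inf_n\frac1n\int a_n\,\mathrm{d}\mu$), and likewise for $b_n$ with $f^{-1}$ in place of $f$. Since $\mu$ is $f$-invariant, $\int b_n\,\mathrm{d}\mu=\int a_n\circ f^{-n}\,\mathrm{d}\mu=\int a_n\,\mathrm{d}\mu$, so the two constants coincide. With that replacement the proof is complete and matches the paper's intent.
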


We denote by \(\M(f)\) the set of $\mu\in\Me(f)$ such that $\vartheta_\mu>0$. As usual, we will parametrise geodesics by arclength. Points that are typical for some ergodic measure of $\M(f)$ follow a so-called \emph{tracking geodesic} \cite[Theorem~B]{alepablo}.

\begin{theorem}\label{DefTrackGeod}
Let $\mu\in\M(f)$. Then $\mu$-a.e.~$z\in S$ admits a \emph{tracking geodesic} $\gamma$: for each lift \(\wt z\) of \(z\), there exists a lift $\wt \gamma$ of $\gamma$ such that:
\begin{equation}\label{eq:trackingequation}
\lim\limits_{n \to +\infty}\frac{1}{n}d\big(\wt f^n(\wt z), \wt \gamma(n \vartheta_\mu)\big) = \lim\limits_{n \to +\infty}\frac{1}{n}d\big(\wt f^{-n}(\wt z), \wt \gamma(-n \vartheta_\mu)\big) = 0.
\end{equation}	
\end{theorem}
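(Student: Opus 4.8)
The plan is to build $\wt\gamma$ as the geodesic of $\wt S=\Hy^2$ joining a backward and a forward limit point of the orbit on $\partial\wt S$, and to establish the sublinear tracking \eqref{eq:trackingequation} by a horofunction argument of Karlsson--Ledrappier type. The metric on $\wt S$ being the fixed hyperbolic one, $\wt S$ is CAT$(-1)$; and a structural remark that makes the (non-isometric) cocycle arguments work is that, since $\wt f$ commutes with the group $\G$ of deck transformations, which act by isometries, and $S$ is compact, the displacement $d(\wt x,\wt f(\wt x))$ is bounded by a constant $C=C(f)$ independent of $\wt x\in\wt S$; in particular $\wt f$ distorts distances by at most $2C$. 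Since $\wt f\circ g=g\circ\wt f$, it is enough to prove the statement for one fixed lift $\wt z$ of a $\mu$-typical point $z$: the geodesic for $g\wt z$ is then $g\wt\gamma$, and its projection to $S$ does not depend on the chosen lift.

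Consider first the forward ray. Put $x_n=\wt f^n(\wt z)$ and $a_n(z)=d(\wt z,\wt f^n(\wt z))$; by Lemma~\ref{LemErgoRotSpeed}, $\tfrac1n a_n(z)\to\vartheta_\mu>0$, so $(x_n)$ leaves every compact subset of $\wt S$. For $n\in\N$ let $h_n=d(\cdot,x_n)-a_n(z)$; these functions are $1$-Lipschitz, vanish at $\wt z$, and hence range in a set that is compact for uniform convergence on compact sets. The core of the proof is to produce, using ergodicity of $\mu$ and the subadditive cocycle $a_\bullet$, a limiting horofunction $h$ --- a Busemann function based at $\wt z$ of some point $\xi^+\in\partial\wt S$, since $\wt S$ is CAT$(-1)$ --- that is \emph{efficient along the orbit}, in the sense that $\tfrac1n h(x_n)\to-\vartheta_\mu$. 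One sets up the horofunction-valued cocycle $z\mapsto h_1^{(z)}:=d(\cdot,\wt f\wt z)-d(\wt z,\wt f\wt z)$ over $(f,\mu)$ --- integrability being automatic from the bound $d(\wt x,\wt f\wt x)\le C$ --- and runs the Karlsson--Ledrappier extraction (a maximal-inequality and compactness argument followed by Birkhoff's theorem). Let $\wt\gamma^+$ be the geodesic ray from $\wt z$ to $\xi^+$; since $d(\wt z,x_n)=\vartheta_\mu n+o(n)$ and $h(x_n)=-\vartheta_\mu n+o(n)$, the elementary CAT$(-1)$ estimate $(\wt z\mid\xi^+)_{x_n}=\tfrac12\big(d(\wt z,x_n)+h(x_n)\big)=o(n)$ places $x_n$ at distance $o(n)$ from $\wt\gamma^+$, at a parameter $\vartheta_\mu n+o(n)$; hence $d\big(x_n,\wt\gamma^+(n\vartheta_\mu)\big)=o(n)$, which is the forward half of \eqref{eq:trackingequation}.

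Applying the same argument to $f^{-1}$, whose canonical lift $\wt f^{-1}$ again has bounded displacement, yields a backward ray $\wt\gamma^-$ from $\wt z$ to some $\xi^-\in\partial\wt S$ with $d\big(x_{-n},\wt\gamma^-(n\vartheta_\mu)\big)=o(n)$. To assemble a single bi-infinite geodesic one must check $\xi^+\neq\xi^-$; otherwise the two rays would coincide and both $x_n$ and $x_{-n}$ would be $o(n)$-close to $\wt\gamma^+(n\vartheta_\mu)$, which I would exclude using efficiency of the Busemann function together with $d(x_{-n},x_n)=a_{2n}(f^{-n}z)$ and a measurability/ergodicity argument ($z\mapsto(\xi^-(z),\xi^+(z))$ is measurable and $\G$-equivariant, and $\xi^+=\xi^-$ on a set of positive measure is incompatible with $\vartheta_\mu>0$). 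Once $\xi^+\neq\xi^-$, let $\wt\gamma$ be the bi-infinite geodesic from $\xi^-$ to $\xi^+$, parametrised by arclength with any origin at bounded distance from $\wt z$. As $\wt\gamma$ is asymptotic to $\wt\gamma^+$ and to $\wt\gamma^-$, replacing $\wt\gamma^\pm$ by $\wt\gamma$ in the two estimates above changes them only by a bounded amount, so \eqref{eq:trackingequation} holds, with $\gamma$ the projection of $\wt\gamma$ to $S$.

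The hard part is the middle step: producing the efficient horofunction $h$ with $\tfrac1n h(x_n)\to-\vartheta_\mu$ in the \emph{non-isometric} setting. Kingman's theorem applied to $a_\bullet$ by itself does not suffice, since it does not control how the orbit may ``wind''; the horofunction cocycle is exactly the device that does, and the technical work lies in the compactness extraction and in justifying the use of Birkhoff's theorem for it --- the bounded-displacement remark being what compensates for $\wt f$ not acting by isometries. A secondary, less routine point is the dichotomy $\xi^+\neq\xi^-$ needed to glue the two rays.
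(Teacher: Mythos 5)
This theorem is not proved in the paper: it is quoted verbatim as \cite[Theorem~B]{alepablo} in the preliminaries, so there is no internal argument against which to measure your proposal. Your horofunction route is nonetheless the kind of argument one would expect behind such a tracking statement, and the surrounding ingredients --- bounded displacement of $\wt f$ by cocompactness, Lemma~\ref{LemErgoRotSpeed} for $\vartheta_\mu$, the CAT$(-1)$ identity $(\wt z\mid\xi^+)_{x_n}=\tfrac12\big(d(\wt z,x_n)+h(x_n)\big)$ to pass from efficiency of the Busemann function to linear tracking of a ray, equivariance to reduce to one lift, asymptoticity of $\wt\gamma$ with $\wt\gamma^\pm$ to pass from rays to the bi-infinite geodesic --- are all correct and correctly deployed. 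Your sketch of the $\xi^+\neq\xi^-$ step also works once unfolded: if $\xi^+=\xi^-$ then $d(x_n,x_{-n})=o(n)$ for a.e.\ $z$, i.e.\ $\frac1{2n}a_{2n}(f^{-n}z)\to 0$; by $f$-invariance of $\mu$ this forces $\frac1{2n}a_{2n}\to 0$ in $\mu$-measure, contradicting Kingman and $\vartheta_\mu>0$.

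The real gap is exactly where you flag it, and your description of how to close it is not adequate. The object $z\mapsto h_1^{(z)}=d(\cdot,\wt f\wt z)-d(\wt z,\wt f\wt z)$ is not a cocycle in the sense required by Karlsson--Ledrappier: that theorem takes as input a measurable \emph{isometry}-valued cocycle $u\colon\Omega\to\operatorname{Isom}(X)$ (so that $h_n^{(\omega)}=d(\cdot,u_n(\omega)x_0)-d(x_0,u_n(\omega)x_0)$ can be pushed around equivariantly), and its proof uses this isometry structure essentially to promote efficiency of a horofunction along a subsequence to efficiency along the whole orbit. The bounded-displacement remark by itself does not supply this. The standard way to restore the isometry structure in your setting is to fix a fundamental domain $D\subset\wt S$ with $\mu$-negligible boundary, define the deck transformation $T_n(z)\in\G$ by $T_n(z)^{-1}\wt f^n(\wt z)\in D$ for $\wt z\in D$, check the cocycle relation $T_{n+m}(z)=T_m(z)\,T_n(f^m z)$ from equivariance of $\wt f$, and observe that $d\big(\wt f^n(\wt z),T_n(z)\wt z_0\big)\le\diam D$. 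Integrability of $z\mapsto d(\wt z_0,T_1(z)\wt z_0)$ follows from the bounded displacement of $\wt f$, one applies Karlsson--Ledrappier (or Karlsson--Margulis, or Gou\"ezel--Karlsson) to the isometry cocycle $T_\bullet$ to obtain a ray tracking $T_n(z)\wt z_0$ sublinearly, and the $\diam D$ bound transfers this tracking to $\wt f^n(\wt z)$. Without this reduction --- or some equivalent device that your sketch does not name --- the ``Karlsson--Ledrappier extraction'' you invoke in the middle is not available, and as written the proof does not go through.
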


The geodesic associated to a $\mu$-typical $z\in S$ will be denoted by $\gamma_z$ and the one associated to the lift $\wt z$ will be denoted $\wt\gamma_{\wt z}$, and parametrised such that $d(\wt z,\wt\gamma_{\wt z}) = d(\wt z,\wt\gamma_{\wt z}(0))$. 

Theorem~\ref{DefTrackGeod} allows us to define a set of geodesics associated to an ergodic measure of $\M(f)$ \cite[Theorem~ C]{alepablo}. 

\begin{theorem}\label{theoExistTrackSet}
To any $\mu\in\M(f)$ is associated a set $\dot\Lambda_\mu\subset T^1 S$ that is invariant under the geodesic flow on $T^1 S$, and such that for $\mu$-a.e.~$z\in S$, we have 
\[\overline{\dot\gamma_z(\R)} = \dot\Lambda_\mu.\]
\end{theorem}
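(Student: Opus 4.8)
The plan is to recover this statement from Theorem~\ref{DefTrackGeod} by an ergodicity argument on the unit tangent bundle $T^1S$, as done in \cite{alepablo}. For a $\mu$-typical $z$, Theorem~\ref{DefTrackGeod} gives a tracking geodesic $\gamma_z$; lift it to the unit tangent bundle to get the oriented geodesic $\dot\gamma_z\in T^1S$ and consider the closure $\overline{\dot\gamma_z(\R)}$ of its forward-and-backward orbit under the geodesic flow $(g^t)_{t\in\R}$ on $T^1S$. The first step is to check that this closed set is measurably and dynamically ``the same'' for $\mu$-a.e.\ $z$. Concretely, I would introduce the map $z\mapsto \dot\gamma_z$ from $S$ to the hyperspace of compact subsets of $T^1S$ equipped with the Hausdorff topology (or, slightly more carefully, work with a measurable selection $z\mapsto \dot\gamma_z(0)$ and then push forward along $g^t$), and show it is measurable. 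The subtlety here is that the tracking geodesic is only defined $\mu$-a.e.\ and the lift to $T^1S$ requires a choice of base point and direction; the parametrisation convention $d(\wt z,\wt\gamma_{\wt z}) = d(\wt z,\wt\gamma_{\wt z}(0))$ fixed just before the statement makes $z\mapsto \dot\gamma_z(0)$ well-defined (up to the measure-zero set where the nearest point is not unique, which one discards), hence measurable.

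The second step is the invariance and rigidity argument. Because the tracking geodesic of $f(z)$ is the same geodesic as that of $z$ (the defining limits in \eqref{eq:trackingequation} are insensitive to replacing $z$ by $f(z)$, as this only shifts the index $n$ by one and divides by $n$), the set-valued map $z\mapsto \overline{\dot\gamma_z(\R)}$ is $f$-invariant: $\overline{\dot\gamma_{f(z)}(\R)} = \overline{\dot\gamma_z(\R)}$ for $\mu$-a.e.\ $z$. By ergodicity of $\mu$, any $f$-invariant measurable function is $\mu$-a.e.\ constant; one has to be mildly careful that ``constant'' is taken in the appropriate Polish space (the space of compact subsets of $T^1S$ with the Hausdorff metric is Polish), so that the a.e.-constant value is a genuine fixed compact set, which I name $\dot\Lambda_\mu$. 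Thus $\overline{\dot\gamma_z(\R)} = \dot\Lambda_\mu$ for $\mu$-a.e.\ $z$. Invariance of $\dot\Lambda_\mu$ under the geodesic flow is then immediate: $\dot\gamma_z(\R)$ is by construction a full geodesic flow orbit, so its closure is $(g^t)$-invariant, and this property passes to the a.e.-constant value $\dot\Lambda_\mu$.

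The main obstacle I anticipate is purely measure-theoretic bookkeeping rather than dynamical: making precise that $z\mapsto \overline{\dot\gamma_z(\R)}$ is a well-defined measurable map into a standard Borel space, given that $\gamma_z$ is produced by a limiting procedure that a priori only guarantees existence $\mu$-a.e.\ without any continuity in $z$. I would handle this by first establishing measurability of $z\mapsto \dot\gamma_z(0)$ — using that for each $n$ the map $z\mapsto \wt f^n(\wt z)$ is continuous and the tracking geodesic is characterised by the vanishing of the explicit limsup in \eqref{eq:trackingequation}, so $\dot\gamma_z(0)$ is a pointwise limit (along a suitable subsequence, or via a uniqueness argument) of measurable functions — and then noting that $\overline{\dot\gamma_z(\R)} = \overline{\{g^t(\dot\gamma_z(0)) : t\in\R\}}$ depends measurably on $\dot\gamma_z(0)$ because the geodesic flow is continuous and the Hausdorff-closure operation is Borel. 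Once this is in place, the ergodic argument is routine and the conclusion $\overline{\dot\gamma_z(\R)} = \dot\Lambda_\mu$ follows; the geodesic-flow invariance of $\dot\Lambda_\mu$ was already noted above.
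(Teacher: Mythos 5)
The paper does not prove Theorem~\ref{theoExistTrackSet}: it quotes it verbatim as \cite[Theorem~C]{alepablo} in the preliminaries section, so there is no in-paper proof to compare your proposal against. That said, your sketch is the natural and correct route, and it is almost certainly the argument carried out in \cite{alepablo}. The two key observations are right: (i) the tracking condition \eqref{eq:trackingequation} for $f(z)$ is obtained from that for $z$ by the index shift $n\mapsto n+1$, so $\gamma_{f(z)}$ and $\gamma_z$ are the same unparametrised oriented geodesic and hence $\overline{\dot\gamma_{f(z)}(\R)}=\overline{\dot\gamma_z(\R)}$; (ii) the map $z\mapsto\overline{\dot\gamma_z(\R)}$ into the Polish (indeed compact) hyperspace of closed subsets of $T^1S$ is measurable and $f$-invariant, so ergodicity forces it to be a.e.\ constant, and the constant value inherits geodesic-flow invariance because each $\overline{\dot\gamma_z(\R)}$ is the closure of a full flow orbit. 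The measurability of $\wt x\mapsto\wt\gamma_{\wt x}$ that you worry about is exactly the content of \cite[Theorem~B]{alepablo}, as the present paper itself notes in a footnote in the proof of Lemma~\ref{LemReturnDeckTrans}; you could have cited it rather than re-deriving it, but your argument via pointwise limits of the explicit quantities in \eqref{eq:trackingequation} is a legitimate way to see it.
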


When the geodesics of $\dot\Lambda_\mu$ are simple, the lamination $\dot\Lambda_\mu$ is minimal \cite[Theorem D]{alepablo}:

\begin{theorem}\label{Thifsimpletheoremintro}
Suppose $\mu \in \M(f)$ is such that $\gamma_x$ is simple for $\mu$-a.e.\ $x \in S$.  Then $\dot\Lambda_\mu$ is a minimal geodesic lamination.
\end{theorem}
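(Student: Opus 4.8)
The plan is to prove two statements separately: (i) $\dot\Lambda_\mu$ is a geodesic lamination — this is where the hypothesis that $\gamma_x$ is simple for $\mu$-a.e.\ $x$ is used; and (ii) this lamination is minimal, which I reduce to a recurrence property of the tracking geodesic obtained via Atkinson's theorem. Throughout, fix a $\mu$-typical point $z$ with $\gamma_z$ simple and $\overline{\dot\gamma_z(\R)}=\dot\Lambda_\mu$ (Theorem~\ref{theoExistTrackSet}); if $\gamma_z$ happens to be a closed geodesic the statement is immediate, so I assume it is not.

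\textbf{Step 1: $\dot\Lambda_\mu$ is a geodesic lamination.} Since $\dot\Lambda_\mu$ is closed and invariant under the geodesic flow, every $v\in\dot\Lambda_\mu$ lies on a complete geodesic $\eta_v$ with $\dot\eta_v(\R)\subset\dot\Lambda_\mu$, and choosing $t_n$ with $\dot\gamma_z(t_n)\to v$ one gets $\dot\gamma_z(t_n+\cdot)\to\dot\eta_v$ uniformly on compact sets. I would then argue: each $\eta_v$ is simple, because a transverse self-intersection of $\eta_v$ is an open condition and would, for $n$ and $L$ large enough, be present in the sub-arc $\gamma_z|_{[t_n-L,\,t_n+L]}$, contradicting the simplicity of $\gamma_z$; and any two leaves $\eta_v,\eta_w$ are disjoint or equal, because if $\dot\gamma_z(s_n+\cdot)\to\dot\eta_v$, $\dot\gamma_z(t_n+\cdot)\to\dot\eta_w$ and $\eta_v,\eta_w$ crossed transversally, then, passing to a subsequence along which $t_n-s_n\to\tau\in[-\infty,+\infty]$, a finite $\tau$ would yield a transverse self-intersection of $\eta_v$ (already excluded) while $\tau=\pm\infty$ would yield two disjoint sub-arcs of $\gamma_z$ crossing transversally, i.e.\ a transverse self-intersection of $\gamma_z$. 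As two complete geodesics of $S$ that do not cross transversally are disjoint or equal, $\dot\Lambda_\mu$ is a closed union of pairwise disjoint complete simple geodesics, i.e.\ a geodesic lamination; moreover $\dot\gamma_z$ is one of its leaves, dense in $\dot\Lambda_\mu$.

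\textbf{Step 2: reduction of minimality to recurrence.} I invoke the classification of geodesic laminations: $\dot\Lambda_\mu$ is a finite union of minimal sublaminations together with finitely many isolated leaves, each end of which spirals onto a minimal sublamination; in particular a recurrent leaf of $\dot\Lambda_\mu$ necessarily lies in a minimal sublamination, whereas an isolated leaf is never recurrent. Consequently, if I show that for $\mu$-a.e.\ $z$ the tracking geodesic $\dot\gamma_z$ is recurrent for the geodesic flow, then $\dot\gamma_z$ lies in a minimal sublamination $M\subset\dot\Lambda_\mu$, and $\dot\Lambda_\mu=\overline{\dot\gamma_z(\R)}=M$ is minimal — which finishes the proof. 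I would also record that $z\mapsto\gamma_z(\R)\subset S$ is $f$-invariant: by the tracking equation the complete geodesics $\wt\gamma_{\wt f(\wt z)}$ and $\wt\gamma_{\wt z}$ of $\wt S$ have the same pair of endpoints at infinity (two geodesic rays at sublinear distance in arclength share their endpoint), hence coincide; with ergodicity and the finiteness above, this gives the equivalent reformulation that, if $\dot\Lambda_\mu$ is \emph{not} minimal, then $\gamma_z(\R)$ equals one fixed isolated leaf for $\mu$-a.e.\ $z$.

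\textbf{Step 3: recurrence of $\dot\gamma_z$ — the main obstacle.} The mechanism is as follows. For every $n$ one has, in $T^1S$, $\dot\gamma_{f^n(z)}(0)=\dot\gamma_z(s_n)$, where $s_n=\pr_{\wt\gamma_{\wt z}}(\wt f^n(\wt z))$ satisfies $s_n/n\to\vartheta_\mu>0$; so an $f$-return $f^{n_k}(z)\to z$ together with the convergence $\dot\gamma_{f^{n_k}(z)}(0)\to\dot\gamma_z(0)$ would give $\dot\gamma_z(s_{n_k})\to\dot\gamma_z(0)$ with $s_{n_k}\to+\infty$, i.e.\ the required recurrence. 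The obstruction is that $w\mapsto\dot\gamma_w(0)$ is only measurable and, crucially, the tracking equation controls $r_n:=d(\wt f^n(\wt z),\wt\gamma_{\wt z})=\psi(f^n z)$, with $\psi(w)=d(w,\gamma_w(\R))$, only sublinearly, not boundedly. This is exactly where Atkinson's theorem is essential: applied (after a truncation reducing to an $L^1$ cocycle) to a zero-mean cocycle built from the increments of $s_n$, it produces a subsequence of $f$-return times $n_k\to+\infty$ of $z$ along which in addition $r_{n_k}\to0$, so that the perpendicular feet $\wt\gamma_{\wt z}(s_{n_k})$, translated by the deck transformations $T_k$ (non-trivial since $d(\wt z,\wt f^{n_k}(\wt z))\sim n_k\vartheta_\mu$) satisfying $T_k^{-1}\wt f^{n_k}(\wt z)\to\wt z$, converge to $\wt z$ together with their directions; projecting yields $\dot\gamma_z(s_{n_k})\to\dot\gamma_z(0)$, hence recurrence. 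Granting this, $\dot\gamma_z$ is a leaf of a minimal sublamination $M\subset\dot\Lambda_\mu$ and $\dot\Lambda_\mu=\overline{\dot\gamma_z(\R)}=M$ is minimal. I expect essentially all the difficulty to sit in Step 3 — upgrading the sublinear tracking control into genuine translation-like returns of the lifted orbit via Atkinson's theorem, and controlling the convergence (directions included) of the translated tracking geodesics along those good times — whereas Steps 1 and 2 use only the openness of transverse intersection, the classification of geodesic laminations, and ergodicity.
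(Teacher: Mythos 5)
This theorem is quoted from \cite{alepablo} (their Theorem D) and is not reproved in the present paper, so there is no internal proof to compare against; I assess the argument on its own terms. Your Steps 1 and 2 are sound: openness of transverse intersection does rule out transverse crossings among the limit leaves of a simple recurrent geodesic, and the classification of geodesic laminations correctly reduces minimality of $\dot\Lambda_\mu$ to the recurrence of $\dot\gamma_z$ in $T^1S$, via the key identity $\dot\gamma_z(s_n)=\dot\gamma_{f^n(z)}(0)$ and the density of $\dot\gamma_z$ in $\dot\Lambda_\mu$.

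Step 3, however, has a real gap. Atkinson's theorem applies to Birkhoff sums of a zero-mean $L^1$ cocycle; the cocycle you build from the increments of $s_n$, once centred by $\vartheta_\mu$, has Birkhoff sum $s_n-n\vartheta_\mu$, so Atkinson controls that quantity --- not $r_n=d(\wt f^n(\wt z),\wt\gamma_{\wt z})=\psi(f^n z)$, which is a single-time observable and not a Birkhoff sum at all. So Atkinson does not produce return times along which $r_{n_k}\to 0$. And even granting $r_{n_k}\to 0$ together with $T_k^{-1}\wt f^{n_k}(\wt z)\to\wt z$, the translated feet $T_k^{-1}\wt\gamma_{\wt z}(s_{n_k})$ would tend to $\wt z$, not to $\wt\gamma_{\wt z}(0)=\pr_{\wt\gamma_{\wt z}}(\wt z)$ --- two distinct points whenever $\psi(z)>0$ --- and nothing in the argument controls the direction of the geodesic $T_k^{-1}\wt\gamma_{\wt z}$ near $\wt z$, so the claimed convergence of directions is unsupported. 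The correct and simpler mechanism is the one the paper itself uses in the proof of Lemma~\ref{LemReturnDeckTrans}: by Lusin's theorem the measurable map $\Theta:\wt x\mapsto\wt\gamma_{\wt x}$ is continuous on a set $A$ of positive measure; take a typical $z\in A$ and, by Poincar\'e recurrence, $n_k\to\infty$ with $T_k^{-1}\wt f^{n_k}(\wt z)\to\wt z$ inside $A$. Continuity of $\Theta$ on $A$ then yields $T_k^{-1}\wt\gamma_{\wt z}=\wt\gamma_{T_k^{-1}\wt f^{n_k}(\wt z)}\to\wt\gamma_{\wt z}$ as parametrised geodesics, which is precisely the directional control you need, and $s_{n_k}\to+\infty$ because $s_n/n\to\vartheta_\mu>0$. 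Atkinson's theorem does appear in this paper (in the proof of Lemma~\ref{LemRtoLandLtoR}), but there it settles a genuine Birkhoff-sum sign-change question and is a red herring here.
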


We denote $\Lambda_\mu$ the projection of $\dot\Lambda_\mu$ on $S$. 
Let us define an equivalence relation $\sim$ on $\M(f)$ by: $\mu_1\sim \mu_2$ if one of the following is true:
\begin{itemize}
\item $\dot\Lambda_{\mu_1} = \dot\Lambda_{\mu_2}$;
\item There exist $\nu_1,\dots,\nu_m\in\M(f)$ such that $\nu_1=\mu_1$, $\nu_m=\mu_2$ and for all $1\le i<m$, there exists two geodesics of $\dot\Lambda_{\nu_i}$ and $\dot\Lambda_{\nu_{i+1}}$ that intersect transversally. 
\end{itemize} 

We denote $(\cl_i)_i$ the classes of this equivalence relation. By \cite{alepablo} (Section 6.2, and in particular Lemmas 6.7 and 6.8), to any class $\cl_i$ is associated a surface $S_i\subset S$ whose boundary is made of a finite collection of closed geodesics and minimal for inclusion among such surfaces such that for any $i$ and any $\mu\in \cl_i$ we have $\dot\Lambda_\mu \subset T^1 S_i$. If $\inte(S_i)\neq\emptyset$, then $S_i$ is open. Moreover, the surfaces $S_i$ are pairwise disjoint.

\begin{definition}\label{DefClassMeas}
There are three types of classes: classes $\cl_i$ such that $\bigcup_{\mu\in\cl_i}\dot\Lambda_\mu$:
\begin{itemize}[nosep]
\item is a single closed geodesic are called \emph{closed} classes;
\item is a minimal lamination that is not a closed geodesic are called \emph{minimal non-closed} classes;
\item has transverse intersection are called \emph{chaotic} classes.
\end{itemize}
\end{definition}

Note that a similar theory was developed for surface flows, leading to the definition of \emph{geodesic framework}, see \cite[Section 4.2]{zbMATH06859894}. 

\section{Laminations are orientable}\label{SecLaminOrient}

Consider $\mu\in\Merg(f)$, with $\mu\in \cl_i$, with $i\in I^1$. By \cite[Theorem F]{alepablo}, $\Lambda_i$ is a minimal geodesic lamination. If $\Lambda_i$ is a simple closed geodesic, there is nothing to prove. So, suppose that $\Lambda_i$ is made of non-closed geodesics. 

Consider $z$ a point that is $\mu$-typical, and $\wt z$ a lift of $z$ to $\wt S$. Let {$I$ be a maximal isotopy between the identity and $f$ and let} $\F$ be a transverse foliation for $f$ (see \cite[Section~2]{paper1PAF}), $\wt \F$ a lift of $\F$ to $\wt S$ and denote $\beta_0 = I^\Z_{\F}(z)$ the transverse trajectory of $z$. Let $\wt\beta_0$ be a lift of $\beta_0$ to $\wt S$.

We equip the set of geodesics of $\wt S$ with a distance $d$ that comes from a distance on $\partial\wt S\times\partial\wt S$, by the map that to a geodesic associates the couple of its endpoints in $\partial\wt S$.

\begin{lemma}\label{LemGeodNotAccuPerOrb}
There exists $d_0>0$ such that for $\mu$-a.e.~$z\in S$ and any geodesic $\wt\gamma$ of $\wt S$ lifting a closed geodesic $\gamma$ of $S$ and satisfying $d(\wt\gamma,\wt\gamma_{\wt z})\le d_0$ crosses $\wt\Lambda_i$; moreover in this case $\gamma$ is included in $S_i$.
\end{lemma}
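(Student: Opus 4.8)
The plan is to exploit the minimality of $\Lambda_i$ together with a compactness/closing argument. First I would fix a $\mu$-typical point $z$ for which the conclusion of Theorem~\ref{theoExistTrackSet} holds, so that $\overline{\dot\gamma_z(\R)} = \dot\Lambda_\mu = \dot\Lambda_i$, and work on $T^1S$. The key topological fact I would use is that a minimal geodesic lamination $\Lambda_i$ that is not a closed geodesic fills its supporting subsurface $S_i$ in the following sense: there is a uniform constant $d_1>0$ such that every geodesic segment of length $d_1$ contained in $S_i$ (and, more relevantly, every complete geodesic of $S$ that comes within a small distance of $\Lambda_i$ and "tries to run parallel to it" for long enough) must cross $\Lambda_i$ transversally. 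I would make this precise by contradiction: if the statement failed, there would be a sequence of closed geodesics $\gamma_n$ with lifts $\wt\gamma_n$ satisfying $d(\wt\gamma_n,\wt\gamma_{\wt z})\le 1/n$ but not crossing $\wt\Lambda_i$; passing to a subsequence, $\wt\gamma_n$ converges (in the space of geodesics with the metric on $\partial\wt S\times\partial\wt S$) to a geodesic $\wt\gamma_\infty$ with $d(\wt\gamma_\infty,\wt\gamma_{\wt z})=0$, i.e.\ $\wt\gamma_\infty$ has the same endpoints as $\wt\gamma_{\wt z}$, hence $\wt\gamma_\infty = \wt\gamma_{\wt z}$. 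The non-crossing condition is a closed condition, so $\wt\gamma_{\wt z}$ would not cross $\wt\Lambda_i$; but $\wt\gamma_{\wt z}$ is itself a leaf (or a limit of leaves) of $\wt\Lambda_i$, and since $\wt\Lambda_i$ is minimal non-closed, $\wt\gamma_{\wt z}$ is dense in $\Lambda_i$, so a geodesic equal to $\wt\gamma_{\wt z}$ is contained in $\Lambda_i$ and this is impossible for a lift of a \emph{closed} geodesic (a closed geodesic contained in a minimal non-closed lamination would force the lamination to be that closed geodesic). So the approximating geodesics cannot avoid $\wt\Lambda_i$; this yields the existence of $d_0$.

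Second, I would upgrade ``crosses $\wt\Lambda_i$'' to ``$\gamma\subset S_i$''. Here I would use the characterisation of $S_i$ recalled in the preliminaries: $S_i$ is bounded by finitely many disjoint simple closed geodesics and is minimal for inclusion among subsurfaces containing $\dot\Lambda_\mu$. A closed geodesic $\gamma$ crossing $\Lambda_i$ transversally must have geometric intersection number $\ge 1$ with $\Lambda_i$; by a standard argument (a closed geodesic that crosses a lamination cannot be disjoint from the subsurface filled by that lamination and, being a closed geodesic, cannot cross the boundary geodesics of $S_i$ since they are disjoint simple closed geodesics and geodesics in distinct free homotopy classes crossing would contradict the convexity/minimality — more precisely, $\partial S_i$ consists of geodesics disjoint from $\Lambda_i$, so a geodesic crossing $\Lambda_i$ and exiting $S_i$ would have to cross $\partial S_i$, and then one could isotope/shorten to find a subsurface strictly smaller than $S_i$ still containing $\Lambda_i$, contradicting minimality), one concludes $\gamma\subset S_i$. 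I would phrase this using that $\Lambda_i$ is filling in $\inte(S_i)$: the complementary components of $\Lambda_i$ in $S_i$ are ideal polygons or once-punctured (boundary) ideal polygons, none of which contains a closed geodesic, so any closed geodesic meeting $\inte(S_i)$ and not equal to a boundary component must cross $\Lambda_i$; conversely a closed geodesic crossing $\Lambda_i$ meets $\inte(S_i)$ and, not being able to cross the boundary geodesics, stays in $S_i$.

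The main obstacle I expect is the second part — converting the transverse-crossing conclusion into the containment $\gamma\subset S_i$ — because it requires carefully invoking the structure theory of $S_i$ from \cite{alepablo} (the minimality-for-inclusion property and the fact that $\partial S_i$ is a union of geodesics disjoint from $\Lambda_i$), and ruling out the degenerate cases where $S_i$ has empty interior or where $\gamma$ is a boundary component. The first part is essentially a soft compactness argument once one has the key dichotomy that a geodesic at distance $0$ from $\wt\gamma_{\wt z}$ equals it, together with density of leaves in a minimal non-closed lamination; I would present that as the heart of the proof and treat the containment as a consequence of the established geometry of $S_i$. One subtlety to address along the way: the constant $d_0$ should be uniform over $\mu$-typical $z$, which follows because all such $\wt\gamma_{\wt z}$ are leaves of the \emph{same} lamination $\Lambda_i$ (so the compactness argument can be run on the compact set $\dot\Lambda_i\subset T^1S$ rather than point by point).
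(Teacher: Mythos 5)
Your compactness argument in the first part has a real gap. You take a sequence of lifts $\wt\gamma_n$ of closed geodesics with $d(\wt\gamma_n,\wt\gamma_{\wt z})\le 1/n$, disjoint from $\wt\Lambda_i$, and pass to the limit $\wt\gamma_\infty=\wt\gamma_{\wt z}$. But then you deduce a contradiction from ``$\wt\gamma_{\wt z}$ would not cross $\wt\Lambda_i$'', and this is not a contradiction at all: $\wt\gamma_{\wt z}$ \emph{is} a leaf of $\wt\Lambda_i$, so of course it does not cross $\wt\Lambda_i$. Moreover, the limit $\wt\gamma_\infty$ is not a lift of a closed geodesic (the $\gamma_n$ are closed; their limit need not be, and here it is not, since $\gamma_z$ is dense in a minimal non-closed lamination), so the parenthetical ``a closed geodesic contained in a minimal non-closed lamination...'' does not apply to $\wt\gamma_\infty$. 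There is also a technical slip: disjointness from a closed set of geodesics is not a closed condition under Hausdorff limits (a sequence of lines disjoint from the $x$-axis can limit onto the $x$-axis), so ``non-crossing is a closed condition'' is not available even as a lemma; what is true is exactly what is vacuous here.

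What the paper does instead is use the structure of the complement of $\wt\Lambda_i$ (Casson--Bleiler, Lemma~4.4): the complement is a disjoint union of ideal polygons and crowns. If some $\wt\gamma_p$ is disjoint from $\wt\Lambda_i$ and near $\wt\gamma_{\wt z}$, it lies in one such complementary region. It cannot lie in an ideal polygon adjacent to $\wt\gamma_{\wt z}$, since by convergence of endpoints it would have to share an ideal endpoint with $\wt\gamma_{\wt z}$, which is impossible because $\gamma_p$ is closed and $\gamma_z$ is not. Hence $\wt\gamma_p$ lies in a crown, near a crown boundary geodesic; but the crown boundary geodesics project into $\partial S_i$, which is at definite distance from the (interior) leaf $\gamma_z$, contradicting $d(\wt\gamma_p,\wt\gamma_{\wt z})\to 0$. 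This is the step your proposal skips, and it is not replaceable by a soft closedness argument. Your treatment of the second part (containment $\gamma\subset S_i$ from the fact that $\gamma$ cannot cross the boundary geodesics of $S_i$ once it crosses $\Lambda_i$) is essentially the intended argument and is fine, though the paper compresses it into ``one easily sees''.
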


In particular, any geodesic satisfying $d(\wt\gamma,\wt\gamma_{\wt z})\le d_0$ cannot be the tracking geodesic of some $f$-periodic point

\begin{proof}
Suppose by contradiction that there are geodesics $\wt\gamma_{p}$ lifting closed geodesics and accumulating on $\wt\gamma_{\wt z}$ without crossing $\wt\Lambda_\mu$.
Recall that $\gamma_z$ is dense in the geodesic lamination $\Lambda_i$ as this latter is  minimal \cite[Theorem~D]{alepablo}. Denote $\wt\Lambda_i$ the lift of $\Lambda_i$ to $\wt S$. Following \cite[Lemma 4.4]{casson}, the complement of $\wt\Lambda_i$ is made of ideal polygons and unions of core regions with crowns. The geodesic $\gamma_{p}$ cannot be included in an ideal polygon of the complement as it would force one of its lifts to have an endpoint in common with $\wt \gamma_{\wt z}$, which is impossible as $\gamma_{p}$ is a closed geodesic and $\gamma_{z}$ is not. 
Hence, $\gamma_{p}$ is included in a crown and close to a geodesic that is a boundary component of this crown, and one easily sees that it implies the conclusion of the lemma.
\end{proof}

\begin{lemma}\label{LemReturnDeckTrans}
For $\mu$-a.e.\ $z\in S$, there exists $T\in\G$ and $n\ge 0$ such that $n$ is large, $\wt f^n(\wt z)$ is close to $T\wt z$ and the geodesic axis of $T$ is close to $\wt\gamma_{\wt z}$.
\end{lemma}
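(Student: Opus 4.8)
The statement is somewhat informal ("$n$ is large", "close to"), so the real content is: given any $\varepsilon>0$ and any $R>0$, there exist $T\in\G$ and arbitrarily large $n\ge 0$ such that $d(\wt f^n(\wt z),T\wt z)<\varepsilon$ and $d(\mathrm{axis}(T),\wt\gamma_{\wt z})<R$ (distance in the space of geodesics). The plan is to combine a recurrence argument downstairs on $S$ with the tracking property and the nonelementarity of $\pi_1(S)$. First I would use that $z$ is $\mu$-typical with $\vartheta_\mu>0$: by Poincaré recurrence applied to $f$ and the measure $\mu$, for $\mu$-a.e.\ $z$ there is a sequence $n_k\to+\infty$ with $f^{n_k}(z)\to z$ in $S$; choosing $n_k$ so that $d_S(f^{n_k}(z),z)<\varepsilon$ lets us pick deck transformations $T_k\in\G$ with $d(\wt f^{n_k}(\wt z),T_k\wt z)<\varepsilon$. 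Since $\vartheta_\mu>0$, Lemma~\ref{LemErgoRotSpeed} gives $d(\wt z,\wt f^{n_k}(\wt z))\to+\infty$, so $d(\wt z,T_k\wt z)\to+\infty$ and in particular the $T_k$ are eventually all distinct and nontrivial.

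Next I would control the axis of $T_k$. By the tracking property (Theorem~\ref{DefTrackGeod}), $\wt f^{n_k}(\wt z)$ stays within sublinear distance of $\wt\gamma_{\wt z}(n_k\vartheta_\mu)$; more precisely, for the purpose at hand one wants a genuinely bounded (not merely sublinear) fellow-travelling statement along the relevant time window. Here I would invoke the bounded-deviation material already available in this program — the tracking geodesic of a typical point in a minimal non-closed class has bounded transverse deviation along its forward orbit (this is exactly the kind of statement Theorem~\ref{ThmBndDevIrrat}/\ref{ThmBndDevIrrat2} and the $\delta$-hyperbolicity of $\wt S$ provide), so there is a uniform $R_0$ with $d(\wt f^{n}(\wt z),\wt\gamma_{\wt z})\le R_0$ for all $n$ relevant. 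Since $\wt f^{n_k}(\wt z)$ is $R_0$-close to $\wt\gamma_{\wt z}$ and $\varepsilon$-close to $T_k\wt z$, the point $T_k\wt z$ is $(R_0+\varepsilon)$-close to $\wt\gamma_{\wt z}$; and $\wt z$ itself is $d(\wt z,\wt\gamma_{\wt z})$-close to $\wt\gamma_{\wt z}$. Thus the geodesic segment $[\wt z,T_k\wt z]$ has both endpoints within bounded distance of $\wt\gamma_{\wt z}$, while its length goes to infinity; by $\delta$-thin triangles (or the Morse lemma) this segment, hence a long portion of the axis of $T_k$ (which by the standard ping-pong estimate lies within bounded distance of $[\wt z,T_k\wt z]$ once $d(\wt z,T_k\wt z)$ is large compared to the translation length, or one replaces $T_k$ by a suitable power), fellow-travels $\wt\gamma_{\wt z}$ on a segment of length $\to\infty$. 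Passing to a subsequence, the axes of $T_k$ converge in the space of geodesics to a geodesic with both endpoints on $\partial\wt S$ equal to those of $\wt\gamma_{\wt z}$, i.e.\ to $\wt\gamma_{\wt z}$ itself; in particular $d(\mathrm{axis}(T_k),\wt\gamma_{\wt z})\le R$ for $k$ large.

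Putting these together: for $k$ large we have $d(\wt f^{n_k}(\wt z),T_k\wt z)<\varepsilon$, $n_k$ arbitrarily large, and $d(\mathrm{axis}(T_k),\wt\gamma_{\wt z})<R$, which is the assertion with $T=T_k$, $n=n_k$. The main obstacle I anticipate is the second step — upgrading the sublinear tracking of Theorem~\ref{DefTrackGeod} to the genuinely bounded fellow-travelling needed to pin the axis of $T_k$ to $\wt\gamma_{\wt z}$ rather than merely to a geodesic sublinearly close to it. In the context of this paper this should follow from the bounded-deviation estimates being developed (or, in a pinch, one argues with the sublinear bound directly: the endpoints of $[\wt z, T_k \wt z]$ are within $o(n_k)$ of $\wt\gamma_{\wt z}$ over a window of length $\asymp n_k$, which by $\delta$-hyperbolicity still forces the two endpoints at infinity to coincide in the limit, so a weak "close" in the sense of the lemma's informal statement is already enough). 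A secondary point to be careful about is the passage from $T_k$ to a power of $T_k$ to make the ping-pong/axis estimate clean; since the statement only requires \emph{some} $T\in\G$, replacing $T_k$ by $T_k^{m}$ (and $n_k$ by a correspondingly chosen recurrence time) is harmless.
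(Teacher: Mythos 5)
Your recurrence scheme (Poincaré recurrence downstairs, picking $T_k$ such that $\wt f^{n_k}(\wt z)\in T_k B(\wt z,\varepsilon)$, and using $\vartheta_\mu>0$ via Lemma~\ref{LemErgoRotSpeed} to force $T_k\neq\Id$ and $d(\wt z,T_k\wt z)\to\infty$) agrees with the paper's setup. The gap is in the axis-pinning step, and it is a real one.

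Your first route is circular: the bounded-deviation estimates (Theorems~\ref{ThmBndDevIrrat}, \ref{ThmBndDevIrrat2}) are proved later in the paper, and their proofs rest on Proposition~\ref{PropPasInterTrans} and Lemma~\ref{LemGeodNotAccuPerOrb}, which themselves invoke the present lemma. So you cannot assume $d(\wt f^n(\wt z),\wt\gamma_{\wt z})\le R_0$ uniformly here. Your fallback (apply $\delta$-hyperbolicity to the sublinear tracking) does tell you that the segment $[\wt z, T_k\wt z]$ fellow-travels $\wt\gamma_{\wt z}$ on a window of length $L_k - o(L_k)$, but that alone does \emph{not} pin the axis of $T_k$: a short-translation-length hyperbolic element can have $T_k\wt z$ far from $\wt z$, with the axis passing near the \emph{midpoint} of $[\wt z,T_k\wt z]$ yet crossing $\wt\gamma_{\wt z}$ transversally. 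Knowing where $T_k\wt z$ lies only constrains one of the two fixed points of $T_k$ on $\partial\wt S$; to control the other you need to know where $T_k^{-1}\wt z$ goes, and the tracking of the forward orbit $\wt f^{n_k}(\wt z)$ gives no information about $T_k^{-1}\wt z$. Replacing $T_k$ by a power does not help either, since it leaves the axis unchanged.

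The paper gets the missing second-endpoint control by a Lusin-type continuity argument. One fixes a fundamental domain $D$, finds a positive-measure set $A$ on which $\Theta:x\mapsto\wt\gamma_{\wt x}$ is continuous, and chooses the return time $n$ so that both $z$ and $f^n(z)$ lie in $A$ inside a small chart $W$. The $\G$-equivariance of $\Theta$ gives
\[
-\pr_{\wt\gamma_{T^{-1}\wt f^n(\wt z)}}\big(T^{-1}\wt z\big)\;=\;\pr_{\wt\gamma_{\wt z}}\big(\wt f^n(\wt z)\big),
\]
which is large; the continuity of $\Theta$ at $\wt z$ (available because $T^{-1}\wt f^n(\wt z)$ is $\varepsilon$-close to $\wt z$, both in the same fundamental domain and projecting into $A$) then lets one replace $\wt\gamma_{T^{-1}\wt f^n(\wt z)}$ by $\wt\gamma_{\wt z}$ up to a bounded reparametrisation, showing $\pr_{\wt\gamma_{\wt z}}(T^{-1}\wt z)$ is very negative, i.e.\ $T^{-1}\wt z$ is near $\alpha(\wt\gamma_{\wt z})$. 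With $T\wt z$ near $\omega(\wt\gamma_{\wt z})$ and $T^{-1}\wt z$ near $\alpha(\wt\gamma_{\wt z})$, elementary hyperbolic geometry now does force the axis of $T$ to be close to $\wt\gamma_{\wt z}$. It is this Lusin step — giving control of $T^{-1}\wt z$, not just $T\wt z$ — that your sketch lacks.
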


The proof of this lemma uses the arguments of  \cite[Lemma 5.10]{alepablo}. Recall that $\wt\gamma$ is the map that at some point $\wt z\in \wt S$ whose projection on $S$ is $\mu$-typical associates its tracking geodesic in $\wt S$. Its extremities are $\alpha(\wt z)$ and $\omega(\wt z)$. 
Denoting $\pr_{\wt\gamma}$ the orthogonal projection on the geodesic $\wt\gamma$, a tracking geodesic is parametrised by $\R$, such that $\wt\gamma_{\wt z}(0) = \pr_{\wt\gamma_{\wt z}}({\wt z})$. In the latter we will sometimes identify a parametrised geodesic with $\R$.

The proof will be reused in the proof of Proposition~\ref{PropPasInterTrans}. Note that it does not use any hypothesis on the tracking geodesic $\gamma_z$.

\begin{proof}
We fix a regular fundamental domain $D$ of $S$ in $\wt S$ (regular in the sense that its boundary has 0 $\mu$-measure) and define a map $S\ni x\mapsto \wt x\in\wt S$ accordingly.
By Lusin theorem, there exists a set $A$ of positive $\mu$-measure such that the restriction to $A$ of the map $\Theta$ that satisfies $\Theta(x)=\wt\gamma_{\wt x}$ is continuous\footnote{By \cite[Theorem~B]{alepablo}, the map $\wt x\mapsto \wt\gamma_{\wt x}$ is measurable.}. 
Let $z$ be a $\mu$-typical point such that $\wt z\in \inte (D)$. {Since $z$ is typical, there is some positive integer $m$ such that $f^m(z)$ belongs to $A\cap\inte{D}$ and since the map $\Theta$ must also be continuous on points of $f^{-m}(A)$, we can assume from the start that $z$ belongs to $A$}.
Let $W$ be a small open chart on $S$ around $z$, that is included in the projection of the interior of $D$.
Let $n>0$ (to be chosen large thereafter) such that $f^n(z)\in A\cap W$. Let $T$ be the deck transformation such that if $\wt W$ is the lift of $W$ satisfying $\wt z\in\wt W$, then $\wt f^n(\wt z)\in T \wt W$.

Let us explain why if $W$ is chosen small enough, and if $n$ is large enough, then the axis of the deck transformation $T$ is close to $\wt\gamma_{\wt z}$. 
Indeed, on the one hand, if $n$ is large enough, then $\pr_{\wt\gamma_{\wt z}}(\wt f^n(\wt z))$ is large; in particular $T\wt z$, which belongs to the same fundamental domain as $\wt f^n(\wt z)$, is close to $\omega(\wt z)$. 
On the other hand (recall that $\wt\gamma_{\wt z}$ is parametrised such that $\pr_{\wt\gamma_{\wt z}}(\wt z) = 0$),
\begin{equation}\label{eqAxisBeta}
-\pr_{\wt\gamma_{T^{-1}\widetilde f^n(\wt z)}}(T^{-1}\wt z) = -\pr_{\wt\gamma_{\widetilde f^n(\wt z)}}(\wt z) = \pr_{\wt\gamma_{\wt z}}(\widetilde f^n(\wt z)) \text{ is large.}
\end{equation}
If $W$ is small enough, then by continuity of $x\mapsto\wt\gamma_{\wt x}$ on $A$, we have that the (unparametrised) geodesics $\wt\gamma_{T^{-1}\widetilde f^n(\wt z)}$ and $\wt\gamma_{\wt z}$ are close; moreover $T^{-1}\widetilde f^n(\wt z)$ and $\wt z$ lie in the same fundamental domain; this implies that the parametrizations of the geodesics $\wt\gamma_{T^{-1}\widetilde f^n(\wt z)}$ and $\wt\gamma_{\wt z}$ are at finite distance. 
By \eqref{eqAxisBeta} we deduce that $-\pr_{\wt\gamma_{\wt z}}(T^{-1}\wt z)$ is large, in other words that $T^{-1}\wt z$ is close to $\alpha(\wt z)$. Hence, $T$ maps $\wt z$ close to $\omega(\wt z)$ and $T^{-1}$ maps $\wt z$ close to $\alpha(\wt z)$; by elementary hyperbolic geometry we deduce that the axis of $T$ is close to $\wt\gamma_{\wt z}$ for $n$ large enough and $W$ small enough. Note that the same reasoning also works for negative times (with big absolute value).
\end{proof}

\begin{prop}\label{PropPasInterTrans}
The trajectory $\beta_0$ has no $\F$-transverse self-intersection.
\end{prop}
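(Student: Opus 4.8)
The plan is to argue by contradiction, feeding a hypothetical $\F$-transverse self-intersection of $\beta_0$ into the forcing theory of Le Calvez and Tal in order to manufacture a periodic point of $f$ whose tracking geodesic --- the projection of an axis, hence a closed geodesic --- crosses $\Lambda_i$ within distance $d_0$ of $\wt\gamma_{\wt z}$; such a periodic measure would lie in the class $\cl_i$ and force $\cl_i$ to be chaotic, contradicting the standing hypothesis that $\cl_i$ is minimal non-closed. So I would start by assuming that $\beta_0=I^\Z_\F(z)$ has an $\F$-transverse self-intersection and lifting it to $\wt S$: there is $T\in\G$ such that $\wt\beta_0$ and $T\wt\beta_0$ have an $\F$-transverse intersection, the deck transformation $T$ being a priori possibly trivial (the lifted trajectory crossing itself). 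As the configuration realising the crossing involves only finitely many subpaths of $\wt\beta_0$ of bounded length, the element $T$ can, once the configuration is fixed, be assumed to be ``bounded''; I would moreover use recurrence of the $\mu$-typical point $z$ to place this crossing at arbitrarily large positive (and negative) trajectory times.

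The next step reuses the construction of the proof of Lemma~\ref{LemReturnDeckTrans} --- which, as noted there, needs no hypothesis on $\gamma_z$ and works in both time directions. For the $z$ at hand it provides, for arbitrarily large $|n|$, deck transformations $T_n\neq\Id$ with $\wt f^{n}(\wt z)$ arbitrarily close to $T_n\wt z$, with $\mathrm{axis}(T_n)$ within the constant $d_0$ of Lemma~\ref{LemGeodNotAccuPerOrb} of $\wt\gamma_{\wt z}$, and --- crucially --- with translation length of $T_n$ going to infinity with $n$. Along the resulting shadowing $\wt\beta_0|_{[n,+\infty)}\approx T_n\,\wt\beta_0|_{[0,+\infty)}$ (and its negative-time analogue), the self-crossing of $\wt\beta_0$, transported forward, reappears as an $\F$-transverse intersection with a nontrivial translate, since an $\F$-transverse intersection is an open condition; combining this with the previous paragraph I may therefore assume from now on that $T\neq\Id$.

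The core step then invokes the fundamental proposition on $\F$-transverse trajectories of \cite{lct1,lct2}: the close returns $\wt f^{n}(\wt z)\approx T_n\wt z$ and $\wt f^{-n'}(\wt z)\approx T_{n'}^{-1}\wt z$ give admissible $\F$-transverse arcs joining $\wt z$ to points very close to $T_n^{\pm1}\wt z$ --- ``near-loops'' of holonomy $T_n^{\pm1}$ that do not close up exactly --- and the hypothetical $\F$-transverse self-intersection of $\wt\beta_0$ supplies precisely the track-switching flexibility needed to splice these near-loops into a genuine admissible $\F$-transverse loop whose homotopy class is of the form ``(an element of huge translation length with axis within $d_0$ of $\wt\gamma_{\wt z}$) times (a bounded element)''. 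For $n$ large enough such a product is a hyperbolic element with axis still within $d_0$ of $\wt\gamma_{\wt z}$; and by the classical consequence of forcing theory that a nontrivial admissible transverse loop produces a periodic point realising the corresponding deck transformation, $f$ has a periodic point $p$ whose tracking geodesic is a closed geodesic lying within $d_0$ of $\wt\gamma_{\wt z}$.

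To conclude, Lemma~\ref{LemGeodNotAccuPerOrb} applies to this closed geodesic: it crosses $\wt\Lambda_i$, so $\dot\Lambda_{\mu_p}$ and $\dot\Lambda_\mu=\dot\Lambda_i$ have a transverse intersection; hence $\mu_p\sim\mu$, i.e.\ $\mu_p\in\cl_i$, and then $\bigcup_{\mu'\in\cl_i}\dot\Lambda_{\mu'}$ contains the two transversally intersecting laminations $\dot\Lambda_{\mu_p}$ and $\dot\Lambda_i$, so it is not a lamination and $\cl_i$ is chaotic --- a contradiction (equivalently, $\cl_i$ being minimal non-closed, this union is a minimal lamination, which cannot contain the closed geodesic $\dot\Lambda_{\mu_p}$). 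I expect the main obstacle to be exactly the core step: arranging the forcing bookkeeping --- admissibility orders of the spliced paths, the exact homotopy class obtained, and the positioning of the self-crossing inside the shadowing region of the chosen return --- so that the forced periodic orbit \emph{provably} has its axis within $d_0$ of $\wt\gamma_{\wt z}$; this is precisely where the close-return construction of Lemma~\ref{LemReturnDeckTrans}, with its control on the axis and on the translation length in both time directions, is indispensable. The $T=\Id$ case and the reduction to a bounded self-crossing configuration are minor points by comparison.
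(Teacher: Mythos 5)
Your overall strategy --- feed the hypothetical $\F$-transverse self-intersection of $\beta_0$ into forcing theory, produce a periodic orbit whose axis lies within $d_0$ of $\wt\gamma_{\wt z}$, and contradict Lemma~\ref{LemGeodNotAccuPerOrb} --- is exactly the paper's. But the ``core step'' you yourself flag is where the proposal genuinely fails. You claim the holonomy of the spliced admissible loop can be taken of the form ``(an element $T_n$ of huge translation length with axis within $d_0$ of $\wt\gamma_{\wt z}$) times (the fixed element $T_0$ coming from the self-crossing)'', and that such a product still has axis within $d_0$ of $\wt\gamma_{\wt z}$ once $n$ is large. This is false as a statement of hyperbolic geometry: the attracting fixed point of $T_nT_0$ is close to $\omega(\wt z)$ (since $T_n$ is applied last), but the repelling fixed point of $T_nT_0$ is the attracting fixed point of $T_0^{-1}T_n^{-1}$, where $T_0^{-1}$ is applied last, and so lies close to $T_0^{-1}\alpha(\wt z)$ rather than to $\alpha(\wt z)$. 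Since $\gamma_z$ is not closed, $\alpha(\wt z)$ is not fixed by $T_0$, so for $T_0\neq\Id$ this repelling point is bounded away from the endpoints of $\wt\gamma_{\wt z}$; the axis of $T_nT_0$ is therefore not close to $\wt\gamma_{\wt z}$, and Lemma~\ref{LemGeodNotAccuPerOrb} cannot be applied.

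The paper's resolution is to use close returns in \emph{both} time directions to conjugate $T_0$ on both sides. One picks $n_-\ll 0 < n_+$ with $\wt f^{n_\pm}(\wt z)\in T_\pm\wt W$, so that by Lemma~\ref{LemReturnDeckTrans} the transformations $T_\pm$ have axis close to $\wt\gamma_{\wt z}$, with $T_-\wt z$ near $\alpha(\wt z)$ and $T_+\wt z$ near $\omega(\wt z)$. Transporting the self-crossing by \cite[Lemma~2.10]{paper1PAF} yields an $\wt\F$-transverse intersection between the trajectory of $\wt z_0=\wt f^{n_-}(\wt z)$ and its image under $T_-T_0T_+^{-1}$, and \cite[Theorem~2.12]{paper1PAF} produces a periodic point realising $T_-T_0T_+^{-1}$. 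The product $T_-T_0T_+^{-1}$ maps a neighbourhood of $\wt z$ close to $\alpha(\wt z)$ (via $T_+^{-1}$, then the fixed $T_0$, then $T_-$ whose attracting point is near $\alpha(\wt z)$), while its inverse $T_+T_0^{-1}T_-^{-1}$ maps it close to $\omega(\wt z)$; hence the axis is close to $\wt\gamma_{\wt z}$ for any fixed $T_0$. The two-sided sandwich is precisely what cancels the displacement by $T_0^{-1}$ that ruins the one-sided product $T_nT_0$. You note that the close-return construction works in negative time, but you never feed the negative-time return into the holonomy, and this is the missing idea rather than a minor point. The remaining bookkeeping you describe (persistence of transverse intersections, the periodic orbit theorem) matches the paper.
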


\begin{proof}
In this proof we will use the proof of Lemma~\ref{LemReturnDeckTrans}, and in particular the Lusin set $A$ of continuity of $x\mapsto \wt\gamma_{\wt x}$.

Suppose by contradiction that $\beta_0$ does have an $\F$-transverse self-intersection; we will prove that this forces the existence of periodic points with tracking geodesics close to the one $\gamma_z$ of $\beta_0$. 

By hypothesis, there exist $t_1<t_2$ and $s_1<s_2$ such that the trajectories $I^{[t_1,t_2]}_\F(z)$ and $I^{[s_1,s_2]}_\F(z)$ intersect $\F$-transversally in $S$. This implies that there exists a deck transformation $T_0$ such that $\wt I^{[t_1,t_2]}_{\wt \F}(\wt z)$ and $T_0\wt I^{[s_1,s_2]}_{\wt \F}(\wt z)$ intersect $\wt \F$-transversally in $\wt S$. 

Choosing $W$ a small enough neighbourhood of $z$, we get that for any points $\wt z', \wt z''\in \wt W$ we have that $\wt I^{[t_1-1,t_2+1]}_{\wt \F}(\wt z')$ and $T_0\wt I^{[s_1-1,s_2+1]}_{\wt \F}(\wt z'')$ intersect $\wt \F$-transversally (see \cite[Lemma~2.10]{paper1PAF}). 
By ergodicity of $\mu$, one can choose $n_-<0< n_+$ with $|n_-|,|n_+|\gg 1$, such that $f^{n_-}(z), f^{n_+}(z)\in A\cap W$. Let $T_-$ and $T_+$ be the deck transformations such that $T_-^{-1}\wt f^{n_-}(\wt z)\in \wt W$ and $T_+^{-1}\wt f^{n_+}(\wt z)\in \wt W$. 
By Lemma~\ref{LemReturnDeckTrans} and its proof, the axes of $T_-$ and $T_+$ are close to $\wt\gamma_{\wt z}$. 
We also know that 
$\wt I^{[t_1-1,t_2+1]}_{\wt \F}\big(T_-^{-1}\wt f^{n_-}(\wt z)\big)$ and $T_0\wt I^{[s_1-1,s_2+1]}_{\wt \F}\big(T_+^{-1}\wt f^{n_+}(\wt z)\big)$ intersect $\wt \F$-transversally; in other words, denoting $\wt z_0 = \wt f^{n_-}(\wt z)$, we have $\wt I^{[t_1-1,t_2+1]}_{\wt \F}\big(\wt z_0\big)$ and $T_-T_0T_+^{-1}\wt I^{[s_1-1,s_2+1]}_{\wt \F}\big(\wt f^{n_+-n_-}(\wt z_0)\big)$ intersect $\wt \F$-trans\-versally. By \cite[Theorem~2.12]{paper1PAF}, there exists $\wt z_p\in \wt S$ and $\tau\in\N$ such that $\wt f^\tau(\wt z_p) = T_-T_0T_+^{-1} \wt z_p$: the point $z_p$ is $\tau$-periodic and turns around $S$ by the deck transformation $T_-T_0T_+^{-1}$.

By Lemma~\ref{LemReturnDeckTrans}, the axes of $T_-$ and $T_+$ can be supposed to be close to $\wt\gamma_{\wt z}$ (with different directions), hence if $-n_-$  and $n_+$ are chosen large enough (recall that $T_0$ is fixed), then the axis $\wt\gamma_{\wt z_p}$ of $T_-T_0T_+^{-1}$ is also close to $\wt\gamma_{\wt z}$. 
This is a contradiction with Lemma~\ref{LemGeodNotAccuPerOrb}.
\end{proof}

\begin{lemma}\label{LemAlphaSimple}
The transverse trajectory $\wt \beta_0$ does not meet any leaf of $\wt\F$ twice or more (and in particular is simple).
\end{lemma}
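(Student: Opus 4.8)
The goal is to upgrade Proposition~\ref{PropPasInterTrans} (no $\F$-transverse self-intersection) to the stronger statement that $\wt\beta_0$ does not meet any leaf of $\wt\F$ twice. The plan is to argue by contradiction: suppose some leaf $\wt\phi$ of $\wt\F$ is met twice by $\wt\beta_0$, say the transverse path crosses $\wt\phi$ at parameters $t_1 < t_2$. Since a transverse path always crosses a given leaf with a well-defined local side, and $\wt\beta_0$ has no $\wt\F$-transverse self-intersection, the two crossings must be ``parallel'' (same direction), so the sub-path $\wt\beta_0|_{[t_1,t_2]}$ together with an arc of $\wt\phi$ bounds a topological disc, or more precisely the situation is governed by the drawing-in-the-leaf-space picture. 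The key point I would exploit is that the leaf space of $\wt\F$ (locally, or along the relevant strip) is an oriented one-dimensional object, so meeting the same leaf twice in the same transverse direction forces the path to ``come back'': concretely, there is a closed transverse loop hidden in the combinatorics, or the images $T\wt\beta_0$ under deck transformations fail to be well-ordered.

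The cleanest route, and the one I expect the paper to take, is to downstream this to the already-established machinery: if $\wt\beta_0$ meets a leaf $\wt\phi$ twice, then by standard Le~Calvez--Tal arguments about transverse paths (the analogue of ``a transverse path that meets a leaf twice can be modified to exhibit a transverse self-intersection, unless it is a transverse loop'', see \cite{lct1}), either we directly get an $\F$-transverse self-intersection of $\beta_0$ — contradicting Proposition~\ref{PropPasInterTrans} — or the relevant sub-path projects to a transverse loop in $S$, meaning there is a deck transformation $T_0$ with $T_0\wt\phi = \wt\phi'$ where $\wt\phi'$ is the second leaf met, and the concatenation becomes periodic-like. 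In that second case I would run the same forcing-plus-compactness argument as in the proof of Proposition~\ref{PropPasInterTrans}: using the Lusin continuity set $A$ and Lemma~\ref{LemReturnDeckTrans}, pick large negative and positive return times $n_-, n_+$ so that $\wt f^{n_\pm}(\wt z)$ are close to $T_\pm \wt z$ with axes of $T_\pm$ close to $\wt\gamma_{\wt z}$; the transverse loop then forces, via \cite[Theorem~2.12]{paper1PAF} and the iteration-and-concatenation trick, the existence of an $f$-periodic point $z_p$ whose tracking geodesic (the axis of a product $T_- T_0 T_+^{-1}$, or similar) is arbitrarily close to $\wt\gamma_{\wt z}$, contradicting Lemma~\ref{LemGeodNotAccuPerOrb}.

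Concretely, the steps in order are: (1) assume $\wt\beta_0$ meets a leaf $\wt\phi$ at $t_1<t_2$; (2) analyze the two local crossing directions — if opposite, one reads off an $\wt\F$-transverse self-intersection of $\wt\beta_0$ directly (the path crosses $\wt\phi$ and comes back through it on the other side, which by the definition of transverse intersection in \cite{lct1} is a transverse self-intersection), contradicting Proposition~\ref{PropPasInterTrans}; (3) if the two crossings are in the same direction, the sub-path $\wt\beta_0|_{[t_1,t_2]}$ has both endpoints on opposite sides of $\wt\phi$ but in a configuration that, projected to $S$, realizes a transverse loop around a nontrivial element $T_0$ of $\G$ (here one uses that $\wt\beta_0$ is the lift of $\beta_0 = I^\Z_\F(z)$ and that $z$ is not periodic, so the loop is essential); (4) feed this transverse loop into the forcing argument exactly as in Proposition~\ref{PropPasInterTrans}, replacing the transverse self-intersection there by this transverse loop as the source of forced periodic orbits; (5) conclude the axis of the forced periodic point's deck transformation is close to $\wt\gamma_{\wt z}$, contradicting Lemma~\ref{LemGeodNotAccuPerOrb}. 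The final ``in particular is simple'' clause then follows because a transverse path that meets every leaf at most once cannot have a self-intersection point (two distinct parameters mapping to the same point of $\wt S$ would lie on a common leaf, met twice).

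\textbf{Main obstacle.} The delicate step is (3): correctly extracting a bona fide essential transverse loop (with a well-identified deck transformation $T_0$) from the hypothesis ``a leaf is met twice in the same direction''. One must rule out the degenerate possibility that the two crossings bound a disc in $\wt S$ in which the transverse path could be homotoped off $\wt\phi$ — but this is exactly what is prevented by the fact that $\wt\beta_0$ is a genuine transverse trajectory of the lift of a maximal isotopy (it cannot be shortened across leaves), so the second crossing genuinely witnesses nontrivial topology or a transverse self-intersection. Handling the dichotomy cleanly, and making sure the ``same direction'' case really does produce an object to which \cite[Theorem~2.12]{paper1PAF} applies after the $\G$-translation bookkeeping, is where the real work lies; the rest is a reprise of the proof of Proposition~\ref{PropPasInterTrans}.
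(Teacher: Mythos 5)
Your plan diverges from the paper's in a way that contains a genuine gap. The paper's proof is short: it observes that meeting a leaf twice forces $\wt\beta_0$ to draw a simple transverse loop $\wt\Gamma$ in $\wt S$; then recurrence of $\beta_0$ in $S$ together with properness of $\wt\beta_0$ in $\wt S$ yields two deck transformations $T_1, T_2$ such that $\wt\beta_0$ also draws $T_1\wt\Gamma$ and $T_2\wt\Gamma$, and these three loops can be chosen pairwise disjoint with none contained in the bounded complementary component of another; this configuration contradicts \cite[Lemma~2.20]{paper1PAF}, a planar-topology result. No forcing theorem, no periodic orbits, no appeal to Lemma~\ref{LemGeodNotAccuPerOrb}.

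The gap in your step (3) is the claim that meeting a leaf twice ``realizes a transverse loop around a nontrivial element $T_0$ of $\G$.'' The transverse loop you obtain lives in $\wt S$, which is simply connected; a simple transverse loop there is null-homotopic and bounds a disc (whose interior necessarily contains lifts of singularities of $\wt\F$). There is no nontrivial deck transformation canonically attached to it. So when you try in step (4) to ``feed this transverse loop into the forcing argument exactly as in Proposition~\ref{PropPasInterTrans},'' there is nothing to feed: the forcing theorem \cite[Theorem~2.12]{paper1PAF} requires an $\wt\F$-transverse \emph{intersection}, and a \emph{simple} transverse loop has none, nor does it come with a deck transformation whose axis you could compare with $\wt\gamma_{\wt z}$. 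The recurrence-based passage to large return times, which is the engine of Proposition~\ref{PropPasInterTrans}, needs a fixed nontrivial $T_0$ (coming from a transverse self-intersection taken modulo $\G$) as its seed; here you would be starting from $T_0 = \Id$, and the argument collapses.

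A smaller issue: the dichotomy in your steps (2)/(3) is spurious. A transverse path crosses every leaf positively, so the two crossings are automatically in the same direction; there is no ``opposite direction'' case to dispatch to a transverse self-intersection. You do hedge this earlier (``the two crossings must be parallel''), but the numbered plan still lists the vacuous case as if it carried content, which suggests the picture you have in mind is not quite the right one.

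What actually makes the paper's proof work is a shift of where the nontrivial deck transformations come from: not from the loop $\wt\Gamma$ itself, but from the recurrence of $\beta_0$, which produces \emph{other} drawings $T_1\wt\Gamma$, $T_2\wt\Gamma$ by the same proper trajectory. The contradiction is then a pure plane-topology statement about three disjoint Jordan curves drawn by one transverse line, and it bypasses forcing entirely. If you want to stay closer to the machinery of Proposition~\ref{PropPasInterTrans}, you would first have to explain how the trajectory's exit from the disc bounded by $\wt\Gamma$ produces a transverse self-intersection — for instance via \cite[Proposition~2.15]{paper1PAF} (draw-and-visit) — but that only covers one of the two exit configurations; the draw-and-cross case does not yield a transverse self-intersection, and that is exactly the case your plan has no answer for.
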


\begin{proof}
If this is not true, then the trajectory $\wt\beta_0$ has to draw a simple transverse loop $\wt\Gamma$. By recurrence of $\beta_0$, and the fact that this trajectory is proper in $\wt S$, we deduce that there exists two deck transformations $T_1,T_2$ such that $\beta_0$ also draws $T_1\wt\Gamma$ and $T_2\wt\Gamma$, and we can choose $T_1,T_2$ such that $\wt\Gamma$, $T_1\wt\Gamma$ and $T_2\wt\Gamma$ do not intersect, and that neither of them is included in the bounded connected component of the complement of another one. 

This configuration contradicts \cite[Lemma~2.20]{paper1PAF}.
\end{proof}

We denote by $\wt B$ the set of leaves met by $\wt\beta_0$. By Lemma~\ref{LemAlphaSimple}, this set is a topological plane. 
The following lemma says that the bands $(T\wt B)_{T\in\G}$ are ordered.

\begin{lemma}\label{LemAlphaNotCross}
For any deck transformation $T$ of $\wt S$, if $T\wt\gamma_{\wt z}\subset R(\wt\gamma_{\wt z})$, then  $T\wt \beta_0 \subset \wt B \cup R(\wt B)$. 
Similarly, if $T\wt\gamma_{\wt z}\subset L(\wt\gamma_{\wt z})$, then  $T\wt \beta_0 \subset \wt B \cup L(\wt B)$
\end{lemma}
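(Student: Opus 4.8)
The plan is to argue by contradiction: assuming $T\wt\gamma_{\wt z}\subset R(\wt\gamma_{\wt z})$ but $T\wt\beta_0\cap L(\wt B)\neq\emptyset$, I would show that the forward end of $T\wt\beta_0$ gets trapped inside $L(\wt B)$, which forces $T\wt\gamma_{\wt z}$ to be asymptotic to $\wt\gamma_{\wt z}$ — impossible for a minimal non-closed lamination. First, the setup. Since $\wt f$ commutes with deck transformations, $T\wt\beta_0=\wt I^\Z_{\wt\F}(T\wt z)$ is the transverse trajectory of $T\wt z$, with tracking geodesic $T\wt\gamma_{\wt z}$ and endpoints $\alpha(T\wt z)=T\alpha(\wt z)$, $\omega(T\wt z)=T\omega(\wt z)$; being a $\G$-translate of $\wt\beta_0$, it meets every leaf of $\wt\F$ at most once by (the $\G$-invariance of) Lemma~\ref{LemAlphaSimple}. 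Recall $\wt\beta_0\subset\wt B$ is a simple proper oriented line with endpoints $\alpha(\wt z),\omega(\wt z)$ (Lemma~\ref{LemAlphaSimple}), hence cuts $\wt S$ into two sides; I name $L(\wt B)$, resp.\ $R(\wt B)$, the component of $\wt S\setminus\wt B$ contained in the side of $\wt\beta_0$ that lies on the left, resp.\ right, of the oriented geodesic $\wt\gamma_{\wt z}$. As $\wt\gamma_{\wt z}$ and $\wt\beta_0$ share their endpoints at infinity and their orientation, this yields $\overline{L(\wt B)}\cap\partial\wt S\subset\overline{L(\wt\gamma_{\wt z})}\cap\partial\wt S$ and $\overline{R(\wt B)}\cap\partial\wt S\subset\overline{R(\wt\gamma_{\wt z})}\cap\partial\wt S$.

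The key step is a trapping observation: \emph{a transverse path that enters $L(\wt B)$ cannot leave it in forward time.} The frontier of $L(\wt B)$ in $\wt S$ is a union of leaves of $\wt\F$, and by orientation-coherence of $\F$ along the leaves met by $\wt\beta_0$ every such frontier leaf $\psi$ has $\wt B$ on its right side $R(\psi)$, hence $L(\wt B)\subset L(\psi)$. If $T\wt\beta_0$ were in $L(\wt B)$ at some time and outside it at a later time, then — a transverse path avoids the singularities of $\F$ — it would have to cross some such $\psi$ from the $L(\wt B)$-side $\subset L(\psi)$ to the complementary side $\subset R(\psi)$, i.e.\ negatively, which a transverse path never does. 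So once $T\wt\beta_0$ meets $L(\wt B)$ it stays there for all larger times, and therefore $\omega(T\wt z)\in\overline{L(\wt B)}\cap\partial\wt S$.

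Combining trapping with the hypothesis: $\omega(T\wt z)\in\overline{L(\wt B)}\cap\partial\wt S\subset\overline{L(\wt\gamma_{\wt z})}\cap\partial\wt S$, while $T\wt\gamma_{\wt z}\subset R(\wt\gamma_{\wt z})$ forces $\omega(T\wt z)\in\overline{R(\wt\gamma_{\wt z})}\cap\partial\wt S$. Since $\overline{L(\wt\gamma_{\wt z})}\cap\overline{R(\wt\gamma_{\wt z})}\cap\partial\wt S=\{\alpha(\wt z),\omega(\wt z)\}$, we get $\omega(T\wt z)\in\{\alpha(\wt z),\omega(\wt z)\}$, so $T\wt\gamma_{\wt z}$ and $\wt\gamma_{\wt z}$ share an endpoint at infinity. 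But both are leaves of the lifted lamination $\wt\Lambda_i$, which is minimal and is not a single closed geodesic, so no two of its distinct leaves are asymptotic; hence $T\wt\gamma_{\wt z}=\wt\gamma_{\wt z}$ as a set, contradicting $T\wt\gamma_{\wt z}\subset R(\wt\gamma_{\wt z})$ (they would not be disjoint). This proves the first statement; the second follows by exchanging the roles of left and right (and of forward and backward time) throughout.

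The main obstacle I anticipate is not this logical skeleton but the band-geometry bookkeeping it rests on: that $\wt S\setminus\wt B$ has exactly two components, that the frontier of each is a union of full leaves of $\wt\F$, and — crucially — that these frontier leaves are coherently oriented with respect to $\wt B$, so that ``leaving $L(\wt B)$ forward'' is genuinely a forbidden negative crossing. Making this rigorous is where I would rely on the finer structure of the band $\wt B$: the $\G$-invariance of Lemma~\ref{LemAlphaSimple} applied to the translates $T\wt\beta_0$, the way $\wt\F$ extends across frontier leaves, and, if needed, the absence of $\wt\F$-transverse self-intersection (Proposition~\ref{PropPasInterTrans}). A secondary point requiring justification is the classical fact that a minimal non-closed geodesic lamination contains no pair of distinct asymptotic leaves.
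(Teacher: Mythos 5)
Your proposal takes a genuinely different route from the paper, and the two points where it diverges are also where it has trouble.

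Concerning the ``trapping'' step, which is the heart of your argument: you claim that the frontier of $L(\wt B)$ is a union of leaves of $\wt\F$, each having $\wt B$ on its right. This is not correct as stated. The frontier of $L(\wt B)$ lies in $\overline{\wt B}$, and in general it consists of \emph{portions} of leaves that are \emph{in} $\wt B$ (leaves actually crossed by $\wt\beta_0$), together possibly with singularities --- not of whole ``frontier leaves'' disjoint from $\wt B$. For a leaf $\phi\subset\wt B$ the assertion ``$\wt B\subset R(\phi)$'' is meaningless, since $\wt\beta_0$ crosses $\phi$ and thus $\wt B$ meets both sides of $\phi$. The trapping property itself can be salvaged if one takes $L(\wt B)=\bigcap_{t}L(\wt\phi_{\wt\beta_0(t)})$, in which case a negative crossing is forced the moment the path leaves $L(\wt B)$, but this is not what you wrote, and establishing that this set coincides with ``the left complementary component'' in the sense the paper uses is itself a nontrivial piece of Brouwer--Le Calvez structure theory. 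The paper avoids all of this: it first shows that $T\wt\beta_0$ cannot cross $\wt B$ at all, via Proposition~\ref{PropPasInterTrans} together with the structure result [Lemma~2.17 of Part~I] (a crossing without transverse intersection forces $\wt\beta_0$ to be equivalent to a compact subarc of $T\wt\beta_0$, hence to accumulate on itself, contradicting [Proposition~2.4 of Part~I]), and only then uses the tracking geodesic and non-equivalence at infinity to pin down on which side $T\wt\beta_0$ lies. That argument carries dynamical input (recurrence of $\beta_0$) that your trapping claim, as a purely foliation-theoretic statement, does not.

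Your concluding step also contains an error: it is \emph{false} that a minimal non-closed geodesic lamination contains no pair of distinct asymptotic leaves --- boundary leaves of an ideal-polygon complementary region are asymptotic and do belong to the lamination. The correct reason why $T\wt\gamma_{\wt z}$ and $\wt\gamma_{\wt z}$ cannot share an ideal endpoint for $T\neq\Id$ is that such a shared endpoint would be fixed by $T$, hence an endpoint of the axis of $T$; then $\gamma_z$ would spiral onto the projection of that axis, a closed geodesic, which by minimality would force $\Lambda_i$ to \emph{be} that closed geodesic, contradicting that $\cl_i$ is non-closed. (This is essentially the same obstruction the paper invokes, via [Proposition~2.18] of Part~I, to rule out equivalence at infinity.) So the final contradiction you reach is right, but your stated justification is not, and fixing it already requires the argument you would need elsewhere.

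In summary: the skeleton (confine $T\wt\beta_0$ on one side, confine its forward ideal endpoint, contradict the hypothesis on $T\wt\gamma_{\wt z}$) is an interesting alternative, but as written it rests on a mischaracterization of $\partial L(\wt B)$ and on a false statement about asymptotic leaves. Repairing either hole pulls in exactly the forcing-theory and deck-transformation arguments the paper uses.
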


Note that because $\Lambda_\mu$ is a lamination, for any deck transformation $T\in\G\setminus\{\Id_{\wt S}\}$, either $T\wt\gamma_{\wt z}\subset L(\wt\gamma_{\wt z})$ or $T\wt\gamma_{\wt z}\subset R(\wt\gamma_{\wt z})$.

This lemma can be seen as a consequence of \cite[Proposition 5]{lct1} but as its proof is rather simple we include it for the sake of completeness.

\begin{proof}
We fist show that the trajectory $T\wt \beta_0$ does not cross $\wt B$.
Suppose it is false: the trajectory $T\wt \beta_0|_{[s_0,s_1]}$ crosses $\wt B$. For example $T\wt\beta_0(s_0)\in R(\tilde B)$ and $T\wt\beta_0(s_1)\in L(\tilde B)$. By Proposition~\ref{PropPasInterTrans}, the paths $\wt\beta_0$ and $T\wt\beta_0$ do not intersect transversally. 

As $T\wt\beta_0(s_0)\notin \tilde B$ and $T\wt\beta_0(s_1)\notin \tilde B$, this implies (see \cite[Lemma~2.17]{paper1PAF}) that there exists $t_0\in\R$ such that either $\wt\beta_0|_{(-\infty, t_0]}$ is $\F$-equivalent to a subpath of $T\beta_0|_{[s_0,s_1]}$, or $\wt\beta_0|_{[t_0,+\infty)}$ is $\F$-equivalent to a subpath of $T\beta_0|_{[s_0,s_1]}$. In both cases, $\beta_0$ accumulates in itself, that is impossible by \cite[Proposition~2.4]{paper1PAF}. 

Now, suppose that $T\wt\gamma_{\wt z}\subset R(\wt\gamma_{\wt z})$.
As $\wt\beta_0$ is simple (Lemma~\ref{LemAlphaSimple}), it has well defined left and right. 
Because of the geodesic tracking property, for $t$ large enough, we have that $T\wt\beta_0(t)\in R(\wt\beta_0)$. 

By \cite[Proposition~2.14]{paper1PAF}, if the recurrent transverse paths $\wt\beta_0$ and $T_0\wt\beta_0$ were equivalent at $+\infty$, then it would imply that $\beta_0$ is equivalent at $+\infty$ to a transverse path that is $T_0$-invariant. By \cite[Proposition~2.18]{paper1PAF}, this would imply that there is a sequence $(t_k)\in\N^\N$, with $t_k\to +\infty$, such that the points $\wt\beta_0(t_k)$ stay at finite distance to the geodesic axis of $T_0$. This is impossible as the tracking geodesic of $z$ is not closed and hence cannot have an endpoint in common with the geodesic axis of $T_0$.
We have proved that the paths $\wt\beta_0$ and $T_0\wt\beta_0$ are not equivalent at $+\infty$.

Hence, we cannot have $T\wt\beta_0\sim_{+\infty}\wt\beta_0$, in particular there exists $t$ arbitrarily large such that $T\wt\beta_0(t)\notin \wt B$. The fact that $T\wt\beta_0(t)\in R(\wt\beta_0)$ then implies that $T\wt\beta_0(t)\in R(\wt B)$. Because $T\wt \beta_0$ does not cross $\wt B$, we deduce that $T\wt\beta_0\cap L(\wt B) = \emptyset$.
\end{proof}

\begin{lemma}\label{LemBandDisjoint}
For any deck transformation $T$ of $\wt S$ such that $\wt\gamma_{\wt z}$ and $T\wt\gamma_{\wt z}$ have different orientations, we have $\wt B \cap T\wt B = \emptyset$.
\end{lemma}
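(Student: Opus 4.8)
The plan is to argue by contradiction and to reduce the whole statement to the cyclic position of four ideal points on $\partial\wt S$, without analysing the geometry of the bands at all. In particular I do not expect to need Lemma~\ref{LemAlphaNotCross} or Proposition~\ref{PropPasInterTrans} here; the only inputs are Lemma~\ref{LemAlphaSimple}, the tracking property, and elementary hyperbolic geometry.

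\textbf{Step 1: produce a common leaf and localise the orbits.} Suppose $\wt B\cap T\wt B\neq\emptyset$. Since $\wt B$ and $T\wt B$ are unions of leaves of $\wt\F$ and two distinct leaves are disjoint, any point of $\wt B\cap T\wt B$ lies on one leaf $\wt\phi$ that is met both by $\wt\beta_0$ and by $T\wt\beta_0$. The leaf $\wt\phi$ separates $\wt S$ into two half-planes; denote by $P^+(\wt\phi)$ the positive one and by $P^-(\wt\phi)$ the negative one (for the co-orientation of the oriented foliation $\wt\F$), and let $\xi_1,\xi_2\in\partial\wt S$ be the two ideal endpoints of $\wt\phi$. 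By Lemma~\ref{LemAlphaSimple}, $\wt\beta_0$ meets $\wt\phi$ exactly once, and being $\wt\F$-transverse it crosses it positively; hence $\wt\beta_0$ lies in $\overline{P^-(\wt\phi)}$ before the crossing time and in $\overline{P^+(\wt\phi)}$ after it. In particular, for all large $n$ one has $\wt f^n(\wt z)\in P^+(\wt\phi)$ and $\wt f^{-n}(\wt z)\in P^-(\wt\phi)$; and by the tracking property (Theorem~\ref{DefTrackGeod}) a point at sublinear distance from a geodesic ray converges to the endpoint of that ray, so $\wt f^n(\wt z)\to\omega(\wt z)$ and $\wt f^{-n}(\wt z)\to\alpha(\wt z)$. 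Therefore $\omega(\wt z)\in\overline{P^+(\wt\phi)}$ and $\alpha(\wt z)\in\overline{P^-(\wt\phi)}$. Running the same argument for $T\wt\beta_0$ — which is the canonical lift based at $T\wt z$, meets $\wt\phi$ exactly once, crosses it positively (deck transformations preserve the orientation of $\wt\F$), and has extremities $\alpha(T\wt z)=T\alpha(\wt z)$ and $\omega(T\wt z)=T\omega(\wt z)$ by equivariance of the tracking geodesic — gives $T\alpha(\wt z)\in\overline{P^-(\wt\phi)}$ and $T\omega(\wt z)\in\overline{P^+(\wt\phi)}$.

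\textbf{Step 2: read off the contradiction from $\partial\wt S$.} It follows that $\{\alpha(\wt z),T\alpha(\wt z)\}$ and $\{\omega(\wt z),T\omega(\wt z)\}$ lie in the closures of the two distinct arcs of $\partial\wt S\setminus\{\xi_1,\xi_2\}$, hence are \emph{unlinked} on $\partial\wt S$. On the other hand $\wt\gamma_{\wt z}$ and $T\wt\gamma_{\wt z}$ are distinct leaves of $\wt\Lambda_\mu$ (the class is minimal non-closed, so $\wt\gamma_{\wt z}$ is a leaf of $\wt\Lambda_\mu$, and $T\neq\Id$ since ``different orientations'' is assumed and no nontrivial deck transformation preserves the non-closed geodesic $\wt\gamma_{\wt z}$), hence disjoint geodesics, so $\{\alpha(\wt z),\omega(\wt z)\}$ and $\{T\alpha(\wt z),T\omega(\wt z)\}$ are also unlinked. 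For four distinct points of a circle exactly one of the three ways of splitting them into two pairs is linked; since the two splittings above are both unlinked, the linked one must be $\{\alpha(\wt z),T\omega(\wt z)\}$ against $\{\omega(\wt z),T\alpha(\wt z)\}$, which pins down the cyclic order of $\alpha(\wt z),\omega(\wt z),T\alpha(\wt z),T\omega(\wt z)$ to be $\alpha(\wt z),\omega(\wt z),T\omega(\wt z),T\alpha(\wt z)$ up to reversal. Unravelling the definition of ``same orientation'' (an elementary check of which half-plane contains which ideal arc) shows that in exactly this configuration $L(T\wt\gamma_{\wt z})\subset L(\wt\gamma_{\wt z})$, i.e.\ $\wt\gamma_{\wt z}$ and $T\wt\gamma_{\wt z}$ have the \emph{same} orientation, contradicting the hypothesis.

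\textbf{Main obstacle.} The delicate input is the convergence of the two ends of $\wt\beta_0$ to $\alpha(\wt z)$ and $\omega(\wt z)$ inside $\overline{\wt S}$; I intend to avoid any properness statement about the whole path by only using that $\wt\beta_0$ passes through the orbit points, that the orbit converges (tracking plus the sublinear-distance fact), and that $\wt\beta_0$ meets $\wt\phi$ just once. A secondary point is the degenerate situation where two of the four ideal points coincide: $\alpha(\wt z)=T\alpha(\wt z)$ and $\omega(\wt z)=T\omega(\wt z)$ are incompatible with ``different orientations'' (two geodesics leaving from, or arriving at, a common ideal point automatically have the same orientation), and the remaining coincidences force the common point to be an endpoint of $\wt\phi$ and are dealt with by the same separation/linking bookkeeping applied to three points.
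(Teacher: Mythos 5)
Your argument is a genuinely different route from the paper's: where the paper establishes the order structure of the translated bands $(T\wt B)_{T}$ via Lemma~\ref{LemAlphaNotCross} and then, in the case $\wt\beta_0\cap T\wt\beta_0\neq\emptyset$, pushes $T\wt\beta_0$ along the foliation to an $\wt\F$-equivalent path disjoint from $\wt\beta_0$, you pass directly to the circle at infinity and read off the cyclic order of $\alpha(\wt z),\omega(\wt z),T\alpha(\wt z),T\omega(\wt z)$ from the separation by a single common leaf. The linking bookkeeping in Step~2 is correct as stated, and the Step~1 localisation of the orbit endpoints is also fine (once $\wt\phi$ separates, crossing once forces the two half-orbits into the two complementary components, and tracking plus sublinearity pins the ideal limits).

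However, there is a genuine gap at the crux of Step~2. You write ``let $\xi_1,\xi_2\in\partial\wt S$ be the two ideal endpoints of $\wt\phi$'' and then deduce that $\{\alpha(\wt z),T\alpha(\wt z)\}$ and $\{\omega(\wt z),T\omega(\wt z)\}$ are unlinked because they sit in the closures of the two arcs of $\partial\wt S\setminus\{\xi_1,\xi_2\}$. But $\wt\phi$ is a leaf of the transverse foliation $\wt\F$, not a geodesic, and there is no result in this framework guaranteeing that such a leaf lands at exactly two ideal points (its ends could, a priori, accumulate on nondegenerate arcs of $\partial\wt S$). If the accumulation set $\overline{\wt\phi}\cap\partial\wt S$ is larger than two points, the sets $\overline{P^\pm(\wt\phi)}\cap\partial\wt S$ need not be arcs, and pairs of points on opposite ``sides'' of $\wt\phi$ can still be linked; the unlinked conclusion — which is what rules out the cyclic order corresponding to different orientations — then does not follow. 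Notice the asymmetry with the paper's proof: there one only needs the path $\wt\beta_0$ (not a leaf) to have well-defined left and right sides with arc-shaped ideal traces, and $\wt\beta_0$ does land at $\alpha(\wt z),\omega(\wt z)$ by Lemma~\ref{LemAlphaSimple} plus the tracking estimate; the paper never needs to control a leaf at infinity. Your sketch can probably be repaired by replacing $\wt\phi$ with the Jordan arc $\overline{\wt\beta_0}$ as the separating object and locating the second crossing point $p_1\in T\wt\beta_0\cap\wt\phi$ in one of its complementary discs, but as written the decisive step rests on an unproved landing property of $\wt\F$-leaves. A secondary, related point: the degenerate coincidence $\alpha(\wt z)=T\omega(\wt z)$ (asymptotic leaves of $\wt\Lambda_\mu$) is not in fact incompatible with ``different orientations'' — a direct check of the definition with cyclic order $\alpha,\omega,T\alpha$ and $T\omega=\alpha$ gives neither inclusion of half-planes — so the closing remark about coincidences being ``dealt with by the same bookkeeping'' needs an actual argument in this case.
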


\begin{proof}
If $\wt\beta_0\cap T\wt\beta_0 = \emptyset$, the lemma is true because of orientations reasons (this is where we use the fact that that $\wt\gamma_{\wt z}$ and $T\wt\gamma_{\wt z}$ have different orientations).

Suppose that $\wt\beta_0(t_1) = T\wt\beta_0 (s_1)$. By \cite[Proposition~2.14]{paper1PAF}, there exist $s_0<s_1<s_2$ such that $T\wt\beta_0(s_0),T\wt\beta_0(s_2)\notin \wt B$ and $T\wt\beta_0\big((s_0,s_2)\big)\subset \wt B$.
By Lemma~\ref{LemAlphaNotCross}, this implies that $T\wt\beta_0(s_0)$ and $T\wt\beta_0(s_2)$ lie in the same side of $\wt B$, in other words $T\wt\beta_0|_{[s_0,s_2]}$ visits $\wt B$. 
By pushing along the foliation, we can then transform $T\wt\beta_0$ into another $\wt\F$-equivalent curve $\wt\beta$ such that $\wt \beta|_{[s_0,s_2]} \cap\wt\beta_0 = \emptyset$. We can do the same thing for any intersection point of $T\wt\beta_0$ with $\wt\beta_0$, to build a path $\wt\beta$ that is $\wt\F$-equivalent to $T\wt\beta_0$ and disjoint from $\wt\beta_0$. 
By applying the same reasoning as at the beginning of the proof, we deduce that the set of leaves met by $\wt\beta$ does not meet $\wt\beta_0$, which implies that the set of leaves met by $T\wt\beta_0$ does not meet $\wt\beta_0$.
\end{proof}

\begin{figure}
\begin{center}

\tikzset{every picture/.style={line width=0.75pt}} 

\begin{tikzpicture}[x=0.75pt,y=0.75pt,yscale=-1,xscale=1]

\draw  [draw opacity=0][fill={rgb, 255:red, 245; green, 166; blue, 35 }  ,fill opacity=0.15 ] (317.51,31.08) .. controls (306.13,67.42) and (279.46,135.06) .. (276.63,166.17) .. controls (273.83,190.17) and (280.38,244.17) .. (280.71,273.48) .. controls (257.63,253.42) and (237.44,191.04) .. (247.88,157.67) .. controls (261.73,119.46) and (283.88,49.67) .. (317.51,31.08) -- cycle ;
\draw  [draw opacity=0][fill={rgb, 255:red, 245; green, 166; blue, 35 }  ,fill opacity=0.15 ] (248.75,25.71) .. controls (271.11,64.29) and (276.67,119.24) .. (273.84,150.34) .. controls (271.04,174.34) and (255.63,243.17) .. (217.25,259.71) .. controls (230.38,223.92) and (224.86,187.17) .. (235.29,153.79) .. controls (249.15,115.59) and (236.71,52.69) .. (248.75,25.71) -- cycle ;
\draw  [color={rgb, 255:red, 144; green, 19; blue, 254 }  ,draw opacity=1 ][fill={rgb, 255:red, 144; green, 19; blue, 254 }  ,fill opacity=0.2 ] (259.29,151.79) .. controls (270.57,153.43) and (268.93,155.79) .. (267.84,165.07) .. controls (266.75,174.34) and (268.75,177.07) .. (258.93,176.52) .. controls (249.11,175.98) and (247.48,171.61) .. (248.2,164.16) .. controls (248.93,156.7) and (248.02,150.16) .. (259.29,151.79) -- cycle ;
\draw [color={rgb, 255:red, 74; green, 144; blue, 226 }  ,draw opacity=1 ]   (217.25,259.71) .. controls (257.25,229.71) and (266.31,99.89) .. (248.75,25.71) ;
\draw [shift={(256.1,142.7)}, rotate = 94.57] [fill={rgb, 255:red, 74; green, 144; blue, 226 }  ,fill opacity=1 ][line width=0.08]  [draw opacity=0] (8.04,-3.86) -- (0,0) -- (8.04,3.86) -- (5.34,0) -- cycle    ;
\draw [color={rgb, 255:red, 74; green, 144; blue, 226 }  ,draw opacity=1 ]   (280.71,273.48) .. controls (241.78,222.96) and (265.91,91.08) .. (317.51,31.08) ;
\draw [shift={(267.21,144.59)}, rotate = 100.82] [fill={rgb, 255:red, 74; green, 144; blue, 226 }  ,fill opacity=1 ][line width=0.08]  [draw opacity=0] (8.04,-3.86) -- (0,0) -- (8.04,3.86) -- (5.34,0) -- cycle    ;
\draw [color={rgb, 255:red, 245; green, 166; blue, 35 }  ,draw opacity=1 ]   (232.89,165.42) .. controls (267.29,161.82) and (269.11,176.37) .. (273.84,143.64) ;
\draw  [draw opacity=0][fill={rgb, 255:red, 74; green, 144; blue, 226 }  ,fill opacity=1 ] (252.81,159.88) .. controls (252.81,158.93) and (253.58,158.16) .. (254.53,158.16) .. controls (255.47,158.16) and (256.24,158.93) .. (256.24,159.88) .. controls (256.24,160.82) and (255.47,161.59) .. (254.53,161.59) .. controls (253.58,161.59) and (252.81,160.82) .. (252.81,159.88) -- cycle ;
\draw  [draw opacity=0][fill={rgb, 255:red, 74; green, 144; blue, 226 }  ,fill opacity=1 ] (262.03,165.52) .. controls (262.03,164.57) and (262.8,163.81) .. (263.74,163.81) .. controls (264.69,163.81) and (265.46,164.57) .. (265.46,165.52) .. controls (265.46,166.47) and (264.69,167.23) .. (263.74,167.23) .. controls (262.8,167.23) and (262.03,166.47) .. (262.03,165.52) -- cycle ;
\draw  [line width=1.5]  (150.54,148.35) .. controls (150.54,79.4) and (206.44,23.5) .. (275.4,23.5) .. controls (344.35,23.5) and (400.25,79.4) .. (400.25,148.35) .. controls (400.25,217.31) and (344.35,273.21) .. (275.4,273.21) .. controls (206.44,273.21) and (150.54,217.31) .. (150.54,148.35) -- cycle ;
\draw [color={rgb, 255:red, 208; green, 2; blue, 27 }  ,draw opacity=0.5 ]   (239,267.5) .. controls (252.79,232.27) and (266.29,94.27) .. (276.79,23.52) ;
\draw [shift={(260.28,150.19)}, rotate = 277.48] [fill={rgb, 255:red, 208; green, 2; blue, 27 }  ,fill opacity=0.5 ][line width=0.08]  [draw opacity=0] (8.04,-3.86) -- (0,0) -- (8.04,3.86) -- (5.34,0) -- cycle    ;

\draw (248.53,169.83) node [anchor=north east] [inner sep=0.75pt]  [color={rgb, 255:red, 144; green, 19; blue, 254 }  ,opacity=1 ,xscale=1.2,yscale=1.2]  {$\wt{W}$};
\draw (251,158.16) node [anchor=south east] [inner sep=0.75pt]  [color={rgb, 255:red, 74; green, 144; blue, 226 }  ,opacity=1 ,xscale=1.2,yscale=1.2]  {$\wt{z}$};
\draw (270.95,166.81) node [anchor=west] [inner sep=0.75pt]  [color={rgb, 255:red, 74; green, 144; blue, 226 }  ,opacity=1 ,xscale=1.2,yscale=1.2]  {$T\wt{f}^{n}(\wt{z})$};
\draw (241.6,72.37) node [anchor=east] [inner sep=0.75pt]  [color={rgb, 255:red, 245; green, 166; blue, 35 }  ,opacity=1 ,xscale=1.2,yscale=1.2]  {$\wt{B}$};
\draw (298.61,84.87) node [anchor=west] [inner sep=0.75pt]  [color={rgb, 255:red, 245; green, 166; blue, 35 }  ,opacity=1 ,xscale=1.2,yscale=1.2]  {$T\wt{B}$};

\end{tikzpicture}

\caption{Proof of Theorem~\ref{TheoLaminMinim}: such a red path cannot be the image of the transverse trajectory of $\tilde z$ by some deck transformation, otherwise it would have to meet either $\wt B$ or $T\wt B$.}\label{FigTheoLaminMinim}
\end{center}
\end{figure}
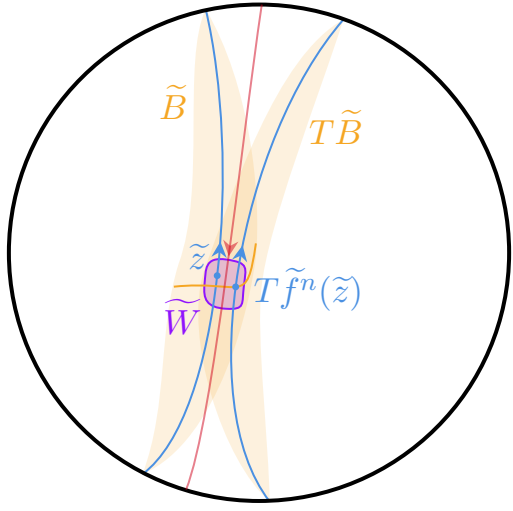

\begin{proof}[Proof of Theorem~\ref{TheoLaminMinim}]
This proof is depicted in Figure~\ref{FigTheoLaminMinim}.
Suppose $\Lambda_i$ is not orientable. 
Let $\wt W \subset \wt B$ be a local chart of $\wt\F$ around $\wt z$ (recall that $z$ is a $\mu$-typical point). By recurrence of the trajectory in $S$ (but not in $\wt S$), there exists $n\in\Z$ and a deck transformation $T_1\neq\mathrm{Id}$ such that $T_1 \wt f^n(\wt z) \in \wt W$. 
In particular, $T_1 \wt f^n(\wt z) \in \wt B$; by Lemma~\ref{LemBandDisjoint} this forces $\wt\gamma_{\wt z}$ and $T_1\wt\gamma_{\wt z}$ to have the same orientation.
As the lamination $\Lambda_i$ is minimal (see \cite[Theorem~D]{alepablo}) and not orientable, there exists a deck transformation $T_2$ such that $T_2\wt\gamma_{\wt z}$ lies between $\wt\gamma_{\wt z}$ and $T_1\wt\gamma_{\wt z}$, and such that it has not the same orientation as $\wt\gamma_{\wt z}$ (and $T_1\wt\gamma_{\wt z}$). But the trajectory $T_2\wt\beta_0$ cannot meet neither $\wt\beta_0$ nor $T_1\wt\beta_0$ (by Lemma~\ref{LemBandDisjoint}) nor $\wt \phi_{T_1\wt f^n(\wt z)}$ (still by Lemma~\ref{LemBandDisjoint}). The fact that $\wt\beta_0 \cup T_1\wt\beta_0 \cup \wt \phi_{T_1\wt f^n(\wt z)}$ separates both ends of $T_2\wt\beta_0$ leads to a contradiction.
\end{proof}

\section{Bounded deviations in directions crossing the one of the lamination}\label{SecBoundCross}

\subsection{Non-existence of transverse intersections}\label{SubSecNoTrans}

Recall that $z$ is a $\mu$-typical point and that we denote $\wt\beta_0 = I^\Z_{\wt F}(\wt z)$.

\begin{lemma}\label{LemNotEquivFiniteTime}
Let $T_0\in\G\setminus\{\Id\}$. There exist $\varep_0>0$ and $t'_0<0<t'_1$ such that for any $\wt y\in B(\wt z,\varep_0)$, the trajectories ${I^{[t'_0,0]}_{\wt \F}(\wt y)}$ and $I^{[0,t'_1]}_{\wt \F}(\wt y)$ are not $\wt\F$-equivalent to a subpath of $T_0\wt\beta_0$.
\end{lemma}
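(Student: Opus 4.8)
The plan is to argue by contradiction, exploiting the fact that $\wt\beta_0$ is proper and has no self-accumulation (Proposition~2.4 of \cite{paper1PAF}) together with the minimality/non-closedness of the tracking geodesic. Suppose the statement fails. Then one can find a sequence $\varep_k\to 0$ and times $t'_0{}^{(k)}\to 0^-$, $t'_1{}^{(k)}\to 0^+$ (or, more carefully, we first fix the claim as: for every $\varep>0$ and every pair $t'_0<0<t'_1$ there is $\wt y\in B(\wt z,\varep)$ with one of the two trajectories $\wt\F$-equivalent to a subpath of $T_0\wt\beta_0$) and extract a limit. The key point is that the relation ``being $\wt\F$-equivalent to a subpath of a fixed path'' is stable under the appropriate limits: one can use the compactness/openness statements for transverse intersections and equivalences from \cite[Lemmas~2.10, 2.14, 2.17]{paper1PAF}. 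Passing $\wt y\to\wt z$, one obtains that an initial segment $\wt\beta_0|_{(-\infty,0]}$ or a terminal segment $\wt\beta_0|_{[0,+\infty)}$ of $\wt\beta_0$ itself is $\wt\F$-equivalent to a subpath of $T_0\wt\beta_0$.

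Next I would derive a contradiction from this conclusion. There are two cases. If a terminal ray $\wt\beta_0|_{[0,+\infty)}$ is $\wt\F$-equivalent to a subpath of $T_0\wt\beta_0$, then in particular $\wt\beta_0$ is $\wt\F$-equivalent at $+\infty$ to (a subpath of, hence eventually to) $T_0\wt\beta_0$, i.e.\ $\wt\beta_0\sim_{+\infty}T_0\wt\beta_0$. But exactly this was shown impossible inside the proof of Lemma~\ref{LemAlphaNotCross}: by \cite[Proposition~2.14]{paper1PAF} it would force $\wt\beta_0$ to be equivalent at $+\infty$ to a $T_0$-invariant transverse path, and by \cite[Proposition~2.18]{paper1PAF} this would put the points $\wt\beta_0(t_k)$, $t_k\to+\infty$, at bounded distance from the axis of $T_0$, contradicting the fact that the tracking geodesic $\wt\gamma_{\wt z}$ is non-closed and hence cannot share an endpoint with the axis of $T_0$. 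The symmetric argument handles the terminal ray at $-\infty$ (using the negative-time version, and that $\wt\gamma_{\wt z}$ has no endpoint in common with the axis of $T_0$ on that side either). The remaining case, where an infinite ray of $\wt\beta_0$ is equivalent to a \emph{bounded} subpath of $T_0\wt\beta_0$, is even easier: an infinite transverse ray cannot be $\wt\F$-equivalent to a compact transverse path, and if it is equivalent to an infinite subray of $T_0\wt\beta_0$ we are back in the previous case.

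The main obstacle I anticipate is making the limiting argument ($\wt y\to\wt z$) clean: one must be careful that the parameter intervals $[t'_0,0]$ and $[0,t'_1]$ are allowed to shrink, so ``being equivalent to a subpath of $T_0\wt\beta_0$'' does not obviously pass to the limit unless one phrases the contradiction hypothesis correctly and uses the right semicontinuity of $\wt\F$-equivalence. The natural fix is to negate the statement as ``for all $\varep_0$ and all $t'_0<0<t'_1$, the conclusion fails,'' apply it along $t'_0,t'_1$ fixed and $\varep_0\to 0$ to get points $\wt y_k\to\wt z$ with (say) $I^{[0,t'_1]}_{\wt\F}(\wt y_k)$ equivalent to a subpath of $T_0\wt\beta_0$, then let $t'_1\to\infty$ along a diagonal subsequence; since $I^{[0,t'_1]}_{\wt\F}(\wt z)$ is a sub-path of $\wt\beta_0$ and transverse-path equivalence to a fixed path is closed under uniform limits of the relevant type (here I would invoke the stability lemmas cited above from \cite{paper1PAF}), one recovers $\wt\beta_0|_{[0,+\infty)}$ equivalent to a subpath of $T_0\wt\beta_0$, and then the previous paragraph applies. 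Everything else is a routine invocation of results already established.
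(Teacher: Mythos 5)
Your proposal is morally the same proof as the paper's, relying on the same three pillars: Propositions~2.14 and 2.18 of \cite{paper1PAF} to exclude $\wt\F$-equivalence of $\wt\beta_0$ and $T_0\wt\beta_0$ at $\pm\infty$ (via non-closedness of the tracking geodesic), Proposition~2.4 of \cite{paper1PAF} to exclude accumulation, and Lemma~2.10 of \cite{paper1PAF} to pass between $\wt z$ and nearby $\wt y$. The difference is purely one of sequencing: the paper first produces finite $t''_0<0<t''_1$ that work \emph{for $\wt z$ itself} (this is where the non-accumulation statement of Proposition~2.4 does its work, exactly replacing your hand-waved claim that ``an infinite transverse ray cannot be $\wt\F$-equivalent to a compact transverse path''), and only then makes a single application of Lemma~2.10 to transfer to all $\wt y$ in a small ball, at the cost of widening the interval by $\pm 1$. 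You instead negate, extract a double limit $\wt y_k\to\wt z$, $t'_1\to\infty$, and then try to disentangle it. That can be made rigorous, but the disentangling is precisely the paper's sequencing done in the opposite order: one must first swap the roles of $\wt y_k$ and $\wt z$ in Lemma~2.10 to pass from ``$I^{[0,t'_1]}_{\wt\F}(\wt y_k)\sim$ subpath of $T_0\wt\beta_0$'' to ``$I^{[1,t'_1-1]}_{\wt\F}(\wt z)\sim$ subpath of $T_0\wt\beta_0$'' (the assertion that ``equivalence to a fixed path is closed under the relevant limits'' is not a stated result and needs this unpacking), and then, to let $t'_1\to\infty$, one must argue that the left endpoint of the matching subpath of $T_0\wt\beta_0$ is \emph{fixed} --- this uses Lemma~\ref{LemAlphaSimple} (so $T_0\wt\beta_0$ meets each leaf at most once) --- while boundedness of the right endpoint is exactly the accumulation scenario excluded by Proposition~2.4. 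So the proposal is correct in outline but defers all the real work to a step it labels an ``obstacle''; the paper's direct phrasing avoids the double limit and shows how to discharge it. When writing this up, replace the unjustified ``infinite ray $\not\sim$ compact path'' claim by an explicit citation of \cite[Proposition~2.4]{paper1PAF}, and do the two limits in the order: first pass from $\wt y_k$ to $\wt z$ at fixed $t'_1$ (via Lemma~2.10 with roles exchanged, losing $1$ at each end of the time interval), then $t'_1\to\infty$ using Lemma~\ref{LemAlphaSimple} and Proposition~2.4.
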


\begin{proof}
As in the proof of Lemma~\ref{LemAlphaNotCross}, using \cite[Proposition~2.14]{paper1PAF}, and \cite[Proposition~2.18]{paper1PAF} we deduce that the transverse paths $\wt\beta_0$ and $T_0\wt\beta_0$ are not equivalent at $+\infty$.

Hence, $\beta_0$ and $T_0\beta_0$ are not equivalent at $+\infty$, similarly they are not equivalent at $-\infty$. By \cite[Proposition~2.4]{paper1PAF}, one of them cannot accumulate in the other. This implies that there exists $t''_0<0<t''_1$ such that ${I^{[t''_0,0]}_{\wt \F}(\wt z)}$ and $I^{[0,t''_1]}_{\wt \F}(\wt z)$ are not $\wt\F$-equivalent to a subpath of $T_0\wt\beta_0$.

It remains to get a similar property for $\wt y$ instead of $\wt z$. By \cite[Lemma~2.10]{paper1PAF}, if $\varep_0$ is small enough, then for any $\wt y\in B(\wt z,\varep_0)$, the transverse path $I^{[t''_0,t''_1]}_{\wt \F}(\wt z)$ is equivalent to a subpath of $I^{[t''_0-1,t''_1+1]}_{\wt \F}(\wt y)$. Combined with the above fact, this proves the lemma for $t'_0 = t''_0-1$ and $t'_1 = t''_1+1$.
\end{proof}

Fix once for all $T_0, T_1\in\G\setminus\{\Id\}$ such that $T_0\wt\gamma_{\wt z}\subset L(\wt\gamma_{\wt z})$ and $T_1\wt\gamma_{\wt z}\subset R(\wt\gamma_{\wt z})$, and such that $\wt\gamma_{\wt z}$, $T_0\wt\gamma_{\wt z}$ and $T_1\wt\gamma_{\wt z}$ have the same orientation. If there exists one $T'_0\in\G\setminus\{\Id\}$ such that $T'_0\wt\gamma_{\wt z}\subset L(\wt\gamma_{\wt z})$ and $\wt\phi_{\wt z} \subset T'_0\wt B$, we choose $T_0 = T'_0$; by Lemma~\ref{LemBandDisjoint} the geodesics $\wt\gamma_{\wt z}$ and $T_0\wt\gamma_{\wt z}$ still have the same orientation. Similarly, If there exists one $T'_1\in\G\setminus\{\Id\}$ such that $T'_1\wt\gamma_{\wt z}\subset R(\wt\gamma_{\wt z})$ and $\wt\phi_{\wt z} \subset T'_1\wt B$, we choose $T_1 = T'_1$.

Let $\varep_0>0$ be given by Lemma~\ref{LemNotEquivFiniteTime} that works for both $T_0$ and $T_1$. We moreover suppose that $B(\wt z,\varep_0)$ is a trivialisation neighbourhood of $\wt \F$ such that for any $\wt y\in B(\wt z,\varep_0)$, the trajectory $I^{[-1,1]}_{\wt\F}(\wt y)$ crosses all the leaves meeting $B(\wt z,\varep_0)$. Let $t_0', t_1'$ be given by Lemma~\ref{LemNotEquivFiniteTime}.

\begin{figure}
\begin{center}

\tikzset{every picture/.style={line width=0.75pt}} 

\begin{tikzpicture}[x=0.75pt,y=0.75pt,yscale=-1,xscale=1]

\draw [color={rgb, 255:red, 245; green, 166; blue, 35 }  ,draw opacity=1 ]   (323.81,197.15) .. controls (328.31,172.15) and (361.56,178.4) .. (357.31,148.65) ;
\draw [shift={(340.55,176.73)}, rotate = 327.7] [fill={rgb, 255:red, 245; green, 166; blue, 35 }  ,fill opacity=1 ][line width=0.08]  [draw opacity=0] (7.14,-3.43) -- (0,0) -- (7.14,3.43) -- (4.74,0) -- cycle    ;
\draw [color={rgb, 255:red, 74; green, 144; blue, 226 }  ,draw opacity=1 ]   (283.08,61.52) .. controls (288.72,70.86) and (280.99,76.72) .. (281.88,86.72) .. controls (282.77,96.72) and (299.08,101.92) .. (299.48,114.32) .. controls (299.88,126.72) and (269.57,110.68) .. (266.68,141.12) .. controls (263.79,171.56) and (288.44,177.2) .. (287.88,192.32) .. controls (287.32,207.44) and (259.08,199.92) .. (261.08,209.92) .. controls (263.08,219.92) and (259.96,217.78) .. (255.88,222.58) ;
\draw [color={rgb, 255:red, 74; green, 144; blue, 226 }  ,draw opacity=1 ]   (279.77,176.47) .. controls (270.77,166.87) and (264.72,155.4) .. (266.68,141.12) ;
\draw [shift={(267.9,157.15)}, rotate = 68.64] [fill={rgb, 255:red, 74; green, 144; blue, 226 }  ,fill opacity=1 ][line width=0.08]  [draw opacity=0] (8.04,-3.86) -- (0,0) -- (8.04,3.86) -- (5.34,0) -- cycle    ;
\draw [color={rgb, 255:red, 74; green, 144; blue, 226 }  ,draw opacity=1 ]   (341.06,65.1) .. controls (346.71,74.44) and (339.81,80.35) .. (341.73,90.93) .. controls (343.65,101.51) and (361.66,96.45) .. (362.06,108.85) .. controls (362.46,121.25) and (342.7,107.91) .. (339.81,138.35) .. controls (336.92,168.79) and (372.98,160.76) .. (374.25,172.4) .. controls (375.52,184.04) and (358.98,191.67) .. (369.16,194.04) .. controls (379.34,196.4) and (371.59,197.83) .. (377.34,202.58) ;
\draw [color={rgb, 255:red, 74; green, 144; blue, 226 }  ,draw opacity=1 ]   (339.81,138.35) .. controls (341.53,123.56) and (344.95,118.48) .. (358.07,114.87) ;
\draw [shift={(346.47,120.44)}, rotate = 126.22] [fill={rgb, 255:red, 74; green, 144; blue, 226 }  ,fill opacity=1 ][line width=0.08]  [draw opacity=0] (8.04,-3.86) -- (0,0) -- (8.04,3.86) -- (5.34,0) -- cycle    ;
\draw [color={rgb, 255:red, 74; green, 144; blue, 226 }  ,draw opacity=1 ]   (274.84,231.69) .. controls (275.95,222.8) and (307.29,226.58) .. (305.51,205.91) .. controls (303.73,185.25) and (310.09,163.95) .. (332.4,173.47) .. controls (354.7,182.99) and (348.17,196.14) .. (343.95,206.14) .. controls (339.73,216.14) and (355.51,214.58) .. (354.17,224.36) ;
\draw  [line width=1.5]  (216,148) .. controls (216,98.29) and (256.29,58) .. (306,58) .. controls (355.71,58) and (396,98.29) .. (396,148) .. controls (396,197.71) and (355.71,238) .. (306,238) .. controls (256.29,238) and (216,197.71) .. (216,148) -- cycle ;
\draw [color={rgb, 255:red, 74; green, 144; blue, 226 }  ,draw opacity=1 ]   (332.4,173.47) .. controls (316.1,166.56) and (308.26,176.31) .. (306.17,188.8) ;
\draw [shift={(312.27,175.31)}, rotate = 326.8] [fill={rgb, 255:red, 74; green, 144; blue, 226 }  ,fill opacity=1 ][line width=0.08]  [draw opacity=0] (8.04,-3.86) -- (0,0) -- (8.04,3.86) -- (5.34,0) -- cycle    ;
\draw  [draw opacity=0][fill={rgb, 255:red, 0; green, 0; blue, 0 }  ,fill opacity=1 ] (354.02,162.56) .. controls (354.02,161.55) and (354.84,160.73) .. (355.84,160.73) .. controls (356.85,160.73) and (357.67,161.55) .. (357.67,162.56) .. controls (357.67,163.56) and (356.85,164.38) .. (355.84,164.38) .. controls (354.84,164.38) and (354.02,163.56) .. (354.02,162.56) -- cycle ;
\draw (269,130.4) node [anchor=east] [inner sep=0.75pt]  [color={rgb, 255:red, 74; green, 144; blue, 226 }  ,opacity=1 ,xscale=1.2,yscale=1.2]  {$I_{\wt{\F}}^{\Z}(\wt{z})$};
\draw (348,76.53) node [anchor=south west] [inner sep=0.75pt]  [color={rgb, 255:red, 74; green, 144; blue, 226 }  ,opacity=1 ,xscale=1.2,yscale=1.2]  {$T'I_{\wt{\F}}^{\Z}(\wt{z})$};
\draw (345,225) node [anchor=north west][inner sep=0.75pt]  [color={rgb, 255:red, 74; green, 144; blue, 226 }  ,opacity=1 ,xscale=1.2,yscale=1.2]  {$T'T_{0} I_{\wt{\F}}^{\Z}(\wt{z})$};
\draw (359.67,159.16) node [anchor=south west] [inner sep=0.75pt]  [xscale=1.2,yscale=1.2]  {$T'\wt{z}$};
\draw (325.25,189.87) node [anchor=north] [inner sep=0.75pt]  [color={rgb, 255:red, 216; green, 138; blue, 9 }  ,opacity=1 ,xscale=1.2,yscale=1.2]  {$\wt{\phi }_{T'\wt{z}}$};
\end{tikzpicture}

\caption{Proof of Lemma~\ref{LemIfFarThenNotEquiv}, first case: when the geodesics $\wt\gamma_{\wt z}$ and $T'T_0\wt\gamma_{\wt z}$ do not have the same orientation.}\label{FigLemIfFarThenNotEquiv0}
\end{center}
\end{figure}
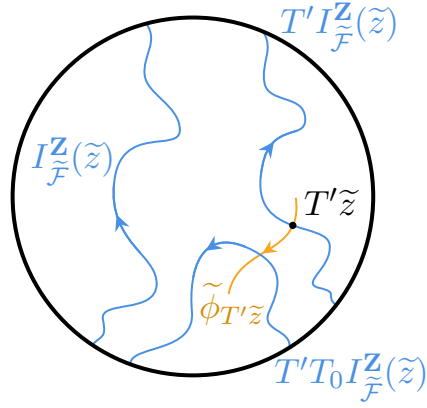

\begin{lemma}\label{LemIfFarThenNotEquiv}
Let $C_1 = \max\big(d(\wt z, \wt\gamma_{\wt z}) ,\,d(\wt z, T_0\wt\gamma_{\wt z}),\, d(\wt z, T_1\wt\gamma_{\wt z}) \big)$.
If $T'\in\G$ is such that $d(T'\wt z,\, \wt\gamma_{\wt z})>C_1$ and that $T'\wt z\in R(\wt\gamma_{\wt z})$ (resp. $T'\wt z\in L(\wt\gamma_{\wt z})$), then for any $\wt y\in B(T'\wt z, \varep_0)$, there exist $t_0\in [t_0', 1]$ and $t_1\in [-1, t_1']$ such that $\wt\phi_{I^{t_0}_{\wt \F}(\wt y)}\subset R(\wt B)$ and $\wt\phi_{I^{t_1}_{\wt \F}(\wt y)}\subset R(\wt B)$ (resp. $\wt\phi_{I^{t_0}_{\wt \F}(\wt y)}\subset L(\wt B)$ and $\wt\phi_{I^{t_1}_{\wt \F}(\wt y)}\subset L(\wt B)$).
\end{lemma}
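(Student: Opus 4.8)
\textbf{Proof plan for Lemma~\ref{LemIfFarThenNotEquiv}.}

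The plan is to argue by contradiction, following the same strategy as the proofs of Lemmas~\ref{LemAlphaNotCross} and~\ref{LemNotEquivFiniteTime}, and splitting into cases according to whether $\wt\gamma_{\wt z}$ and $T'T_0\wt\gamma_{\wt z}$ (or $T'T_1\wt\gamma_{\wt z}$) have the same orientation, as suggested by Figure~\ref{FigLemIfFarThenNotEquiv0}. First I would treat the case $T'\wt z\in R(\wt\gamma_{\wt z})$ (the other is symmetric, exchanging $T_0$ and $T_1$ and $L$ and $R$), and look at the trajectory $T'\wt\beta_0 = T'I^\Z_{\wt\F}(\wt z)$. Since $d(T'\wt z,\wt\gamma_{\wt z}) > C_1 \ge d(\wt z, T_1\wt\gamma_{\wt z})$, the geodesic $T'\wt\gamma_{\wt z}$ lies in $R(\wt\gamma_{\wt z})$ and is separated from $\wt\gamma_{\wt z}$; more precisely $T'\wt z$, being at distance $>C_1$ from $\wt\gamma_{\wt z}$ and on its right, is far enough that the whole band $T'\wt B$ (which stays within bounded distance of $T'\wt\gamma_{\wt z}$ by the tracking property of Lemma~\ref{LemAlphaSimple} / Theorem~\ref{DefTrackGeod}) lies in $R(\wt\gamma_{\wt z})$, hence in $R(\wt B)\cup\wt B$ by Lemma~\ref{LemAlphaNotCross}. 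Actually I need slightly more: that $T'\wt\beta_0$ does not meet $\wt B$ at all, so that $\wt\phi_{T'\wt z}\subset R(\wt B)$, and then carry this local statement from $T'\wt z$ to nearby $\wt y$.

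The key steps, in order. (1) Show $\wt B \cap T'\wt B = \emptyset$: if $T'\wt\gamma_{\wt z}$ and $\wt\gamma_{\wt z}$ have different orientations this is immediate from Lemma~\ref{LemBandDisjoint}; if they have the same orientation, then since $T'\wt z$ is far from $\wt\gamma_{\wt z}$ (distance $> C_1$), the band $T'\wt B$ is contained in $R(\wt B)$ strictly, and in particular disjoint from $\wt B$ — this uses that $\wt B$ stays within a bounded neighbourhood of $\wt\gamma_{\wt z}$, a consequence of the tracking property. In either case $\wt\phi_{\wt y}\subset R(\wt B)$ for $\wt y\in B(T'\wt z,\varep_0)$ once $\varep_0$ is small enough that $B(T'\wt z,\varep_0)\subset R(\wt B)$ (possible since $d(T'\wt z,\wt\gamma_{\wt z})>C_1\ge d(\wt z,\wt\gamma_{\wt z})$ and $\varep_0$ does not depend on $T'$ but the gap does — here I must be careful and possibly shrink $\varep_0$ using that $\wt B$ is at Hausdorff distance $\le$ some fixed $R$ from $\wt\gamma_{\wt z}$, so choosing $C_1$ replaced by $C_1+R+\varep_0$ if necessary, or rather stating the hypothesis already absorbs this). (2) Use Lemma~\ref{LemNotEquivFiniteTime} (applied with the deck transformation being the one relating $T'\wt\beta_0$ to $\wt\beta_0$, i.e. via the continuity/compactness translation as in Proposition~\ref{PropPasInterTrans}) together with the fact that $T'\wt\beta_0$ does not cross $\wt B$: since $T'\wt\beta_0$ stays on one side of $\wt B$ except possibly for leaves it shares with $\wt B$, and it is not $\wt\F$-equivalent to a subpath of $\wt\beta_0$ on any half-line, for the times $t_0\in[t_0',1]$ and $t_1\in[-1,t_1']$ the leaves $\wt\phi_{I^{t_0}_{\wt\F}(\wt y)}$ and $\wt\phi_{I^{t_1}_{\wt\F}(\wt y)}$ must lie in $R(\wt B)$ and not in $\wt B$ itself. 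The choice of $T_0,T_1$ at the start (with the refinement that $T_0$ is chosen so that $\wt\phi_{\wt z}\subset T_0\wt B$ when such a $T_0$ exists) guarantees that the boundary behaviour of $\wt\beta_0$ relative to $\wt B$ near $\wt z$ is controlled, which is what makes the times $t_0,t_1$ land in the claimed intervals $[t_0',1]$ and $[-1,t_1']$ rather than being pushed out.

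\textbf{Main obstacle.} The delicate point is step (2): ruling out that the leaf $\wt\phi_{I^{t_0}_{\wt\F}(\wt y)}$ actually belongs to $\wt B$ (rather than strictly to $R(\wt B)$) for the relevant small times, and that it does not lie in $L(\wt B)$. This requires combining three ingredients: that $T'\wt\beta_0$ does not cross $\wt B$ (step 1), that $\wt\beta_0|_{(-\infty,t_0']}$ and $\wt\beta_0|_{[t_1',+\infty)}$ are not $\wt\F$-equivalent to subpaths of $T'\wt\beta_0$ (Lemma~\ref{LemNotEquivFiniteTime} transported to $\wt y$ via \cite[Lemma~2.10]{paper1PAF}), and the precise choice of $T_0$ and $T_1$ making $\wt\phi_{\wt z}$ lie inside $T_0\wt B$ or $T_1\wt B$ when possible. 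The geometric content is that an orbit far to the right of the band $\wt B$ cannot have a leaf through it that cuts back into or across $\wt B$ within bounded time without producing either a transverse intersection (forbidden by Proposition~\ref{PropPasInterTrans}) or an equivalence at infinity (forbidden by Lemma~\ref{LemNotEquivFiniteTime}); assembling this cleanly, and handling the two orientation cases in parallel, is where the real work lies. Everything else is a translation/compactness argument of the kind already used repeatedly above.
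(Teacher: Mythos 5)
There is a genuine gap, and it sits at the foundation of your step (1) rather than in step (2) where you place the "main obstacle". You assert that "the whole band $T'\wt B$ (which stays within bounded distance of $T'\wt\gamma_{\wt z}$ by the tracking property...) lies in $R(\wt\gamma_{\wt z})$", and later that "$\wt B$ is at Hausdorff distance $\le$ some fixed $R$ from $\wt\gamma_{\wt z}$". This is false: $\wt B$ is a union of \emph{entire leaves} of $\wt\F$, and a leaf crossed by $\wt\beta_0$ at a point near $\wt\gamma_{\wt z}$ can extend arbitrarily far away from $\wt\gamma_{\wt z}$. The tracking property (Theorem~\ref{DefTrackGeod}) controls $\wt\beta_0$, not the leaves it meets. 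In fact the precise danger that the lemma is designed to handle is that the leaf $\wt\phi_{T'\wt z}$ through $T'\wt z$ may reach all the way back to $\wt\gamma_{\wt z}$ and beyond — equivalently, $\wt\phi_{\wt z}$ may lie in $T_0\wt B$ for some $T_0$ with $T_0\wt\gamma_{\wt z}\subset L(\wt\gamma_{\wt z})$. If that happened with a uniformly bounded band, the careful choice of $T_0$ made just after Lemma~\ref{LemNotEquivFiniteTime} would be pointless, and one could conclude directly. So nothing in your step (1) survives once this is dropped: you do not obtain $\wt B\cap T'\wt B=\emptyset$ when the orientations agree, and you do not obtain $\wt\phi_{T'\wt z}\subset R(\wt B)$.

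The paper's argument is built precisely to avoid this. When $\wt\phi_{\wt z}\subset T_0\wt B$ (the dangerous case), one translates by $T'$: then $\wt\phi_{T'\wt z}\subset T'T_0\wt B$, and the role of $C_1$ is to place $T'T_0\wt\gamma_{\wt z}$ strictly between $T'\wt\gamma_{\wt z}$ and $\wt\gamma_{\wt z}$, so that Lemma~\ref{LemAlphaNotCross} sandwiches $T'\wt B$ on the $R(\wt B)$-side of $T'T_0\wt B$. Then Lemma~\ref{LemNotEquivFiniteTime} is applied with the \emph{fixed} $T_0$ (the translated version for $T'\wt y$ and $T'T_0\wt\beta_0$) to produce $t_0\in[t_0',0]$ and $t_1\in[0,t_1']$ at which the trajectory leaves $T'T_0\wt B$; the sandwich then forces those exit leaves into $R(\wt B)$. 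Your step (2) applies Lemma~\ref{LemNotEquivFiniteTime} "with the deck transformation being the one relating $T'\wt\beta_0$ to $\wt\beta_0$", i.e.\ to $T'$ itself (or via the Lusin/recurrence argument of Proposition~\ref{PropPasInterTrans}) — but $T'$ varies and Lemma~\ref{LemNotEquivFiniteTime} only yields constants $\varep_0, t_0', t_1'$ uniform over a \emph{fixed} deck transformation. Applying it to $T'$ would let the time window blow up with $T'$, destroying the uniformity that the statement requires. The fix is to use the fixed $T_0$ (resp.\ $T_1$) selected in advance, which is exactly why that choice appears in the setup just before the lemma.
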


\begin{figure}
\begin{center}
\tikzset{every picture/.style={line width=0.75pt}} 

\begin{tikzpicture}[x=0.75pt,y=0.75pt,yscale=-1.2,xscale=1.2]

\draw  [color={rgb, 255:red, 245; green, 166; blue, 35 }  ,draw opacity=0.3 ][fill={rgb, 255:red, 245; green, 166; blue, 35 }  ,fill opacity=0.15 ] (291.84,23.3) .. controls (282.99,65.3) and (291.25,115.71) .. (288.41,146.82) .. controls (285.61,170.82) and (275.27,221.87) .. (278.99,265.87) .. controls (266.97,252.88) and (219.75,162.72) .. (220.6,140.15) .. controls (222.32,113.01) and (264.7,55.01) .. (291.84,23.3) -- cycle ;
\draw  [color={rgb, 255:red, 245; green, 166; blue, 35 }  ,draw opacity=0.3 ][fill={rgb, 255:red, 245; green, 166; blue, 35 }  ,fill opacity=0.15 ] (225.44,21.5) .. controls (247.81,60.08) and (269.54,115.33) .. (266.7,146.44) .. controls (263.9,170.44) and (219.61,225.14) .. (194.44,249.5) .. controls (207.57,213.71) and (196.55,179.82) .. (206.99,146.44) .. controls (220.84,108.24) and (213.41,48.48) .. (225.44,21.5) -- cycle ;
\draw [color={rgb, 255:red, 74; green, 144; blue, 226 }  ,draw opacity=1 ]   (194.44,249.5) .. controls (225.89,211.14) and (244.89,104.72) .. (225.44,21.5) ;
\draw [shift={(231.27,134.47)}, rotate = 96.3] [fill={rgb, 255:red, 74; green, 144; blue, 226 }  ,fill opacity=1 ][line width=0.08]  [draw opacity=0] (8.04,-3.86) -- (0,0) -- (8.04,3.86) -- (5.34,0) -- cycle    ;
\draw  [color={rgb, 255:red, 245; green, 166; blue, 35 }  ,draw opacity=0.3 ][fill={rgb, 255:red, 245; green, 166; blue, 35 }  ,fill opacity=0.15 ] (320.7,35.87) .. controls (309.31,72.2) and (302.95,112.73) .. (300.11,143.83) .. controls (297.31,167.83) and (306.93,227.12) .. (307.27,256.44) .. controls (288.13,235.01) and (246.13,174.72) .. (244.99,140.72) .. controls (244.13,103.01) and (278.13,71.3) .. (320.7,35.87) -- cycle ;
\draw [color={rgb, 255:red, 74; green, 144; blue, 226 }  ,draw opacity=1 ]   (307.27,256.44) .. controls (268.33,205.92) and (269.1,95.87) .. (320.7,35.87) ;
\draw [shift={(280.57,141.02)}, rotate = 94.02] [fill={rgb, 255:red, 74; green, 144; blue, 226 }  ,fill opacity=1 ][line width=0.08]  [draw opacity=0] (8.04,-3.86) -- (0,0) -- (8.04,3.86) -- (5.34,0) -- cycle    ;
\draw  [line width=1.5]  (132.54,142.87) .. controls (132.54,73.92) and (188.44,18.02) .. (257.4,18.02) .. controls (326.35,18.02) and (382.25,73.92) .. (382.25,142.87) .. controls (382.25,211.83) and (326.35,267.73) .. (257.4,267.73) .. controls (188.44,267.73) and (132.54,211.83) .. (132.54,142.87) -- cycle ;
\draw [color={rgb, 255:red, 74; green, 144; blue, 226 }  ,draw opacity=1 ]   (278.99,265.87) .. controls (256.41,197.58) and (254.99,111.3) .. (291.84,23.3) ;
\draw [shift={(263.78,139.57)}, rotate = 93.53] [fill={rgb, 255:red, 74; green, 144; blue, 226 }  ,fill opacity=1 ][line width=0.08]  [draw opacity=0] (8.04,-3.86) -- (0,0) -- (8.04,3.86) -- (5.34,0) -- cycle    ;
\draw [color={rgb, 255:red, 245; green, 166; blue, 35 }  ,draw opacity=1 ]   (299.29,53.92) .. controls (292.24,68.75) and (295.88,108.36) .. (302.96,117.13) ;
\draw  [draw opacity=0][fill={rgb, 255:red, 74; green, 144; blue, 226 }  ,fill opacity=1 ] (293.65,77.11) .. controls (293.65,76.01) and (294.54,75.12) .. (295.64,75.12) .. controls (296.73,75.12) and (297.62,76.01) .. (297.62,77.11) .. controls (297.62,78.21) and (296.73,79.1) .. (295.64,79.1) .. controls (294.54,79.1) and (293.65,78.21) .. (293.65,77.11) -- cycle ;
\draw [color={rgb, 255:red, 245; green, 166; blue, 35 }  ,draw opacity=1 ]   (300.96,181.9) .. controls (289.57,195.75) and (284.02,224.27) .. (288.41,232.61) ;
\draw  [draw opacity=0][fill={rgb, 255:red, 74; green, 144; blue, 226 }  ,fill opacity=1 ] (285.48,215.92) .. controls (285.48,214.82) and (286.37,213.93) .. (287.47,213.93) .. controls (288.57,213.93) and (289.46,214.82) .. (289.46,215.92) .. controls (289.46,217.02) and (288.57,217.91) .. (287.47,217.91) .. controls (286.37,217.91) and (285.48,217.02) .. (285.48,215.92) -- cycle ;

\draw (218.31,61.7) node [anchor=east] [inner sep=0.75pt]  [color={rgb, 255:red, 245; green, 166; blue, 35 }  ,opacity=1 ,xscale=1.2,yscale=1.2]  {$\wt{B}$};
\draw (303,122.76) node [anchor=west] [inner sep=0.75pt]  [color={rgb, 255:red, 245; green, 166; blue, 35 }  ,opacity=1 ,xscale=1.2,yscale=1.2]  {$T'\wt{B}$};
\draw (282.47,40.38) node [anchor=south east] [inner sep=0.75pt]  [font=\footnotesize,color={rgb, 255:red, 245; green, 166; blue, 35 }  ,opacity=1 ,xscale=1.2,yscale=1.2]  {$T'T_{0}\wt{B}$};
\draw (220.93,190.91) node [anchor=east] [inner sep=0.75pt]  [color={rgb, 255:red, 74; green, 144; blue, 226 }  ,opacity=1 ,xscale=1.2,yscale=1.2]  {$I_{\wt{\F}}^{\Z}(\wt{z})$};
\draw (282.46,153.68) node [anchor=west] [inner sep=0.75pt]  [color={rgb, 255:red, 74; green, 144; blue, 226 }  ,opacity=1 ,xscale=1.2,yscale=1.2]  {$T'I_{\wt{\F}}^{\Z}(\wt{z})$};
\draw (295.19,206.64) node [anchor=west] [inner sep=0.75pt]  [color={rgb, 255:red, 245; green, 166; blue, 35 }  ,opacity=1 ,xscale=1.2,yscale=1.2]  {$\wt{\phi }_{I_{\wt{\F}}^{t_{0}}( T'\wt{z})}$};
\draw (298.74,87.7) node [anchor=west] [inner sep=0.75pt]  [color={rgb, 255:red, 245; green, 166; blue, 35 }  ,opacity=1 ,xscale=1.2,yscale=1.2]  {$\wt{\phi }_{I_{\wt{\F}}^{t_{1}}( T'\wt{z})}$};
\draw (275.29,241.58) node [anchor=north east] [inner sep=0.75pt]  [font=\footnotesize,color={rgb, 255:red, 74; green, 144; blue, 226 }  ,opacity=1 ,xscale=1.2,yscale=1.2]  {$T'T_{0} I_{\wt{\F}}^{\Z}(\wt{z})$};
\end{tikzpicture}
\caption{Proof of Lemma~\ref{LemIfFarThenNotEquiv}: the leaves $\wt\phi_{I^{t_0}_{\wt\F}(T'\wt z)}$ and $\wt\phi_{I^{t_1}_{\wt\F}(T'\wt z)}$ are included in $R(T'T_0\wt B)$ and hence, by ordering of the bands, in $R(\wt B)$.}\label{FigLemIfFarThenNotEquiv}
\end{center}
\end{figure}
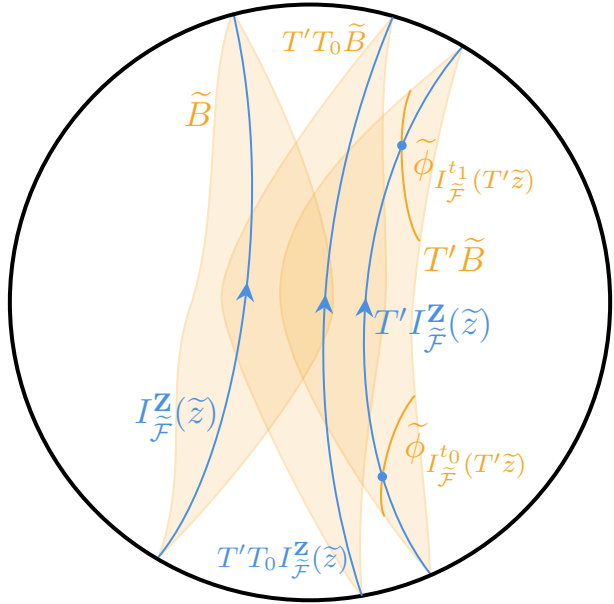


\begin{proof}
Suppose that $T'{\wt z}$ is on the right of $\wt\gamma_{\wt z}$ (the other case being similar); note that (because $d(T'\wt z, \wt\gamma_{\wt z})> C_1 \ge d(\wt z, \wt\gamma_{\wt z})$) this is equivalent to the fact that $T'\gamma_{\wt z}$ is on the right of $\wt\gamma_{\wt z}$. By Lemma~\ref{LemBandDisjoint} one can suppose that $T'\gamma_{\wt z}$ and $\wt\gamma_{\wt z}$ have the same orientation (otherwise the lemma is proved for any choice of $t_0, t_1$). Also, if there exists no $T\in\G$ such that $T\wt\gamma_{\wt z}\subset L(\wt\gamma_{\wt z})$ and $\wt\phi_{\wt z} \subset T\wt B$, then the lemma is proved (for $t_0=t_1$ such that $I^{t_0}_{\wt\F}(\wt y) = \wt\phi_{T'\wt z}$). So we suppose that $\wt\phi_{\wt z} \subset T_0\wt B$.

If $\wt\gamma_{\wt z}$ and $T'T_0\wt\gamma_{\wt z}$ do not have the same orientation (as in Figure~\ref{FigLemIfFarThenNotEquiv0}), then by Lemma~\ref{LemBandDisjoint} we get $T'T_0\wt B\cap \wt B = \emptyset$ and so $\wt\phi_{T'\wt z}\cap \wt B = \emptyset$; this proves the lemma for $t_0=t_1$ such that $I^{t_0}_{\wt\F}(\wt y) = \wt\phi_{T'\wt z}$).

Otherwise $\wt\gamma_{\wt z}$ and $T'T_0\wt\gamma_{\wt z}$ have the same orientation.
An idea of the proof in this case is depicted in Figure~\ref{FigLemIfFarThenNotEquiv}. Recall that $T_0\wt\gamma_{\wt z}$ is on the left of $\wt\gamma_{\wt z}$. From the hypothesis $d(\wt\gamma_{\wt z}, T'\wt z)>C_1$ we deduce that $T'T_0\wt\gamma_{\wt z}$ is between $T'\wt\gamma_{\wt z}$ and $\wt\gamma_{\wt z}$.

By Lemma~\ref{LemAlphaNotCross}, the bands $\wt B$, $T'T_0\wt B$ and $T'\wt B$ are well ordered: we have $T'T_0\wt B \cup R(T'T_0\wt B) \subset R(\wt B) \cup \wt B$ and $T'\wt B \subset R(T'T_0 \wt B) \cup T'T_0\wt B$. 

Pick $\wt y\in B(\wt z,\varep_0)$. By Lemma~\ref{LemNotEquivFiniteTime}, the paths $T'{I^{[t'_0,0]}_{\wt \F}(\wt y)}$ and $T'{I^{[0,t'_1]}_{\wt \F}(\wt y)}$ are not $\wt\F$-equivalent to a subpath of $T'T_0\wt\beta_0$. 
In particular, there exist $t_0\in[t'_0,0]$ and $t_1\in [0,t'_1]$ such that $\wt\phi_{I^{t_0}_{\wt \F}(T'\wt y)}\notin T'T_0\wt B$ and $\wt\phi_{I^{t_1}_{\wt \F}(T'\wt y)}\notin T'T_0\wt B$.
By the previous ordering properties on the bands, this means that $\wt\phi_{I^{t_0}_{\wt \F}(T'\wt y)}\subset R(T'T_0\wt B)\subset R(\wt B)$ and $\wt\phi_{I^{t_1}_{\wt \F}(T'\wt y)}\subset R(T'T_0\wt B)\subset R(\wt B)$. This proves the lemma.
\end{proof}

The following improves Proposition~\ref{PropPasInterTrans}.

\begin{prop}\label{LastPropBndDevIrrat}
The path $\beta_0 = I^\Z_{\wt\F}(\wt z)$ cannot intersect $\wt\F$-transversally any other path.
\end{prop}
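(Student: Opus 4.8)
The plan is to argue by contradiction, exactly along the lines used for Proposition~\ref{PropPasInterTrans} but now allowing the ``other path'' to be an arbitrary $\wt\F$-transverse path $\wt\delta$ in $\wt S$ rather than a translate of $\wt\beta_0$ itself. So suppose $\wt\beta_0$ has an $\wt\F$-transverse intersection with some transverse path $\wt\delta$. First I would localize: by definition of $\wt\F$-transverse intersection there are parameter intervals $[t_1,t_2]$ and $[s_1,s_2]$ such that $\wt I^{[t_1,t_2]}_{\wt\F}(\wt z)$ and $\wt\delta|_{[s_1,s_2]}$ intersect $\wt\F$-transversally, and this is a ``stable'' configuration in the sense of \cite[Lemma~2.10]{paper1PAF}: any transverse path that stays $C^0$-close to $\wt\delta|_{[s_1,s_2]}$ on a slightly larger parameter window still intersects $\wt I^{[t_1,t_2]}_{\wt\F}(\wt z)$ transversally, and the same for small perturbations of $\wt I^{[t_1,t_2]}_{\wt\F}(\wt z)$ around the finitely many fundamental-domain pieces it crosses.

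The key move is to replace the (uncontrolled) path $\wt\delta$ by a translate $T\wt\beta_0$ of the transverse trajectory of $\wt z$. Since $\wt\beta_0$ is recurrent, its projection $\beta_0$ comes back arbitrarily close, in arbitrarily many fundamental domains, to the finite subarc of $S$ that $\wt\delta|_{[s_1,s_2]}$ projects to; more precisely, I would use that the transverse trajectory of the $\mu$-typical point $z$ is dense in a neighbourhood of every point of $\Lambda_i$ and that the relevant subarc of $\wt\delta$ can be assumed (after shrinking) to lie near $\wt B$. The careful statement is that there is a deck transformation $T$ and a time interval on which $T\wt\beta_0$ is $C^0$-close, on a long enough parameter window, to $\wt\delta|_{[s_1,s_2]}$, so that by the stability of transverse intersections $\wt\beta_0$ and $T\wt\beta_0$ already intersect $\wt\F$-transversally. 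This contradicts Proposition~\ref{PropPasInterTrans} (no $\wt\F$-transverse self-intersection for $\beta_0$), which is exactly the contradiction we want. In fact, combined with Lemmas~\ref{LemNotEquivFiniteTime} and \ref{LemIfFarThenNotEquiv} one gets an even cleaner route: those lemmas already say that, far from $\wt\gamma_{\wt z}$, any translate of a small neighbourhood of $\wt z$ sends its transverse trajectory entirely to one side of $\wt B$, so a transverse intersection with $\wt\beta_0$ can only come from translates $T'\wt z$ with $d(T'\wt z,\wt\gamma_{\wt z})\le C_1$; then a compactness argument (finiteness of the number of such $T'$ modulo the stabilizer, and the fact that $\wt\beta_0$ is proper) reduces the general $\wt\delta$ to one of finitely many model configurations, each of which produces a genuine transverse self-intersection of $\beta_0$.

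Concretely, the steps in order are: (1) assume a transverse intersection of $\wt\beta_0$ with a path $\wt\delta$, and localize it to compact subarcs, using \cite[Lemma~2.10]{paper1PAF} to make the configuration robust under $C^0$-perturbation on enlarged parameter windows; (2) use recurrence/density of $\beta_0$ near $\Lambda_i$ to find a deck transformation $T$ so that $T\wt\beta_0$ fellow-travels the relevant subarc of $\wt\delta$ long enough to inherit the transverse intersection with $\wt\beta_0$ — here Lemma~\ref{LemIfFarThenNotEquiv} is what lets us control where the translated trajectory sits relative to $\wt B$, ruling out the ``far'' translates and leaving only finitely many near ones; (3) conclude that $\wt\beta_0$ and $T\wt\beta_0$ intersect $\wt\F$-transversally, contradicting Proposition~\ref{PropPasInterTrans}. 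The main obstacle I expect is step~(2): promoting the mere \emph{existence} of an $\wt\F$-transverse crossing of $\wt\beta_0$ with an arbitrary path to one with a \emph{translate of $\wt\beta_0$}. This requires carefully matching the local picture near the crossing with a recurrence of the $\mu$-typical trajectory while keeping track of orientation/side information (whether the nearby strand of $\wt\beta_0$ passes left or right of $\wt B$), so that the fellow-traveling translate genuinely crosses rather than merely brushing $\wt B$. Lemma~\ref{LemAlphaNotCross} (ordering of the bands $T\wt B$) and Lemma~\ref{LemBandDisjoint} are the tools that keep this bookkeeping consistent; once the side data is pinned down, the stability lemma \cite[Lemma~2.10]{paper1PAF} does the rest.
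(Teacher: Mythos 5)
Your step~(2) — ``replace the uncontrolled path $\wt\delta$ by a translate $T\wt\beta_0$ that fellow-travels it long enough to inherit the transverse intersection'' — is the crux, and it cannot work. An $\wt\F$-transverse intersection of $\wt\beta_0$ with $\wt\delta$ means that near the intersection point $\wt\delta$ passes from one side of a subpath of $\wt\beta_0$ to the other, i.e.\ it \emph{crosses} the band $\wt B$; but Lemma~\ref{LemAlphaNotCross} (whose proof uses Proposition~\ref{PropPasInterTrans}) says precisely that \emph{no} translate $T\wt\beta_0$ ever crosses $\wt B$: it stays entirely in $\wt B\cup L(\wt B)$ or in $\wt B\cup R(\wt B)$. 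So there is no $T$ for which $T\wt\beta_0$ is $C^0$-close to the crossing piece of $\wt\delta$, and hence no stability lemma can transfer the transverse intersection to the pair $(\wt\beta_0, T\wt\beta_0)$. Your reduction would make Proposition~\ref{LastPropBndDevIrrat} a corollary of Proposition~\ref{PropPasInterTrans}, but the former is genuinely stronger and does not follow from it by a perturbation argument.

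There is also a factual error underlying the attempted reduction: you invoke density of $\beta_0$ ``in a neighbourhood of every point of $\Lambda_i$''. The transverse trajectory $\beta_0$ of $z$ is a path on the surface $S$ transverse to $\F$, not the geodesic $\gamma_z$ on $T^1S$; it is one-dimensional and not dense near anything, and recurrence of the $\mu$-typical point $z$ only says $f^n(z)$ returns close to $z$ itself, not close to an arbitrary point of the projection of $\wt\delta$. Consequently, translates of $\wt\beta_0$ need not pass anywhere near the subarc $\wt\delta|_{[s_1,s_2]}$ at all (the point $\wt y$ with $\wt\delta = I^\Z_{\wt\F}(\wt y)$ can be far from $\wt\gamma_{\wt z}$), which is a second obstruction to your step~(2) even ignoring the crossing problem.

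The paper's proof follows a completely different route. Starting from a transverse intersection of $I^{[t_-,t_+]}_{\wt\F}(\wt y)$ with $\wt\beta_0$, Lemma~\ref{LemReturnDeckTrans} produces a deck transformation $T'$ with axis close to $\wt\gamma_{\wt z}$ giving a \emph{second} transverse intersection of the same piece of $I^\Z_{\wt\F}(\wt y)$ with $T'\wt\beta_0$. The fundamental proposition of forcing (\cite[Proposition~2.11]{paper1PAF}) then concatenates the relevant subarcs of $I^\Z_{\wt\F}(\wt y)$ and $I^\Z_{\wt\F}(\wt z)$ into the transverse trajectory of an auxiliary point $\wt x$, for which one obtains a transverse intersection between $I^\Z_{\wh\F}(\wh x)$ and a nontrivial $\wh\G$-translate of itself. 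The horseshoe/forcing theorem (\cite[Theorem~2.12]{paper1PAF}) then yields a periodic point whose tracking geodesic (the axis of $T'^{-1}R_0$) is close to $\wt\gamma_{\wt z}$, contradicting Lemma~\ref{LemGeodNotAccuPerOrb}. So the final contradiction is with Lemma~\ref{LemGeodNotAccuPerOrb}, obtained through the forcing machinery, not with Proposition~\ref{PropPasInterTrans}.
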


\begin{figure}
\begin{center}
\tikzset{every picture/.style={line width=0.75pt}} 

\begin{tikzpicture}[x=0.75pt,y=0.75pt,yscale=-1.2,xscale=1.2]

\draw [color={rgb, 255:red, 83; green, 113; blue, 0 }  ,draw opacity=1 ][line width=0.75]    (238.35,245.18) .. controls (241.79,244.12) and (255.15,243.98) .. (261.95,239.18) .. controls (268.75,234.38) and (299.55,146.79) .. (295.55,75.99) ;
\draw [shift={(253.67,242.61)}, rotate = 168.95] [fill={rgb, 255:red, 83; green, 113; blue, 0 }  ,fill opacity=1 ][line width=0.08]  [draw opacity=0] (8.04,-3.86) -- (0,0) -- (8.04,3.86) -- (5.34,0) -- cycle    ;
\draw [shift={(288.67,156.43)}, rotate = 101.47] [fill={rgb, 255:red, 83; green, 113; blue, 0 }  ,fill opacity=1 ][line width=0.08]  [draw opacity=0] (8.04,-3.86) -- (0,0) -- (8.04,3.86) -- (5.34,0) -- cycle    ;
\draw [color={rgb, 255:red, 83; green, 113; blue, 0 }  ,draw opacity=1 ][line width=0.75]    (145.9,145) .. controls (255.15,113.99) and (306.75,138.39) .. (317.15,131.19) .. controls (327.55,123.99) and (331.02,58.58) .. (381.82,24.18) ;
\draw [shift={(234.71,129.33)}, rotate = 176.27] [fill={rgb, 255:red, 83; green, 113; blue, 0 }  ,fill opacity=1 ][line width=0.08]  [draw opacity=0] (8.04,-3.86) -- (0,0) -- (8.04,3.86) -- (5.34,0) -- cycle    ;
\draw [shift={(342.83,69.22)}, rotate = 116.71] [fill={rgb, 255:red, 83; green, 113; blue, 0 }  ,fill opacity=1 ][line width=0.08]  [draw opacity=0] (8.04,-3.86) -- (0,0) -- (8.04,3.86) -- (5.34,0) -- cycle    ;
\draw [color={rgb, 255:red, 208; green, 2; blue, 27 }  ,draw opacity=1 ]   (150,150) .. controls (267.25,98.58) and (403.25,193.58) .. (470,140) ;
\draw [shift={(315.14,143.28)}, rotate = 190.05] [fill={rgb, 255:red, 208; green, 2; blue, 27 }  ,fill opacity=1 ][line width=0.08]  [draw opacity=0] (7.14,-3.43) -- (0,0) -- (7.14,3.43) -- (4.74,0) -- cycle    ;
\draw [color={rgb, 255:red, 74; green, 144; blue, 226 }  ,draw opacity=1 ]   (250,270) .. controls (293.25,198.08) and (310.26,115.93) .. (302.01,54.85) ;
\draw [shift={(294.25,162.83)}, rotate = 104.23] [fill={rgb, 255:red, 74; green, 144; blue, 226 }  ,fill opacity=1 ][line width=0.08]  [draw opacity=0] (8.04,-3.86) -- (0,0) -- (8.04,3.86) -- (5.34,0) -- cycle    ;
\draw [color={rgb, 255:red, 74; green, 144; blue, 226 }  ,draw opacity=1 ]   (330,230) .. controls (307.25,144.58) and (338.25,44.58) .. (400,20) ;
\draw [shift={(331.5,108.06)}, rotate = 105.47] [fill={rgb, 255:red, 74; green, 144; blue, 226 }  ,fill opacity=1 ][line width=0.08]  [draw opacity=0] (8.04,-3.86) -- (0,0) -- (8.04,3.86) -- (5.34,0) -- cycle    ;
\draw [color={rgb, 255:red, 208; green, 2; blue, 27 }  ,draw opacity=1 ]   (240,250) .. controls (271.75,234.08) and (285.75,270.08) .. (300,260) ;
\draw [shift={(274.01,251.29)}, rotate = 206.77] [fill={rgb, 255:red, 208; green, 2; blue, 27 }  ,fill opacity=1 ][line width=0.08]  [draw opacity=0] (8.04,-3.86) -- (0,0) -- (8.04,3.86) -- (5.34,0) -- cycle    ;
\draw [color={rgb, 255:red, 245; green, 166; blue, 35 }  ,draw opacity=1 ]   (117.92,145.5) .. controls (159.33,128.92) and (159.92,170) .. (136.92,180.5) ;
\draw [color={rgb, 255:red, 245; green, 166; blue, 35 }  ,draw opacity=1 ]   (258.42,220) .. controls (270.92,192) and (297.92,210) .. (283.92,234.5) ;
\draw [color={rgb, 255:red, 245; green, 166; blue, 35 }  ,draw opacity=1 ]   (285.82,56.9) .. controls (285.32,77.4) and (317.85,102.35) .. (324.25,34.35) ;
\draw [color={rgb, 255:red, 245; green, 166; blue, 35 }  ,draw opacity=1 ]   (388.32,9.8) .. controls (368.25,20.25) and (395.82,47.3) .. (412.32,32.3) ;
\draw [color={rgb, 255:red, 245; green, 166; blue, 35 }  ,draw opacity=1 ]   (477.92,113.5) .. controls (451.92,125) and (472.92,163) .. (504.92,141.5) ;
\draw  [draw opacity=0][fill={rgb, 255:red, 208; green, 2; blue, 27 }  ,fill opacity=1 ] (146.93,150) .. controls (146.93,148.08) and (148.48,146.53) .. (150.4,146.53) .. controls (152.32,146.53) and (153.87,148.08) .. (153.87,150) .. controls (153.87,151.92) and (152.32,153.47) .. (150.4,153.47) .. controls (148.48,153.47) and (146.93,151.92) .. (146.93,150) -- cycle ;
\draw  [draw opacity=0][fill={rgb, 255:red, 208; green, 2; blue, 27 }  ,fill opacity=1 ] (466.53,140) .. controls (466.53,138.08) and (468.08,136.53) .. (470,136.53) .. controls (471.92,136.53) and (473.47,138.08) .. (473.47,140) .. controls (473.47,141.92) and (471.92,143.47) .. (470,143.47) .. controls (468.08,143.47) and (466.53,141.92) .. (466.53,140) -- cycle ;
\draw [color={rgb, 255:red, 245; green, 166; blue, 35 }  ,draw opacity=1 ]   (222.75,240.38) .. controls (243.55,235.98) and (245.38,257.86) .. (228.75,264.78) ;
\draw  [draw opacity=0][fill={rgb, 255:red, 65; green, 117; blue, 5 }  ,fill opacity=1 ] (142.43,145) .. controls (142.43,143.08) and (143.98,141.53) .. (145.9,141.53) .. controls (147.82,141.53) and (149.37,143.08) .. (149.37,145) .. controls (149.37,146.92) and (147.82,148.47) .. (145.9,148.47) .. controls (143.98,148.47) and (142.43,146.92) .. (142.43,145) -- cycle ;
\draw [color={rgb, 255:red, 245; green, 166; blue, 35 }  ,draw opacity=1 ]   (312.8,227.15) .. controls (313.94,199.72) and (343.08,196.87) .. (343.06,227.24) ;
\draw  [draw opacity=0][fill={rgb, 255:red, 65; green, 117; blue, 5 }  ,fill opacity=1 ] (234.88,245.18) .. controls (234.88,243.26) and (236.43,241.71) .. (238.35,241.71) .. controls (240.26,241.71) and (241.82,243.26) .. (241.82,245.18) .. controls (241.82,247.1) and (240.26,248.65) .. (238.35,248.65) .. controls (236.43,248.65) and (234.88,247.1) .. (234.88,245.18) -- cycle ;
\draw [color={rgb, 255:red, 245; green, 166; blue, 35 }  ,draw opacity=1 ]   (336.21,39.08) .. controls (320.21,82.58) and (339.6,93.87) .. (372.71,84.58) ;
\draw [color={rgb, 255:red, 74; green, 144; blue, 226 }  ,draw opacity=1 ]   (345.75,66.92) -- (350.67,70.46) ;
\draw [color={rgb, 255:red, 74; green, 144; blue, 226 }  ,draw opacity=1 ]   (382.94,26.21) -- (385.94,30.21) ;
\draw [color={rgb, 255:red, 74; green, 144; blue, 226 }  ,draw opacity=1 ]   (300.88,78.92) -- (306.71,78.65) ;
\draw [color={rgb, 255:red, 74; green, 144; blue, 226 }  ,draw opacity=1 ]   (278,205.92) -- (283.17,208.46) ;
\draw [color={rgb, 255:red, 74; green, 144; blue, 226 }  ,draw opacity=1 ]   (322.33,206.94) -- (328.25,205.29) ;
\draw [color={rgb, 255:red, 74; green, 144; blue, 226 }  ,draw opacity=1 ]   (337.9,83.38) -- (343.02,86.36) ;
\draw  [draw opacity=0][fill={rgb, 255:red, 0; green, 0; blue, 0 }  ,fill opacity=1 ] (320.62,144.81) .. controls (320.62,142.9) and (322.17,141.34) .. (324.09,141.34) .. controls (326,141.34) and (327.55,142.9) .. (327.55,144.81) .. controls (327.55,146.73) and (326,148.28) .. (324.09,148.28) .. controls (322.17,148.28) and (320.62,146.73) .. (320.62,144.81) -- cycle ;
\draw  [draw opacity=0][fill={rgb, 255:red, 0; green, 0; blue, 0 }  ,fill opacity=1 ] (295.12,140.81) .. controls (295.12,138.9) and (296.67,137.34) .. (298.59,137.34) .. controls (300.5,137.34) and (302.05,138.9) .. (302.05,140.81) .. controls (302.05,142.73) and (300.5,144.28) .. (298.59,144.28) .. controls (296.67,144.28) and (295.12,142.73) .. (295.12,140.81) -- cycle ;
\draw    (379.18,189.45) .. controls (346.81,184.38) and (324.33,177.41) .. (304.31,149.62) ;
\draw [shift={(302.78,147.45)}, rotate = 55.43] [fill={rgb, 255:red, 0; green, 0; blue, 0 }  ][line width=0.08]  [draw opacity=0] (7.14,-3.43) -- (0,0) -- (7.14,3.43) -- (4.74,0) -- cycle    ;

\draw (155.67,150.4) node [anchor=north west][inner sep=0.75pt]  [color={rgb, 255:red, 208; green, 2; blue, 27 }  ,opacity=1 ,xscale=1.2,yscale=1.2]  {$I_{\wh{\F}}^{t_{-}}(\wh{y})$};
\draw (147.1,136.79) node [anchor=south] [inner sep=0.75pt]  [color={rgb, 255:red, 65; green, 117; blue, 5 }  ,opacity=1 ,xscale=1.2,yscale=1.2]  {$\wh{x}$};
\draw (293.72,55.5) node [anchor=south] [inner sep=0.75pt]  [color={rgb, 255:red, 74; green, 144; blue, 226 }  ,opacity=1 ,xscale=1.2,yscale=1.2]  {$T'I_{\wh{\F}}^{\Z}(\wh{z})$};
\draw (401.38,16.5) node [anchor=west] [inner sep=0.75pt]  [color={rgb, 255:red, 74; green, 144; blue, 226 }  ,opacity=1 ,xscale=1.2,yscale=1.2]  {$I_{\wh{\F}}^{\Z}(\wh{z})$};
\draw (248.55,236.04) node [anchor=south east] [inner sep=0.75pt]  [color={rgb, 255:red, 65; green, 117; blue, 5 }  ,opacity=1 ,xscale=1.2,yscale=1.2]  {$T'\wh{x}$};
\draw (472,140.07) node [anchor=south west] [inner sep=0.75pt]  [color={rgb, 255:red, 208; green, 2; blue, 27 }  ,opacity=1 ,xscale=1.2,yscale=1.2]  {$I_{\wh{\F}}^{t_{+}}(\wh{y})$};
\draw (326.7,204.41) node [anchor=south west] [inner sep=0.75pt]  [color={rgb, 255:red, 74; green, 144; blue, 226 }  ,opacity=1 ,xscale=1.2,yscale=1.2]  {$s_{--}$};
\draw (340,87) node [anchor=north west][inner sep=0.75pt]  [color={rgb, 255:red, 74; green, 144; blue, 226 }  ,opacity=1 ,xscale=1.2,yscale=1.2]  {$s_{++}$};
\draw (305.44,77.72) node [anchor=north west][inner sep=0.75pt]  [color={rgb, 255:red, 74; green, 144; blue, 226 }  ,opacity=1 ,xscale=1.2,yscale=1.2]  {$s'_{++}$};
\draw (283.45,209.42) node [anchor=south west] [inner sep=0.75pt]  [color={rgb, 255:red, 74; green, 144; blue, 226 }  ,opacity=1 ,xscale=1.2,yscale=1.2]  {$s'_{--}$};
\draw (352.67,70.46) node [anchor=west] [inner sep=0.75pt]  [color={rgb, 255:red, 74; green, 144; blue, 226 }  ,opacity=1 ,xscale=1.2,yscale=1.2]  {\small $s_{--} +n'-1$};
\draw (380.06,39) node [anchor=west] [inner sep=0.75pt]  [color={rgb, 255:red, 74; green, 144; blue, 226 }  ,opacity=1 ,xscale=1.2,yscale=1.2]  {\small $s_{++} +n'-1$};
\draw (330.18,146.22) node [anchor=south west] [inner sep=0.75pt]  [color={rgb, 255:red, 0; green, 0; blue, 0 }  ,opacity=1 ,xscale=1.2,yscale=1.2]  {$I_{\wh{\F}}^{s_{0}}(\wh{z}) =I_{\wh{\F}}^{t_{0}}(\wh{y})$};
\draw (381.18,189.45) node [anchor=west] [inner sep=0.75pt]  [color={rgb, 255:red, 0; green, 0; blue, 0 }  ,opacity=1 ,xscale=1.2,yscale=1.2]  {$I_{\wh{\F}}^{s'_{0}}( T'\wh{z}) =I_{\wh{\F}}^{t'_{0}}(\wh{y})$};
\end{tikzpicture}

\caption{Proof of Proposition~\ref{LastPropBndDevIrrat}.\label{FigBndDevIrrat}}
\end{center}
\end{figure}
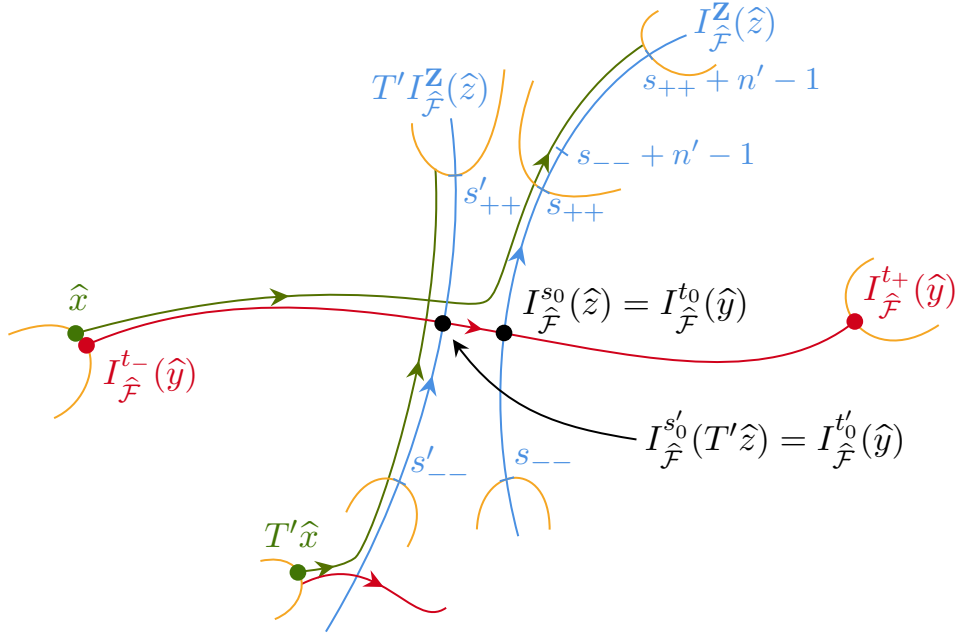

\begin{proof}
We argue by contradiction: suppose that there exist $t_-<t_+$, $t_0\in (t_-,t_+)$, $s_-<s_0<s_+$ such that the paths $I^{[t_-, t_+]}_{\wt \F}(\wt y) $ and $I^{[s_-, s_+]}_{\wt \F}(\wt z)$ intersect $\wt\F$-transversally at $I^{t_0}_{\wt \F}(\wt y) = I^{s_0}_{\wt \F}(\wt z)$. The idea of the proof is illustrated in Figure~\ref{FigBndDevIrrat}.

By \cite[Lemma~2.10]{paper1PAF}, there exists a neighbourhood $\wt W$ of $\wt z$ such that, for every $\wt x'\in W$, the path $I^{[s_-, s_+]}_{\wt\F}(\wt z)$ is $\wt\F$-equivalent to a subpath of $I^{[s_--1, s_+ +1]}_{\wt\F}(\wt x')$.

By Lemma~\ref{LemReturnDeckTrans}, there exist $n\ge s_+-s_- +1$ and $T'\in\G\setminus\{\Id\}$ whose axis is arbitrarily close to $\wt\gamma_{\wt z}$ such that $\wt f^n(\wt z)\in (T^\prime)^{-1}\wt W$. Hence, there exist $s'_-<s'_0<s'_+$ and $t'_0\in(t_-, t_+)$ such that the paths $I^{[t_-, t_+]}_{\wt \F}(\wt y) $ and $I^{[s'_-, s'_+]}_{\wt \F}(T'\wt z)$ intersect $\wt\F$-transversally at $I^{t'_0}_{\wt \F}(\wt y) = I^{s'_0}_{\wt \F}(T'\wt z)$ (for this step one can take $s'_- = s_-+n-1$ and $s_+ = s_++n+1$).

Exchanging $\wt z$ and $T'\wt z$ and changing $T'$ by $T^{\prime-1}$ if necessary\footnote{If this is not the case, one has to choose $n\le s_--s_+-1$ (hence very negative) but Lemma~\ref{LemReturnDeckTrans} also works in this case.}, we can suppose that $\wt\gamma_{T'\wt z}$ is on the left of $\wt\gamma_{\wt z}$.

By Lemma~\ref{LemNotEquivFiniteTime}, by choosing some $s_{--}\le s_-$, $s'_{--}\le s'_-$, $s_{++}\ge s_+$ and $s'_{++}\ge s'_+$, we get:
\begin{itemize}
\item $I^{[s_{--}, s_0]}_{\wt\F}(\wt z)$ is not $\wt\F$-equivalent to a subpath of $I^{[s'_{--}, s'_0]}_{\wt\F}(T'\wt z)$;
\item $I^{[s'_{--}, s'_0]}_{\wt\F}(T'\wt z)$ is not $\wt\F$-equivalent to a subpath of $I^{[s_{--}, s_0]}_{\wt\F}(\wt z)$;
\item $I^{[s_0, s_{++}]}_{\wt\F}(\wt z)$ is not $\wt\F$-equivalent to a subpath of $I^{[s'_0, s'_{++}]}_{\wt\F}(T'\wt z)$;
\item $I^{[s'_0, s'_{++}]}_{\wt\F}(T'\wt z)$ is not $\wt\F$-equivalent to a subpath of $I^{[s_0, s_{++}]}_{\wt\F}(\wt z)$.
\end{itemize}


\bigskip

Remark that we introduced a complication here, as it can happen that $s'_{--}\le s_0$. Let us use the recurrence of $z$ to solve this issue.

Because the point $z$ is recurrent, and more precisely by Lemma~\ref{LemReturnDeckTrans}, there exists $R_0\in\G$ and $n'\ge 0$ such that $n'$ is arbitrarily large, $\wt f^{n'}(\wt z)$ is arbitrarily close to $R_0\wt z$ and the geodesic axis of $R_0$ is arbitrarily close to $\wt\gamma_{\wt z}$.
By \cite[Lemma~2.10]{paper1PAF}, this implies that there exists $s_1\ge s_{++}-s'_{--}+1$ such that $I^{[s'_{--}, s'_{++}]}_{\wt\F}(\wt z)$ is $\wt\F$-equivalent to a subpath of $R_0I^{[s'_{--}+s_1-1, s'_{++} +s_1+1]}_{\wt\F}(\wt z)$. 

By the fundamental proposition of forcing \cite[Proposition~2.11]{paper1PAF}, there exists $\wt x\in\wt S$ such that the transverse path $I^{[t_-, t_0]}_{\wt\F}(\wt y) I^{[s_0, s'_{++} +s_1+1]}_{\wt\F}(\wt z)$ is $\wt\F$-equivalent to a subpath of $I^\Z_{\wt\F}(\wt x)$ (see Figure~\ref{FigBndDevIrrat}).

Let $\wh y$ and $\wh z$ be lifts of $\wt y$ and $\wt z$ to $\wh\dom(\F)$ such that $I^{t_0}_{\wh \F}(\wh y) = I^{s_0}_{\wh \F}(\wh z)$. We also consider ${\wh T',\wh  R_0}$ deck transformations of $\wh\dom (\F)$ that lift $T', R_0\in\G$ such that $I^{t'_0}_{\wh \F}(\wh y) = I^{s'_0}_{\wh \F}(\wh z)$ and that $I^{[s'_{--}, s'_{++}]}_{\wh\F}({\wh T}'\wh z)$ is $\wh\F$-equivalent to a subpath of ${\wh R_0}I^{[s'_{--}+n'-1, s'_{++}+n'+1]}_{\wh\F}(\wh z)$. Denote $\wh B$ the set of leaves of $\wh\F$ met by $I^{\Z}_{\wh \F}(\wh z)$

By the above properties, the paths $I^{[t_-, t_+]}_{\wh \F}(\wh y) $ and $I^{[s_{-}, s_{+}]}_{\wh \F}(\wh z)$ intersect $\wh\F$-transversally at $I^{t_0}_{\wh \F}(\wh y) = I^{s_0}_{\wh \F}(\wh z)$, and the paths $I^{[t_-, t_+]}_{\wh \F}(\wh y) $ and $I^{[s'_{-}, s'_{+}]}_{\wh \F}(T'\wh z)$ intersect $\wh\F$-transversally at $I^{t'_0}_{\wh \F}(\wh y) = I^{s'_0}_{\wh \F}(\wh T'\wh z)$.
The above properties also imply that $L(\wh\phi_{I^{s_-}_{\wh \F}(\wh z)}) \cup R(\wh\phi_{I^{s'_{++}}_{\wh \F}(\wh z)}) \subset R(\wh T'\wh B)$ and $L(\wh\phi_{I^{s'_{--}}_{\wh \F}(\wh T'\wh z)}) \cup R(\wh\phi_{I^{s'_{++}}_{\wh \F}(\wh T'\wh z)}) \subset L(\wh B)$. 
This means that the leaves $\wh\phi_{\wh y}$, $\wh\phi_{I^{s'_{++}}_{\wh \F}(\wh z)}$, $\wh\phi_{I^{s'_{--}}_{\wh \F}(\wh T'\wh z)}$ and $\wh\phi_{I^{s'_{++}}_{\wh \F}(\wh T'\wh z)}$ are indeed ordered as in Figure~\ref{FigBndDevIrrat}. 

Hence, the paths $I^{[t_-, t_0]}_{\wh\F}(\wh y) I^{[s_0, s'_{++}+n'+1]}_{\wh\F}(\wh z)$ and $I^{[s'_{--}, s'_{++}]}_{\wh \F}(\wh T'\wh z)$ intersect $\wh \F$-transversally, so the paths $I^\Z_{\wh\F}(\wh x)$ and $T^{\prime -1}\wh R_0I^\Z_{\wh\F}(\wh x)$ also intersect $\F$-transversally. 

By the fundamental theorem of existence of horseshoes \cite[Theorem~2.12]{paper1PAF}, this implies that there exists a point $\wt z_p\in \wt S$ and $\tau\in\N$ such that $\wt f^\tau(\wt z_p) = T^{\prime -1}R_0\wt z$ (in particular, the projection $z_p$ of $\wt z_p$ on $S$ is $f$-periodic). 

Recall that the geodesic axes of $T'$ and $R_0$ can be chosen arbitrarily close to $\wt\gamma_{\wt z}$ (see Lemma~\ref{LemReturnDeckTrans}). This is a contradiction with Lemma~\ref{LemGeodNotAccuPerOrb}.
\end{proof}

\subsection{Essential points and creation of transverse intersections}\label{SubSecEssen}

\begin{lemma}\label{LemEssentialPoints}
Let $\mu$ be a measure of a minimal non-closed class. For any $\varep>0$ and a $\mu$-typical point $z$, the union 
\[A_\varep = \bigcup_{n\in\N} f^{n}\big(B(z,\varep)\big)\]
is an open surface such that ${i}_*(\pi_1(S_\Lambda))\subset {i}_*(\pi_1(A_\varep))$.
\end{lemma}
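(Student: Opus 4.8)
I would first handle the formal parts and set up the contradiction argument based on the theory of fully essential sets of \cite{zbMATH06294042,zbMATH06908424}. That $A_\varep$ is open is immediate, so $A_\varep$ is an open sub-surface of $S$; it is forward invariant, $f(A_\varep)\subseteq A_\varep$, and contains $B(z,\varep)$ together with the whole forward orbit of $z$. Since $A_{\varep'}\subseteq A_\varep$ for $\varep'\le\varep$, one may assume $\varep$ small enough that $B(z,\varep)$ is a topological disk. It is convenient to pass to the bi-invariant enlargement $A_\varep^\infty:=\bigcup_{n\in\Z}f^n\big(B(z,\varep)\big)$: one has $A_\varep\subseteq f^{-1}(A_\varep)\subseteq f^{-2}(A_\varep)\subseteq\cdots$, an increasing family of connected open sets all with the same image under $i_*$ in $\pi_1(S)$ (because $f\simeq\Id$), so $i_*(\pi_1(A_\varep^\infty))=i_*(\pi_1(A_\varep))$, while $A_\varep^\infty$ is open, $f$-invariant, and contains the disk $B(z,\varep)$. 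Finally, by Poincaré recurrence $B(z,\varep)$ returns to itself, which forces $A_\varep^\infty$ to have finitely many connected components, cyclically permuted by $f$; replacing $f$ by the corresponding power --- which changes neither $\cl_i$, nor $\Lambda$, nor $S_\Lambda$, nor the conclusion --- and $\mu$ by an ergodic component, one reduces to the case where the orbit of $z$ lies in a single connected component $U$ of $A_\varep^\infty$, with $U$ an $f$-invariant connected open set containing a disk.

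Now assume, for contradiction, that $i_*(\pi_1(S_\Lambda))\not\subseteq i_*(\pi_1(U))$. Since $U$ is a connected $f$-invariant open set which is the orbit of a disk, the methods of \cite{zbMATH06294042,zbMATH06908424}, carried out for $S$ relative to the subsurface $S_\Lambda$, yield a dichotomy: either $U$ is \emph{fully essential relative to $S_\Lambda$}, which is exactly the desired inclusion $i_*(\pi_1(S_\Lambda))\subseteq i_*(\pi_1(U))$; or $U$ is \emph{annular}, i.e.\ there is an essential simple closed curve $c$ of $S$ such that $U$ is contained in an $f$-invariant open set whose lift to the $c$-annular cover $\check S_c\to S$ has compact closure, the canonical lift $\check f$ of $f$ (available because $f\simeq\Id$) preserving the relevant connected component of the preimage of $U$. (More generally, the non-trivial alternative produces a proper subsurface $\Sigma\subsetneq S$ with $i_*(\pi_1(\Sigma))=i_*(\pi_1(U))$ in which the orbit of $z$ is trapped; the annular case is the prototype.)

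In the annular case, the full orbit $(\check f^n(\check z))_{n\in\Z}$ of the lift $\check z$ of $z$ in that component stays in a compact part of the open annulus $\check S_c$; equivalently, in $\wt S$ the points $\wt f^n(\wt z)$, $n\in\Z$, remain at bounded distance from a single lift $\wt c$ of the closed geodesic freely homotopic to $c$. Hence the endpoints at infinity $\alpha(\wt z)$ and $\omega(\wt z)$ of the tracking geodesic are the endpoints of $\wt c$, so $\wt\gamma_{\wt z}=\wt c$ and $\gamma_z$ is a \emph{closed} geodesic --- contradicting that $\cl_i$ is a minimal non-closed class. (In the general ``subsurface'' alternative, $\alpha(\wt z)$ and $\omega(\wt z)$ lie in the limit set of the quasiconvex subgroup $i_*(\pi_1(\Sigma))$, so $\gamma_z\subseteq\Sigma$; but $\gamma_z$ is dense in the minimal lamination $\Lambda$, which \emph{fills} $S_\Lambda$ --- so $\gamma_z$ crosses every non-peripheral simple closed curve of $S_\Lambda$ --- and this forces $S_\Lambda\subseteq\Sigma$, whence $i_*(\pi_1(S_\Lambda))\subseteq i_*(\pi_1(\Sigma))=i_*(\pi_1(U))$, again a contradiction.) So the non-trivial alternative cannot occur, and $i_*(\pi_1(S_\Lambda))\subseteq i_*(\pi_1(A_\varep))$.

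The whole difficulty is concentrated in the dichotomy of the second paragraph: that an $f$-invariant connected open set which is the orbit of a disk and fails to be essential relative to $S_\Lambda$ must, for all times, be trapped in an annular (or proper-subsurface) region, with the canonical lift of $f$ respecting the trapping. For an arbitrary open set this is false; it is the conjunction of $f$-invariance with the ``orbit of a disk'' structure that makes the Koropecki--Tal / Koropecki--Le~Calvez--Nassiri machinery applicable, and it is the non-closedness of $\gamma_z$ together with $\vartheta_\mu>0$ --- i.e.\ the hypothesis that $\mu$ belongs to a minimal non-closed class --- that excludes the non-trivial alternative. The remaining steps (the reduction to a connected bi-invariant set, and reading off a constrained tracking geodesic from a trapped orbit) are routine.
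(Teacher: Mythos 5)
Your proposal takes a genuinely different route from the paper, but it concentrates the entire content of the lemma in a step that you do not prove and that is not a black-box result in the cited literature. You invoke a dichotomy: a connected $f$-invariant open set that is ``the orbit of a disk'' is either fully essential \emph{relative to $S_\Lambda$}, or is annular / trapped in a proper subsurface $\Sigma$ with the canonical lift preserving the trapping region and the lift in the annular cover having compact closure. The Koropecki--Tal and Koropecki--Le Calvez--Nassiri machinery gives essential/inessential/annular trichotomies for invariant sets in the whole surface (typically for $\T^2$, or for fixed-point sets, or under additional hypotheses), but the relative-to-$S_\Lambda$ version with the quantitative trapping conclusion (compact closure in $\check S_c$, or equivalently bounded distance of $\wt f^n(\wt z)$ from a single lift $\wt c$) is precisely what requires proof here; you acknowledge yourself that ``the whole difficulty is concentrated in the dichotomy.'' For a general invariant open set, failing to be fully essential does \emph{not} imply the orbit of a lift stays bounded in an annular cover.

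The paper avoids this by arguing directly with the geometry of tracking geodesics. It introduces $\wt S_{\wt A_\varep}$, the convex hull of the axes of the essential loops in $\wt A_\varep$, and proves two geometric facts: (a) using Lemma~\ref{LemReturnDeckTrans} twice, one finds a hyperbolic deck transformation in $\operatorname{stab}(\wt A_\varep)$ whose axis is arbitrarily close to $\wt\gamma_{\wt z}$, so $S_{A_\varep}$ contains a closed geodesic $\gamma_0$ crossing $\gamma_z$; (b) if $A_\varep$ misses an essential class of $S_\Lambda$, a boundary geodesic $\gamma_1$ of $S_{A_\varep}$ is disjoint from $S_{A_\varep}$ and, by the crown-structure argument of Lemma~\ref{LemGeodNotAccuPerOrb}, must cross $\gamma_z$. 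Combining the recurrence of $\gamma_z$ in $T^1 S$ with the positions of translates of $\gamma_0$ and $\gamma_1$, one traps $\wt A_\varep$ (hence the entire forward orbit $\wt f^n(\wt z)$) in a half-plane not containing $\omega(\wt\gamma_{\wt z})$, which contradicts the tracking property. Your proposal uses the tracking geodesic the same way at the very end, but the hard middle step --- producing the geometric obstruction --- is exactly what the black-box dichotomy hides. As written, your proof has a genuine gap there; the reductions in the first paragraph and the contradiction in the third paragraph are fine but peripheral.
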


\begin{rem}
The proof also implies that the same conclusion holds for a measure $\mu$ of a horseshoe class whose rotation vector is not rational, and whose tracking geodesic crosses any other tracking geodesic of the class.
\end{rem}

\begin{proof}
Suppose by contradiction that there exist $\varep>0$ and a simple essential loop $\alpha\subset S_\Lambda$ that is not freely homotopic (in $S$) to a loop in $A_\varep$. The associated closed geodesic (any simple essential closed loop is freely homotopic to a single simple closed geodesic) $\gamma_\alpha$ is included in $S_\Lambda$.

Let $z$ be a $\mu$-typical point, and $\wt z$ be a lift of $z$ to $\wt S$. 
Let $\wt A_\varep$ be the connected component of the set of points lifting points of $A_\varep$ containing $\wt z$.
Let $\wt\gamma_{\wt z}$ be the tracking geodesic of $\wt z$, $\wt\alpha$ a lift of $\alpha$ and $\wt\gamma_{\wt\alpha}$ the unique geodesic that is at finite distance to $\wt\alpha$ (and this geodesic lifts $\gamma_\alpha$); we choose the lift $\wt\alpha$ such that the geodesics $\wt\gamma_{\wt z}$ and $\wt\gamma_{\wt\alpha}$ cross. We denote $T\in\G$ a primitive deck transformation associated to $\wt\alpha$. 
Let $\wt \Gamma_{\wt A_\varep}$ be the union of the geodesics $\wt \gamma_{\wt\beta}$, where $\wt \beta\subset \wt A_\varep$ is a path lifting an essential loop. We denote $\wt S_{\wt A_\varep} = \conv\big(\wt \Gamma_{\wt A_\varep}\big)$, and $S_{A_\varep}$ the projection of $\wt S_{\wt A_\varep}$ on $S$.

\begin{claim}\label{ClaimEssentialPoints}
The surface $S_{A_\varep}$ contains a closed geodesic $\gamma_0$ crossing $\gamma_z$.
\end{claim}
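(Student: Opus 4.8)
The plan is to prove Claim~\ref{ClaimEssentialPoints} as a statement about $A_\varep$ alone (the loop $\alpha$ of the contradiction hypothesis of the proof of Lemma~\ref{LemEssentialPoints} is not needed for the claim itself): I would show that the connected component of $A_\varep$ through $z$ already contains a loop whose associated closed geodesic hugs $\gamma_z$, and then invoke Lemma~\ref{LemGeodNotAccuPerOrb}.

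First I would record the elementary topology of $A_\varep$. Since $f(A_\varep)\subset A_\varep$, the set $A_\varep$ is forward invariant; letting $U$ be its connected component containing $z$, Poincaré recurrence provides $m\ge 1$ with $f^m(z)\in B(z,\varep)\subset U$, whence $f^m(U)$ is connected, contained in $A_\varep$ and meets $U$, so $f^m(U)\subset U$; we may also assume $z\in\supp\mu$, so $\mu(U)>0$. By definition $\wt A_\varep$ is a lift of $U$ through $\wt z$, its stabiliser in $\G$ equals $i_*(\pi_1(U))$, and for every $T\in i_*(\pi_1(U))$ the geodesic axis of $T$ is one of the geodesics $\wt\gamma_{\wt\beta}$ constituting $\wt\Gamma_{\wt A_\varep}$ (take a loop $\beta\subset U$ representing $T$ and its lift starting in $\wt A_\varep$, which stays in $\wt A_\varep$), hence projects into $S_{A_\varep}$.

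The heart of the argument is to exhibit, inside $i_*(\pi_1(U))$, a deck transformation whose axis is within the constant $d_0$ of Lemma~\ref{LemGeodNotAccuPerOrb} of $\wt\gamma_{\wt z}$. For this I would use Lemma~\ref{LemReturnDeckTrans}: for every $\delta>0$ there are a large integer $n$ with $f^n(z)\in B(z,\varep)$ and a deck transformation $T_n\in\G\setminus\{\Id\}$ such that $\wt f^n(\wt z)$ is within $\delta$ of $T_n\wt z$ and the geodesic axis of $T_n$ is within $\delta$ of $\wt\gamma_{\wt z}$. One then joins $\wt z$ to $\wt f^n(\wt z)$ \emph{inside} $\wt A_\varep$: the arc $I^{[0,n]}(z)=\bigcup_{j=0}^{n-1} f^j\big(I^{[0,1]}(z)\big)$ lies in $A_\varep$ together with the forward orbit of $B(z,\varep)$, it lifts to a path from $\wt z$ to $\wt f^n(\wt z)$, and completing it by a short arc inside $B(z,\varep)$ joining $f^n(z)$ back to $z$ produces a loop contained in $U$ whose class in $\pi_1(S)\cong\G$ is exactly $T_n$ (its lift from $\wt z$ follows the lift of $I^{[0,n]}(z)$ to $\wt f^n(\wt z)$ and then the short arc to $T_n\wt z$, provided $\delta<\varep$). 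Choosing $\delta<d_0$, the projection $\gamma_0$ of the axis of $T_n$ is a closed geodesic contained in $S_{A_\varep}$ one of whose lifts is within $d_0$ of $\wt\gamma_{\wt z}$; by Lemma~\ref{LemGeodNotAccuPerOrb} it crosses $\wt\Lambda_i$, and since $\gamma_z$ is a dense leaf of the minimal lamination $\Lambda_i$ this forces $\gamma_0$ to cross $\gamma_z$, which is the desired geodesic.

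I expect the delicate point to be the middle step: turning the recurrence datum of Lemma~\ref{LemReturnDeckTrans} into a genuine loop inside $A_\varep$ rather than merely inside $S$ — equivalently, the assertion that $\wt f^n(\wt z)$ lies in the \emph{same} component $\wt A_\varep$ as $\wt z$. The connecting arcs (the pieces $f^j(I^{[0,1]}(z))$ of the isotopy path and the return arc in $B(z,\varep)$) must be checked to remain inside $A_\varep$; this is precisely the sort of control furnished by the theory of fully essential points \cite{zbMATH06294042,zbMATH06908424}, which is why $A_\varep$ was built as an $f$-saturation of a ball, and which also guarantees that the component $U$ is genuinely essential (so in particular $T_n\neq\Id$, a fact anyway forced by $\vartheta_\mu>0$ and the largeness of $n$). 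Once this is secured the remainder is soft, using only Lemma~\ref{LemReturnDeckTrans}, Lemma~\ref{LemGeodNotAccuPerOrb}, and minimality of $\Lambda_i$.
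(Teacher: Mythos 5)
Your high-level outline is the same as the paper's (use Lemma~\ref{LemReturnDeckTrans} to produce deck transformations with axis near $\wt\gamma_{\wt z}$, deduce that one of them stabilises $\wt A_\varep$, then conclude via Lemma~\ref{LemGeodNotAccuPerOrb} and minimality), but the middle step of your argument — the one you yourself flag as ``delicate'' — has a genuine gap, and it is precisely the gap the paper's proof is structured to avoid.

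You claim that the isotopy arc $I^{[0,n]}(z)=\bigcup_{j=0}^{n-1}f^j\big(I^{[0,1]}(z)\big)$ lies in $A_\varep$. There is no reason for this. The set $A_\varep=\bigcup_{k\ge0}f^k(B(z,\varep))$ is the forward saturation of the \emph{ball}, not of the isotopy trajectory; $I^{[0,1]}(z)$ connects $z$ to $f(z)$ and can leave $B(z,\varep)$ well before it re-enters $f(B(z,\varep))$, hence can leave $A_\varep$ entirely. Without a path from $\wt z$ to $\wt f^n(\wt z)$ inside $\pi^{-1}(A_\varep)$ you cannot conclude $\wt f^n(\wt z)\in\wt A_\varep$. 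Note moreover that with $T_n$ chosen by Lemma~\ref{LemReturnDeckTrans} so that $\wt f^n(\wt z)\in T_nB(\wt z,\varep)\subset T_n\wt A_\varep$, the statement ``$\wt f^n(\wt z)\in\wt A_\varep$'' is \emph{equivalent} to ``$T_n\in\operatorname{stab}(\wt A_\varep)$'' (the translates of $\wt A_\varep$ are disjoint components, so two coincide iff they intersect). So what you want to prove and what you are assuming in this step are essentially the same thing; appealing to ``the theory of fully essential points'' to fill it is not a proof, and is arguably circular, since Claim~\ref{ClaimEssentialPoints} and Lemma~\ref{LemEssentialPoints} are themselves the mechanism by which essentialness of $A_\varep$ is being established.

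The paper sidesteps this entirely. It never asserts $\wt f^{n_1}(\wt z)\in\wt A_\varep$; on the contrary, it works with the translate $T_1\wt A_\varep$ directly. The trick is to apply Lemma~\ref{LemReturnDeckTrans} twice: once to $f$ to get $n_1$, $T_1$ with $\wt f^{n_1}(\wt z)\in T_1B(\wt z,\varep)$, and once to $f^{n_1}$ to get $k_2$, $T_2$ with $\wt f^{k_2n_1}(\wt z)\in T_2B(\wt z,\varep)$ and $T_2\notin\langle T_1\rangle$. Since $\wt f^{n_1}(\wt A_\varep)$ is connected, lies in $\pi^{-1}(A_\varep)$ and meets $T_1\wt A_\varep$, it is contained in $T_1\wt A_\varep$; iterating and using equivariance gives $\wt f^{k_2n_1}(\wt z)\in T_1^{k_2}\wt A_\varep$, while the second application gives $\wt f^{k_2n_1}(\wt z)\in T_2\wt A_\varep$. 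The two components coincide, so $T_2^{-1}T_1^{k_2}\in\operatorname{stab}(\wt A_\varep)\setminus\{\Id\}$. That is the element whose axis projects to the sought $\gamma_0$. Your conclusion from this point on (close to $\wt\gamma_{\wt z}$, hence crosses $\wt\Lambda_i$ by Lemma~\ref{LemGeodNotAccuPerOrb}, hence crosses the dense leaf $\gamma_z$) matches the paper. The missing idea in your proposal is the double-return-time/translate-tracking trick that manufactures a stabiliser element without ever needing $\wt f^n(\wt z)\in\wt A_\varep$.
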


\begin{proof}
By Lemma~\ref{LemReturnDeckTrans}, there exists $T_1\in\G\setminus\{\Id\}$ and $n_1\in\N$ such that $\wt f^{n_1}(\wt z)\in T_1 B(\wt z,\varep)$, with the axis of $T_1$ being close to $\wt\gamma_{\wt z}$. One can repeat the argument for $f^{n_1}$: there exists $T_2\in\G\setminus\{\Id\}$ and $k_2\in\N$ and $\wt f^{k_2n_1}(\wt z)\in T_2 B(\wt z,\varep)$, with the axis of $T_2$ being close to $\wt\gamma_{\wt z}$, close enough so that $T_2\notin \langle T_1\rangle$. But $\wt f^{n_1}(\wt z)\in T_1 B(\wt z,\varep) \subset T_1\wt A_\varep$, so $\wt f^{k_2n_1}(\wt z)\in  T_1^{k_2}\wt A_\varep \cap T_2 \wt A_\varep$. Hence $T_2^{-1}T_1^{k_2} \in \operatorname{stab}(\wt A_\varep) \setminus \{\Id\}$. By construction, the axis of $T_2^{-1}T_1^{k_2}$ is close to $\wt\gamma_{\wt z}$, its projection on $S$ is the closed geodesic we look for.
\end{proof}

The fact that $\alpha\subset S_\Lambda$ is not freely homotopic to a loop in $A_\varep$ implies that $\wt A_\varep \cap T\wt A_\varep = \emptyset$. So we also have $\wt S_{\wt A_\varep} \cap T\wt S_{\wt A_\varep} = \emptyset$.
By \cite[Lemma 6.7]{alepablo}, the boundary of $\wt S_{\wt A_\varep}$ is made of geodesics that project to simple closed geodesics of $S$; we deduce that there is a closed geodesic $\gamma_1$ of $S_\Lambda$ that is disjoint from $S_{A_\varep}$.

By the arguments of the proof of Lemma~\ref{LemGeodNotAccuPerOrb}, the closed geodesic $\gamma_1$ has to cross the tracking geodesic $\gamma_z$ for $\mu$-a.e.~$z\in S$. Let $\wt\gamma_1$ be a lift of $\gamma_1$ crossing $\wt\gamma_{\wt z}$.
By recurrence of the tracking geodesic $\gamma_z$ in $T^1S$ \cite[Theorem~C]{alepablo}, there exists a sequence $(T_n)_{n\in\Z}\in\G^\Z$ such that $T_n\wt \gamma_1\to_{n\to+\infty} \omega(\wt\gamma_{\wt z})$ and $T_n\wt \gamma_1\to_{n\to-\infty} \alpha(\wt\gamma_{\wt z})$ (for Hausdorff topology in the compactification $\wt S\cup \partial\wt S$). 
In particular, there exists $T_1^+\in \G$ such that $\wt S_{\wt A_\varep}$ lies inside the connected component $\wt E_1$ of the complement of $T_1^+\wt\gamma_1$ not containing $\omega(\wt\gamma_{\wt z})$. 

Consider the geodesic $\gamma_0$ given by Claim~\ref{ClaimEssentialPoints} and a lift $\wt\gamma_0$ to $\wt S$ crossing $\wt\gamma_{\wt z}$. By definition of $S_{A_\varep}$, there exists a closed loop $\alpha_0\subset A_\varep$ and a lift $\wt\alpha_0\subset T_0\wt A_\varep$ of it staying at finite distance to $\wt\gamma_0$. By the same arguments as before, there exists $T_2^+\in \G$ such that $T_2^+\wt\gamma_0$ lies inside the complement of $\wt E_1$. Hence, $T_2^+\wt\gamma_0$ is not contained in $\wt S_{\wt A_\varep}$, so and $T_2^+\wt\alpha_0$ is disjoint from ${\wt A_\varep}$. In particular, $\wt A_\varep$ is included in the connected component $E_0$ of the complement of $T_2^+\wt\alpha_0$ not containing $\omega(\wt\gamma_{\wt z})$.

As $A_\varep$ is an essential positively invariant open set, we have $f(\wt A_\varep)\subset \wt A_\varep$, in particular $\wt f^n(\wt z)\in \wt A_\varep$ for any $n\ge 0$. The fact that $\wt f^n(\wt z)\to \omega(\wt\gamma_{\wt z})$ contradicts the fact that $\wt A_\varep\subset E_0$.
\end{proof}


Let $X_\varep$ be a skeleton of $A_\varep$. By this we mean a closed connected set $X_\varep \subset A_\varep$, with empty interior, such that for any $x\in X_\varep$, we have ${i}_*(\pi_1(X_\varep, x)) = {i}_*(\pi_1(A_\varep, x))$, and that the complement of $X_\varep$ is connected. For example, such a set can be obtained as the union of closed loops generating the fundamental group of the fill of $A_\varep$ (the fill of a sub-surface is the union of the surface with the connected components of its complement that are topological discs). 

Let $\wt X_\varep$ be the lift of $X_\varep$ to $\wt S$ that is included in $\wt A_\varep$.

\begin{lemma}\label{LemAPlusNotEquiv}
There exist $\varep_0>0$ and $N>0, C_0>0$ such that the following is true: for any $\wt y\in \wt X_{\varep_0}\cap R(\wt\gamma_{\wt z})$ (resp. $\wt y\in \wt X_{\varep_0}\cap L(\wt\gamma_{\wt z})$) such that $d(\wt y, \wt\gamma_{\wt z})\ge C_0$, there exists $t_0\le N$ and $t_1\ge -N$ such that $I^{t_0}_{\wt \F}(\wt y), I^{t_1}_{\wt \F}(\wt y)\in R(\wt B)$ (resp. $I^{t_0}_{\wt \F}(\wt y), I^{t_1}_{\wt \F}(\wt y)\in L(\wt B)$).
\end{lemma}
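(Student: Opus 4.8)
The plan is to reduce this to Lemma~\ref{LemIfFarThenNotEquiv} via a compactness argument, exploiting the fact that points of $\wt X_{\varep_0}$ have iterates that come $\varep_0$-close to $\wt z$ (after applying a suitable deck transformation). First I would fix, as in the construction preceding the lemma, the set $A_{\varep_0}$, its skeleton $X_{\varep_0}$, and the lift $\wt X_{\varep_0}\subset \wt A_{\varep_0}$; recall that by construction $\wt A_{\varep_0}$ is the connected component (containing $\wt z$) of the preimage of $A_{\varep_0}$, and $A_{\varep_0}=\bigcup_{n\ge 0}f^n(B(z,\varep_0))$. The key observation is that $X_{\varep_0}\subset A_{\varep_0}$ is \emph{compact}: since $X_{\varep_0}$ is a finite union of closed loops it is compact, so it is covered by finitely many sets $f^{n_j}(B(z,\varep_0))$, $j=1,\dots,k$. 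Hence there is a single integer $N_0=\max_j n_j$ such that every $x\in X_{\varep_0}$ satisfies $f^{-m}(x)\in B(z,\varep_0)$ for some $0\le m\le N_0$; lifting, for every $\wt y\in\wt X_{\varep_0}$ there is $0\le m\le N_0$ and $T'\in\G$ with $\wt f^{-m}(\wt y)\in B(T'\wt z,\varep_0)$, i.e.\ $\wt y\in \wt f^m\big(B(T'\wt z,\varep_0)\big)$.

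Next I would locate the deck transformation $T'$. Set $C_1$ as in Lemma~\ref{LemIfFarThenNotEquiv}. I claim there is a constant $C_0$, depending only on $C_1$, $N_0$ and an upper bound $L_0$ for the displacement $\sup_{\wt w\in\wt S} d(\wt w,\wt f(\wt w))$ (finite since $f$ commutes with deck transformations and $S$ is compact), such that $d(\wt y,\wt\gamma_{\wt z})\ge C_0$ forces $d(T'\wt z,\wt\gamma_{\wt z})>C_1$ and $T'\wt z$ on the same side of $\wt\gamma_{\wt z}$ as $\wt y$. Indeed $d(\wt y,\wt f^{-m}(\wt y))\le m L_0\le N_0 L_0$ and $d(\wt f^{-m}(\wt y),T'\wt z)<\varep_0$, so $d(\wt y,T'\wt z)\le N_0 L_0+\varep_0$; thus if $C_0 := C_1 + N_0 L_0 + \varep_0 + 1$ we get $d(T'\wt z,\wt\gamma_{\wt z})\ge d(\wt y,\wt\gamma_{\wt z}) - (N_0L_0+\varep_0) > C_1 + 1$, and since this bound exceeds $C_1\ge d(\wt z,\wt\gamma_{\wt z})$, the point $T'\wt z$ is geometrically on the same side of $\wt\gamma_{\wt z}$ as $\wt y$ (any segment joining $T'\wt z$ to the other side would have to cross $\wt\gamma_{\wt z}$, forcing a point within $C_1$ of it).

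Now I can invoke Lemma~\ref{LemIfFarThenNotEquiv} applied to this $T'$: writing $\wt y_0 := \wt f^{-m}(\wt y)\in B(T'\wt z,\varep_0)$ (so $\wt y_0$ plays the role of ``$\wt y$'' in that lemma), there exist $s_0\in[t_0',1]$ and $s_1\in[-1,t_1']$ with $\wt\phi_{I^{s_0}_{\wt\F}(\wt y_0)}\subset R(\wt B)$ and $\wt\phi_{I^{s_1}_{\wt\F}(\wt y_0)}\subset R(\wt B)$ in the right-hand case (and the $L(\wt B)$ analogue on the left). Translating back along the trajectory, $I^{s+m}_{\wt\F}(\wt y)$ passes through the leaf $\wt\phi_{I^{s}_{\wt\F}(\wt y_0)}$ for the appropriate parameters, so with $t_0 := s_0 + m \le 1 + N_0 =: N$ and $t_1 := s_1 - (m' )$ --- more precisely, splitting into the forward part (use $\wt f^{-m}$ to reach $\wt y_0$, then run forward) and the backward part symmetrically --- one obtains parameters $t_0\le N$ and $t_1\ge -N$ with $I^{t_0}_{\wt\F}(\wt y)\in R(\wt B)$ and $I^{t_1}_{\wt\F}(\wt y)\in R(\wt B)$. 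Taking $N := N_0 + \max(|t_0'|,|t_1'|) + 1$ absorbs all constants.

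The main obstacle, and the step requiring the most care, is the bookkeeping of the time shift by $m$ when transporting the conclusion of Lemma~\ref{LemIfFarThenNotEquiv} from $\wt y_0=\wt f^{-m}(\wt y)$ back to $\wt y$: one must check that the leaves $\wt\phi_{I^{s_i}_{\wt\F}(\wt y_0)}$ are genuinely leaves met by the trajectory $I^{\Z}_{\wt\F}(\wt y)$ at times within $[-N,N]$ of $0$, and that ``being on the right of $\wt B$'' is the correct side after the shift --- here one uses that $\wt B$ is defined from the \emph{fixed} trajectory of $\wt z$, not of $\wt y$, so no relabeling of $\wt B$ occurs. A secondary subtlety is that $m$ may differ between the forward witness and the backward witness; but since we only need \emph{some} $t_0\le N$ and \emph{some} $t_1\ge -N$, and the forward/backward parts of Lemma~\ref{LemIfFarThenNotEquiv} are proved independently, this causes no difficulty. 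Everything else is the uniform compactness bound $N_0$ and the elementary hyperbolic-geometry estimate separating $T'\wt z$ from the far side of $\wt\gamma_{\wt z}$, both of which are routine once set up.
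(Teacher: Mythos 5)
Your proof follows essentially the same approach as the paper: exploit the compactness of $X_{\varep_0}$ to obtain a uniform bound $N_0$ on the number of backward iterates needed to return near $\wt z$, estimate that the relevant translate $T'\wt z$ is at distance $>C_1$ from $\wt\gamma_{\wt z}$ and on the correct side, and then invoke Lemma~\ref{LemIfFarThenNotEquiv} at the shifted point $\wt f^{-m}(\wt y)$. The only slip is a sign in the final time translation --- since $I^{s}_{\wt\F}(\wt f^{-m}(\wt y)) = I^{s-m}_{\wt\F}(\wt y)$ one gets $t_0 = s_0 - m$ and $t_1 = s_1 - m$ with a \emph{single} $m$ (so your worry about differing witnesses is moot for a different reason than you give) --- but the bounds $t_0\le N$, $t_1\ge -N$ still hold, so this is a bookkeeping slip rather than a gap.
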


Note that the conclusion of the lemma implies that $I^{t_0}_{\wt \F}(\wt y), I^{t_1}_{\wt \F}(\wt y)\notin \wt B$.

\begin{rem}\label{RemLemAPlusNotEquiv}
The proof of this lemma implies the following stronger conclusion: there exists a neighbourhood $V_\varep$ of $X_\varep$ such that for any skeleton $X'_\varep \subset V_\varep$, the same conclusion holds for $X'_\varep$ instead of $X_\varep$. Some results below (Proposition~\ref{PropExistInterTransIrrat} and Theorem~\ref{ThmBndDevIrrat}) will also hold for $X'_\varep$ instead of $X_\varep$.
\end{rem}

\begin{proof}
Let $\varep_0>0$ be given by Lemma~\ref{LemNotEquivFiniteTime} that work for both $T_0$ and $T_1$. Let also $C_1>0$ given by Lemma~\ref{LemIfFarThenNotEquiv}.

Let $\wt K_{\varep_0}\subset \wt X_{\varep_0}$ be a compact set such that $\wt X_{\varep_0} = \bigcup_{T'\in\G} T'\wt K_{\varep_0}$ ($\wt K_{\varep_0}$ is a ``fundamental domain'' for $\wt X_{\varep_0}$).

For any $\wt x\in \wt K_{\varep_0}$, there exists $n_{x}\in \N$ such that $f^{{-n_{x}}}(x)\in B(z,\varep_0)$. So there exists $T_{\wt x}\in\G$ such that $\wt f^{{-n_{x}}}(\wt x)\in T_{\wt x} B(\wt z,\varep_0)$. As $B(\wt z,\varep_0)$ is open, there exists $\eta_{\wt x}>0$ such that for any $\wt x'\in B(\wt x, \eta_{\wt x})$, we have $\wt f^{{-n_{x}}}(\wt x')\in T_{\wt x} B(\wt z,\varep_0)$.

As $\wt K_{\varep_0}$ is compact, there is a finite set $\wt x_1,\dots,\wt x_k\in \wt K_{\varep_0}$ such that $\wt K_{\varep_0} \subset \bigcup_{1\le j\le k} B(\wt x_j, \eta_{\wt x_j})$. Hence, for any $\wt y'\in \wt K_{\varep_0}$, there exists $j\le k$ such that $\wt f^{{-n_{x_j}}}(\wt y')\in T_{\wt x_j} B(\wt z,\varep_0)$. {Set $N=\max_{j\in\{1,\ldots,k\}}n_{x_j}+1+t_1'-t_0'$}

Let 
\[C_0 = \sup_{1\le j\le k} d\big(\wt f^{n_{x_j}},\Id_{\wt S}\big)+C_1+\varep_0+1,\]
and pick $\wt y\in \wt X_{\varep_0}\cap R(\wt\gamma_{\wt z})$ such that $d(\wt y, \wt\gamma_{\wt z})\ge C_0$.
By the definition of $\wt K_{\varep_0}$, there exists $T'_{\wt y}\in\G$ such that $\wt y\in T'_{\wt y} \wt K_{\varep_0}$. 
Hence, there exists $j = j_{\wt y}\le k$ such that $\wt f^{-n_{\wt x_j}}(T^{\prime\, -1}_{\wt y} \wt y)\in T_{x_j} B(\wt z,\varep_0)$; in other words, setting $S_{\wt y} = T'_{\wt y} T_{\wt x_j}$, we have  $\wt f^{n_{x_j}}(\wt y)\in S_{\wt y} B(\wt z,\varep_0)$.

By the definition of $C_0$, we have 
\begin{align*}
d\big(S_{\wt y}\wt z,\, \wt\gamma_{\wt z}\big) & \ge d\big(\wt f^{n_{x_j}}(\wt y),\, \wt\gamma_{\wt z}\big)-\varep_0\\
& \ge d\big(\wt y,\, \wt\gamma_{\wt z}\big)-\varep_0 - d\big(\wt f^{n_{x_j}},\Id_{\wt S}\big)\\
& \ge C_1+1>C_1.
\end{align*}
It allows us to apply Lemma~\ref{LemIfFarThenNotEquiv} to $T' = S_{\wt y}$: there exists $\overline{t_0}\in[t_0', 1]$ and $t_1'\in [-1, t'_1]$ such that $\wt\phi_{I^{\overline{t_0}}_{\wt \F}({\wt f^{-n_{x_j}}(\wt y)})}\cup \wt\phi_{I^{\overline{t_1}}_{\wt \F}({\wt f^{-n_{x_j}}(\wt y)})} = {\wt\phi_{I^{-n_{x_j}}_{\wt \F}(\wt y)}\cup\wt\phi_{I^{-n_{x_j}}_{\wt \F}(\wt y)} \subset R(\wt B)}$ and the lemma follows from taking $t_0=\overline{t_0}-n_{x_j}\le N,\, t_1=\overline{t_1}-n_{x_j}\ge -N$.
\end{proof}

\begin{prop}\label{PropExistInterTransIrrat}
There exist $C_0>0$ and $\varep_0>0$ such that the following is true.
Suppose that there exists $\wt y_0$ belonging to a connected component $\wt {\mathcal X}_{\wt y_0}^+$ of the complement $\wt X_{\varep_0}$ and $n_0\in\N$ such that $\wt f^{n_0}(\wt y_0)$ belongs to a connected component $\wt {\mathcal X}_{\wt y_0}^+$ of the complement $\wt X_{\varep_0}$, with $\wt {\mathcal X}_{\wt y_0}^- \subset L(V_{C_0}(\wt\gamma_{\wt z}))$ and $\wt {\mathcal X}_{\wt y_0}^+ \subset R(V_{C_0}(\wt\gamma_{\wt z}))$.

Then there exists $\wt y_1\in\wt S$ such that the paths $I^{\Z}_{\wt \F}(\wt y_1) $ and $I^{\Z}_{\wt \F}(\wt z)$ intersect $\wt\F$-transversally.
\end{prop}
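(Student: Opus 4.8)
The plan is to use the hypothesised configuration to produce a point $\wt y_1$ whose transverse trajectory $\wt\eta:=I^\Z_{\wt\F}(\wt y_1)$ crosses the band $\wt B$ of $\wt\beta_0=I^\Z_{\wt\F}(\wt z)$ from its left side to its right side, with the two endpoints of the crossing lying off $\wt B$, and then to upgrade such a crossing to an actual $\wt\F$–transverse intersection between $\wt\eta$ and $\wt\beta_0$ via the band–crossing dichotomy of \cite[Lemma~2.17]{paper1PAF}, exactly as in the proof of Lemma~\ref{LemAlphaNotCross}. For the constants: keep the $\varep_0>0$ fixed in the paragraph following Lemma~\ref{LemNotEquivFiniteTime} (so that it works for $T_0$ and $T_1$), take $N,C_0$ from Lemma~\ref{LemAPlusNotEquiv}, and let $N'$ be the bound on $|t_0|$ and $|t_1|$ for the two times produced by Lemma~\ref{LemAPlusNotEquiv} — a constant depending only on $\varep_0$, $t_0'$, $t_1'$ and the finitely many integers $n_{x_j}$, read off the proof of that lemma. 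Finally enlarge $C_0$ so that $2C_0>N'\cdot L_f$, where $L_f:=\sup_{\wt x\in\wt S}d(\wt x,\wt f(\wt x))$. This last enlargement forces $n_0$ to be large in the hypothesised configuration: $\wt y_0$ and $\wt f^{n_0}(\wt y_0)$ lie on opposite sides of $\wt\gamma_{\wt z}$ at distance $>C_0$, so the geodesic joining them crosses $\wt\gamma_{\wt z}$ and has length $>2C_0$, while $d(\wt y_0,\wt f^{n_0}(\wt y_0))\le n_0 L_f$; hence $n_0>2C_0/L_f>2N'$.

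\emph{Step 1 (reduction to a point of $\wt X_{\varep_0}$).} First I would reduce the hypothesis to the cleaner statement: there is $\wt y\in\wt X_{\varep_0}$ with $\wt y\in L(\wt\gamma_{\wt z})$, $d(\wt y,\wt\gamma_{\wt z})\ge C_0$, and $\wt f^{n_0}(\wt y)\in\wt X_{\varep_0}$ with $\wt f^{n_0}(\wt y)\in R(\wt\gamma_{\wt z})$, $d(\wt f^{n_0}(\wt y),\wt\gamma_{\wt z})\ge C_0$. This step is delicate because $\wt X_{\varep_0}$ has empty interior, so the discrete orbit of $\wt y_0$ need not meet it. The point is that $\wt{\mathcal X}^-$ and $\wt{\mathcal X}^+$ are distinct connected components of the complement of the closed, $\G$–cocompact set $\wt X_{\varep_0}$, so the connecting transverse paths $I^{[0,1]}_{\wt\F}(\wt f^j(\wt y_0))$ must cross $\partial\wt{\mathcal X}^-\cup\partial\wt{\mathcal X}^+\subset\wt X_{\varep_0}$; to replace these crossing points (which lie on transverse paths but need not be orbit points) by honest iterates of a single point, one uses the positive invariance of $A_{\varep_0}$ together with the flexibility in the choice of skeleton provided by Remark~\ref{RemLemAPlusNotEquiv} (any skeleton inside a fixed neighbourhood $V_{\varep_0}$ of $X_{\varep_0}$ works, and one routes it through the relevant iterates, which lie in $\wt A_{\varep_0}$). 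The side and distance conditions on $\wt y$ and $\wt f^{n_0}(\wt y)$ are then automatic, since $\overline{\wt{\mathcal X}^-}\subset\overline{L(V_{C_0}(\wt\gamma_{\wt z}))}$ and $\overline{\wt{\mathcal X}^+}\subset\overline{R(V_{C_0}(\wt\gamma_{\wt z}))}$.

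\emph{Step 2 (the trajectory of $\wt y$ crosses $\wt B$).} Apply Lemma~\ref{LemAPlusNotEquiv} to $\wt y\in\wt X_{\varep_0}\cap L(\wt\gamma_{\wt z})$, $d(\wt y,\wt\gamma_{\wt z})\ge C_0$: it produces a time $t_-$ with $|t_-|\le N'$ and $I^{t_-}_{\wt\F}(\wt y)\in L(\wt B)$. Apply it to $\wt f^{n_0}(\wt y)\in\wt X_{\varep_0}\cap R(\wt\gamma_{\wt z})$, $d(\wt f^{n_0}(\wt y),\wt\gamma_{\wt z})\ge C_0$: it produces a time $t_+$ with $|t_+|\le N'$ and $I^{t_+}_{\wt\F}(\wt f^{n_0}(\wt y))\in R(\wt B)$, that is $I^{\,n_0+t_+}_{\wt\F}(\wt y)\in R(\wt B)$. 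Since $n_0>2N'$ we have $t_-<n_0+t_+$; and since $L(\wt B)$ and $R(\wt B)$ are unions of leaves of $\wt\F$ disjoint from $\wt B$, the sub-path $P:=I^{[\,t_-,\;n_0+t_+\,]}_{\wt\F}(\wt y)$ is a compact $\wt\F$–transverse path joining a point of $L(\wt B)$ to a point of $R(\wt B)$; in particular $P$ crosses $\wt B$ and its two endpoints lie off $\wt B$.

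\emph{Step 3 (conclusion).} Set $\wt y_1:=\wt y$. Applying \cite[Lemma~2.17]{paper1PAF} to $P$, whose endpoints lie on opposite sides of $\wt B$, we obtain that either $\wt\eta=I^\Z_{\wt\F}(\wt y_1)$ and $\wt\beta_0$ intersect $\wt\F$–transversally — which is precisely the conclusion of the proposition, with this $\wt y_1$ — or a half-trajectory $\wt\beta_0|_{(-\infty,t_0]}$ or $\wt\beta_0|_{[t_0,+\infty)}$ is $\wt\F$–equivalent to a subpath of the compact path $P$. The second alternative is impossible, for the same reason as in the proof of Lemma~\ref{LemAlphaNotCross}: $\wt\beta_0$ meets each leaf of $\wt\F$ at most once (Lemma~\ref{LemAlphaSimple}) and is proper in $\wt S$, so a half-trajectory of $\wt\beta_0$ cannot be $\wt\F$–equivalent to a subpath of a compact transverse path (this would force $\beta_0$ to accumulate on itself, contradicting \cite[Proposition~2.4]{paper1PAF}). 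Hence $\wt\eta$ and $\wt\beta_0$ intersect $\wt\F$–transversally. The main obstacle is Step~1, namely the topological passage from a point of the complement of $\wt X_{\varep_0}$ whose orbit crosses $V_{C_0}(\wt\gamma_{\wt z})$ to a point of $\wt X_{\varep_0}$ itself whose orbit does so; once this is secured, Steps~2 and 3 are a routine application of Lemma~\ref{LemAPlusNotEquiv} and of the band–crossing dichotomy, and enlarging $C_0$ to make $n_0$ large is exactly what is needed in order to separate the $L(\wt B)$– and $R(\wt B)$–times furnished by Lemma~\ref{LemAPlusNotEquiv}.
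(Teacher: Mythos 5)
Your overall plan — reduce to a point $\wt y_1\in\wt X_{\varep_0}$ with $\wt f^{n_0}(\wt y_1)\in\wt X_{\varep_0}$ on the correct sides, feed it to Lemma~\ref{LemAPlusNotEquiv}, then upgrade the crossing of $\wt B$ to a transverse intersection — matches the paper's proof, and your observation that $C_0$ must be enlarged so that $n_0$ is forced to exceed the bound coming from Lemma~\ref{LemAPlusNotEquiv} is the right bookkeeping. But the two key steps are not secured.

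\emph{Step~1 is the genuine gap, and the route you sketch does not close it.} You propose to locate the point $\wt y_1$ by arguing that the transverse arcs $I^{[0,1]}_{\wt\F}(\wt f^j(\wt y_0))$ cross $\partial\wt{\mathcal X}^-\cup\partial\wt{\mathcal X}^+$, and then to ``replace these crossing points by honest iterates of a single point'' using invariance of $A_{\varep_0}$ and the skeleton flexibility of Remark~\ref{RemLemAPlusNotEquiv}. This cannot work as stated: what Lemma~\ref{LemAPlusNotEquiv} needs is a \emph{single} point $\wt y_1$ such that \emph{both} $\wt y_1\in\wt X_{\varep_0}$ and $\wt f^{n_0}(\wt y_1)\in\wt X_{\varep_0}$, and there is no reason the crossing points you find (times and positions along transverse paths, unrelated to one another) should satisfy the second condition after applying $\wt f^{n_0}$. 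Nor is the orbit of $\wt y_0$ assumed to lie in $\wt A_{\varep_0}$, so ``routing the skeleton through the relevant iterates'' is not even available. The paper's actual argument is entirely different: one looks at the two open sets $\wt{\mathcal X}^-_{\wt y_0}$ and $\wt f^{-n_0}(\wt{\mathcal X}^+_{\wt y_0})$, both of which contain $\wt y_0$, and proves directly that their boundaries (both contained in $\wt X_{\varep_0}$, resp.\ $\wt f^{-n_0}(\wt X_{\varep_0})$) must meet. This is handled in two cases: when $X_{\varep_0}$ is a skeleton of $S$, these are overlapping distinct fundamental domains, whose boundaries therefore intersect; when it is not, one uses the path-connectedness of $\wt{\mathcal X}^+_{\wt y_0}$ to produce a path $\wt\delta$ reaching a point $\chi\in\partial\wt S$, the fact that $\chi$ is a fixed point of a deck transformation and hence lies on the correct side of $\wt\gamma_{\wt z}$, and the identity $\wt f|_{\partial\wt S}=\Id$ to show $\wt\delta$ crosses $\partial\wt f^{n_0}(\wt{\mathcal X}^-_{\wt y_0})$. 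Neither case appears in your argument; this is the idea you would need to supply.

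\emph{Step~3 also has a gap.} After you produce the compact crossing arc $P=I^{[t_-,n_0+t_+]}_{\wt\F}(\wt y_1)$, you invoke the dichotomy of \cite[Lemma~2.17]{paper1PAF} and rule out the non-transverse branch by saying ``$\beta_0$ accumulates in itself, contradicting \cite[Proposition~2.4]{paper1PAF}, exactly as in Lemma~\ref{LemAlphaNotCross}.'' But that argument in Lemma~\ref{LemAlphaNotCross} works because there the compact arc is a piece of $T\wt\beta_0$, which projects to a piece of $\beta_0$ itself; here $P$ is a piece of the trajectory of $y_1$, which is \emph{not} part of $\beta_0$'s orbit, so ``$\beta_0$ accumulates in itself'' does not follow from a half-trajectory of $\wt\beta_0$ being equivalent to a subpath of $P$. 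The paper instead rules out accumulation of $I^\Z_{\wt\F}(\wt z)$ in $I^{[t_-,t_+]}_{\wt\F}(\wt y_1)$ via \cite[Proposition~2.5]{paper1PAF} (accumulation in a transverse path forces $\wt B$ to be invariant under some nontrivial deck transformation) and then \cite[Proposition~2.18]{paper1PAF}, contradicting that $\gamma_z$ is not a closed geodesic; only afterward does it invoke \cite[Lemma~2.17]{paper1PAF} to produce the transverse intersection. You need this $T$-invariance argument, not the self-accumulation one.
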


\begin{figure}
\begin{center}

\tikzset{every picture/.style={line width=0.75pt}} 

\begin{tikzpicture}[x=0.75pt,y=0.75pt,yscale=-1.2,xscale=1.2]

\draw  [color={rgb, 255:red, 189; green, 16; blue, 224 }  ,draw opacity=0.2 ][fill={rgb, 255:red, 189; green, 16; blue, 224 }  ,fill opacity=0.1 ] (415.72,59.5) .. controls (440.61,81.5) and (444.39,200.39) .. (415.28,238.61) .. controls (380.61,200.39) and (383.5,83.94) .. (415.72,59.5) -- cycle ;
\draw  [draw opacity=0][fill={rgb, 255:red, 176; green, 248; blue, 28 }  ,fill opacity=0.5 ] (325.21,181.59) .. controls (319.67,168.21) and (318.84,141.66) .. (321.51,132.04) .. controls (325.58,133.29) and (326.58,134.21) .. (328.83,132.04) .. controls (331.33,131.71) and (332.25,130.71) .. (333.25,129.13) .. controls (337.17,134.54) and (351.42,128.96) .. (354.42,129.54) .. controls (349.08,141.96) and (362.67,144.88) .. (366.75,153.63) .. controls (354.17,154.88) and (348.33,164.21) .. (346.33,175.04) .. controls (339.33,173.38) and (335.08,178.29) .. (334.33,181.88) .. controls (330.58,178.88) and (328.25,180.38) .. (325.21,181.59) -- cycle ;
\draw  [draw opacity=0][fill={rgb, 255:red, 176; green, 248; blue, 28 }  ,fill opacity=0.5 ] (325.21,181.59) .. controls (319.67,168.21) and (318.84,141.66) .. (321.51,132.04) .. controls (325.58,133.29) and (328.13,138.97) .. (330.87,134.43) .. controls (333.97,137.74) and (340.44,140.51) .. (347.82,137.44) .. controls (351.74,142.85) and (396.13,142.05) .. (413.21,128.51) .. controls (426.44,144.97) and (477.43,152.07) .. (481.51,160.82) .. controls (465.51,163.74) and (410.28,149.37) .. (408.28,160.21) .. controls (338.13,129.74) and (342.52,175.93) .. (341.77,179.52) .. controls (334.44,171.59) and (328.25,180.38) .. (325.21,181.59) -- cycle ;
\draw  [draw opacity=0][fill={rgb, 255:red, 176; green, 248; blue, 28 }  ,fill opacity=0.5 ] (101.72,150.01) .. controls (108.58,147.21) and (111.67,145.88) .. (114.34,136.26) .. controls (126.27,144.54) and (128.36,139.79) .. (137.19,147.21) .. controls (131.92,152.04) and (126.82,155.21) .. (124.88,167.11) .. controls (119.08,159.57) and (116.33,154.75) .. (101.72,150.01) -- cycle ;
\draw  [draw opacity=0][fill={rgb, 255:red, 248; green, 231; blue, 28 }  ,fill opacity=0.5 ] (107.02,164.93) .. controls (108.69,156.02) and (127.27,140.35) .. (131.73,130.12) .. controls (133.77,131.93) and (133.94,130.77) .. (136.02,132.68) .. controls (128.86,148.6) and (121.11,163.35) .. (113.36,173.52) .. controls (113.11,169.85) and (112.77,165.68) .. (107.02,164.93) -- cycle ;
\draw  [draw opacity=0][fill={rgb, 255:red, 248; green, 231; blue, 28 }  ,fill opacity=0.5 ] (219.08,134.29) .. controls (226.41,135.4) and (229.75,135.85) .. (236.97,128.96) .. controls (242.97,142.18) and (247.19,139.18) .. (250.97,150.07) .. controls (243.97,151.52) and (237.97,151.63) .. (230.19,160.85) .. controls (229.08,151.4) and (229.19,145.85) .. (219.08,134.29) -- cycle ;
\draw [color={rgb, 255:red, 155; green, 155; blue, 155 }  ,draw opacity=1 ]   (209.19,130.68) .. controls (221.3,136.9) and (234.97,138.24) .. (242.52,119.79) ;
\draw [color={rgb, 255:red, 155; green, 155; blue, 155 }  ,draw opacity=1 ]   (251.41,154.46) .. controls (251.41,138.46) and (230.97,136.68) .. (238.52,118.24) ;
\draw [color={rgb, 255:red, 155; green, 155; blue, 155 }  ,draw opacity=1 ]   (226.3,168.46) .. controls (229.3,157.79) and (239.86,149.79) .. (255.19,150.02) ;
\draw [color={rgb, 255:red, 155; green, 155; blue, 155 }  ,draw opacity=1 ]   (229.63,167.79) .. controls (231.86,152.02) and (225.19,136.68) .. (211.19,128.68) ;

\draw [color={rgb, 255:red, 189; green, 16; blue, 224 }  ,draw opacity=1 ]   (176.56,60.33) .. controls (172.11,136.33) and (171.22,159.89) .. (176.11,239.44) ;
\draw  [color={rgb, 255:red, 189; green, 16; blue, 224 }  ,draw opacity=0.2 ][fill={rgb, 255:red, 189; green, 16; blue, 224 }  ,fill opacity=0.1 ] (176.56,60.33) .. controls (201.44,82.33) and (205.22,201.22) .. (176.11,239.44) .. controls (141.44,201.22) and (144.33,84.78) .. (176.56,60.33) -- cycle ;
\draw  [line width=1.5]  (80,150) .. controls (80,100.29) and (120.29,60) .. (170,60) .. controls (219.71,60) and (260,100.29) .. (260,150) .. controls (260,199.71) and (219.71,240) .. (170,240) .. controls (120.29,240) and (80,199.71) .. (80,150) -- cycle ;
\draw [color={rgb, 255:red, 155; green, 155; blue, 155 }  ,draw opacity=1 ]   (91.38,151.98) .. controls (104.97,151.11) and (117.38,145.25) .. (114.41,125.54) ;
\draw [color={rgb, 255:red, 155; green, 155; blue, 155 }  ,draw opacity=1 ]   (139.82,150.75) .. controls (131.61,137.01) and (113.15,145.97) .. (110.18,126.26) ;
\draw [color={rgb, 255:red, 155; green, 155; blue, 155 }  ,draw opacity=1 ]   (125.44,175.64) .. controls (122.55,164.95) and (127.51,152.66) .. (140.78,144.99) ;
\draw [color={rgb, 255:red, 155; green, 155; blue, 155 }  ,draw opacity=1 ]   (127.96,173.36) .. controls (121.78,158.67) and (108.19,148.93) .. (92.07,149.24) ;
\draw [color={rgb, 255:red, 155; green, 155; blue, 155 }  ,draw opacity=1 ]   (127,131.21) .. controls (136.43,125.07) and (142.29,145.79) .. (142.98,131.83) ;
\draw [color={rgb, 255:red, 155; green, 155; blue, 155 }  ,draw opacity=1 ]   (132,126.79) .. controls (134.86,134.5) and (100.43,159.36) .. (107.43,171.21) ;
\draw [color={rgb, 255:red, 155; green, 155; blue, 155 }  ,draw opacity=1 ]   (137.86,129.36) .. controls (132.71,139.93) and (117.57,175.07) .. (108.29,177.36) ;
\draw [color={rgb, 255:red, 155; green, 155; blue, 155 }  ,draw opacity=1 ]   (101.87,167.07) .. controls (111.3,160.93) and (115.57,171.03) .. (111.43,179.79) ;
\draw [color={rgb, 255:red, 208; green, 2; blue, 27 }  ,draw opacity=1 ]   (119.86,153.43) .. controls (159.86,123.43) and (193.86,179.07) .. (233.86,149.07) ;
\draw [shift={(233.86,149.07)}, rotate = 323.13] [color={rgb, 255:red, 208; green, 2; blue, 27 }  ,draw opacity=1 ][fill={rgb, 255:red, 208; green, 2; blue, 27 }  ,fill opacity=1 ][line width=0.75]      (0, 0) circle [x radius= 1.34, y radius= 1.34]   ;
\draw [shift={(119.86,153.43)}, rotate = 323.13] [color={rgb, 255:red, 208; green, 2; blue, 27 }  ,draw opacity=1 ][fill={rgb, 255:red, 208; green, 2; blue, 27 }  ,fill opacity=1 ][line width=0.75]      (0, 0) circle [x radius= 1.34, y radius= 1.34]   ;
\draw [color={rgb, 255:red, 208; green, 193; blue, 3 }  ,draw opacity=1 ]   (121.22,195.67) .. controls (118.22,188.03) and (125.05,185.58) .. (115.59,168.2) ;
\draw [shift={(114.14,165.64)}, rotate = 59.59] [fill={rgb, 255:red, 208; green, 193; blue, 3 }  ,fill opacity=1 ][line width=0.08]  [draw opacity=0] (7.14,-3.43) -- (0,0) -- (7.14,3.43) -- (4.74,0) -- cycle    ;
\draw [color={rgb, 255:red, 143; green, 210; blue, 5 }  ,draw opacity=1 ]   (107.29,112.21) .. controls (103.12,119.87) and (99.34,135.98) .. (110.88,144.89) ;
\draw [shift={(113.29,146.5)}, rotate = 210.29] [fill={rgb, 255:red, 143; green, 210; blue, 5 }  ,fill opacity=1 ][line width=0.08]  [draw opacity=0] (7.14,-3.43) -- (0,0) -- (7.14,3.43) -- (4.74,0) -- cycle    ;
\draw  [draw opacity=0][fill={rgb, 255:red, 0; green, 0; blue, 0 }  ,fill opacity=1 ] (110.57,155.18) .. controls (110.57,154.25) and (111.32,153.5) .. (112.25,153.5) .. controls (113.18,153.5) and (113.93,154.25) .. (113.93,155.18) .. controls (113.93,156.11) and (113.18,156.86) .. (112.25,156.86) .. controls (111.32,156.86) and (110.57,156.11) .. (110.57,155.18) -- cycle ;
\draw [color={rgb, 255:red, 189; green, 16; blue, 224 }  ,draw opacity=1 ]   (415.72,59.5) .. controls (411.28,135.5) and (410.39,159.06) .. (415.28,238.61) ;
\draw [color={rgb, 255:red, 155; green, 155; blue, 155 }  ,draw opacity=1 ]   (368.08,157.46) .. controls (365.51,142.36) and (346.9,145.59) .. (356.13,125.28) ;
\draw [color={rgb, 255:red, 155; green, 155; blue, 155 }  ,draw opacity=1 ]   (345.6,179.6) .. controls (347.21,164.97) and (357.05,150.97) .. (371.36,154.21) ;
\draw [color={rgb, 255:red, 155; green, 155; blue, 155 }  ,draw opacity=1 ]   (334.91,186.96) .. controls (332.02,176.27) and (344.59,171.9) .. (350.44,177.28) ;
\draw [color={rgb, 255:red, 155; green, 155; blue, 155 }  ,draw opacity=1 ]   (337.05,186.67) .. controls (335.51,180.97) and (331.21,179.28) .. (325.21,181.59) ;
\draw [color={rgb, 255:red, 155; green, 155; blue, 155 }  ,draw opacity=1 ]   (331.97,126.97) .. controls (337.58,136.81) and (347.36,126.67) .. (361.67,129.9) ;
\draw [color={rgb, 255:red, 155; green, 155; blue, 155 }  ,draw opacity=1 ]   (325.51,128.36) .. controls (326.44,134.51) and (333.67,131.59) .. (335.21,124.97) ;
\draw [color={rgb, 255:red, 155; green, 155; blue, 155 }  ,draw opacity=1 ]   (320.59,131.59) .. controls (325.05,133.28) and (328.74,134.82) .. (330.28,128.21) ;
\draw  [draw opacity=0][fill={rgb, 255:red, 248; green, 231; blue, 28 }  ,fill opacity=0.5 ] (490.84,111.26) .. controls (497.45,124.15) and (500.43,150.54) .. (498.55,160.35) .. controls (494.39,159.43) and (493.32,158.6) .. (491.25,160.94) .. controls (488.79,161.48) and (487.95,162.55) .. (487.09,164.21) .. controls (482.74,159.13) and (468.99,165.85) .. (465.96,165.51) .. controls (470.26,152.7) and (456.49,150.9) .. (451.71,142.51) .. controls (464.15,140.24) and (469.2,130.47) .. (470.32,119.51) .. controls (477.43,120.6) and (481.26,115.35) .. (481.72,111.72) .. controls (485.7,114.41) and (487.91,112.72) .. (490.84,111.26) -- cycle ;
\draw [color={rgb, 255:red, 155; green, 155; blue, 155 }  ,draw opacity=1 ]   (450.07,138.8) .. controls (453.86,153.64) and (472.15,148.91) .. (464.6,169.9) ;
\draw [color={rgb, 255:red, 155; green, 155; blue, 155 }  ,draw opacity=1 ]   (470.67,114.9) .. controls (470.27,129.61) and (461.59,144.36) .. (447.07,142.31) ;
\draw [color={rgb, 255:red, 155; green, 155; blue, 155 }  ,draw opacity=1 ]   (480.73,106.7) .. controls (484.48,117.12) and (472.31,122.5) .. (466.05,117.61) ;
\draw [color={rgb, 255:red, 155; green, 155; blue, 155 }  ,draw opacity=1 ]   (478.62,107.16) .. controls (480.62,112.71) and (485.05,114.05) .. (490.84,111.26) ;
\draw [color={rgb, 255:red, 155; green, 155; blue, 155 }  ,draw opacity=1 ]   (488.53,166.25) .. controls (482.15,156.9) and (473.22,167.8) .. (458.7,165.75) ;
\draw [color={rgb, 255:red, 155; green, 155; blue, 155 }  ,draw opacity=1 ]   (494.86,164.34) .. controls (493.44,158.28) and (486.47,161.78) .. (485.48,168.5) ;
\draw [color={rgb, 255:red, 155; green, 155; blue, 155 }  ,draw opacity=1 ]   (498.55,160.35) .. controls (493.97,159.02) and (491.12,158.16) .. (490.12,164.88) ;
\draw [color={rgb, 255:red, 0; green, 98; blue, 212 }  ,draw opacity=1 ]   (470.07,157.87) .. controls (481.38,142.1) and (489.95,140.38) .. (498.24,139.52) ;
\draw  [line width=1.5]  (319.17,149.17) .. controls (319.17,99.46) and (359.46,59.17) .. (409.17,59.17) .. controls (458.87,59.17) and (499.17,99.46) .. (499.17,149.17) .. controls (499.17,198.87) and (458.87,239.17) .. (409.17,239.17) .. controls (359.46,239.17) and (319.17,198.87) .. (319.17,149.17) -- cycle ;
\draw [color={rgb, 255:red, 155; green, 155; blue, 155 }  ,draw opacity=1 ]   (321.51,132.04) .. controls (325.97,133.73) and (330.13,139.74) .. (331.67,133.13) ;
\draw [color={rgb, 255:red, 155; green, 155; blue, 155 }  ,draw opacity=1 ]   (329.85,132.61) .. controls (330.77,138.76) and (346.88,141.41) .. (348.42,134.79) ;
\draw [color={rgb, 255:red, 155; green, 155; blue, 155 }  ,draw opacity=1 ]   (345.56,134.81) .. controls (351.16,144.64) and (394.67,141.79) .. (416.33,127.29) ;
\draw [color={rgb, 255:red, 155; green, 155; blue, 155 }  ,draw opacity=1 ]   (483.75,164.63) .. controls (481.18,149.53) and (416.75,144.13) .. (412.58,125.46) ;
\draw [color={rgb, 255:red, 155; green, 155; blue, 155 }  ,draw opacity=1 ]   (483.67,159.63) .. controls (469.67,166.13) and (406,144.96) .. (407.67,163.5) ;
\draw [color={rgb, 255:red, 208; green, 2; blue, 27 }  ,draw opacity=1 ]   (355.5,148.79) .. controls (395.5,118.79) and (430.07,187.87) .. (470.07,157.87) ;
\draw [shift={(470.07,157.87)}, rotate = 323.13] [color={rgb, 255:red, 208; green, 2; blue, 27 }  ,draw opacity=1 ][fill={rgb, 255:red, 208; green, 2; blue, 27 }  ,fill opacity=1 ][line width=0.75]      (0, 0) circle [x radius= 1.34, y radius= 1.34]   ;
\draw [shift={(355.5,148.79)}, rotate = 323.13] [color={rgb, 255:red, 208; green, 2; blue, 27 }  ,draw opacity=1 ][fill={rgb, 255:red, 208; green, 2; blue, 27 }  ,fill opacity=1 ][line width=0.75]      (0, 0) circle [x radius= 1.34, y radius= 1.34]   ;
\draw [color={rgb, 255:red, 155; green, 155; blue, 155 }  ,draw opacity=1 ]   (341.27,181.08) .. controls (341.5,150.04) and (363.75,140.96) .. (409.83,160.71) ;
\draw [color={rgb, 255:red, 155; green, 155; blue, 155 }  ,draw opacity=1 ]   (343.17,181.29) .. controls (335.5,171.63) and (331.21,179.28) .. (325.21,181.59) ;

\draw (123.57,151.69) node [anchor=north west][inner sep=0.75pt]  [font=\small,color={rgb, 255:red, 178; green, 3; blue, 25 }  ,opacity=1 ]  {$\wt{y}_{0}$};
\draw (232.21,150.95) node [anchor=north west][inner sep=0.75pt]  [font=\small,color={rgb, 255:red, 178; green, 3; blue, 25 }  ,opacity=1 ]  {$\wt{f}^{n_{0}}(\wt{y}_{0})$};
\draw (234.72,130.15) node [anchor=south east] [inner sep=0.75pt]  [font=\small,color={rgb, 255:red, 170; green, 150; blue, 0 } ,opacity=1 ]  {$\wt\X_{\wt y_0}^{+}$};
\draw (145.11,195.37) node [anchor=north] [inner sep=0.75pt]  [font=\small,color={rgb, 255:red, 170; green, 150; blue, 0 }  ,opacity=1 ]  {$\wt f^{-n_{0}}( \wt\X_{\wt y_0}^{+})$};
\draw (113.36,113.81) node [anchor=south] [inner sep=0.75pt]  [font=\small,color={rgb, 255:red, 80; green, 200; blue, 0 }  ,opacity=1 ]  {$\wt \X_{\wt y_0}^{-}$};
\draw (94.6,148.48) node [anchor=north west][inner sep=0.75pt]  [font=\small,color={rgb, 255:red, 0; green, 0; blue, 0 }  ,opacity=1 ]  {$\wt{y}_{1}$};
\draw (353.2,149.64) node [anchor=north west][inner sep=0.75pt]  [font=\small,color={rgb, 255:red, 178; green, 3; blue, 25 }  ,opacity=1 ]  {$\wt{y}_{0}$};
\draw (462.43,160.5) node [anchor=north west][inner sep=0.75pt]  [font=\small,color={rgb, 255:red, 178; green, 3; blue, 25 }  ,opacity=1 ]  {$\wt{f}^{n_{0}}(\wt{y}_{0})$};
\draw (490.55,142.63) node [anchor=south east] [inner sep=0.75pt]  [font=\small,color={rgb, 255:red, 0; green, 98; blue, 212 }  ,opacity=1 ]  {$\wt{\delta }$};
\draw (501.52,133.72) node [anchor=west] [inner sep=0.75pt]  [font=\small,color={rgb, 255:red, 0; green, 98; blue, 212 }  ,opacity=1 ]  {$\chi $};
\draw (169.82,94.87) node [anchor=south east] [inner sep=0.75pt]  [font=\small,color={rgb, 255:red, 177; green, 5; blue, 212 }  ,opacity=1 ]  {$V_{C_0}( \wt\gamma_{\wt z} )$};
\draw (411.56,89.87) node [anchor=south east] [inner sep=0.75pt]  [font=\small,color={rgb, 255:red, 177; green, 5; blue, 212 }  ,opacity=1 ]  {$V_{C_0}( \wt\gamma_{\wt z} )$};
\draw (348.44,123.48) node [anchor=south] [inner sep=0.75pt]  [font=\small,color={rgb, 255:red, 80; green, 200; blue, 0 }  ,opacity=1 ]  {$\wt\X_{\wt y_0}^{-}$};
\draw (415.78,163.88) node [anchor=north] [inner sep=0.75pt]  [font=\small,color={rgb, 255:red, 80; green, 200; blue, 0 },opacity=1 ]  {$\wt f^{n_0}( \wt\X_{\wt y_0}^{-})$};
\draw (474.44,112.49) node [anchor=south east] [inner sep=0.75pt]  [font=\small,color={rgb, 255:red, 170; green, 150; blue, 0 } ,opacity=1 ]  {$\wt \X_{\wt y_0}^{+}$};

\end{tikzpicture}

\caption{Proof of Proposition~\ref{PropExistInterTransIrrat}: the case where $X_{\varep_0}$ is a skeleton of the surface (left) and the case it is not (right).}\label{FigPropExistInterTransIrrat}
\end{center}
\end{figure}
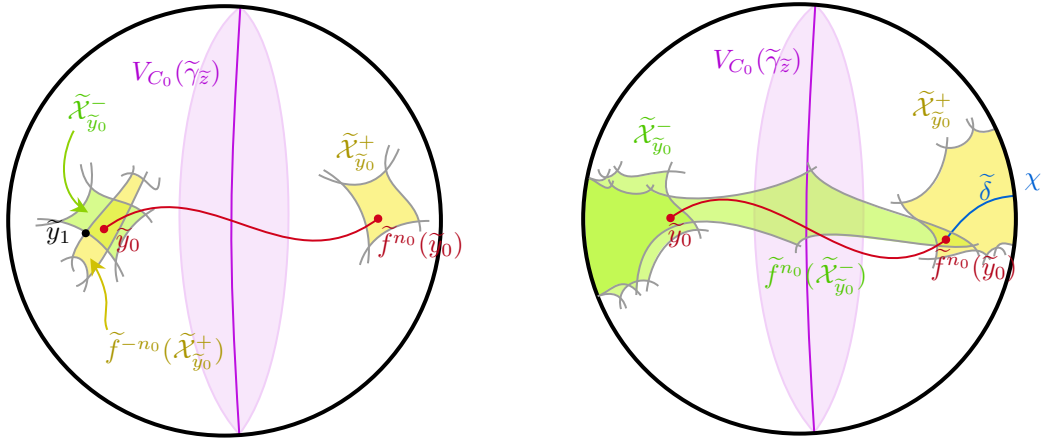

\begin{proof}
We do the proof for $n_0>0$, the case $n_0<0$ being identical. See Figure~\ref{FigPropExistInterTransIrrat}.
Let $C_0$ and $\varep_0$ given by Lemma~\ref{LemAPlusNotEquiv}.
Suppose that there exists $\wt y_0$ belonging to a connected component $\wt {\mathcal X}_{\wt y_0}^-$ of the complement $\wt X_{\varep_0}$ and $n_0>0$ such that $\wt f^{n_0}(\wt y_0)$ belongs to a connected component $\wt {\mathcal X}_{\wt y_0}^+$ of the complement $\wt X_{\varep_0}$, with $\wt {\mathcal X}_{\wt y_0}^- \subset L(V_{C_0}(\wt\gamma_{\wt z}))$ and $\wt {\mathcal X}_{\wt y_0}^+ \subset R(V_{C_0}(\wt\gamma_{\wt z}))$.

Let us prove that there exists $\wt y_1\in \wt X_{\varep_0} \cap L(V_{C_0}(\wt\gamma_{\wt z}))$, such that $\wt f^{n_0}(\wt y_1)\in \wt X_{\varep_0}\cap R(V_{C_0}(\wt\gamma_{\wt z}))$. 

Indeed, we have two cases. The first is when $X_\varep$ is a skeleton of $S$ (\emph{i.e.}~${i}_*(\pi_1(X_\varep)) = \pi_1(S)$), equivalently when $\wt \X_{y_0}^-$ and $\wt \X_{y_0}^+$ are fundamentals domains of $S$, and hence so is $\wt f^{-n_0}(\wt \X_{y_0}^+)$. The two fundamental domains $\wt \X_{y_0}^-$ and $\wt f^{-n_0}(\wt \X_{y_0}^+)$ intersect (at $\wt y_0$), hence their boundaries intersect at one point $\wt y_1 \in \wt X_{\varep_0}^- \cap \wt f^{-n_0}(\wt X_{\varep_0}^+)$, with the property that $\wt y_1\in L(V_{C_0}(\wt\gamma_{\wt z}))$ and $\wt f^{n_0}(\wt y_1)\in R(V_{C_0}(\wt\gamma_{\wt z}))$.

The second case is when $X_\varep$ is not a skeleton of $S$, and hence $\wt \X_{y_0}^-$ and $\wt \X_{y_0}^+$ are unbounded, in other words their boundaries contain points of $\partial\wt S$. As $\wt \X_{y_0}^+$ is a topological disc, it is path connected. Its projection on $S$ contains an essential loop, hence there is a path included in $\wt \X_{y_0}^+$ with well defined $\omega$-limit belonging to ${\partial \wt S}$. We deduce that there is a path $\wt \delta : [0,1]\to \wt \X_{y_0}^+ \cup {\partial \wt S}$ linking $\wt f^{n_0}(\wt y_0)$ to a point $\chi\in {\partial \wt S}$. Note that by our construction, $\chi$ is an endpoint of a deck transformation, hence it cannot be an endpoint of $\wt\gamma_{\wt z}$. We deduce that $\chi\in R(\wt\gamma_{\wt z})$, while $\overline{\wt \X_{y_0}^-} \cap R(\wt\gamma_{\wt z}) = \emptyset$. As $\wt f|_{\partial \wt S} = \Id_{\partial \wt S}$, we deduce that 
\[\wt f^{n_0}\big(\overline{\wt \X_{y_0}^-}\big) \cap {\partial\wt S} \cap R(\wt\gamma_{\wt z}) = \emptyset.\]
As a consequence, $\wt\delta$ links $\wt f^{n_0}(\wt y_0) \in \wt f^{n_0}\big(\overline{\wt \X_{y_0}^-}\big)$ to $\chi \notin \wt f^{n_0}\big(\overline{\wt \X_{y_0}^-}\big)$. We deduce that $\wt\delta$ meets $\partial \wt f^{n_0}\big({\wt \X_{y_0}^-}\big)$, in particular $\partial \wt f^{n_0}\big({\wt \X_{y_0}^-}\big)$ meets $\wt \X_{y_0}^+$. As it also meets the complement of $\wt \X_{y_0}^+$ (consider a point in $\wt f^{n_0}\big(\overline{\wt \X_{y_0}^-}\big)\cap \partial \wt S$), we deduce that 
\[\partial \wt f^{n_0}\big({\wt \X_{y_0}^-}\big) \cap \partial {\wt \X_{y_0}^-} \neq\emptyset.\]
This proves that there exists $\wt y_1\in \wt X_{\varep_0}^-$ with $\wt y_1\in L(V_{C_0}(\wt\gamma_{\wt z}))$, such that $\wt f^{n_0}(\wt y_1)\in \wt X_{\varep_0}^+$ and $\wt f^{n_0}(\wt y_1)\in R(V_{C_0}(\wt\gamma_{\wt z}))$.
\bigskip

Let us apply Lemma~\ref{LemAPlusNotEquiv} to both $\wt y_1$ and $\wt f^{n_0}(\wt y_1)$: 
there exist $t_-\le N$ and $t_+\ge -N$ such that
\[I^{t_-}_{\wt \F}(\wt y_1)\in L(\wt B)\qquad \text{and}\qquad I^{t_+-n_0}_{\wt \F}\big(\wt f^{n_0}(\wt y_1)\big) = I^{t_+ }_{\wt \F}(\wt y)\in R(\wt B).\]
Note that one can suppose that {$n_0> 2N+2$} (and hence $t_-<t_+$), taking {$C_0>(2N+2)d(\wt f, \Id_{\wt S})$} if necessary.
Moreover, as $I^{[t_-, t_+]}_{\wt \F}(\wt y_1)$ crosses $\wt B$ and hence $I^\Z_{\wt \F}(\wt z)$, there exist $t_0\in (t_-,t_+)$ and $s_0\in\R$ such that 
\[I^{t_0}_{\wt \F}(\wt y_1) = I^{s_0}_{\wt \F}(\wt z).\] 
Finally, if the path $I^\Z_{\wt \F}(\wt z)$ accumulated in $I^{[t_-, t_+]}_{\wt \F}(\wt y_1)$, \cite[Proposition~2.5]{paper1PAF} would imply the existence of $T\in\G\setminus \{\Id\}$ such that $\wt B$ is $T$-invariant. By \cite[Proposition~2.18]{paper1PAF}, this would imply that there is a sequence $(t'_k)\in\N^\N$, with $t_k\to +\infty$, such that the points $I^{t'_k}_{\wt \F}(\wt z)$ stay at finite distance to the geodesic axis of $T$. This is impossible as the tracking geodesic of $z$ is not closed and hence cannot be equal to the geodesic axis of $T$. Hence, there exist $s_-<s_0<s_+$ such that 
\[\wt\phi_{I^{s_-}_{\wt \F}(\wt z)} \cap I^{[t_-, t_+]}_{\wt \F}(\wt y_1) = \wt\phi_{I^{s_+}_{\wt \F}(\wt z)} \cap I^{[t_-, t_+]}_{\wt \F}(\wt y_1) = \emptyset.\]
This implies (see \cite[Lemma~2.17]{paper1PAF}) that the paths $I^{[t_-, t_+]}_{\wt \F}(\wt y_1) $ and $I^{[s_-, s_+]}_{\wt \F}(\wt z)$ intersect $\wt\F$-transversally at $I^{t_0}_{\wt \F}(\wt y_1) = I^{s_0}_{\wt \F}(\wt z)$.
\end{proof}

\subsection{Proofs of Theorem~\ref{ThmBndDevIrrat} and Corollary~\ref{CoroBndedDevIrrat}}

\begin{proof}[Proof of Theorem~\ref{ThmBndDevIrrat}]
We argue by contradiction: suppose that there exist $\wt y_0$ belonging to a connected component $\wt {\mathcal X}_{\wt y_0}^-$ of the complement $\wt X_{\varep_0}^-$ and $n_0>0$ such that $\wt f^{n_0}(\wt y_0)$ belongs to a connected component $\wt {\mathcal X}_{\wt y_0}^+$ of the complement $\wt X_{\varep_0}^+$, with $\wt {\mathcal X}_{\wt y_0}^- \subset L(V_{C_0}(\wt\gamma_{\wt z}))$ and $\wt {\mathcal X}_{\wt y_0}^+ \subset R(V_{C_0}(\wt\gamma_{\wt z}))$.

Let us apply Proposition~\ref{PropExistInterTransIrrat}: there exists $\wt y_1\in\wt S$ such that the paths $I^{\Z}_{\wt \F}(\wt y_1) $ and $I^{\Z}_{\wt \F}(\wt z)$ intersect $\wt\F$-transversally. This contradicts Proposition~\ref{LastPropBndDevIrrat}.
\end{proof}

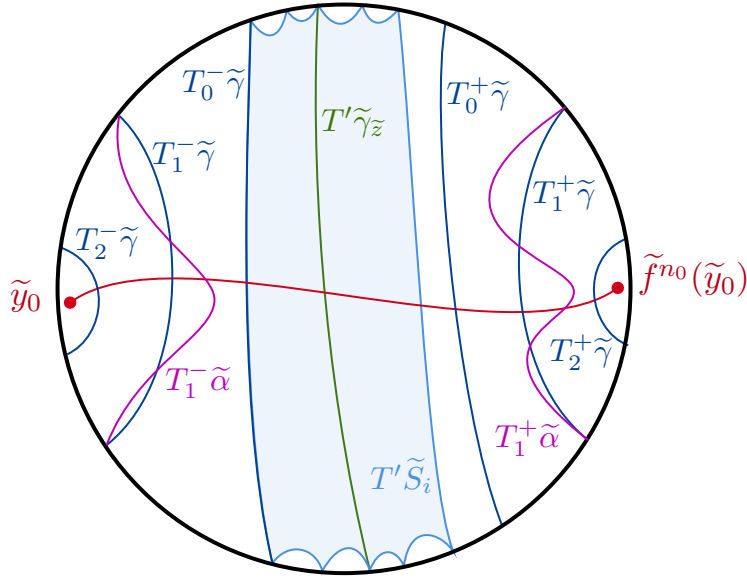
\begin{figure}
\begin{center}

\tikzset{every picture/.style={line width=0.75pt}} 

\begin{tikzpicture}[x=0.75pt,y=0.75pt,yscale=-1.3,xscale=1.3]

\draw [color={rgb, 255:red, 65; green, 117; blue, 5 }  ,draw opacity=1 ]   (320,50) .. controls (317.06,108.27) and (321.06,191.13) .. (339.91,268.85) ;
\draw  [color={rgb, 255:red, 74; green, 144; blue, 226 }  ,draw opacity=1 ][fill={rgb, 255:red, 74; green, 144; blue, 226 }  ,fill opacity=0.1 ] (308,52.29) .. controls (312.17,58.29) and (316.83,57.79) .. (320,50) .. controls (325.5,59.96) and (331.67,59.79) .. (337,50.29) .. controls (340.17,59.46) and (345.15,58.58) .. (350.82,51.91) .. controls (356.53,103.34) and (361.1,233.91) .. (371.96,261.63) .. controls (367.42,254.81) and (355.5,249.63) .. (353,267.13) .. controls (349,259.13) and (343,260.79) .. (339.91,268.85) .. controls (334.68,255.33) and (326.83,259.79) .. (321.67,269.13) .. controls (318.9,259.8) and (308.67,257.63) .. (302.5,266.13) .. controls (288.11,202.09) and (293.33,87.29) .. (294.17,56.29) .. controls (302.67,68.12) and (306.83,58.63) .. (308,52.29) -- cycle ;
\draw [color={rgb, 255:red, 0; green, 66; blue, 146 }  ,draw opacity=1 ]   (294.17,56.29) .. controls (293.68,91.34) and (287.39,200.77) .. (302.5,266.13) ;
\draw [color={rgb, 255:red, 208; green, 2; blue, 27 }  ,draw opacity=1 ]   (224.82,165.34) .. controls (264.82,135.34) and (395.1,189.63) .. (435.1,159.63) ;
\draw [shift={(435.1,159.63)}, rotate = 323.13] [color={rgb, 255:red, 208; green, 2; blue, 27 }  ,draw opacity=1 ][fill={rgb, 255:red, 208; green, 2; blue, 27 }  ,fill opacity=1 ][line width=0.75]      (0, 0) circle [x radius= 2.01, y radius= 2.01]   ;
\draw [shift={(224.82,165.34)}, rotate = 323.13] [color={rgb, 255:red, 208; green, 2; blue, 27 }  ,draw opacity=1 ][fill={rgb, 255:red, 208; green, 2; blue, 27 }  ,fill opacity=1 ][line width=0.75]      (0, 0) circle [x radius= 2.01, y radius= 2.01]   ;
\draw [color={rgb, 255:red, 0; green, 66; blue, 146 }  ,draw opacity=1 ]   (243.68,93.06) .. controls (269.39,117.06) and (273.96,192.2) .. (238.82,220.49) ;
\draw [color={rgb, 255:red, 0; green, 66; blue, 146 }  ,draw opacity=1 ]   (221.68,144.2) .. controls (241.96,152.2) and (238.53,178.49) .. (223.68,185.34) ;
\draw [color={rgb, 255:red, 0; green, 66; blue, 146 }  ,draw opacity=1 ]   (437.96,140.77) .. controls (420.25,148.49) and (423.39,175.63) .. (439.1,181.91) ;
\draw [color={rgb, 255:red, 0; green, 66; blue, 146 }  ,draw opacity=1 ]   (415.1,89.63) .. controls (391.1,117.06) and (388.53,190.2) .. (423.68,218.49) ;
\draw [color={rgb, 255:red, 0; green, 66; blue, 146 }  ,draw opacity=1 ]   (369.02,56.93) .. controls (362.82,91.34) and (372.53,204.77) .. (390.82,251.91) ;
\draw  [line width=1.5]  (220,160) .. controls (220,99.25) and (269.25,50) .. (330,50) .. controls (390.75,50) and (440,99.25) .. (440,160) .. controls (440,220.75) and (390.75,270) .. (330,270) .. controls (269.25,270) and (220,220.75) .. (220,160) -- cycle ;
\draw [color={rgb, 255:red, 188; green, 0; blue, 181 }  ,draw opacity=1 ]   (243.68,93.06) .. controls (237.1,130.77) and (280.25,150.2) .. (280.25,164.2) .. controls (280.25,178.2) and (250.82,189.06) .. (238.82,220.49) ;
\draw [color={rgb, 255:red, 188; green, 0; blue, 181 }  ,draw opacity=1 ]   (415.1,89.63) .. controls (347.39,136.49) and (418.25,148.2) .. (418.25,161.06) .. controls (418.25,173.91) and (374.53,179.63) .. (423.68,218.49) ;

\draw (319.9,95.51) node [anchor=west] [inner sep=0.75pt]  [font=\small,color={rgb, 255:red, 65; green, 117; blue, 5 }  ,opacity=1 ,xscale=1.2,yscale=1.2]  {$T'\wt{\gamma }_{\wt{z}}$};
\draw (352.2,232.22) node  [font=\small,color={rgb, 255:red, 74; green, 144; blue, 226 }  ,opacity=1 ,xscale=1.2,yscale=1.2]  {$T'\wt{S}_{i}$};
\draw (293.49,80) node [anchor=east] [inner sep=0.75pt]  [font=\small,color={rgb, 255:red, 0; green, 66; blue, 146 }  ,opacity=1 ,xscale=1.2,yscale=1.2]  {$T_{0}^{-}\wt{\gamma }$};
\draw (255,108) node [anchor=west] [inner sep=0.75pt]  [font=\small,color={rgb, 255:red, 0; green, 66; blue, 146 }  ,opacity=1 ,xscale=1.2,yscale=1.2]  {$T_{1}^{-}\wt{\gamma }$};
\draw (225.45,149.75) node [anchor=south west] [inner sep=0.75pt]  [font=\small,color={rgb, 255:red, 0; green, 66; blue, 146 }  ,opacity=1 ,xscale=1.2,yscale=1.2]  {$T_{2}^{-}\wt{\gamma }$};
\draw (368,84.99) node [anchor=west] [inner sep=0.75pt]  [font=\small,color={rgb, 255:red, 0; green, 66; blue, 146 }  ,opacity=1 ,xscale=1.2,yscale=1.2]  {$T_{0}^{+}\wt{\gamma }$};
\draw (401,123) node [anchor=west] [inner sep=0.75pt]  [font=\small,color={rgb, 255:red, 0; green, 66; blue, 146 }  ,opacity=1 ,xscale=1.2,yscale=1.2]  {$T_{1}^{+}\wt{\gamma }$};
\draw (434.53,176) node [anchor=north east] [inner sep=0.75pt]  [font=\small,color={rgb, 255:red, 0; green, 66; blue, 146 }  ,opacity=1 ,xscale=1.2,yscale=1.2]  {$T_{2}^{+}\wt{\gamma }$};
\draw (215.62,162.87) node [anchor=east] [inner sep=0.75pt]  [color={rgb, 255:red, 182; green, 2; blue, 24 }  ,opacity=1 ,xscale=1.2,yscale=1.2]  {$\wt{y}_{0}$};
\draw (441.38,154.87) node [anchor=west] [inner sep=0.75pt]  [color={rgb, 255:red, 182; green, 2; blue, 24 }  ,opacity=1 ,xscale=1.2,yscale=1.2]  {$\wt{f}^{n_{0}}(\wt{y}_{0})$};
\draw (414.28,209.35) node [anchor=north east] [inner sep=0.75pt]  [font=\small,color={rgb, 255:red, 188; green, 0; blue, 181 }  ,opacity=1 ,xscale=1.2,yscale=1.2]  {$T_{1}^{+}\wt{\alpha }$};
\draw (260.02,187.07) node [anchor=north west][inner sep=0.75pt]  [font=\small,color={rgb, 255:red, 188; green, 0; blue, 181 }  ,opacity=1 ,xscale=1.2,yscale=1.2]  {$T_{1}^{-}\wt{\alpha }$};

\end{tikzpicture}
\caption{Proof of Corollary~\ref{CoroBndedDevIrrat}.}\label{FigCoroBndedDevIrrat}
\end{center}
\end{figure}

\begin{proof}[Proof of Corollary~\ref{CoroBndedDevIrrat}]
Let $i\in I_{\mathrm{m}}$, $\Lambda_i$ the associated lamination and $S_i$ the associated surface. Let $\gamma$ be a closed geodesic that is a boundary component of this surface $S_i$. 

Pick $z\in S$ a $\mu$-typical point; to this point Theorem~\ref{ThmBndDevIrrat} associates a constant $C_0>0$ and a set $X\subset S$ with ${i}_*(\pi_1(S_i)) \subset {i}_*(\pi_1(X))$. Hence there exists a closed loop $\alpha\subset X$ that is freely homotopic to $\gamma$ (see Figure~\ref{FigCoroBndedDevIrrat}). 

Let $\wt S_i$, $\wt\gamma$ and $\wt z$ be lifts of respectively $S_i$, $\gamma$ and $z$ to $\wt S$ such that $\wt\gamma\cup\wt\gamma_{\wt z} \subset \overline{\wt S_i}$. 
As $\alpha$ is homotopic to $\gamma$, there exists a lift $\wt\alpha$ of $\alpha$ to $\wt S$ such that $d(\wt\alpha,\wt\gamma) = d_0<+\infty$.

As $\gamma$ is simple and closed, there exists $M>0$ such that if $\wt\beta : [0,1]\to\wt S$ crosses $M$ lifts $T_1\wt\gamma,\dots,T_M\wt\gamma$ of $\gamma$, then there exists $1\le i,j,k\le M$ such that $T_j\wt\gamma$ is between $T_1\wt\gamma$ and $T_k\wt\gamma$, and that $d(T_1\wt\gamma, T_j\wt\gamma) > C_0+d_0$ and $d(T_j\wt\gamma, T_k\wt\gamma) > C_0+d_0$. 
\bigskip

Suppose that there exists an orbit $\wt y_0,\dots,\wt f^{n_0}(\wt y_0)$ crossing at least $N=2M+1$ different lifts of $\gamma$. As $X$ has empty interior, perturbing a bit $y_0$ if necessary, one can suppose that $y_0\notin X$ and $f^n(y_0)\notin X$.

Note that as $\gamma$ is the boundary component of $S_i$, either a left neighbourhood of $\wt\gamma$ or a right neighbourhood of $\wt\gamma$ is included in $S_i$. This implies that if $\wt\beta : [0,1]\to \wt S$ is a path linking $\wt y_0$ to $\wt f^{n_0}(\wt y_0)$, then there exists $T'\in\G$ and $t'\in[0,1]$ such that $\wt\beta(t')\in T'\wt\gamma_{\wt z}$, and that both $\wt\beta|_{[0,t']}$ and $\wt\beta|_{[t',1]}$ cross $M$ lifts of $\gamma$.

By the above property, we deduce that $\wt\beta|_{[0,t']}$ crosses three lifts $T_0^-\wt\gamma,T_1^-\wt\gamma$ and $T_2^-\wt\gamma$ of $\gamma$ such that $d(T_0^-\wt\gamma, T_1^-\wt\gamma) > C_0+d_0$ and $d(T_1^-\wt\gamma, T_2^-\wt\gamma) > C_0+d_0$. Note that $\gamma_z$ is disjoint from $\gamma$, so one can suppose that $T_0^-\wt\gamma$ is between $T_1^-\wt\gamma$ and $T'\wt\gamma_{\wt z}$, and that $T_1^-\wt\gamma$ is between $T_0^-\wt\gamma$ and $T_2^-\wt\gamma$. 
This implies that 
\[d(T'\wt\gamma_{\wt z}, T_1^+\wt\alpha) \ge d(T'\wt\gamma_{\wt z}, T_1^+\wt\gamma) - d(T_1^+\wt\gamma, T_1^+\wt\alpha) > C_0+d_0-d_0 = C_0\]
and that $T_1^-\wt\alpha$ separates $\wt\beta(0) = \wt y_0$ from $T'\wt\gamma_{\wt z}$. 

Similarly, there exists $T_1^+\in\G$ such that $d(T'\wt\gamma_{\wt z}, T_1^+\wt\alpha) > C_0$ and that $T_1^+\wt\alpha$ separates $\wt\beta(1) = \wt f^{n_0}(\wt y_0)$ from $T'\wt\gamma_{\wt z}$. Moreover, $T'\wt\gamma_{\wt z}$ separates $T_1^-\wt\alpha$ and $T_1^+\wt\alpha$.

This implies that the connected component of the complement $\wt\X^-$ of $\wt X$ containing $\wt y_0$ (that must be disjoint from $T_1^-\wt\alpha$) is at a distance of at least $C_0$ from $T'\wt\gamma_{\wt z}$, and that the connected component of the complement $\wt\X^+$ of $\wt X$ containing $\wt f^{n_0}(\wt y_0)$ (that must be disjoint from $T_1^+\wt\alpha$) is at a distance of at least $C_0$ from $T'\wt\gamma_{\wt z}$. We moreover have that $T'\wt\gamma_{\wt z}$ separates $\wt\X^-$ from $\wt\X^+$. Exchanging the roles of $y_0$ and $f^{n_0}(y_0)$ and changing $n_0$ with $-n_0$ if necessary, one can suppose that $\wt\X^-\subset L(\wt\gamma_{T'\wt z})$ and that $\wt\X^+\subset R(\wt\gamma_{T'\wt z})$. This allows us to apply Theorem~\ref{ThmBndDevIrrat}, which leads to a contradiction.
\end{proof}

\subsection{Proof of Theorem~\ref{ThmBndDevIrrat2}}\label{SubSecThmBndDevIrrat2}

Let $C_0$ and $\varep_0$ given by Proposition~\ref{PropExistInterTransIrrat}.   Choose $\varep\le \varep_0$.

We suppose that the set $X_{\varep}$ defined before Lemma~\ref{LemAPlusNotEquiv} is a skeleton of the set $A_{\varep}$ (given by Lemma~\ref{LemEssentialPoints}). Let $\wt A_\varep$ be the lift of $A_\varep$ to $\wt S$ containing $\wt z$, and $\wt X_\varep$ the lift of $X_\varep$ to $\wt S$ included in $\wt A_\varep$.

If the skeleton $X_\varep$ of $A_\varep$ is a skeleton of $S$, then $S\setminus X_\varep$ is made of a disc whose lifts to $\wt S$ are bounded, so Theorem~\ref{ThmBndDevIrrat2} is proven. Hence we suppose from now that $X_\varep$ is not a skeleton of $S$, in other words that the complement of $X_\varep$ is made of a set whose lifts to $\wt S$ are unbounded. 

\begin{lemma}\label{LemYInAv}
There exists $C_1\ge 0$ such that the following is true. 
If $\wt y_0\in\wt S$ and $n_0\in\N$ are such that $y_0\in L(V_{C_1}(\wt\gamma_{\wt z}))$ and $\wt f^{n_0}(y_0) \in R(V_{C_1}(\wt\gamma_{\wt z}))$, then $\wt y_0\in \fil(\wt A_{\varep})$.
\end{lemma}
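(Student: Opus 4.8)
The goal is to show that if an orbit segment of $\wt f$ goes from far on the left of $\wt\gamma_{\wt z}$ to far on the right, then its starting point already lies in the fill of $\wt A_\varep$. The plan is to argue by contradiction, so suppose there are $\wt y_0$ and $n_0$ with $\wt y_0\in L(V_{C_1}(\wt\gamma_{\wt z}))$, $\wt f^{n_0}(\wt y_0)\in R(V_{C_1}(\wt\gamma_{\wt z}))$, but $\wt y_0\notin\fil(\wt A_\varep)$. Since $\wt y_0\notin \fil(\wt A_\varep)$, the point $\wt y_0$ lies in an unbounded connected component $\wt{\mathcal{X}}^-$ of the complement of $\wt X_\varep$ (here we use the standing assumption that $X_\varep$ is not a skeleton of $S$, so such components are unbounded and touch $\partial\wt S$); similarly, if moreover $\wt f^{n_0}(\wt y_0)\notin\fil(\wt A_\varep)$ it lies in an unbounded component $\wt{\mathcal{X}}^+$. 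The first step is therefore to reduce to the case where both endpoints lie outside $\fil(\wt A_\varep)$: if $\wt f^{n_0}(\wt y_0)\in\fil(\wt A_\varep)$, I would instead run the argument below with the roles reversed, using that $\fil(\wt A_\varep)$ (being essentially a disc union a sub-surface whose boundary geodesics are disjoint from $\wt\gamma_{\wt z}$) cannot meet $R(V_{C_1}(\wt\gamma_{\wt z}))$ once $C_1$ is large enough — this is where the constant $C_1$ first enters, and it should be chosen larger than the diameter of the relevant piece of $\fil(\wt A_\varep)$ transverse to $\wt\gamma_{\wt z}$, which is finite because $\wt S_{\wt A_\varep}$ sits between two lifts of boundary geodesics of $S_i$ that are at finite distance from $\wt\gamma_{\wt z}$.

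The heart of the argument is then to choose $C_1 \ge C_0$ (with $C_0$ from Proposition~\ref{PropExistInterTransIrrat}) and to check that the hypotheses of that proposition, or equivalently of Theorem~\ref{ThmBndDevIrrat}, are satisfied, yielding a contradiction with Proposition~\ref{LastPropBndDevIrrat}. Concretely: the component $\wt{\mathcal{X}}^-$ containing $\wt y_0$ is disjoint from $\wt X_\varep$ and is unbounded; I claim that up to enlarging $C_1$ it is contained in $L(V_{C_0}(\wt\gamma_{\wt z}))$. Indeed, $\wt X_\varep$ contains lifts of loops generating $i_*(\pi_1(S_\Lambda))$, and by the arguments in the proof of Lemma~\ref{LemEssentialPoints} the translates of $\wt X_\varep$ accumulate onto both endpoints of $\wt\gamma_{\wt z}$; hence any connected component of the complement that reaches close to $\wt\gamma_{\wt z}$ on the left and then escapes to $\partial\wt S$ on the right would have to cross $\wt X_\varep$, which is impossible. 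So for $C_1$ large, $\wt y_0\in L(V_{C_1}(\wt\gamma_{\wt z}))$ forces $\wt{\mathcal{X}}^-\subset L(V_{C_0}(\wt\gamma_{\wt z}))$; symmetrically $\wt f^{n_0}(\wt y_0)\in R(V_{C_1}(\wt\gamma_{\wt z}))$ forces $\wt{\mathcal{X}}^+\subset R(V_{C_0}(\wt\gamma_{\wt z}))$. This is exactly the configuration forbidden by Proposition~\ref{PropExistInterTransIrrat} combined with Proposition~\ref{LastPropBndDevIrrat}, giving the contradiction.

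The main obstacle I expect is the claim that a complementary component of $\wt X_\varep$ which comes within distance $C_0$ of $\wt\gamma_{\wt z}$ on one side cannot simultaneously be unbounded on the other side, i.e.\ the quantitative separation statement that converts "$\wt y_0$ far on the left" into "$\wt{\mathcal{X}}^-\subset L(V_{C_0}(\wt\gamma_{\wt z}))$". This needs the syndetic recurrence of the translates of the skeleton loops along $\wt\gamma_{\wt z}$ (a consequence of minimality of $\Lambda_i$ and of the construction of $A_\varep$ via forward iterates of $B(z,\varep)$, as in the proof of Claim~\ref{ClaimEssentialPoints}), together with a compactness argument producing a uniform bound on how far a translate $T\wt X_\varep$ can be from $\wt\gamma_{\wt z}$ while still separating. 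Once that uniform bound, call it $C_1 - C_0$, is in hand, the rest is the bookkeeping described above. A secondary, purely technical point is handling the case $\wt f^{n_0}(\wt y_0)\in\fil(\wt A_\varep)$, which as indicated is dealt with by symmetry using that $\fil(\wt A_\varep)$ stays within bounded distance of $\wt\gamma_{\wt z}$.
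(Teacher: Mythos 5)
Your strategy is genuinely different from the paper's and has two gaps, both substantial.

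The paper's proof first constructs a set $\wt Y\subset\wt X_\varep$ that \emph{separates} $\wt S$ and stays within a uniform distance of $\wt\gamma_{\wt z}$ (this uses a geodesic skeleton $X'$ and the bounded return time of $\gamma_z$ to a closed geodesic in $X'$). It then takes $C_1$ so that $\wt Y\subset V_{C_1}(\wt\gamma_{\wt z})$, and argues: since $\fil(A_\varep)$ is an essential filled invariant open set, the connected components of the complement of $\fil(\wt A_\varep)$ are $\wt f$-invariant, so $\wt y_0\notin\fil(\wt A_\varep)$ forces $\wt f^{n_0}(\wt y_0)$ into the \emph{same} complement component as $\wt y_0$, hence into the same component of the complement of $\wt Y$; this is incompatible with $\wt y_0$ and $\wt f^{n_0}(\wt y_0)$ lying on opposite sides of $V_{C_1}(\wt\gamma_{\wt z})\supset\wt Y$. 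The whole argument is a short topological one and never invokes Theorem~\ref{ThmBndDevIrrat} or Proposition~\ref{PropExistInterTransIrrat}.

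Your first gap: you claim that for $C_1$ large enough the membership $\wt y_0\in L(V_{C_1}(\wt\gamma_{\wt z}))$ forces $\wt{\mathcal X}^-\subset L(V_{C_0}(\wt\gamma_{\wt z}))$. This is false in general. Each unbounded complementary component of $\wt X_\varep$ stays at bounded Hausdorff distance from the lift of a boundary geodesic of $S_i$, and the distance from that boundary geodesic to $\wt\gamma_{\wt z}$ is a fixed geometric quantity with no relation to $C_0$: it can perfectly well be smaller than $C_0$, in which case $\wt{\mathcal X}^-$ enters $V_{C_0}(\wt\gamma_{\wt z})$ no matter how far out $\wt y_0$ is. This exact failure mode is precisely what Cases~\ref{Ca2}--\ref{Ca4} of the proof of Theorem~\ref{ThmBndDevIrrat2} (and Lemma~\ref{LemExistC4}) are designed to handle, so you cannot shortcut around it and appeal directly to Proposition~\ref{PropExistInterTransIrrat} or Theorem~\ref{ThmBndDevIrrat}.

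Your second gap is the reduction step: you assert that $\fil(\wt A_\varep)$ "cannot meet $R(V_{C_1}(\wt\gamma_{\wt z}))$ once $C_1$ is large enough." This is also false. The set $\fil(\wt A_\varep)$ is at finite Hausdorff distance from the convex set $\wt S_{\wt A_\varep}=\conv(\wt\Gamma_{\wt A_\varep})$, and by Claim~\ref{ClaimEssentialPoints} (together with the density of $\gamma_z$ in $\Lambda_i$) the set $\wt\Gamma_{\wt A_\varep}$ contains axes of deck transformations pointing in a rich collection of directions, so $\wt S_{\wt A_\varep}$ — and hence $\fil(\wt A_\varep)$ — extends arbitrarily far from $\wt\gamma_{\wt z}$ on both sides. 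So the dichotomy you set up at the outset (either both endpoints are outside $\fil(\wt A_\varep)$, or you quickly dispose of the other case) does not hold. The missing idea in both gaps is the one the paper uses: the $\wt f$-invariance of the complementary components of $\fil(\wt A_\varep)$, which makes the second endpoint land in the same component as the first, after which a purely separation-theoretic contradiction (against the set $\wt Y$) suffices, with no need for the transverse-intersection machinery at all.
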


\begin{proof}
As in the proof of Lemma~\ref{LemEssentialPoints}, let $\wt \Gamma_{\wt A_\varep}$ be the union of the geodesics $\wt \gamma_{\wt\beta}$, where $\wt \beta\subset \wt A_\varep$ is a path lifting an essential closed loop; let also $\wt S_{\wt A_\varep} = \conv\big(\wt \Gamma_{\wt A_\varep}\big)$.

We claim that there is a set $\wt Y\subset \wt X_\varep$ separating $\wt S$ and staying at finite distance to $\wt\gamma_{\wt z}$. Indeed, each loop defining $X_\varep$ is at finite distance (in $\wt S$) to a geodesic loop. Let us call $X'$ the union of these geodesic loops; $X'$ forms a skeleton of the geodesic surface $\pr_S(\wt S_{\wt A_\varep})$, with the property that the angle between two of these geodesics at an intersection point is uniformly bounded from below. Let $\wt X'$ be the lift of $X'$ included in $\wt S_{\wt A_\varep}$. We are reduced to show that there is a subset of $\wt X'$ separating $\wt S$ and staying at finite distance to $\wt\gamma_{\wt z}$.
Each point $\wt x\in \wt\gamma_{\wt z}$ belongs to the boundary of a (bounded) disc $\wt D_{\wt x}$ that is a connected component of the complement of $\wt\gamma_{\wt z} \cup \wt X'$. Let $\gamma_0\subset X'$ be a closed geodesic that meets the minimal geodesic lamination $\overline{\gamma_z}$. The return time of $\gamma_z$ on $\gamma_0$ is uniformly bounded (see also Lemma~\ref{LemSyndeticInter}), and the angle of intersection between $\gamma_z$ and $\gamma_0$ is continuous on $\gamma_z \cap \gamma_0$. We deduce that the diameter of the sets
\[\left(\bigcup_{\wt x\in \wt\gamma_{\wt z}(I)} \wt D_{\wt x}\right)_{\substack{I\textrm{ interval}\\ \gamma_{z}(I) \cap \gamma_0 = \emptyset}}\]
is uniformly bounded. It then suffices to consider the set
\[\bigcup_{\wt x\in \wt\gamma_{\wt z}(\R)} \big(\wt D_{\wt x} \cap \wt X'\big);\]
it is included in $\wt X'$, separates $\wt S$ and stays at finite distance to $\wt\gamma_{\wt z}$. This proves the existence of $\wt Y$.
\bigskip

We denote $C_1\ge C_0$ such that $\wt Y \subset V_{C_1}(\wt\gamma_{\wt z})$. 
Suppose by contradiction that $\wt y_0\notin \fil(\wt A_{\varep})$ while $\wt y_0\in L(V_{C_1}(\wt\gamma_{\wt z}))$ and $\wt f^{n_0}(\wt y_0) \in R(V_{C_1}(\wt\gamma_{\wt z}))$.

As $\fil(A_{\varep})$ is an essential filled $f$-invariant open set, the set $\fil(\wt A_{\varep})$ is $\wt f$-invariant, as well as any of its boundary components. So any connected component of the complement of $\fil(\wt A_{\varep})$ is $\wt f$-invariant.
Hence the points $\wt y_0$ and $\wt f^{n_0}(\wt y_0)$ belong to the same connected component of the complement of $\wt X'$, in particular to the same connected component of the complement of ${\wt Y}$. 
This contradicts the hypothesis that $\wt y_0\in L(V_{C_1}(\wt\gamma_{\wt z}))$ and $\wt f^{n_0}(\wt y_0) \in R(V_{C_1}(\wt\gamma_{\wt z}))$.
\end{proof}

The {projection of the set} ${\Fix(\wt f)}$ being inessential, there exist a finite number of pairwise disjoint bounded discs $D_1,\dots, D_k$ containing {this projection} with the property that for $i\neq j$ one has $f(D_i)\cap D_j = \emptyset$, {and that if $\wt D_i$ is a lift of $D_i$, then $\wt f(\wt D_i)$ is disjoint from $R\wt D_i$ for any $R\in\G\setminus\{\Id\}$.} 

Let $(U_j)_{1\le j\le p}$ be a finite set of trivializing charts for the foliation $\F$ on the complement of the discs $D_i$, such that for any $j$, for any lift $\wh U_j$ of $U_j$ to $\wh\dom(I)$ and any $\wh y\in \wh U_j$, the trajectory $I^{[-1,1]}_{\wh \F}(\wh y)$ meets all the leaves of $\wh\F$ meeting $\wh U_j$. 

We also require the following property, that is automatically satisfied if the charts $U_j$ are small enough: if $U_j$ intersects $X_\varep$, then $U_j\subset V_\varep$, where $V_\varep$ is the neighbourhood of $X_\varep$ given by Remark~\ref{RemLemAPlusNotEquiv}.

\begin{lemma}\label{LemMeetPhiTwice}
There exists $C_2>0$ such that for any $\wt y_0\in\wt S$ and any $n_0\in\N$ such that $d\big(\wt y_0, \wt f^{n_0}(\wt y_0)\big)\ge C_2$, there exists $1\le j\le p$ and $T\in\G\setminus\{\Id\}$ such that the trajectory $\wt y_0, \dots ,\wt f^{n_0}(\wt y_0)$ meets one lift $\wt U_j$ of $U_j$ and then $T\wt U_{j}$.
\end{lemma}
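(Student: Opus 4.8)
The plan is to argue by a compactness/pigeonhole argument on the number of charts the trajectory can cross without repetition modulo deck transformations. First I would fix the finite family of trivializing charts $(U_j)_{1\le j\le p}$ together with the discs $D_1,\dots,D_k$ covering the projection of $\Fix(\wt f)$, and observe that since these charts together with the $D_i$ cover $S$, and since each $D_i$ has the property that $\wt f(\wt D_i)\cap R\wt D_i = \emptyset$ for every $R\in\G\setminus\{\Id\}$ (so a finite orbit segment cannot ``wrap around $S$'' while staying inside a single $D_i$-orbit), the only way for the trajectory $\wt y_0,\dots,\wt f^{n_0}(\wt y_0)$ to travel a large distance in $\wt S$ is to cross infinitely many (up to the deck group) translates of the charts $\wt U_j$. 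More precisely, I would show: there is a constant $\kappa>0$ (depending only on the diameters of the $U_j$ and $D_i$ and on the Lebesgue number of the cover) such that any path in $\wt S$ of diameter $\ge \kappa\cdot m$ must meet at least $m+1$ of the sets $\{R\wt U_j : R\in\G,\ 1\le j\le p\}$.

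Next, the pigeonhole step: the quotient of the collection $\{R\wt U_j\}$ by $\G$ is finite (exactly $p$ classes), so if the trajectory meets strictly more than $p$ of these translates, two of them must be $\G$-translates of the same $\wt U_j$; that is, the trajectory meets $\wt U_j$ and then $S\wt U_j$ for some $S\in\G$. It is at this point that I would need to ensure $S\ne\Id$: this is exactly where the hypothesis that the projection of $\Fix(\wt f)$ is inessential (via the discs $D_i$ arranged so that $\wt f(\wt D_i)$ avoids all nontrivial translates $R\wt D_i$) is used — namely, to rule out the case where the trajectory re-enters the \emph{same} lift $\wt U_j$ without ever leaving a ``small'' region, which would correspond to the orbit being trapped near a fixed point. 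One sets $T = S$ and takes $C_2 = \kappa\cdot p$ (adjusting by the maximal displacement $d(\wt f,\Id_{\wt S})$ so that consecutive iterates don't jump over a chart); then $d(\wt y_0,\wt f^{n_0}(\wt y_0))\ge C_2$ forces the trajectory to meet at least $p+1$ translates, hence $\wt U_j$ and $T\wt U_j$ with $T\ne\Id$ for some $j$.

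The main obstacle I anticipate is the correct bookkeeping in the case distinction when the trajectory passes through one of the exceptional discs $D_i$: there the foliation $\F$ is not defined (these discs contain the fixed points), so the geometric control on ``crossing charts'' fails inside $D_i$, and one has to use the separating property $\wt f(\wt D_i)\cap R\wt D_i=\emptyset$ to guarantee that the trajectory, upon entering $\wt D_i$, must exit through the same lift $\wt D_i$ before wrapping, so that the distance accumulated inside $\wt D_i$ is bounded by $\diam(\wt D_i)+d(\wt f,\Id)$ and can be absorbed into the constant $\kappa$. Making this precise — tracking which lift of $D_i$ the trajectory is in, and checking that the bounded-displacement property prevents ``drift'' through a chain of translates $\wt D_i, R_1\wt D_i, R_2\wt D_i,\dots$ — is the delicate part; everything else is a routine Lebesgue-number plus pigeonhole argument.
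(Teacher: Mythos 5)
Your strategy is essentially the paper's: force the orbit to hit many distinct translates of the charts, pigeonhole on the finitely many chart indices $j$, and use the disc properties to keep the orbit from traveling far while confined to disc lifts. The paper carries out the counting concretely via $2p+2$ separating geodesics spaced $d_0 := \max_i\diam(D_i)+\max_j\diam(U_j)+d(\wt f,\Id)$ apart, so that each of the $2p+1$ intermediate strips must contain an iterate whose projection lies outside $\bigcup_i D_i$, and then applies pigeonhole to the even-indexed strips to produce two translates of the same $\wt U_j$ which are distinct because those strips are $d_0$-separated.

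One point in your write-up is confused. You present the inessentialness hypothesis (via the $D_i$) as the ingredient needed to rule out $T=\Id$ after pigeonhole, but once the orbit meets two \emph{distinct} translates $R\wt U_j\neq R'\wt U_j$, the element $T=R'R^{-1}$ is automatically nontrivial and nothing more is needed. The disc properties $f(D_i)\cap D_{i'}=\emptyset$ for $i\neq i'$ and $\wt f(\wt D_i)\cap R\wt D_i=\emptyset$ for $R\in\G\setminus\{\Id\}$ are used \emph{upstream}, exactly as your final paragraph correctly identifies: they force any run of consecutive iterates landing in lifts of the discs to stay in one fixed lift $\wt D_i$, whose diameter is bounded, so that an orbit traveling a distance $\ge C_2$ cannot remain in discs and must hit at least $p+1$ distinct chart translates. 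Your ``Lebesgue number'' framing is also loose since $\wt S$ is noncompact; the intended content (a uniform diameter bound on the elements of a $\G$-invariant cover) is exactly what the paper's explicit strips make precise, and filling in your sketch essentially reproduces that argument.
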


\begin{proof}
We choose lifts $(\wt U_j)_{1\le j\le p}$ of $(U_j)_{1\le j\le p}$ to $\wt S$.
Let $d_0 = \max_{1\le i\le k}\diam(D_i) + \max_{1\le j\le p}\diam(U_j) + d(\wt f,\Id)$. Let $C_2 = (2p+2)d_0$. Consider $2p+2$ geodesics $\wt\gamma_1,\dots,\wt\gamma_{2p+2}$ of $\wt S$, each of them separating $\wt y_0$ from $\wt f^{n_0}(\wt y_0)$ and pairwise at a distance $\ge d_0$. 
These geodesics define strips $\mathrm{St}_j = \big(R(\wt\gamma_j)\cap L(\wt\gamma_{j+1})\big)_{1\le j\le 2p+1}$ that we can suppose nonempty up to permuting the geodesics $\gamma_\ell$. 
Each strip $\mathrm{St}_\ell$ meets the orbit $\wt y_0, \dots, \wt f^{n_0}(\wt y_0)$ at at least one point $\wt y'_\ell$ whose projection on $S$ satisfies $y'_\ell \notin \bigcup_{1\le i\le k} D_i$. Hence each point $\wt y'_\ell$ belongs to one chart $T_\ell \wt U_{j_\ell}$ for some $T_\ell\in \G$. 
By the pigeonhole principle there exist $\ell_0 \neq \ell_1$ that are even and such that $j_{\ell_0} = j_{\ell_1}$. 
By hypothesis on the size of the strips one has $T_{\ell_0} \neq T_{\ell_1}$. It then suffices to choose for $\wt \phi$ any leaf meeting $U_{j_{\ell_0}}$ and $T = T_{\ell_1}T_{\ell_0}^{-1}$. 
\end{proof}

\begin{lemma}\label{LemExistC4}
For any $C_0>0$, there exists $C_3\ge C_0$ such that if $\wt y\in L(V_{{C_3}}(\wt\gamma_{\wt z}))$ and if $\wt y$ and $\wt f(\wt y)$ belong to the closure of different connected components $\wt\X_1$ and $\wt \X_2$ of the complement of $\wt X_\varep$, then one of $\wt\X_1$ and $\wt \X_2$ is disjoint from $V_{C_0}(\wt\gamma_{\wt z})$.
\end{lemma}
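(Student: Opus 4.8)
The plan is to argue by contradiction in the spirit of the proof of Theorem~\ref{ThmBndDevIrrat}, but adapted to a single step of the dynamics. Suppose no such $C_3$ exists; then for every $C\ge C_0$ there is a point $\wt y$ with $\wt y\in L(V_C(\wt\gamma_{\wt z}))$, with $\wt y$ and $\wt f(\wt y)$ in the closures of distinct complementary components $\wt\X_1,\wt\X_2$ of $\wt X_\varep$, and with both $\wt\X_1$ and $\wt\X_2$ meeting $V_{C_0}(\wt\gamma_{\wt z})$. The first step is to use the fact that $\wt y$ is very far on the left of $\wt\gamma_{\wt z}$ while $\wt\X_1$ (say, the component whose closure contains $\wt y$) reaches back into $V_{C_0}(\wt\gamma_{\wt z})$: since $\wt\X_1$ is connected (it is a complementary component of a skeleton, hence in the non-skeleton case a topological disc) one can join $\wt y$ to a point of $\wt\X_1\cap V_{C_0}(\wt\gamma_{\wt z})$ by a path inside $\wt\X_1$, and similarly for $\wt f(\wt y)$ inside $\wt\X_2$; in particular $\wt\X_1$ and $\wt\X_2$ each have points at distance $\ge C-C_0$ in the appropriate direction. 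The point is that, because $X_\varep$ separates $\wt S$ and stays at finite distance from $\wt\gamma_{\wt z}$ (as established in the proof of Lemma~\ref{LemYInAv}), a component of $\wt S\setminus\wt X_\varep$ reaching distance $C$ on the left of $\wt\gamma_{\wt z}$ and also meeting $V_{C_0}(\wt\gamma_{\wt z})$ must ``fellow travel'' along $\wt\gamma_{\wt z}$ for a long stretch.

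The second step is to pass to the transverse trajectory. Since $\wt y$ and $\wt f(\wt y)$ lie in $\overline{\wt\X_1}$ and $\overline{\wt\X_2}$ with $\wt\X_1\ne\wt\X_2$, the segment $I^{[0,1]}_{\wt\F}(\wt y)$ of the transverse trajectory crosses $\wt X_\varep$, hence crosses $\wt B = \wt B(\wt z)$. Now I would invoke Lemma~\ref{LemAPlusNotEquiv} (via Remark~\ref{RemLemAPlusNotEquiv}, which allows replacing $X_\varep$ by a nearby skeleton and replacing $\wt B$-sidedness statements by their consequences) applied to a point of $\wt X_\varep$ sitting on the boundary of $\wt\X_1$ at distance $\ge C_0$ from $\wt\gamma_{\wt z}$ on the left: this produces times $t_0\le N$, $t_1\ge -N$ with $I^{t_0}_{\wt\F}$, $I^{t_1}_{\wt\F}$ of that point in $L(\wt B)$. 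Combining this with the analogous statement on the $\wt\X_2$ side — but now $\wt\X_2$, being the $\wt f$-image, must have been pushed across $\wt B$ — one derives that the trajectory through $\wt y$ produces a transverse intersection with $I^{\Z}_{\wt\F}(\wt z)$, exactly as in Proposition~\ref{PropExistInterTransIrrat}, contradicting Proposition~\ref{LastPropBndDevIrrat}. The role of taking $C_3$ large (instead of just $C_0$) is to guarantee enough room between the two sides so that the two applications of Lemma~\ref{LemAPlusNotEquiv} land on opposite sides of $\wt B$ and genuinely force a transverse crossing rather than an equivalence.

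The main obstacle, I expect, is the bookkeeping around the single-step nature of the statement: in Proposition~\ref{PropExistInterTransIrrat} one has a trajectory of length $n_0$ that can be taken as large as needed (and indeed one needs $n_0>2N+2$), whereas here the hypothesis only concerns $\wt y$ and $\wt f(\wt y)$ — a single iterate. Thus one cannot directly reuse that proposition; instead one must exploit that $\wt y$ itself is already at distance $\ge C_3\gg N\cdot d(\wt f,\Id)$ from $\wt\gamma_{\wt z}$, so that the component $\wt\X_1$ containing $\wt y$ in its closure is \emph{already} deep inside $L(V_{C_0}(\wt\gamma_{\wt z}))$ at a definite distance, and then use the separating set $\wt Y\subset\wt X_\varep$ from Lemma~\ref{LemYInAv} together with recurrence of $z$ to find, along $\wt\gamma_{\wt z}$, a deck translate $T$ with axis close to $\wt\gamma_{\wt z}$ realising a transverse self-configuration — this is where the geometry (a component reaching distance $C_3$ on one side but returning near $\wt\gamma_{\wt z}$ forces $\wt X_\varep$ itself to have a large oscillation, impossible once $C_3$ exceeds the width of $V_{C_1}(\wt\gamma_{\wt z})$ from Lemma~\ref{LemYInAv}) should close the argument cleanly. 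Making this last geometric implication precise, rather than the forcing-theoretic part, will require the most care.
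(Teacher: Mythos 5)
Your plan takes a route that is genuinely different from the paper's, and I think it has a real gap rather than just missing bookkeeping. The paper's proof of Lemma~\ref{LemExistC4} is purely geometric and uses no forcing theory at all: it exploits the fact that each connected component of $\wt S\setminus\wt X_\varep$ is at bounded Hausdorff distance $d_1$ from a geodesic of $\wt\Pi_i$ (a lift of a boundary geodesic of $S_i$), together with a bounded-shadow estimate coming from the crown structure of $\Lambda_i$ (via \cite[Lemma 4.4]{casson}): there is $p_0$ such that for distinct geodesics $\wt\gamma,\wt\gamma'$ of $\wt\Pi_i$, the projection $\pr_{\wt\gamma}(\wt\gamma')$ has length at most $p_0$. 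Given this, if $\wt\X_1$ meets $V_{C_0}(\wt\gamma_{\wt z})$ then its geodesic $\wt\gamma_1$ comes within $C_0+d_1$ of $\wt\gamma_{\wt z}$ at a point $\wt\gamma_1(t_0)$; since $\wt y$ is within $d(\wt f,\Id)+d_1$ of $\wt\gamma_1$ but at distance $\ge C_3$ from $\wt\gamma_{\wt z}$, the point $\pr_{\wt\gamma_1}(\wt y)$ must lie far from $t_0$, and because $\wt\X_2$ has bounded shadow on $\wt\gamma_1$ and sits near $\wt y$, the shadow of $\wt\X_2$ lands at distance $\ge C_0$ from $\pr_{\wt\gamma_1}(\wt\gamma_{\wt z})$; since $\pr_{\wt\gamma_1}$ is $1$-Lipschitz this forces $\wt\X_2\cap V_{C_0}(\wt\gamma_{\wt z})=\emptyset$.

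The gap in your proposal is precisely the obstacle you already flag but do not resolve: the hypothesis involves only $\wt y$ and $\wt f(\wt y)$, so there is no long trajectory segment to feed into Lemma~\ref{LemAPlusNotEquiv} and Proposition~\ref{PropExistInterTransIrrat}, both of which are built around crossing $\wt B$ over a time window of order $n_0>2N+2$. A single iterate cannot produce an $\wt\F$-transverse intersection with $I^\Z_{\wt\F}(\wt z)$ on its own, and the lemma is meant to be applied precisely in the situation where one has \emph{not} yet manufactured such an intersection (it is a preparatory step in Case~\ref{Ca4} of Theorem~\ref{ThmBndDevIrrat2}, used to organize the orbit into a single complementary component). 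Moreover the geometric observation you hope will close the argument --- that a complementary component reaching both distance $C_3$ and $V_{C_0}(\wt\gamma_{\wt z})$ would force a "large oscillation" of $\wt X_\varep$, impossible once $C_3$ exceeds the width $C_1$ from Lemma~\ref{LemYInAv} --- is not correct: the components are unbounded and may well reach far in the transverse direction while simultaneously meeting $V_{C_0}(\wt\gamma_{\wt z})$; the separating set $\wt Y$ of Lemma~\ref{LemYInAv} is only a subset of $\wt X_\varep$, and its bounded width places no constraint on the transverse extent of a complementary component of the full set $\wt X_\varep$. What actually does the work is not any oscillation bound on $\wt X_\varep$ but the hyperbolic-geometric fact that the two boundary geodesics $\wt\gamma_1,\wt\gamma_2$ of $\wt\Pi_i$ corresponding to $\wt\X_1,\wt\X_2$ have uniformly bounded shadows on each other, and this is what you are missing.
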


\begin{proof}
Denote $\wt\Pi_i$ the union of the lifts of boundary components of $S_i$; it is a union of lifts of closed geodesics.
In $\wt S$, the connected components of the complement of $\wt X_\varep$ are in bijection with geodesics of $\wt \Pi_i$. Each lift of such geodesic is the boundary component of two connected components of the complement of $\wt \Pi_i$, one that is a lift of $S_i$ and one that is at finite Hausdorff distance to a (unique) connected component of the complement of $\wt X_\varep$. This Hausdorff distance is uniformly bounded by $d_1>0$.

We parametrise geodesics by arclength. Using the crown components of $\Lambda_i$ (see \cite[Lemma 4.4]{casson}), we get that there exists $p_0>0$ such that for any geodesic $\wt\gamma$ of $\wt\Pi_i$ and any geodesic $\wt\gamma'$ of $\wt\Lambda_i$, one has $\operatorname{length}(\pr_{\wt\gamma}(\wt\gamma')) \le p_0$. Note that it implies that for any geodesic $\wt\gamma$ of $\wt\Pi_i$ and any geodesic $\wt\gamma'$ of $\wt\Pi_i$ different from $\wt\gamma$, one has $\operatorname{length}(\pr_{\wt\gamma}(\wt\gamma')) \le p_0$.
Set 
\[C_3 = 2p_0 + 2C_0 + 3d_1 + d(\wt f,\Id).\]

Suppose that $\wt y\in L(V_{{C_3}}(\wt\gamma_{\wt z}))$ is such that $\wt y$ and $\wt f(\wt y)$ belong to the closure of different connected components $\wt\X_1$ and $\wt \X_2$ of the complement of $\wt X_\varep$. If $\wt\X_1$ is disjoint from $V_{C_0}(\wt\gamma_{\wt z})$, the lemma is proved. If not, let $\wt\gamma_1$ be the geodesic of $\wt\Pi_i$ associated to $\wt\X_1$ and pick $t_0$ such that $d(\wt\gamma_1, \wt\gamma_{\wt z}) = d(\wt\gamma_1(t_0), \wt\gamma_{\wt z}) \le C_0+d_1$. As $\wt f(\wt y)\in\wt \X_2$, the point $\wt y$ is at a distance of at most $d(\wt f, \Id)$ to $\partial \wt\X_1$, hence at a distance of at most $d(\wt f, \Id) + d_1$ to $\wt\gamma_1$. 
Hence $d(\wt y, \pr_{\wt\gamma_1}(\wt y)) \le d(\wt f, \Id) + d_1$, so
\begin{align*}
C_3 & \le d(\wt y,\wt\gamma_{\wt z}) \\
&\le d(\wt y, \pr_{\wt\gamma_1}(\wt y)) + d(\pr_{\wt\gamma_1}(\wt y), \wt\gamma_1(t_0)) + d(\wt\gamma_1(t_0), \wt\gamma_{\wt z})\\
&\le d(\wt f, \Id) + d_1 + d(\pr_{\wt\gamma_1}(\wt y), \wt\gamma_1(t_0)) + C_0+d_1,
\end{align*}
hence
$d(\pr_{\wt\gamma_1}(\wt y), \wt\gamma_1(t_0)) \ge 2 p_0 + C_0 + d_1$. 
Using $\wt\gamma_2$ the geodesic of $\wt\Pi_i$ associated to $\wt\X_2$, this implies that 
\[|\pr_{\wt\gamma_1} (\wt \X_2) - t_0 | \ge  p_0 + C_0,\]
and so
\[\inf\{|x-y|\mid x\in \pr_{\wt\gamma_1}(\wt \X_2),\, y \in \pr_{\wt\gamma_1}(\wt\gamma_{\wt z})\} \ge C_0.\]
As the projection $\pr_{\wt\gamma_1}$ is 1-Lipschitz, we deduce the lemma.
\end{proof}

\begin{lemma}\label{LemDrawCrosses}
Suppose that there is an essential (in $S$) transverse loop $\alpha$ and a lift $\wh\alpha$ of $\alpha$ to $\wh\dom(I)$ whose associated band $\wh B'$ is drawn by the transverse trajectory $I^\Z_{\wh \F}(\wh z)$. Then the geodesic axis of the projection $\wt \alpha$ of $\wh\alpha$ on $\wt S$ crosses $\wt \gamma_{\wt z}$.
\end{lemma}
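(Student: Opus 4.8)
We want to show that if a transverse loop $\alpha$ has a band $\wh B'$ drawn by $I^\Z_{\wh\F}(\wh z)$, then the geodesic axis of $\wt\alpha$ (the projection to $\wt S$ of a lift $\wh\alpha$) must cross $\wt\gamma_{\wt z}$. The key point is that $\wt\gamma_{\wt z}$ is the \emph{tracking} geodesic of $\wt z$: the forward and backward iterates $\wt f^n(\wt z)$ escape to the two distinct endpoints $\omega(\wt z), \alpha(\wt z)\in\partial\wt S$, and these are not the endpoints of the axis of any deck transformation (because $\gamma_z$ is not closed). So the plan is to argue that if the axis of $\wt\alpha$ did \emph{not} cross $\wt\gamma_{\wt z}$, then the entire transverse trajectory $I^\Z_{\wt\F}(\wt z)$ — which stays at bounded Hausdorff distance from $\wt\gamma_{\wt z}$ on each end by the tracking property — would be confined to one side of the axis of $T$ (the primitive deck transformation of $\wt\alpha$), contradicting the fact that the band $\wh B'$ is $T$-invariant and is genuinely crossed by the trajectory.

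\textbf{Key steps.} First, since $\wh B'$ is a band of $\wh\F$ associated to the transverse loop $\alpha$, it is invariant under the deck transformation $\wh T$ of $\wh\dom(I)$ corresponding to $\alpha$; pushing down to $\wt S$, the set of leaves of $\wt\F$ projecting from $\wh B'$ is invariant under the primitive $T\in\G$ associated to $\wt\alpha$, and $T$ has geodesic axis precisely the geodesic at finite Hausdorff distance from $\wt\alpha$. Second, the fact that $I^\Z_{\wh\F}(\wh z)$ draws $\wh B'$ means (by the same kind of argument as in Proposition~\ref{PropPasInterTrans} and the discussion around \cite[Proposition~2.18]{paper1PAF}) that there are infinitely many times $n_k\to+\infty$ and deck transformations $T_k$ with $T_k\wh\alpha$ drawn by the trajectory; by properness of the trajectory and the $T$-invariance, one extracts that $I^\Z_{\wt\F}(\wt z)$ crosses infinitely many distinct translates $T^j\wt B'$, hence $I^\Z_{\wt\F}(\wt z)$ is \emph{not} contained in any half-plane bounded by the axis of $T$. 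Third — and this is the heart — one invokes the tracking property: $\wt f^n(\wt z)$ stays within bounded distance of $\wt\gamma_{\wt z}$ for all $n$ (this is the content already used repeatedly, e.g. in Lemma~\ref{LemReturnDeckTrans}, that $\wt z$ genuinely tracks $\wt\gamma_{\wt z}$), so $I^\Z_{\wt\F}(\wt z)$ lies in a bounded neighbourhood $V_C(\wt\gamma_{\wt z})$ of its tracking geodesic. If the axis of $T$ did not cross $\wt\gamma_{\wt z}$, then $\wt\gamma_{\wt z}$ — being a geodesic disjoint from the axis of $T$ — lies at infinite distance on one side, so $V_C(\wt\gamma_{\wt z})$ would be disjoint from some half-plane $H$ bounded by the axis of $T$; but then the $T$-orbit of a fixed leaf of $\wt B'$ eventually enters $H$, contradicting that $I^\Z_{\wt\F}(\wt z)\subset V_C(\wt\gamma_{\wt z})$ meets every translate $T^j\wt B'$. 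Finally, one rules out the degenerate case where the axis of $T$ merely shares one endpoint with $\wt\gamma_{\wt z}$: this would force $\gamma_z$ to be closed (its tracking geodesic would then be the closed geodesic $\gamma_\alpha$), contrary to our standing assumption that $\Lambda_i$ is non-closed.

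\textbf{Main obstacle.} The delicate point is making precise the step ``the band $\wh B'$ is drawn by $I^\Z_{\wh\F}(\wh z)$ $\Rightarrow$ infinitely many distinct translates $T^j\wt B'$ are crossed by $I^\Z_{\wt\F}(\wt z)$''. One has to be careful that ``drawing a band'' is a statement about $\wh\F$-equivalence of subpaths, not literal intersection, and that the recurrence used is recurrence of $z$ in $S$ (not in $\wt S$); the right tool is \cite[Proposition~2.14]{paper1PAF} together with \cite[Proposition~2.18]{paper1PAF}, exactly as in the proof of Lemma~\ref{LemAlphaNotCross}, to pass from the $T$-invariance of the band to the existence of infinitely many times with the trajectory passing through translates. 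Once that is in hand, the hyperbolic-geometry estimate (a geodesic cannot stay in a bounded neighbourhood of another geodesic while meeting infinitely many translates of a band whose leaves march off to infinity along the axis) is routine. I would also keep in mind the alternative, cleaner route suggested by the structure of the paper: invoke \cite[Proposition~2.18]{paper1PAF} directly to say that if the axis of $T$ did not cross $\wt\gamma_{\wt z}$ then, since $\wh B'$ is $T$-invariant and drawn by the trajectory, there would be a sequence of times $t_k\to+\infty$ with $I^{t_k}_{\wt\F}(\wt z)$ at bounded distance from the axis of $T$, forcing $\omega(\wt z)$ to be an endpoint of that axis — impossible since $\gamma_z$ is not closed. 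This last formulation is essentially the argument already used twice above (in Lemma~\ref{LemAlphaNotCross} and in Proposition~\ref{PropExistInterTransIrrat}), so I expect the actual proof to be short and to cite those two propositions of \cite{paper1PAF}.
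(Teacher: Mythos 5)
Your central claim — that ``$I^\Z_{\wt\F}(\wt z)$ lies in a bounded neighbourhood $V_C(\wt\gamma_{\wt z})$ of its tracking geodesic'' — is not available here and would in fact be circular. The tracking property \eqref{eq:trackingequation} gives only sublinear growth of $d(\wt f^n(\wt z),\wt\gamma_{\wt z}(n\vartheta_\mu))$, not a uniform bound; and the uniform bound \emph{is} precisely the second conclusion of Theorem~\ref{ThmBndDevIrrat2}, which requires the additional hypothesis that the contractible fixed point set is inessential and which is proved later in the same subsection \emph{using} Lemma~\ref{LemDrawCrosses}. So the ``heart'' of your argument assumes the bounded-deviation statement that this entire section is building towards. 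Your fallback via \cite[Proposition~2.18]{paper1PAF} is also mis-aimed: that proposition gives a sequence of times at bounded distance from the axis only when the trajectory is $\wt\F$-equivalent to a $T$-invariant path at $+\infty$ (or $-\infty$), and the first step of the correct argument is to \emph{rule out} exactly that equivalence.

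What the paper actually does is cheaper and uses none of the metric control you invoke. It first shows, by \cite[Proposition~2.5]{paper1PAF} and \cite[Proposition~2.18]{paper1PAF}, that the trajectory is not eventually contained in $\wh B'$ in either time direction. Since the trajectory draws the band and has no $\F$-transverse self-intersection (Proposition~\ref{PropPasInterTrans}), \cite[Proposition~2.15]{paper1PAF} forbids a ``draws-and-visits'' configuration, so the trajectory must actually \emph{cross} $\wh B'$, say from left to right. Now, if the axis of $\wt\alpha$ did not cross $\wt\gamma_{\wt z}$, the two endpoints $\alpha(\wt z),\omega(\wt z)\in\partial\wt S$ of the tracking geodesic (to which the two ends of $I^\Z_{\wt\F}(\wt z)$ escape — a purely qualitative consequence of tracking, not a bounded-neighbourhood fact) would lie on the same side of that axis, forcing the trajectory to also cross $\wt B'$ from right to left; \cite[Proposition~2.16]{paper1PAF} then gives a self-transverse intersection, contradiction. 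Your intuition about crossing numbers and about the endpoints being separated by the axis is on the right track; the gap is that you try to get the topological separation from a metric confinement you don't have, whereas the correct argument gets the same separation for free from the crossing direction combined with \cite[Propositions~2.15, 2.16]{paper1PAF}.
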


\begin{proof}
By \cite[Proposition~2.5]{paper1PAF} the transverse trajectory $I^\Z_{\wh \F}(\wh z)$ cannot accumulate in $\wh \alpha$. 
By \cite[Proposition~2.18]{paper1PAF} the transverse trajectory $I^\Z_{\wh \F}(\wh z)$ cannot be equivalent to $\wh \alpha$ at $+\infty$ or at $-\infty$. 
Hence the transverse trajectory $I^\Z_{\wh \F}(\wh z)$ goes out of $\wh B'$ both in positive and negative times. 

Suppose that the transverse trajectory $I^\Z_{\wh \F}(\wh z)$ draws the band $\wh B'$. By what we have said, either it draws and visits $\wh B$ or it draws and crosses $\wh B$.
As it has no transverse self-intersection (Proposition~\ref{PropPasInterTrans}), \cite[Proposition~2.15]{paper1PAF} implies the first case is impossible. 

Hence $I^\Z_{\wh \F}(\wh z)$ draws and crosses the band $\wh B'$, for example from left to right. Suppose by contradiction that the geodesic axis of the projection $\wt \alpha$ of $\wh\alpha$ on $\wt S$ {does not cross} $\wt \gamma_{\wt z}$. Then for crossing number reasons the trajectory $I^\Z_{\wh\F}(\wh z)$ also has to cross the band $\wt B'$ from right to left. \cite[Proposition~2.16]{paper1PAF} states that it is impossible, a contradiction.
\end{proof}

\begin{lemma}\label{LemIfDevThenBothSides}
Suppose that there exists $n\in\Z$ and $C>0$ such that $\wt f^n(\wt z)\in L(V_C(\wt\gamma_{\wt z}))$. Then there exist $T\in\G$ and $m_1, m_2\in\Z$ such that $\wt f^{m_1}(\wt z)\in L(V_C(T\wt\gamma_{\wt z}))$ and $\wt f^{m_2}(\wt z)\in R(V_C(T\wt\gamma_{\wt z}))$.
\end{lemma}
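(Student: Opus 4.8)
The statement says: if the orbit of $\wt z$ deviates far to the left of the tracking geodesic $\wt\gamma_{\wt z}$ at some time $n$, then there is a lift $T\wt\gamma_{\wt z}$ of the tracking geodesic relative to which the orbit of $\wt z$ is seen on \emph{both} sides (far to the left at time $m_1$, far to the right at time $m_2$). The natural idea is to translate the hypothesis by a deck transformation using recurrence, so that the deviation event ``moves under the orbit of $\wt z$''.

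The plan is as follows. First I would use the recurrence of $z$ in $S$ (equivalently Lemma~\ref{LemReturnDeckTrans}): since $z$ is $\mu$-typical, there is a deck transformation $R\in\G\setminus\{\Id\}$ and a large integer $k$ with $\wt f^k(\wt z)$ arbitrarily close to $R\wt z$, and moreover the axis of $R$ arbitrarily close to $\wt\gamma_{\wt z}$. Applying $R^{-1}$ to the hypothesis $\wt f^n(\wt z)\in L(V_C(\wt\gamma_{\wt z}))$ gives $R^{-1}\wt f^n(\wt z)\in L(V_C(R^{-1}\wt\gamma_{\wt z}))$. Now $R^{-1}\wt f^n(\wt z)$ is close to $\wt f^{n-k}(\wt z)$ (because $R^{-1}\wt f^k(\wt z)$ is close to $\wt z$, and $\wt f^{n-k}$ is a fixed homeomorphism of $\wt S$ commuting with $R^{-1}$ for $n$ fixed — more precisely one iterates the closeness along the bounded number of remaining steps $n-k$, or better, one first chooses $k$ depending only on $n$). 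Thus, enlarging $C$ by a bounded amount (say to $C'=C+d(\wt f^{n-k},\Id)$, a quantity we can control since $n$ is fixed once and for all and $k$ can be taken of the form $n + k_0$ with $k_0$ large), we get $\wt f^{m_1}(\wt z)\in L(V_{C'}(T\wt\gamma_{\wt z}))$ with $T = R^{-1}$ and $m_1$ an integer we control. So one side of the conclusion is easy.

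For the other side, the key point is that $R^{-1}\wt\gamma_{\wt z}$ and $\wt\gamma_{\wt z}$ have the same orientation (by the choice of $R$ with axis close to $\wt\gamma_{\wt z}$, combined with Lemma~\ref{LemBandDisjoint}: if they had different orientations the bands would be disjoint, contradicting recurrence through $\wt W$ as in the proof of Theorem~\ref{TheoLaminMinim}). Combined with the geodesic tracking property, $\wt f^m(\wt z)\to\omega(\wt\gamma_{\wt z})$ as $m\to+\infty$ and $\wt f^m(\wt z)\to\alpha(\wt\gamma_{\wt z})$ as $m\to-\infty$; since $T\wt\gamma_{\wt z}=R^{-1}\wt\gamma_{\wt z}$ is a geodesic distinct from $\wt\gamma_{\wt z}$ and at bounded distance from it over a long stretch but with distinct endpoints (as $\gamma_z$ is not closed, $\omega(\wt\gamma_{\wt z})$ and $\alpha(\wt\gamma_{\wt z})$ are not fixed by any deck transformation and in particular $\alpha(T\wt\gamma_{\wt z})\ne\alpha(\wt\gamma_{\wt z})$, $\omega(T\wt\gamma_{\wt z})\ne\omega(\wt\gamma_{\wt z})$), the tracking geodesic $\wt\gamma_{\wt z}$ eventually leaves any tubular neighbourhood $V_{C'}(T\wt\gamma_{\wt z})$ both in positive and negative time. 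Since the orbit of $\wt z$ stays at bounded distance from $\wt\gamma_{\wt z}$ (Lemma~\ref{LemReturnDeckTrans} or directly the tracking property gives a uniform bound along the relevant range), and $\wt f^m(\wt z)$ tends to $\omega(\wt\gamma_{\wt z})$ while $\omega(T\wt\gamma_{\wt z})$ lies strictly on one side of $\wt\gamma_{\wt z}$, say on the right, one concludes that $\wt f^m(\wt z)\in R(V_{C'}(T\wt\gamma_{\wt z}))$ for all $m$ large enough; picking such an $m=m_2$ finishes the proof. (If the orientation puts $\omega(T\wt\gamma_{\wt z})$ on the left, one uses negative times, or equivalently replaces $R$ by $R^{-1}$, as is done repeatedly in the paper.)

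The main obstacle I anticipate is the bookkeeping of the constant: ``$C$'' in the hypothesis must reappear \emph{unchanged} in the conclusion, whereas the translation-plus-approximation argument naturally costs an additive error. This is handled because the conclusion allows the freedom to choose $T$, $m_1$, $m_2$, and — crucially — because in the intended applications of this lemma (in the proof of Theorem~\ref{ThmBndDevIrrat2}) one will already be enlarging $C$ to whatever is needed; alternatively, one observes that since the axis of $R$ can be taken \emph{arbitrarily} close to $\wt\gamma_{\wt z}$ and $n$ is fixed, the approximation error can be absorbed by an a priori enlargement of the neighbourhood, so the statement should be read as ``for $C$ large enough'' or with the understanding that the same $C$ works after the reductions already in force. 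The geometric input (same orientation $\Rightarrow$ the two geodesics separate the two ends of the orbit, so the orbit visits both complementary sides of a bounded neighbourhood of $T\wt\gamma_{\wt z}$) is elementary hyperbolic geometry of the kind used throughout Section~\ref{SecLaminOrient}, so that part should be routine.
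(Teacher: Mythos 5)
Your approach is genuinely different from the paper's and, with some repairs, it works. Let me compare and then flag the flaws.

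\paragraph{Comparison.} The paper's proof invokes the structure theory of the complement of $\Lambda_i$ from \cite[Section~4]{casson}: it distinguishes the case where $\wt\gamma_{\wt z}$ is accumulated on the deviation side by leaves of $\wt\Lambda_i$ from the crown case (where the nearest leaf $\wt\gamma'$ shares an endpoint at infinity with $\wt\gamma_{\wt z}$), and in the second case it additionally uses Lusin continuity of $\wt x\mapsto\wt\gamma_{\wt x}$ to find an iterate $\wt f^{m_2}(\wt z)$ far from $\wt\gamma_{\wt z}$ \emph{and} close to $\omega(\wt\gamma_{\wt z})$. You instead take $T=R^{\pm1}$ directly from Lemma~\ref{LemReturnDeckTrans}, transport the deviation by $T$ to get the ``left'' iterate, and use the tracking property ($\wt f^m(\wt z)\to\omega(\wt\gamma_{\wt z})$ with $\omega(\wt\gamma_{\wt z})$ strictly on one side of $T\wt\gamma_{\wt z}$) for the ``right'' iterate. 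This is a cleaner path: it avoids the crown dichotomy, and the elementary observation that $R\wt\gamma_{\wt z}$ and $R^{-1}\wt\gamma_{\wt z}$ lie on opposite sides of $\wt\gamma_{\wt z}$ (so you can choose the one on the left) absorbs exactly the case distinction that the paper handles through \cite{casson}. What the paper's approach buys is that it never needs to discuss $T\wt\gamma_{\wt z}$ quantitatively; what yours buys is brevity and uniformity of the two halves of the argument.

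\paragraph{Where your write-up goes wrong.} The constant bookkeeping has a sign error that needs to be fixed, not hand-waved. You write that you ``enlarge $C$'' to $C'=C+d(\wt f^{n-k},\Id)$ and conclude $\wt f^{m_1}(\wt z)\in L(V_{C'}(T\wt\gamma_{\wt z}))$. But the approximation $R^{-1}\wt f^{n}(\wt z)\approx\wt f^{n-k}(\wt z)$ goes the \emph{other} way: from $R^{-1}\wt f^{n}(\wt z)\in L(V_C(T\wt\gamma_{\wt z}))$ you only get $\wt f^{n-k}(\wt z)$ at distance $>C-\epsilon$ from $T\wt\gamma_{\wt z}$, i.e.\ you \emph{lose}, not gain. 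The correct fix is the one the paper makes explicit at the start of its proof: since $\wt f^n(\wt z)\in L(V_C(\wt\gamma_{\wt z}))$ is an open (strict) condition, there is $\delta>0$ with $d(\wt f^n(\wt z),\wt\gamma_{\wt z})\ge C+4\delta$, and you should take the recurrence time $k$ large enough so that the error $\epsilon<\delta$. Without this slack the stated conclusion with the same $C$ does not follow. Two smaller imprecisions: the sentence ``the orbit of $\wt z$ stays at bounded distance from $\wt\gamma_{\wt z}$'' is false in the very setting of this lemma (the hypothesis asserts a large deviation) and should be deleted — the argument does not need it, only that $\wt f^m(\wt z)\to\omega(\wt\gamma_{\wt z})$ in $\wt S\cup\partial\wt S$. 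Also, ``$\omega(T\wt\gamma_{\wt z})$ lies strictly on one side of $\wt\gamma_{\wt z}$'' should read ``$\omega(\wt\gamma_{\wt z})$ lies strictly on one side of $T\wt\gamma_{\wt z}$''; that is what the tracking argument uses, and it holds because $T\wt\gamma_{\wt z}$ and $\wt\gamma_{\wt z}$ are distinct leaves of $\wt\Lambda_i$ with no common endpoint (since $\gamma_z$ is not closed and $T$ is hyperbolic with axis a closed geodesic). Finally, you should make the choice of the sign of $T\in\{R,R^{-1}\}$ explicit: you need $T\wt\gamma_{\wt z}\subset L(\wt\gamma_{\wt z})$ so that $\omega(\wt\gamma_{\wt z})\in R(T\wt\gamma_{\wt z})$, and exactly one of $R\wt\gamma_{\wt z}$, $R^{-1}\wt\gamma_{\wt z}$ satisfies this; with the other choice you end up with two iterates far to the \emph{left} of $T\wt\gamma_{\wt z}$ and none to the right.
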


\begin{proof}
The idea of this proof is depicted in Figure~\ref{FigLemIfDevThenBothSides}.

By hypothesis, we have that
\[d\big(\wt f^n(\wt z), \wt\gamma_{\wt z}\big) \ge C+4\delta\]
for some $\delta>0$.
Following \cite[Section~4]{casson}, there are two possibilities. 

Either $\wt\gamma_{\wt z}$ is accumulated by geodesics of $\wt\Lambda_i$ that separate $\wt\gamma_{\wt z}$ from $\wt f^n(\wt z)$. In this case, there exists a geodesic $\wt \gamma'\subset \wt\Lambda_i$ such that 
\[d\big(\wt f^n(\wt z), \wt\gamma'\big) \ge C+3\delta,\]
but with the additional property that for $m$ large enough we also have 
\[d\big(\wt f^m(\wt z), \wt\gamma'\big) \ge C+2\delta,\]
with $\wt\gamma'$ separating $\wt f^n(\wt z)$ from $\wt f^m(\wt z)$. As $\gamma_z$ is dense in $\Lambda_i$, we deduce that there is $T\in\G$ such that $V_C(T\wt\gamma_{\wt z})$ separates $\wt f^n(\wt z)$ from $\wt f^m(\wt z)$.

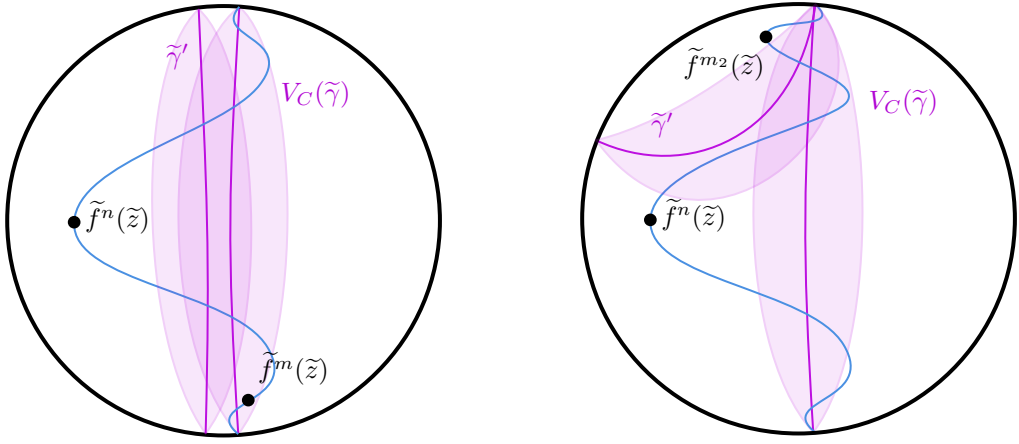
\begin{figure}
\begin{center}

\tikzset{every picture/.style={line width=0.75pt}} 

\begin{tikzpicture}[x=0.75pt,y=0.75pt,yscale=-1.2,xscale=1.2]


\draw [color={rgb, 255:red, 189; green, 16; blue, 224 }  ,draw opacity=1 ]   (436.24,79.35) .. controls (428.09,135.6) and (382.53,154.05) .. (344.97,136.27) ;
\draw  [color={rgb, 255:red, 189; green, 16; blue, 224 }  ,draw opacity=0.2 ][fill={rgb, 255:red, 189; green, 16; blue, 224 }  ,fill opacity=0.1 ] (196.56,80.33) .. controls (221.44,102.33) and (225.22,221.22) .. (196.11,259.44) .. controls (161.44,221.22) and (164.33,104.78) .. (196.56,80.33) -- cycle ;
\draw  [color={rgb, 255:red, 189; green, 16; blue, 224 }  ,draw opacity=0.2 ][fill={rgb, 255:red, 189; green, 16; blue, 224 }  ,fill opacity=0.1 ] (435.72,79.5) .. controls (460.61,101.5) and (464.39,220.39) .. (435.28,258.61) .. controls (400.61,220.39) and (403.5,103.94) .. (435.72,79.5) -- cycle ;
\draw [color={rgb, 255:red, 189; green, 16; blue, 224 }  ,draw opacity=1 ]   (196.56,80.33) .. controls (192.11,156.33) and (191.22,179.89) .. (196.11,259.44) ;
\draw  [line width=1.5]  (100,170) .. controls (100,120.29) and (140.29,80) .. (190,80) .. controls (239.71,80) and (280,120.29) .. (280,170) .. controls (280,219.71) and (239.71,260) .. (190,260) .. controls (140.29,260) and (100,219.71) .. (100,170) -- cycle ;
\draw [color={rgb, 255:red, 189; green, 16; blue, 224 }  ,draw opacity=1 ]   (435.72,79.5) .. controls (431.28,155.5) and (430.39,179.06) .. (435.28,258.61) ;
\draw [color={rgb, 255:red, 189; green, 16; blue, 224 }  ,draw opacity=1 ]   (179.66,81.38) .. controls (182.99,158.94) and (184.54,180.71) .. (182.54,259.16) ;
\draw [color={rgb, 255:red, 74; green, 144; blue, 226 }  ,draw opacity=1 ]   (436.24,79.35) .. controls (448.75,91.16) and (415.42,83.6) .. (415.64,92.94) .. controls (415.86,102.27) and (456.81,110.26) .. (449.2,120.27) .. controls (441.58,130.29) and (368.73,144.03) .. (367.53,169.63) .. controls (366.33,195.23) and (433.78,198.27) .. (447.93,222.03) .. controls (462.09,245.79) and (419.53,244.83) .. (435.8,258.46) ;
\draw  [draw opacity=0][fill={rgb, 255:red, 0; green, 0; blue, 0 }  ,fill opacity=1 ] (364.89,169.63) .. controls (364.89,168.17) and (366.07,166.99) .. (367.53,166.99) .. controls (368.99,166.99) and (370.17,168.17) .. (370.17,169.63) .. controls (370.17,171.09) and (368.99,172.27) .. (367.53,172.27) .. controls (366.07,172.27) and (364.89,171.09) .. (364.89,169.63) -- cycle ;
\draw  [draw opacity=0][fill={rgb, 255:red, 0; green, 0; blue, 0 }  ,fill opacity=1 ] (413,92.94) .. controls (413,91.48) and (414.18,90.3) .. (415.64,90.3) .. controls (417.1,90.3) and (418.28,91.48) .. (418.28,92.94) .. controls (418.28,94.4) and (417.1,95.58) .. (415.64,95.58) .. controls (414.18,95.58) and (413,94.4) .. (413,92.94) -- cycle ;
\draw [color={rgb, 255:red, 74; green, 144; blue, 226 }  ,draw opacity=1 ]   (196.56,80.33) .. controls (185.05,93.41) and (220.25,93.01) .. (205.05,113.01) .. controls (189.85,133.01) and (129.05,145.01) .. (127.85,170.61) .. controls (126.65,196.21) and (194.09,199.25) .. (208.25,223.01) .. controls (222.4,246.77) and (179.85,245.81) .. (196.11,259.44) ;
\draw  [draw opacity=0][fill={rgb, 255:red, 0; green, 0; blue, 0 }  ,fill opacity=1 ] (125.21,170.61) .. controls (125.21,169.16) and (126.39,167.97) .. (127.85,167.97) .. controls (129.3,167.97) and (130.49,169.16) .. (130.49,170.61) .. controls (130.49,172.07) and (129.3,173.25) .. (127.85,173.25) .. controls (126.39,173.25) and (125.21,172.07) .. (125.21,170.61) -- cycle ;
\draw  [line width=1.5]  (339.17,169.17) .. controls (339.17,119.46) and (379.46,79.17) .. (429.17,79.17) .. controls (478.87,79.17) and (519.17,119.46) .. (519.17,169.17) .. controls (519.17,218.87) and (478.87,259.17) .. (429.17,259.17) .. controls (379.46,259.17) and (339.17,218.87) .. (339.17,169.17) -- cycle ;
\draw  [color={rgb, 255:red, 189; green, 16; blue, 224 }  ,draw opacity=0.2 ][fill={rgb, 255:red, 189; green, 16; blue, 224 }  ,fill opacity=0.1 ] (436.24,79.35) .. controls (473.56,118.93) and (393.56,203.16) .. (344.97,136.27) .. controls (378.45,127.83) and (411.34,102.49) .. (436.24,79.35) -- cycle ;
\draw  [color={rgb, 255:red, 189; green, 16; blue, 224 }  ,draw opacity=0.2 ][fill={rgb, 255:red, 189; green, 16; blue, 224 }  ,fill opacity=0.1 ] (179.66,81.38) .. controls (204.54,103.38) and (211.66,220.93) .. (182.54,259.16) .. controls (153.98,219.62) and (152.78,105.62) .. (179.66,81.38) -- cycle ;
\draw  [draw opacity=0][fill={rgb, 255:red, 0; green, 0; blue, 0 }  ,fill opacity=1 ] (197.63,245.19) .. controls (197.63,243.73) and (198.81,242.55) .. (200.27,242.55) .. controls (201.73,242.55) and (202.91,243.73) .. (202.91,245.19) .. controls (202.91,246.65) and (201.73,247.83) .. (200.27,247.83) .. controls (198.81,247.83) and (197.63,246.65) .. (197.63,245.19) -- cycle ;

\draw (212.58,117.21) node [anchor=west] [inner sep=0.75pt]  [font=\small,color={rgb, 255:red, 177; green, 5; blue, 212 }  ,opacity=1 ]  {$V_{C}(\wt \gamma )$};
\draw (457.12,120.21) node [anchor=west] [inner sep=0.75pt]  [font=\small,color={rgb, 255:red, 177; green, 5; blue, 212 }  ,opacity=1 ]  {$V_{C}(\wt \gamma )$};
\draw (177.03,101.21) node [anchor=east] [inner sep=0.75pt]  [font=\small,color={rgb, 255:red, 177; green, 5; blue, 212 }  ,opacity=1 ]  {$\wt\gamma '$};
\draw (131.19,168.59) node [anchor=west] [inner sep=0.75pt]  [font=\small]  {$\wt f^{n}(\wt z)$};
\draw (370.87,167.6) node [anchor=west] [inner sep=0.75pt]  [font=\small]  {$\wt f^{n}(\wt z)$};
\draw (372.65,136.41) node [anchor=south] [inner sep=0.75pt]  [font=\small,color={rgb, 255:red, 177; green, 5; blue, 212 }  ,opacity=1 ]  {$\wt\gamma '$};
\draw (416.28,96.34) node [anchor=north east] [inner sep=0.75pt]  [font=\small]  {$\wt f^{m_{2}}(\wt z)$};
\draw (203.19,240) node [anchor=south west] [inner sep=0.75pt]  [font=\small]  {$\wt f^{m}(\wt z)$};

\end{tikzpicture}

\caption{Proof of Lemma~\ref{LemIfDevThenBothSides}: first case (left) and second case (right).}\label{FigLemIfDevThenBothSides}
\end{center}
\end{figure}

Or $\wt\gamma_{\wt z}$ is not accumulated by geodesics of $\wt\Lambda_i$ that separate $\wt\gamma_{\wt z}$ from $\wt f^n(\wt z)$. By \cite[Section~4]{casson}, there exists a geodesic $\wt\gamma'\subset\wt\Lambda_i$ having an endpoint in common with $\wt\gamma_{\wt z}$ and located on the same side of $\wt\gamma_{\wt z}$ as $\wt f^n(\wt z)$. We treat the case where this common endpoint is $\omega(\wt\gamma_{\wt z})$, the other case being similar.

By Lusin theorem, there exists a set $A$ of positive $\mu$-measure such that the restriction to $A$ of the map $x\mapsto\wt\gamma_{\wt x}$ is continuous. 
The point $z$ being typical, there exists $k\in\Z$ such that $f^k(z)\in A$. So $f^n(z)\in f^{n-k}(A)$.
The point $z$ being recurrent, and by the continuity property of the map $\wt z\mapsto \wt\gamma_{\wt z}$ in restriction to $f^{n-k}(A)$, we deduce that there exists $m_2\in\N$ large such that
\[d\big(\wt f^{m_2}(\wt z), \wt\gamma_{\wt z}\big) \ge C+3\delta\]
and $\wt f^{m_2}(\wt z)$ is close to $\omega(\wt\gamma_{\wt z})$. This implies that for such $m_2$ large enough, we have 
\[d\big(\wt f^{m_2}(\wt z), \wt\gamma'\big) \ge C+2\delta,\]
with $\wt\gamma'$ separating $\wt f^{m}(\wt z)$ from $\wt\gamma_{\wt z}$. 
Moreover, for $m$ large enough we have 
\[d\big(\wt f^{-m}(\wt z), \wt\gamma'\big) \ge C+\delta,\]
with $\wt f^{-m}(\wt z)$ and $\wt\gamma_{\wt z}$ being on the same side of $\wt\gamma'$.
As $\gamma_z$ is dense in $\Lambda_i$, we deduce that there is $T\in\G$ such that, setting $m_1 = -m$, we have $V_C(T\wt\gamma_{\wt z})$ separates $\wt f^{m_1}(\wt z)$ from $\wt f^{m_2}(\wt z)$.
\end{proof}



Let $C_0$ and $\varep_0$ given by Proposition~\ref{PropExistInterTransIrrat}, $\varep<\varep_0$, $C_1$ given by Lemma~\ref{LemYInAv}, $C_2$ given by Lemma~\ref{LemMeetPhiTwice} and $C_3$ given by Lemma~\ref{LemExistC4} applied to $C_0$.
Let us choose an isotopy $I$ between $\Id_S$ and $f$ that is given by Theorem~\ref{ThExistIstop}, that lifts to an isotopy $\wt I$ between $\Id_{\wt S}$ and $\wt f$. {We assume that $\Fix(I)$ is inessential, as it is contained in the projection of $\Fix(\wt f)$, the set of contractible fixed points of $f$. Let $M$ be given by \cite[Lemma~2.9]{paper1PAF}, and we assume that every transverse trajectory of a point in $\dom(I)$ is chosen such that its lift to $\wt{S}$ has diameter at most $M$.}

Let $C_4 = \max(C_1, C_3+C_2+2d(\wt I, \Id_{\wt S}))$ and {$C_5=C_4+2M$}.

\begin{lemma}\label{CoroBoundedTyp}
For $\mu$-a.e.~$z\in S$, if $\wt f^n(\wt z)\in L(V_{C_4}(\wt\gamma_{\wt z}))$ then there exists $t_1\in{[0, n]}$ such that $I^{t_1}_{\wt\F}(\wt z) \in L(\wt B)$. 
\end{lemma}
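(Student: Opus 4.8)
The plan is to argue by contradiction: suppose that $\wt f^n(\wt z)\in L(V_{C_4}(\wt\gamma_{\wt z}))$ but that the transverse trajectory $I^{[0,n]}_{\wt\F}(\wt z)$ never enters $L(\wt B)$, i.e.\ it stays inside $\wt B\cup R(\wt B)$. First I would use the fact that $C_4\ge C_1$ together with Lemma~\ref{LemYInAv} (applied after exchanging left and right, or to $\wt f^{-n}$) to deduce that $\wt f^n(\wt z)$ is far enough from $\wt\gamma_{\wt z}$ that the displacement $d(\wt z, \wt f^n(\wt z))$ is at least $C_2$; more precisely, since $\wt z\in V_{C_4}(\wt\gamma_{\wt z})$ (indeed $\wt z$ is close to $\wt\gamma_{\wt z}(0)$) and $\wt f^n(\wt z)\in L(V_{C_4}(\wt\gamma_{\wt z}))$, the choice $C_4 \ge C_3+C_2+2d(\wt I,\Id_{\wt S})$ forces $d(\wt z,\wt f^n(\wt z))\ge C_2$. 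Then Lemma~\ref{LemMeetPhiTwice} gives an index $j$, a deck transformation $T\in\G\setminus\{\Id\}$, and times $0\le a<b\le n$ such that $\wt f^a$-iterate of the trajectory meets a lift $\wt U_j$ and the $\wt f^b$-iterate meets $T\wt U_j$.

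Next I would analyse what this repeated visit to a chart (lifted by a nontrivial deck transformation) produces. Since $I^{[-1,1]}_{\wt\F}$ of any point in $\wt U_j$ crosses every leaf meeting $\wt U_j$, the subpath $I^{[a-1,b+1]}_{\wt\F}(\wt z)$ is $\wt\F$-equivalent to a path drawing a transverse loop whose lift to $\wh\dom(I)$ gives an essential band $\wh B'$ drawn by $I^\Z_{\wh\F}(\wh z)$; here I would invoke the diameter bound $M$ on transverse trajectories (so that $U_j\subset V_\varep$ when $U_j$ meets $X_\varep$, and so that the loop is genuinely essential in $S$ — the chart $U_j$ lies in the complement of the fixed-point discs $D_i$). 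Then Lemma~\ref{LemDrawCrosses} applies: the geodesic axis $\wt\delta$ of the deck transformation $T$ crosses $\wt\gamma_{\wt z}$, and moreover (by the proof of that lemma) $I^\Z_{\wt\F}(\wt z)$ draws and \emph{crosses} the band associated to $T$, say from left to right — which is exactly the information that forces the trajectory to pass from one side of the band to the other.

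The final step is to combine this with the hypothesis $\wt f^n(\wt z)\in L(V_{C_4}(\wt\gamma_{\wt z}))$ to reach a contradiction with the assumption that $I^{[0,n]}_{\wt\F}(\wt z)\subset \wt B\cup R(\wt B)$. The point is that the band associated to $T$ has one of its sides on the left of $\wt\gamma_{\wt z}$; once the trajectory crosses that band (which, being drawn inside $[a,b]\subset[0,n]$, happens during the relevant time interval) a leaf of $\wt\F$ met by the trajectory lands in $L(\wt B)$, by the ordering of bands coming from Lemmas~\ref{LemAlphaNotCross} and~\ref{LemBandDisjoint} (as in the end of the proof of Lemma~\ref{LemIfFarThenNotEquiv}): a leaf on the left of $T\wt\gamma_{\wt z}$, with $T\wt\gamma_{\wt z}$ on the left of $\wt\gamma_{\wt z}$ and both of the same orientation, forces $\wt\phi\subset L(T\wt B)\subset L(\wt B)$. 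This contradicts $I^{[0,n]}_{\wt\F}(\wt z)\subset\wt B\cup R(\wt B)$, so some $t_1\in[0,n]$ with $I^{t_1}_{\wt\F}(\wt z)\in L(\wt B)$ must exist.

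I expect the main obstacle to be the bookkeeping that turns the ``meets $\wt U_j$ then $T\wt U_j$'' statement into a genuine crossing of an \emph{essential} band by $I^\Z_{\wt\F}(\wt z)$ with the correct orientation and side relative to $V_{C_4}(\wt\gamma_{\wt z})$ — in particular, ensuring that the crossing actually reaches the left of $\wt B$ rather than merely visiting it, which is where one needs the constant $C_4$ (through $C_3$ and Lemma~\ref{LemExistC4}) to be large enough and where the inessentiality of the fixed point set, via the chart selection and the diameter bound $M$ from \cite[Lemma~2.9]{paper1PAF}, gets used. The ordering-of-bands argument and the distance estimates should otherwise be routine.
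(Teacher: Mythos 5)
Your proposal takes a genuinely different route from the paper's, and unfortunately it has a real gap at the final ordering step. Let me compare the two.

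The paper's proof is a case dichotomy: either (case 1) some skeleton $X'_\varep\subset V_\varep$ has a connected component of its complement meeting the orbit segment and lying entirely in $L(V_{C_0}(\wt\gamma_{\wt z}))$, in which case Lemma~\ref{LemAPlusNotEquiv} produces the desired leaf in $L(\wt B)$; or (case 2) no such component exists, and then Lemma~\ref{LemExistC4} forces the orbit segment to stay in a single complementary component, Lemma~\ref{LemMeetPhiTwice} gives a return to a chart via a nontrivial deck transformation $T_1$, and a Claim (using the case 2 hypothesis and the inessentiality of $\Fix(I)$) shows that the axis of $T_1$ does \emph{not} cross $\gamma_z$. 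Combined with Lemma~\ref{LemDrawCrosses} in contrapositive form, this says $I^\Z_{\wt\F}(\wt z)$ does not draw the band $\wt B_1$ of the loop closed inside the chart — but for $y_0=z$ the trajectory $I^\Z_{\wt\F}(\wt z)$ draws $\wt B_1$ by construction. Contradiction, so case 2 is impossible, and case 1 concludes via Lemma~\ref{LemAPlusNotEquiv}.

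Your proposal skips the dichotomy entirely, never invokes Lemma~\ref{LemAPlusNotEquiv} or Lemma~\ref{LemExistC4}, and runs Lemma~\ref{LemDrawCrosses} in the \emph{forward} direction to conclude that the axis of $T$ crosses $\wt\gamma_{\wt z}$ and that the trajectory crosses the band. This is where the gap opens. Once you conclude the axis crosses $\wt\gamma_{\wt z}$, the relation ``$T\wt\gamma_{\wt z}$ is on the left of $\wt\gamma_{\wt z}$ with the same orientation'' that you invoke in the closing ordering argument is not justified --- and in fact is typically false, since if the axis of $T$ meets $\wt\gamma_{\wt z}$ transversally then $T\wt\gamma_{\wt z}$ generally also meets $\wt\gamma_{\wt z}$, so the geodesics are not disjoint and the ``same orientation'' language (defined only for disjoint geodesics) does not apply. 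Moreover, you conflate two different bands: the band $\wt B_1$ drawn by the trajectory (the one associated with the essential loop closed inside $\wt U_j$) is not $T\wt B$. Crossing $\wt B_1$ from one side to the other puts a leaf of the trajectory in $L(\wt B_1)$ or $R(\wt B_1)$, but there is no established relation $L(\wt B_1)\subset L(\wt B)$ or $L(T\wt B)\subset L(\wt B)$ under your hypotheses, so you cannot conclude that some $I^{t_1}_{\wt\F}(\wt z)$ lands in $L(\wt B)$. The ordering argument of Lemmas~\ref{LemAlphaNotCross} and~\ref{LemBandDisjoint} requires the translate to be disjoint from $\wt B$, which is precisely what fails here. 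The paper sidesteps this entirely by proving that the relevant case cannot happen; your forward use of Lemma~\ref{LemDrawCrosses} is consistent with the paper's contradiction but does not, on its own, produce the desired leaf in $L(\wt B)$.
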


\begin{lemma}\label{LemThmBndDevIrrat2}
Let $\wt y_0\in \fil(\wt A_\varep)$ and $n_1\in\N$ be such that $\wt y_0\in L(V_{{C_5}}(\wt\gamma_{\wt z}))$ and $\wt f^{n_1}(\wt y_0) \notin L(V_{{C_5-C_2-d(\wt I, \Id_S)}}(\wt\gamma_{\wt z}))$. 
Then there exists $t_1\in [0, n_1]$ such that $I^{t_1}_{\wt\F}(\wt y_0) \in L(\wt B)$. 
\end{lemma}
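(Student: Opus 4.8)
The plan is to argue by contradiction. Assume that the transverse trajectory $\bigl(I^{t}_{\wt\F}(\wt y_0)\bigr)_{t\in[0,n_1]}$ never meets $L(\wt B)$ — that is, it stays in $\wt B\cup R(\wt B)$ throughout — and derive that then $\wt f^{n_1}(\wt y_0)$ would remain in $L(V_{C_5-C_2-d(\wt I,\Id_S)}(\wt\gamma_{\wt z}))$, contradicting the hypothesis.

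First I would extract from the metric hypotheses that the displacement is large: comparing distances to the geodesic $\wt\gamma_{\wt z}$, from $\wt y_0\in L(V_{C_5}(\wt\gamma_{\wt z}))$ and $\wt f^{n_1}(\wt y_0)\notin L(V_{C_5-C_2-d(\wt I,\Id_S)}(\wt\gamma_{\wt z}))$ one gets $d(\wt y_0,\wt f^{n_1}(\wt y_0))\ge C_2+d(\wt I,\Id_S)>C_2$. Lemma~\ref{LemMeetPhiTwice} then says the orbit visits some chart $\wt U_j$ and later $T\wt U_j$ for a deck transformation $T\in\G\setminus\{\Id\}$; after a short correction inside the trivialising chart $U_j$ this turns the corresponding subpath of $I^\Z_{\wt\F}(\wt y_0)$ into an essential transverse loop. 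Meanwhile, since $\wt y_0\in\fil(\wt A_\varep)$ and (as in the proof of Lemma~\ref{LemYInAv}) $\fil(\wt A_\varep)$, together with every connected component of its complement, is $\wt f$-invariant, the entire orbit $\wt y_0,\dots,\wt f^{n_1}(\wt y_0)$ lies in $\fil(\wt A_\varep)$; because $\wt X_\varep$ carries the fundamental group of $\fil(A_\varep)$, the essential loop just produced forces the orbit to cross $\wt X_\varep$, and using the separating set $\wt Y\subset\wt X_\varep\cap V_{C_1}(\wt\gamma_{\wt z})$ built in the proof of Lemma~\ref{LemYInAv}, which separates $\wt S$, I would locate an integer time $c$ at which the orbit passes from the closure of one connected component $\wt\X_1$ of $\wt S\setminus\wt X_\varep$ to the closure of another component $\wt\X_2$. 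The slack $2M$ in $C_5=C_4+2M$ and the margin $C_2+d(\wt I,\Id_S)$ coming from Lemma~\ref{LemMeetPhiTwice} are exactly what allows $c$ to be chosen bounded away from $0$ and $n_1$, and the point $\wt f^c(\wt y_0)$ to still satisfy $\wt f^c(\wt y_0)\in L(V_{C_3}(\wt\gamma_{\wt z}))$, where $C_3\le C_4\le C_5$ is the constant of Lemma~\ref{LemExistC4} applied at $C_0$.

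Next I would feed this crossing into Lemma~\ref{LemExistC4}: since $\wt f^c(\wt y_0)$ and $\wt f^{c+1}(\wt y_0)$ sit in the closures of the distinct components $\wt\X_1,\wt\X_2$, one of them is disjoint from $V_{C_0}(\wt\gamma_{\wt z})$; after exchanging $y_0$ with $f^{n_1}(y_0)$ and $n_1$ with $-n_1$ if necessary, this component is $\wt\X_1$, so $\wt\X_1\subset L(V_{C_0}(\wt\gamma_{\wt z}))$ and $\wt f^c(\wt y_0)\in\overline{\wt\X_1}$ with $d(\wt f^c(\wt y_0),\wt\gamma_{\wt z})\ge C_0$. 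Because $\wt f^c(\wt y_0)$ lies in a chart meeting $X_\varep$, hence inside $V_\varep$, Remark~\ref{RemLemAPlusNotEquiv} lets me replace $X_\varep$ by a skeleton $X'_\varep\subset V_\varep$ passing through $\wt f^c(\wt y_0)$ and then apply Lemma~\ref{LemAPlusNotEquiv} to $\wt f^c(\wt y_0)\in\wt X'_\varep\cap L(\wt\gamma_{\wt z})$: it yields a bounded $t_0$ with $I^{t_0}_{\wt\F}(\wt f^c(\wt y_0))\in L(\wt B)$. Since $I^{t_0}_{\wt\F}(\wt f^c(\wt y_0))=I^{c+t_0}_{\wt\F}(\wt y_0)$ and $c$ was chosen away from the endpoints, $c+t_0\in[0,n_1]$, contradicting the standing assumption; this proves the lemma, with $t_1=c+t_0$.

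The main obstacle is the middle step: forcing the orbit, which is confined to $\fil(\wt A_\varep)$ but drifts from deep on the left to shallow (or to the right) of $\wt\gamma_{\wt z}$, to genuinely cross $\wt X_\varep$ between two components one of which Lemma~\ref{LemExistC4} ejects outside $V_{C_0}(\wt\gamma_{\wt z})$, while the crossing time $c$ stays strictly inside $[0,n_1]$. This is precisely the delicate configuration isolated as Case~\ref{Ca4} in the proof of Theorem~\ref{ThmBndDevIrrat2}: one must distinguish whether the relevant complementary component is bounded or reaches $\partial\wt S$, and the argument needs the $\wt f$-invariance of $\fil(\wt A_\varep)$, the ``revisit a small chart far from the fixed points'' statement of Lemma~\ref{LemMeetPhiTwice} (which rests on inessentialness of the contractible fixed point set), and the carefully calibrated constants $C_4$ and $C_5$ — built from $C_1$, $C_2$, $C_3$, $d(\wt I,\Id_S)$ and the diameter bound $M$ of \cite[Lemma~2.9]{paper1PAF} — to fit together.
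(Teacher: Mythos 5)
Your proposal hinges on the step ``the essential loop just produced forces the orbit to cross $\wt X_\varep$'', so that one can find an integer time $c$ at which the orbit passes between two complementary components of $\wt X_\varep$. That step does not hold, and it is precisely the situation the paper's proof is built to handle. After discarding the easy sub-case (where some $\wt f^{n'_1}(\wt y_0)$ already lies in a component $\wt\X'_1\subset L(V_{C_0}(\wt\gamma_{\wt z}))$ of the complement of a skeleton $X'_\varep\subset V_\varep$, which is exactly when Lemma~\ref{LemAPlusNotEquiv} applies directly), the paper's running hypothesis is the negation: every complementary component met by the orbit intersects $V_{C_0}(\wt\gamma_{\wt z})$, for every admissible skeleton. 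Lemma~\ref{LemExistC4} then forces the whole segment $\wt y_0,\dots,\wt f^{n_1}(\wt y_0)$ to stay in the closure of a \emph{single} complementary component $\wt\X_1$. In that situation the essential transverse loop $\alpha_1$ constructed via Lemma~\ref{LemMeetPhiTwice} never crosses $\wt X_\varep$: if the chart $U_{j_1}$ is disjoint from $X_\varep$ the loop is disjoint from $X_\varep$, and if $U_{j_1}$ meets $X_\varep$ one perturbs the skeleton only to discover one was in the easy sub-case after all (a contradiction with the running hypothesis), not to produce a crossing. So there is no time $c$ to feed into Lemma~\ref{LemExistC4} and Lemma~\ref{LemAPlusNotEquiv}.

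Because the crossing does not exist, the remaining machinery must be entirely different, and none of it appears in your sketch. The paper first proves the Claim that the axis of $T_1$ does not cross $\gamma_z$, then uses Lemma~\ref{LemDrawCrosses} to conclude that $I^\Z_{\wt\F}(\wt z)$ does not draw the band $\wt B_1$ attached to $\alpha_1$; this is what yields a $t_1\in[0,n_1]$ with $I^{t_1}_{\wt\F}(\wt y_0)\notin\wt B$. To upgrade ``$\notin\wt B$'' to ``$\in L(\wt B)$'' the paper must first establish Lemma~\ref{CoroBoundedTyp} (hence the second conclusion of Theorem~\ref{ThmBndDevIrrat2}), confine $I^\Z_{\wt\F}(\wt z)$ to $V_{C_4+M}(\wt\gamma_{\wt z})$ via \cite[Lemma~2.9]{paper1PAF}, and then use that $\{\wt y_0,\dots,\wt f^{n_1}(\wt y_0)\}\subset L(V_{C_4+2M}(\wt\gamma_{\wt z}))$, so that $R(\wt B)$ is out of reach. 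The non-crossing claim, Lemma~\ref{LemDrawCrosses}, the bootstrap through Lemma~\ref{CoroBoundedTyp}, and the specific role of $M$ in $C_5=C_4+2M$ are all missing from your argument, and they are exactly what resolves the case on which your crossing claim breaks down.
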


Let us first show how these lemmas imply Theorem~\ref{ThmBndDevIrrat2}. In fact, we will use the last property of Theorem~\ref{ThmBndDevIrrat2} in the proof of Lemma~\ref{LemThmBndDevIrrat2}: in logical order we prove Lemma~\ref{CoroBoundedTyp}, then the last conclusion of Theorem~\ref{ThmBndDevIrrat2} {using $\overline{C}=C_4$}, then Lemma~\ref{LemThmBndDevIrrat2} and finally the first conclusion of Theorem~\ref{ThmBndDevIrrat2} {using $\overline{C}=C_5$}. We adopt this order of presentation because Lemmas~\ref{CoroBoundedTyp} and \ref{LemThmBndDevIrrat2} share the same beginning of proof, as well as both conclusions of Theorem~\ref{ThmBndDevIrrat2}.

\begin{proof}[Proof of Theorem~\ref{ThmBndDevIrrat2}]
Thanks to Lemma~\ref{LemIfDevThenBothSides}, the second conclusion of Theorem~\ref{ThmBndDevIrrat2} follows from the first one applied to the point $z$. Hence, we will only prove this first conclusion, using {$\overline{C}=C_4$ and} Lemma~\ref{CoroBoundedTyp} for the case of ${\wt z}$ and {using $\overline{C}=C_5$ and} Lemma~\ref{LemThmBndDevIrrat2} for the general case.

Let $\wt y_0\in\wt S$ be such that $\wt y_0\in L(V_{{\overline{C}}}(\wt\gamma_{\wt z}))$ and $\wt f^{n_0}(\wt y_0)\in R(V_{{\overline{C}}}(\wt\gamma_{\wt z}))$.

Let $n_1\in [0,n_0)$ be the first $n\ge 0$ such that $\wt f^{n+1}(\wt y_0)\notin L(V_{{\overline{C}-C_2-d(\wt I, \Id)}}(\wt\gamma_{\wt z}))$. Then $d(\wt y_0, \wt f^{n_1}(\wt y_0))\ge C_2$. 
Similarly, let $n_2\in (n_1,n_0)$ be the last $n\le n_0$ such that $\wt f^{n-1}(\wt y_0)\notin R(V_{{\overline{C}-C_2-d(\wt I, \Id)}}(\wt\gamma_{\wt z}))$. Then $d(f^{n_2}(\wt y_0), \wt f^{n_0}(\wt y_0))\ge C_2$.

We have four cases:
\begin{enumerate}
\item\label{Ca1} There exist $\wt \X_1$ and $\wt \X_2$ two closures of connected components of the complement of $\wt X_\varep$, and $0\le n'_1\le n_1$ and $n_2\le n'_2\le n_0$ such that $\wt f^{n'_1}(\wt y_0)\in \wt\X_1$ and $\wt f^{n'_2}(\wt y_0)\in \wt\X_2$, and such that $\wt \X_1 \subset L(V_{C_0}(\wt\gamma_{\wt z}))$ and $\wt \X_2\subset R(V_{C_0}(\wt\gamma_{\wt z}))$.
\item\label{Ca2} There exists $\wt \X_1$ a closure of a connected components of the complement of $\wt X_\varep$, and $0\le n'_1\le n_1$ such that $\wt f^{n'_1}(\wt y_0)\in \wt\X_1$ and $\wt \X_1 \subset L(V_{C_0}(\wt\gamma_{\wt z}))$. Moreover, for any closure $\wt \X_2$ of a connected component of the complement of $\wt X_\varep$ meeting the segment of orbit $\wt f^{n_2}(\wt y_0),\dots, \wt f^{n_0}(\wt y_0)$, one has $\wt \X_2 \cap V_{C_0}(\wt \gamma_{\wt z})\neq\emptyset$.
\item\label{Ca3} There exists $\wt \X_2$ a closure of a connected components of the complement of $\wt X_\varep$, and $n_2\le n'_2\le n_0$ such that $\wt f^{n'_2}(\wt y_0)\in \wt\X_2$ and $\wt \X_2 \subset R(V_{C_0}(\wt\gamma_{\wt z}))$. Moreover, for any closure $\wt \X_1$ of a connected component of the complement of $\wt X_\varep$ meeting the segment of orbit $\wt y_0,\dots, \wt f^{n_1}(\wt y_0)$, one has $\wt \X_1 \cap V_{C_0}(\wt \gamma_{\wt z})\neq\emptyset$.
\item\label{Ca4} For any closures $\wt \X_1, \wt \X_2$ of connected components of the complement of $\wt X_\varep$, the first one meeting the segment of orbit $\wt y_0,\dots, \wt f^{n_1}(\wt y_0)$ and the second one meeting the segment of orbit $\wt f^{n_2}(\wt y_0),\dots, \wt f^{n_0}(\wt y_0)$, one has $\wt \X_1 \cap V_{C_0}(\wt \gamma_{\wt z})\neq\emptyset$ and $\wt \X_2 \cap V_{C_0}(\wt \gamma_{\wt z})\neq\emptyset$.
\end{enumerate}

\paragraph{Case~\ref{Ca1}.} Under these hypotheses, Theorem~\ref{ThmBndDevIrrat2} directly follows from  Theorem~\ref{ThmBndDevIrrat}.

\paragraph{Case~\ref{Ca4}.} 
By Lemma~\ref{LemYInAv}, we have $\wt y_0\in \fil(\wt A_\varep)$.

Applying Lemma~\ref{CoroBoundedTyp} (for $y_0 = z$) or Lemma~\ref{LemThmBndDevIrrat2} (general case), we get that there exists $t_1\in [0, n_1]$ such that $I^{t_1}_{\wt\F}(\wt y_0) \in L(\wt B)$. 
Similarly $I^{t_2}_{\wt \F}(\wt y_0)\in R(\wt B)$. 
Hence there exist $t_1<t<t_2$ and $t'\in\R$ such that $I^{t}_{\wt \F}(\wt y_0) = I^{s}_{\wt \F}(\wt z)$, and that choosing lifts $\wh y_0$ and $\wh z$ of $y_0$ and $z$ to $\wh\dom(I)$ such that $I^{t}_{\wh \F}(\wh y_0) = I^{s}_{\wh \F}(\wh z)$, we have $I^{t_1}_{\wh \F}(\wh y_0)\in L(\wh B)$ and $I^{t_2}_{\wh \F}(\wh y_0)\in R(\wh B)$ (where $\wh B$ is the set of leaves of $\wh \F$ met by $I^{\Z}_{\wh \F}(\wh z)$). So $\wh y_0\in L(\wh B)$ and $\wh f^{n_0}(\wh y_0)\in R(\wh B)$

Moreover, by \cite[Proposition~2.5]{paper1PAF}, the transverse trajectory $I^\Z_{\wh \F}(\wh z)$ cannot accumulate in any transverse trajectory; this implies (similarly to \cite[Lemma~2.17]{paper1PAF}) that $I^{\Z}_{\wh \F}(\wh z)$ and $I^{[0, n_0]}_{\wh \F}(\wh y_0)$ intersect $\F$-transversally. This contradicts Proposition~\ref{LastPropBndDevIrrat}.

\begin{figure}
\begin{center}

\tikzset{every picture/.style={line width=0.75pt}} 

\begin{tikzpicture}[x=0.75pt,y=0.75pt,yscale=-1.2,xscale=1.2]

\draw  [color={rgb, 255:red, 189; green, 16; blue, 224 }  ,draw opacity=0.2 ][fill={rgb, 255:red, 189; green, 16; blue, 224 }  ,fill opacity=0.1 ] (224.72,45.5) .. controls (249.61,67.5) and (253.39,186.39) .. (224.28,224.61) .. controls (189.61,186.39) and (192.5,69.94) .. (224.72,45.5) -- cycle ;
\draw  [draw opacity=0][fill={rgb, 255:red, 176; green, 248; blue, 28 }  ,fill opacity=0.5 ] (134.21,167.59) .. controls (128.67,154.21) and (127.84,127.66) .. (130.51,118.04) .. controls (134.58,119.29) and (135.58,120.21) .. (137.83,118.04) .. controls (140.33,117.71) and (141.25,116.71) .. (142.25,115.13) .. controls (146.17,120.54) and (160.42,114.96) .. (163.42,115.54) .. controls (158.08,127.96) and (171.67,130.88) .. (175.75,139.63) .. controls (163.17,140.88) and (157.33,150.21) .. (155.33,161.04) .. controls (148.33,159.38) and (144.08,164.29) .. (143.33,167.88) .. controls (139.58,164.88) and (137.25,166.38) .. (134.21,167.59) -- cycle ;
\draw  [draw opacity=0][fill={rgb, 255:red, 176; green, 248; blue, 28 }  ,fill opacity=0.5 ] (134.21,167.59) .. controls (128.67,154.21) and (127.84,127.66) .. (130.51,118.04) .. controls (134.58,119.29) and (137.13,124.97) .. (139.87,120.43) .. controls (142.97,123.74) and (149.44,126.51) .. (156.82,123.44) .. controls (160.74,128.85) and (205.13,128.05) .. (222.21,114.51) .. controls (235.44,130.97) and (286.43,138.07) .. (290.51,146.82) .. controls (274.51,149.74) and (219.28,135.37) .. (217.28,146.21) .. controls (147.13,115.74) and (151.52,161.93) .. (150.77,165.52) .. controls (143.44,157.59) and (137.25,166.38) .. (134.21,167.59) -- cycle ;
\draw [color={rgb, 255:red, 189; green, 16; blue, 224 }  ,draw opacity=1 ]   (224.72,45.5) .. controls (220.28,121.5) and (219.39,145.06) .. (224.28,224.61) ;
\draw [color={rgb, 255:red, 155; green, 155; blue, 155 }  ,draw opacity=1 ]   (177.08,143.46) .. controls (174.51,128.36) and (155.9,131.59) .. (165.13,111.28) ;
\draw [color={rgb, 255:red, 155; green, 155; blue, 155 }  ,draw opacity=1 ]   (154.6,165.6) .. controls (156.21,150.97) and (166.05,136.97) .. (180.36,140.21) ;
\draw [color={rgb, 255:red, 155; green, 155; blue, 155 }  ,draw opacity=1 ]   (143.91,172.96) .. controls (141.02,162.27) and (153.59,157.9) .. (159.44,163.28) ;
\draw [color={rgb, 255:red, 155; green, 155; blue, 155 }  ,draw opacity=1 ]   (146.05,172.67) .. controls (144.51,166.97) and (140.21,165.28) .. (134.21,167.59) ;
\draw [color={rgb, 255:red, 155; green, 155; blue, 155 }  ,draw opacity=1 ]   (140.97,112.97) .. controls (146.58,122.81) and (156.36,112.67) .. (170.67,115.9) ;
\draw [color={rgb, 255:red, 155; green, 155; blue, 155 }  ,draw opacity=1 ]   (134.51,114.36) .. controls (135.44,120.51) and (142.67,117.59) .. (144.21,110.97) ;
\draw [color={rgb, 255:red, 155; green, 155; blue, 155 }  ,draw opacity=1 ]   (129.59,117.59) .. controls (134.05,119.28) and (137.74,120.82) .. (139.28,114.21) ;
\draw  [draw opacity=0][fill={rgb, 255:red, 248; green, 231; blue, 28 }  ,fill opacity=0.5 ] (299.84,97.26) .. controls (306.45,110.15) and (309.43,136.54) .. (307.55,146.35) .. controls (303.39,145.43) and (302.32,144.6) .. (300.25,146.94) .. controls (297.79,147.48) and (296.95,148.55) .. (296.09,150.21) .. controls (291.74,145.13) and (256.44,155.34) .. (253.4,155) .. controls (257.71,142.19) and (245.13,130.06) .. (240.35,121.67) .. controls (254.18,120) and (278.2,116.47) .. (279.32,105.51) .. controls (286.43,106.6) and (290.26,101.35) .. (290.72,97.72) .. controls (294.7,100.41) and (296.91,98.72) .. (299.84,97.26) -- cycle ;
\draw [color={rgb, 255:red, 155; green, 155; blue, 155 }  ,draw opacity=1 ]   (237.68,115.5) .. controls (241.47,130.34) and (260.29,136.84) .. (252.74,157.83) ;
\draw [color={rgb, 255:red, 155; green, 155; blue, 155 }  ,draw opacity=1 ]   (279.67,100.9) .. controls (281.07,117.67) and (251.74,122.17) .. (236.74,120.5) ;
\draw [color={rgb, 255:red, 155; green, 155; blue, 155 }  ,draw opacity=1 ]   (289.73,92.7) .. controls (293.48,103.12) and (281.31,108.5) .. (275.05,103.61) ;
\draw [color={rgb, 255:red, 155; green, 155; blue, 155 }  ,draw opacity=1 ]   (287.62,93.16) .. controls (289.62,98.71) and (294.05,100.05) .. (299.84,97.26) ;
\draw [color={rgb, 255:red, 155; green, 155; blue, 155 }  ,draw opacity=1 ]   (297.53,152.25) .. controls (291.15,142.9) and (262.43,157.06) .. (247.9,155) ;
\draw [color={rgb, 255:red, 155; green, 155; blue, 155 }  ,draw opacity=1 ]   (303.86,150.34) .. controls (302.44,144.28) and (295.47,147.78) .. (294.48,154.5) ;
\draw [color={rgb, 255:red, 155; green, 155; blue, 155 }  ,draw opacity=1 ]   (307.55,146.35) .. controls (302.97,145.02) and (300.12,144.16) .. (299.12,150.88) ;
\draw  [line width=1.5]  (128.17,135.17) .. controls (128.17,85.46) and (168.46,45.17) .. (218.17,45.17) .. controls (267.87,45.17) and (308.17,85.46) .. (308.17,135.17) .. controls (308.17,184.87) and (267.87,225.17) .. (218.17,225.17) .. controls (168.46,225.17) and (128.17,184.87) .. (128.17,135.17) -- cycle ;
\draw [color={rgb, 255:red, 155; green, 155; blue, 155 }  ,draw opacity=1 ]   (130.51,118.04) .. controls (134.97,119.73) and (139.13,125.74) .. (140.67,119.13) ;
\draw [color={rgb, 255:red, 155; green, 155; blue, 155 }  ,draw opacity=1 ]   (138.85,118.61) .. controls (139.77,124.76) and (155.88,127.41) .. (157.42,120.79) ;
\draw [color={rgb, 255:red, 155; green, 155; blue, 155 }  ,draw opacity=1 ]   (154.56,120.81) .. controls (160.16,130.64) and (203.67,127.79) .. (225.33,113.29) ;
\draw [color={rgb, 255:red, 155; green, 155; blue, 155 }  ,draw opacity=1 ]   (292.75,150.63) .. controls (290.18,135.53) and (225.75,130.13) .. (221.58,111.46) ;
\draw [color={rgb, 255:red, 155; green, 155; blue, 155 }  ,draw opacity=1 ]   (292.67,145.63) .. controls (278.67,152.13) and (215,130.96) .. (216.67,149.5) ;
\draw [color={rgb, 255:red, 208; green, 2; blue, 27 }  ,draw opacity=1 ]   (164.5,134.79) .. controls (204.5,104.79) and (237.18,145.83) .. (279.07,143.87) ;
\draw [shift={(279.07,143.87)}, rotate = 357.31] [color={rgb, 255:red, 208; green, 2; blue, 27 }  ,draw opacity=1 ][fill={rgb, 255:red, 208; green, 2; blue, 27 }  ,fill opacity=1 ][line width=0.75]      (0, 0) circle [x radius= 1.34, y radius= 1.34]   ;
\draw [shift={(164.5,134.79)}, rotate = 323.13] [color={rgb, 255:red, 208; green, 2; blue, 27 }  ,draw opacity=1 ][fill={rgb, 255:red, 208; green, 2; blue, 27 }  ,fill opacity=1 ][line width=0.75]      (0, 0) circle [x radius= 1.34, y radius= 1.34]   ;
\draw [color={rgb, 255:red, 155; green, 155; blue, 155 }  ,draw opacity=1 ]   (150.27,167.08) .. controls (150.5,136.04) and (172.75,126.96) .. (218.83,146.71) ;
\draw [color={rgb, 255:red, 155; green, 155; blue, 155 }  ,draw opacity=1 ]   (152.17,167.29) .. controls (144.5,157.63) and (140.21,165.28) .. (134.21,167.59) ;

\draw (162.2,135.64) node [anchor=north west][inner sep=0.75pt]  [font=\small,color={rgb, 255:red, 178; green, 3; blue, 25 }  ,opacity=1 ]  {$\wt{y}_{0}$};
\draw (257,146) node [anchor=north west][inner sep=0.75pt]  [font=\small,color={rgb, 255:red, 178; green, 3; blue, 25 }  ,opacity=1 ]  {$\wt{f}^{n_{0}}(\wt{y}_{0})$};
\draw (220.56,75.87) node [anchor=south east] [inner sep=0.75pt]  [font=\small,color={rgb, 255:red, 177; green, 5; blue, 212 }  ,opacity=1 ]  {$V_{C_0}(\wt \gamma_{\wt z} )$};
\draw (157.44,109.48) node [anchor=south] [inner sep=0.75pt]  [font=\small,color={rgb, 255:red, 80; green, 200; blue, 0 } ,opacity=1 ]  {$\wt{\mathcal X}_1$};
\draw (221.94,146.88) node [anchor=north] [inner sep=0.75pt]  [font=\small,color={rgb, 255:red, 80; green, 200; blue, 0 } ,opacity=1 ]  {$\wt f^{n}(\wt{\mathcal X}_1)$};
\draw (283.44,98.49) node [anchor=south east] [inner sep=0.75pt]  [font=\small,color={rgb, 255:red, 170; green, 150; blue, 0 } ,opacity=1 ]  {$\wt{\mathcal X}_2$};
\end{tikzpicture}

\caption{Proof of Case~\ref{Ca2} of Theorem~\ref{ThmBndDevIrrat2}. Note that $\wt{\mathcal X}_2$ intersects $V_{C_0}(\wt\gamma_{\wt z})$.\label{FigProofCase2}}

\end{center}
\end{figure}
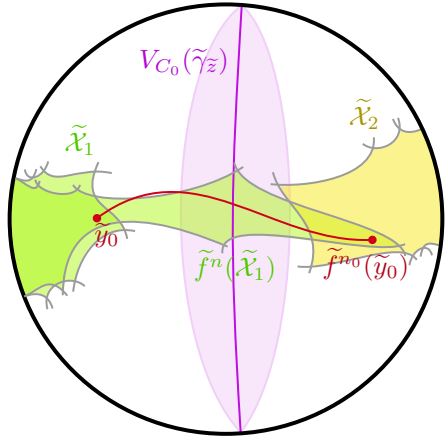

\paragraph{Case~\ref{Ca2}.}
Cases \ref{Ca2}.~and \ref{Ca3}.~are symmetrical, so we only treat Case~\ref{Ca2}.  
Reasoning as in the proof of Proposition~\ref{PropExistInterTransIrrat}, we get that there exists $\wt y'\in X_\varep \cap L(V_{C_0}(\wt\gamma_{\wt z}))$ such that $\wt f^{n_0-n'_1}(\wt y') \in R(V_{C_0}(\wt\gamma_{\wt z}))$
(see Figure~\ref{FigProofCase2}).

It then suffices to use Lemma~\ref{LemAPlusNotEquiv} and Lemma~\ref{CoroBoundedTyp} (for $y_0 = z$) or Lemma~\ref{LemThmBndDevIrrat2} (general case), as in the previous cases.
\end{proof}

\begin{proof}[Proofs of {Lemma~\ref{CoroBoundedTyp} and}  Lemma~\ref{LemThmBndDevIrrat2}]
If there exists a skeleton $X'_\varep\subset V_\varep$ (given by Remark~\ref{RemLemAPlusNotEquiv})
and $\wt \X'_1$ a closure of a connected components of the complement of $\wt X'_\varep$, and $0\le n'_1\le n_1$ such that $\wt f^{n'_1}(\wt y_0)\in \wt\X_1$ and $\wt \X_1 \subset L(V_{C_0}(\wt\gamma_{\wt z}))$, then the lemma is a consequence of Lemma~\ref{LemAPlusNotEquiv}.
\bigskip

So we suppose we are not in this case: for any skeleton $X'_\varep\subset V_\varep$ and any closure $\wt \X'_1$ of a connected component of the complement of $\wt X'_\varep$ meeting the segment of orbit $\wt y_0,\dots, \wt f^{n_1}(\wt y_0)$, one has $\wt \X'_1 \cap V_{C_0}(\wt \gamma_{\wt z})\neq\emptyset$.

Fix one skeleton $X_\varep\subset V_\varep$. By Lemma~\ref{LemExistC4}, the segment of orbit $\wt y_0,\dots, \wt f^{n_1}(\wt y_0)$ stays in a single closure $\wt \X_1$ of a connected component of the complement of $\wt X_\varep$.
Applying Lemma~\ref{LemMeetPhiTwice}, one gets that there exists $0\le n'_1 < n''_1\le n_1$, $T_1\in\G\setminus\{\Id\}$, $1\le j_1\le p$ and a lift $\wt U_{j_1}$ of $U_{j_1}$ to $\wt S$ such that $\wt f^{n'_1}(\wt y_0)\in \wt U_{j_1}$ and $\wt f^{n''_1}(\wt y_0)\in T_1\wt U_{j_1}$.

\begin{claim}
The axis of $T_1$ does not cross the geodesic $\gamma_z$.
\end{claim}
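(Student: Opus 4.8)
The plan is to argue by contradiction, supposing that the axis of $T_1$ crosses $\gamma_z$, and to extract a periodic point whose tracking geodesic is close to $\wt\gamma_{\wt z}$, contradicting Lemma~\ref{LemGeodNotAccuPerOrb}. The key point is that the segment of orbit $\wt y_0,\dots,\wt f^{n_1}(\wt y_0)$ passes through $\wt U_{j_1}$ at time $n_1'$ and through $T_1\wt U_{j_1}$ at time $n_1''$, so the transverse trajectory draws the band associated to a transverse loop: more precisely, the concatenation $\wt I^{[n_1', n_1'']}_{\wt\F}(\wt y_0)$, read in $\wh\dom(I)$ with appropriate lifts, is a path linking a point of a chart $\wh U_{j_1}$ to a point of $\wh T_1 \wh U_{j_1}$, and since these charts are trivializing charts for $\wh\F$ crossing all leaves they meet, pushing along the foliation produces an essential transverse loop $\alpha$ (essential because $T_1\neq\Id$) whose lift $\wh\alpha$ has associated band $\wh B'$ drawn by the trajectory of $\wh y_0$.

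First I would make the previous paragraph precise: using the property that $I^{[-1,1]}_{\wh\F}(\wh y)$ meets every leaf meeting $\wh U_j$ for any lift $\wh U_j$, the transverse path from $\wh f^{n_1'}(\wh y_0)$ to $\wh f^{n_1''}(\wh y_0)$ together with a path inside the charts yields (after pushing along $\wh\F$) a closed transverse loop drawn by $I^{\Z}_{\wh\F}(\wh y_0)$, homotopic to the deck transformation $T_1$, hence essential in $S$. Then I would want to transfer this to a statement about $\wh z$ rather than $\wh y_0$. This is where the work lies: in Case~\ref{Ca4} (the case we have reduced to, where the orbit segment $\wt y_0,\dots,\wt f^{n_1}(\wt y_0)$ stays in one component $\wt\X_1$ which meets $V_{C_0}(\wt\gamma_{\wt z})$), I expect that the hypothesis $\wt f^{n_1}(\wt y_0)\notin L(V_{C_5 - C_2 - d(\wt I,\Id)}(\wt\gamma_{\wt z}))$ combined with $\wt y_0 \in L(V_{C_5}(\wt\gamma_{\wt z}))$ forces, via the usual argument (Lemma~\ref{CoroBoundedTyp} or the beginning of the proof of Lemma~\ref{LemThmBndDevIrrat2}) that the trajectory of $\wh z$ itself draws the band $\wh B'$. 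The cleanest route is probably: since $\wh y_0$ and $\wh f^{n_1}(\wh y_0)$ end up on opposite sides of $\wh B'$ relative to the crossing of $\wt\gamma_{\wt z}$, and since $I^\Z_{\wh\F}(\wh z)$ cannot have transverse self-intersection (Proposition~\ref{PropPasInterTrans}) nor accumulate in itself (Proposition~2.4 of \cite{paper1PAF}), the band $\wh B'$ is drawn by $I^\Z_{\wh\F}(\wh z)$ as well.

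Once we know $I^\Z_{\wh\F}(\wh z)$ draws the band $\wh B'$ of the essential transverse loop $\wh\alpha$, we are exactly in the situation of Lemma~\ref{LemDrawCrosses}, which tells us the geodesic axis of the projection $\wt\alpha$ of $\wh\alpha$ to $\wt S$ crosses $\wt\gamma_{\wt z}$. But this geodesic axis is precisely (up to bounded Hausdorff distance, since $\alpha$ is freely homotopic to the closed geodesic in its class) the axis of $T_1$, so the axis of $T_1$ crosses $\wt\gamma_{\wt z}$. Now I would invoke the recurrence machinery: by Lemma~\ref{LemReturnDeckTrans}, combined with the fact that the axis of $T_1$ crosses $\wt\gamma_{\wt z}$, one can close up the orbit to produce an $f$-periodic point whose tracking geodesic (its axis) is forced close to $\wt\gamma_{\wt z}$ — this uses the forcing/horseshoe theorem \cite[Theorem~2.12]{paper1PAF} exactly as in the proof of Proposition~\ref{PropPasInterTrans}, by combining $T_1$ with returning deck transformations whose axes are close to $\wt\gamma_{\wt z}$. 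This contradicts Lemma~\ref{LemGeodNotAccuPerOrb}. The main obstacle I anticipate is the bookkeeping needed to pass from "the trajectory of $\wh y_0$ draws $\wh B'$" to "the trajectory of $\wh z$ draws $\wh B'$"; the geometric heart (Lemma~\ref{LemDrawCrosses} plus Lemma~\ref{LemGeodNotAccuPerOrb}) is straightforward once that reduction is in place.
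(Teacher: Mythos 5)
Your proof takes a genuinely different route from the paper's, and it has gaps that keep it from closing.

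The paper's proof of the Claim is short and purely topological. It splits on whether the chart $U_{j_1}$ meets $X_\varep$. If $U_{j_1}\cap X_\varep=\emptyset$, then the loop obtained by closing $I^{[n_1',n_1'']}(y_0)$ inside $U_{j_1}$ is disjoint from $X_\varep$, because the orbit segment stays in the single component $\wt\X_1\cap\fil(\wt A_\varep)$; a loop disjoint from $X_\varep$ has an axis that cannot cross any geodesic of the lamination, in particular $\gamma_z$. If $U_{j_1}$ does meet $X_\varep$, the paper perturbs $X_\varep$ to a skeleton $X'_\varep\subset V_\varep$ (allowed by Remark~\ref{RemLemAPlusNotEquiv}) so that the skeleton separates $\wt f^{n_1'}(\wt y_0)$ from $\wt f^{n_1''}(\wt y_0)$, contradicting the standing reduction that the orbit segment stays in one complementary component. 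No forcing theory is invoked at this point.

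Your argument by contradiction has two concrete gaps. First, the transfer from ``$I^\Z_{\wh\F}(\wh y_0)$ draws $\wh B'$'' to ``$I^\Z_{\wh\F}(\wh z)$ draws $\wh B'$'' is unjustified: that $\wh z$'s trajectory has no transverse self-intersection and does not accumulate on itself does not force it to be $\wh\F$-equivalent to a full period of $\wh\alpha$, and there is no reason $\wh y_0$ and $\wh f^{n_1}(\wh y_0)$ should lie on opposite sides of $\wh B'$, since the drawing occurs over the subinterval $[n_1',n_1'']$, not across all of $[0,n_1]$. Second, even granting that transfer, Lemma~\ref{LemDrawCrosses} would only re-derive your contradiction hypothesis (the axis of $T_1$ crosses $\wt\gamma_{\wt z}$), not a contradiction; and Lemma~\ref{LemGeodNotAccuPerOrb} concerns closed geodesics whose lifts are \emph{close} to $\wt\gamma_{\wt z}$ in the endpoint metric, which is strictly stronger than crossing. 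The pinching argument from the proof of Proposition~\ref{PropPasInterTrans}, which conjugates by returning deck transformations to force an axis near $\wt\gamma_{\wt z}$, needs an $\wt\F$-transverse intersection of transverse paths to feed into the horseshoe theorem, and you have not exhibited one. Note also the logical inversion: in the paper, Lemma~\ref{LemDrawCrosses} is applied \emph{after} the Claim, in contrapositive form, to conclude that $\wt z$ does \emph{not} draw $\wt B_1$; using it inside the Claim's proof to try to establish drawing does not produce a contradiction with the hypothesis it restates.
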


\begin{proof}
We have two cases.

Either the chart $U_{j_1}$ does not intersect $X_\varep$. 
The segment of orbit $\wt y_0,\dots, \wt f^{n_1}(\wt y_0)$ stays in the set $\wt \X_1 \cap \fil(\wt A_\varep)$, so the trajectory under the isotopy $ I^{[n'_1, n''_1]}(y_0)$ is homotopic (relative to endpoints) to a curve that does not intersect $X_\varep$. As $U_{j_1} \cap X_\varep = \emptyset$, the loop obtained by closing this curve with a curve inside $U_{j_1}$ is disjoint from $X_\varep$. This implies that the axis of $T_1$ does not cross the geodesic $\gamma_z$. 

Or the chart $U_{j_1}$ intersects $X_\varep$. By Remark~\ref{RemLemAPlusNotEquiv}, we have that $U_{j_1}\subset V_\varep$. Recall that $ f^{n'_1}(y_0), f^{n''_1}(y_0)\in U_\varep$; by a local perturbation one can modify $X_\varep$ to get another skeleton $X'_\varep\subset V_\varep$ such that $ \wt f^{n'_1}(\wt y_0)$ and $\wt f^{n''_1}(\wt y_0)$ belong to different connected components of the complement of $X'_\varep$. By Remark~\ref{RemLemAPlusNotEquiv} one can work with $X'_\varep$ instead of $X_\varep$. This contradicts the hypothesis made above.
\end{proof}


Let us call $\alpha_1$ the transverse loop obtained from closing the trajectory $I^{[n'_1, n''_1]}_\F(y_0)$ inside $U_{j_1}$, and $\wt\alpha_1$ a lift of $\alpha_1$ to $\wt S$, $\wt B_1$ the union of leaves of $\wt\F$ crossing $\wt\alpha_1$ such that $I^{[n'_1, n''_1]}_{\wt \F}(\wt y_0)$ draws $\wt B_1$. 

The fact that the geodesic axis of $T_1$ does not cross $\gamma_z$ allows us to apply Lemma~\ref{LemDrawCrosses}, which implies that the trajectory $I^\Z_{\wt \F}(\wt z)$ does not draw the band $\wt B_1$. In particular, applying the proof we have made up to now to the orbit of $\wt z$ in place of the orbit of $\wt y_0$, we deduce that {Lemma~\ref{CoroBoundedTyp}} holds. 
This, the second conclusion of Theorem~\ref{ThmBndDevIrrat2}, and \cite[Lemma~2.9]{paper1PAF} show that $I^\Z_{\wt\F}(\wt z)$ is contained in $V_{C_4+M}$. In particular, $L(\wt B)$ does not intersect $R(V_{C_4+M})$ and $R(\wt B)$ does not intersect $L(V_{C_4+M}(\wt\gamma_{\wt z}))$ (recall that $\wt B$ is the union of the leaves met by $I^\Z_{\wt\F}(\wt z)$).

As the trajectory $I^\Z_{\wt \F}(\wt z)$ does not draw the band $\wt B_1$, we deduce that there exists $0\le t_1\le n_1$ such that $I^{t_1}_{\wt \F}(\wt y_0)\notin\wt B$. 
But as  $\{\wt y_0, \dots, \wt f^{n_1}(\wt y_0)\} \subset L(V_{C_4+2M}(\wt\gamma_{\wt z}))$, again by \cite[Lemma~2.9]{paper1PAF} we deduce that $I^{[0,n_1]}_{\wt{\F}}(\wt y_0)$ is contained in $L(V_{C_4+M}(\wt\gamma_{\wt z}))$, which implies that  $I^{t_1}_{\wt \F}(\wt y_0)\in L(\wt B)$ as it is neither in $\wt B$ nor in $R(\wt B)$. This proves Lemma~\ref{LemThmBndDevIrrat2}.
\end{proof}



\section{Bounded deviations in the direction of the lamination}\label{SecBoundDirec}

\subsection{Building good approximations}\label{SubsecApprox}

Let $z$ be a typical point for an ergodic measure $\mu$ that belongs to a minimal non-closed class (Definition~\ref{DefClassMeas}). Let $\wt z$ be a lift of $z$ to $\wt S$. Denote $\wt\beta_0 = I^\Z_{\wt \F}(\wt z)$.

For $\wt A\subset\wt S$ and $R>0$, denote $V_R(\wt A)$ the set of points of $\wt S$ at a distance $\le R$ of one point of $\wt A$.
%
%

Let $\wt W$ be a small open ball around $\wt z$, that is a trivializing chart for $\wt\F$. As $z$ is recurrent and the trajectory of $\wt z$ is proper, there exists a deck transformation $T_0 \neq\Id$ and $n>0$ such that $\wt f^n(\wt z) \in T_0\wt W$. This allows us to define an approximation $\alpha_0$ of the trajectory $\beta_0$ of $z$ (see \cite[Subsection~2.3]{paper1PAF}), such that the path $\wt\alpha_0$ is $T_0$-invariant. 
Denote $\wt B_0$ the set of leaves met by $\wt\alpha_0$.

\begin{lemma}\label{LemAlphaiSimple}
Let $\alpha_0$ be an approximation of $\beta_0$. Then there is another approximation $\alpha'_0$ associated to a deck transformation $T'_0$ such that the transverse path $\wt\alpha'_0$ is simple and that $\langle T'_0\rangle = \langle T_0\rangle$. 
\end{lemma}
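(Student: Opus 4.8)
The plan is to mimic, in the transverse-path setting, the standard surgery that turns an immersed loop with self-intersections into an embedded one, while keeping control of the homotopy class of the associated deck transformation. Recall that $\wt\alpha_0$ is $T_0$-invariant and that $\wt B_0$ is the (union of) leaves it meets; by Proposition~\ref{PropPasInterTrans} the trajectory $\beta_0$, and hence $\wt\alpha_0$ since it is an approximation of it, has no $\wt\F$-transverse self-intersection. So the only way $\wt\alpha_0$ can fail to be simple is that it meets some leaf of $\wt\F$ twice or more, i.e.\ that it "draws a simple transverse loop'' in the terminology of Lemma~\ref{LemAlphaSimple}. The idea is to locate such a sub-loop, decide whether it is ``essential'' (carries part of the $T_0$-action) or ``inessential'' (contractible), and in the inessential case remove it by pushing along $\wt\F$, strictly decreasing a complexity, while in the essential case use it to replace $T_0$ by a primitive power generating the same cyclic subgroup.

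More precisely, here is the order in which I would carry out the argument. First I would set up the combinatorial picture: working in $\wh\dom(I)$ and choosing the lift $\wh\alpha_0$ of $\alpha_0$ whose image under the chosen lift $\wh T_0$ of $T_0$ is itself, I would encode $\wh\alpha_0$ by the bi-infinite ordered sequence of leaves of $\wh\F$ it crosses; since $\beta_0$ (hence $\alpha_0$) has no transverse self-intersection (Proposition~\ref{PropPasInterTrans}) but the \emph{approximation} may revisit leaves, I would quotient one period of this sequence by $\wh T_0$ and count the number of leaves visited more than once in a fundamental period — call it $k(\wh\alpha_0)$. If $k=0$ then $\wt\alpha_0$ is already simple and we take $\alpha'_0=\alpha_0$, $T'_0=T_0$. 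If $k>0$, pick a leaf met (at least) twice within one period: either the two visits are related by a power $T_0^m$ of $T_0$, or they are not. In the ``not related'' case, the sub-path between the two visits projects to a simple transverse loop in $S$ that is null-homotopic (it lies in a single fundamental domain's worth of the $T_0$-periodic strip, away from the direction of $T_0$); here I would invoke the surgery of \cite[Section~2.3]{paper1PAF} (the same pushing-along-$\wt\F$ construction used implicitly in Lemmas~\ref{LemAlphaSimple} and \ref{LemBandDisjoint}, and compatible with the notion of approximation) to replace $\wt\alpha_0$ by an $\wt\F$-equivalent $T_0$-invariant path with strictly smaller $k$, and iterate. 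In the ``related by $T_0^m$'' case with $0<m$, the sub-path is the fundamental building block of a $T_0^m$-invariant simple transverse loop; replacing $T_0$ by this primitive period $T'_0$ (so that $\langle T'_0\rangle=\langle T_0\rangle$) and $\alpha_0$ by the corresponding approximation makes the new path simple, again after finitely many reductions.

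The termination of the reduction is guaranteed because each surgery step strictly decreases the nonnegative integer $k$ (the number of multiply-visited leaves per period), and because the surgery only ever removes a contractible sub-loop it cannot create new transverse self-intersections (consistency with Proposition~\ref{PropPasInterTrans} is preserved), nor change the deck transformation; in the essential step the cyclic subgroup $\langle T_0\rangle$ is preserved by construction. Thus after finitely many steps we reach a simple $\wt\alpha'_0$ associated to some $T'_0$ with $\langle T'_0\rangle=\langle T_0\rangle$, which is exactly the assertion.

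The main obstacle I expect is the essential/inessential dichotomy and, within it, making rigorous the claim that an ``inessential'' sub-loop can always be surgered away without ever disturbing the global $T_0$-periodicity or reintroducing multiply-met leaves elsewhere: one must choose the sub-loop to be \emph{innermost} (minimal for the natural partial order on the finitely many sub-loops in a period), argue that pushing an innermost contractible transverse loop along $\wt\F$ affects only a bounded region and hence can be performed equivariantly, and check that the resulting path is still a genuine approximation of $\beta_0$ in the sense of \cite[Subsection~2.3]{paper1PAF}. The configurations involved are exactly those ruled out by \cite[Lemma~2.20]{paper1PAF} and \cite[Proposition~2.15]{paper1PAF}, which is what prevents pathological nested essential loops and makes the finite descent go through.
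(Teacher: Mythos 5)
Your proposal departs genuinely from the paper's proof, and the departure hides a real gap. The paper does \emph{not} perform an iterated surgery: after a single preliminary reduction it proves by contradiction that the approximation is already simple. Concretely, the paper projects $\wt\alpha_0$ to the annulus $\wt S/T_0$, uses a Poincar\'e--Bendixson argument to show a leaf of the quotient foliation is met only finitely often, then restricts to a sub-approximation whose quotient does not revisit the leaf through its basepoint; from there, if the path still fails to be simple, a minimal re-met interval yields a band $\wc B$, and \cite[Propositions~2.15 and 2.16]{paper1PAF} force a transverse self-intersection of the approximation (hence of $\beta_0$), contradicting Proposition~\ref{PropPasInterTrans}. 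That is the whole argument: one reduction, then an outright contradiction.

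The gap in your approach is precisely the ``inessential sub-loop'' case you intend to surgery away. If a transverse path $\wt\alpha_0$ re-meets a leaf, the configuration in the quotient is a draw-and-visit (or draw-and-cross) of the band $\wc B$; by \cite[Proposition~2.15]{paper1PAF} (which you also cite) a draw-and-visit already produces a transverse self-intersection, contradicting Proposition~\ref{PropPasInterTrans}. So this configuration \emph{cannot occur}, and there is nothing to surgery: the correct reading of Propositions~2.15/2.16 here is not that they ``rule out pathological nested loops so the descent terminates'', but that they yield the contradiction directly. Moreover the surgery you propose --- pushing a transverse sub-loop off its bounding leaf ``along $\wt\F$'' --- is not a legitimate move on a path that is everywhere transverse to $\wt\F$: crossing a leaf back over itself would create a tangency. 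The push-along-$\wt\F$ used in Lemma~\ref{LemBandDisjoint} resolves tangential overlaps between two \emph{different} transverse paths; it does not remove a transverse sub-loop of a single path. You also flag, correctly, that it is unclear whether the surgered path would remain an approximation of $\beta_0$ in the sense of \cite[Subsection~2.3]{paper1PAF}; since the actual proof never modifies the path other than restricting it, this difficulty simply does not arise. Finally, your ``essential'' case (replacing $T_0$ by a primitive power) is close in spirit to the paper's preliminary reduction, but it is performed once, not as part of a descent, and if the sub-loop wrapped $m\ge2$ times you would be replacing $T_0$ by $T_0^m$, which does \emph{not} generate the same cyclic group, so the statement $\langle T'_0\rangle=\langle T_0\rangle$ would fail. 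The ingredients you assembled (quotient annulus, Propositions~2.15/2.16, Proposition~\ref{PropPasInterTrans}) are the right ones, but they should be combined into a direct contradiction, not a reduction scheme.
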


\begin{proof}
By hypothesis, $\wt\alpha_1$ is $T_1$-invariant for some $T_1\in\G$. One can suppose that for any $t\in\R$ we have $\wt\alpha_1(t+1) = T_1\wt\alpha_1(t)$. 
As $\alpha_1$ is an approximation of $I^\Z_\F(z)$, by shifting the parametrization of $\wt\alpha_1$ if necessary, we can suppose that there exists $s_i<s'_i$ such that $\wt\alpha_1|_{[0,1]}$ is $\wt\F$-equivalent to $I^{[s_i, s'_i]}_{\wt\F}(\wt z)$.

Let us denote $\wc\alpha_1$ the projection of $\wt\alpha_1$ to the annulus $\wt S/T_1$ and $\wc \F$ the projection of $\wt \F$ to $\wt S/T_1$.
Let us first show that a leaf cannot meet $\wc\alpha_1$ an infinite number of times. 
Suppose by contradiction that $\wc\alpha_1$ meets a leaf $\wc\phi$ an infinite number of times. Consider $t_0$ such that $\wc\alpha_1(t_0)$ is a accumulation point of intersections of $\wc\alpha_1$ with $\wc\phi$. By Poincaré-Bendixson theorem, this implies that $\wc\alpha_1(t_0)$ is in the boundary of a disc in the two ends compactification of the annulus $\wt S/T_1$ (this compactification is a sphere), such that any transverse trajectory meeting the boundary of this disc is wandering. This is a contradiction.

Hence, by restricting $\alpha_1$ to another approximation $\alpha'_1$ of $\beta_0$ (with a deck transformation $T'_0$ satisfying $\langle T'_0\rangle = \langle T_0\rangle$), we can suppose that $\wc\alpha_1|_{(0,1)}$ does not meet the leaf $\wc\phi_{\wc\alpha_1(0)}$.
\bigskip

Suppose by contradiction that $\wt\alpha_1$ is not simple: there exists $t\neq t'$ such that $\wt\alpha_1(t) = \wt\alpha_1(t')$. Note that it implies that $t-t'\notin\Z$. By translating $t$ and $t'$ if necessary, one can suppose that $t\in[0,1)$, so writing $k=\lfloor t'\rfloor$ and $t'' = t'-k$, we have that $\wt\alpha_1(t) = T_1^k\wt\alpha_1(t'')$ (with $t''\in[0,1)$). 
As $\wt\alpha_1|_{[0,1]}$ is $\wt\F$-equivalent to $I^{[s_i, s'_i]}_{\wt\F}(\wt z)$ and $I^{\Z}_{\wt\F}(\wt z)$ does not meet any leaf of $\wt\F$ twice (Lemma~\ref{LemAlphaSimple}), we deduce that $k\neq 0$. By \cite[Lemma~2.1]{paper1PAF}, we can suppose that $k = \pm 1$. 
By exchanging the roles of $t$ and $t''$ if necessary, one can also suppose that $t<t''$. 
By changing $T_1$ by $T_1^{-1}$ if necessary, we can suppose that $k=1$.
Because $t-t'\notin\Z$, we have $t''-t<1$

In the annulus $\wt S/T_1$ we have $\wc\alpha_1(t) = \wc\alpha_1(t'')$. 
As $\wt\alpha_1|_{[0,1]}$ is simple, and as $\wt\alpha_1$ is uniformly continuous, there exists $\delta>0$ such that if $\wc\alpha_1(s) = \wc\alpha_1(s'')$, then $|s-s''|\ge\delta$. Consider an interval $[s, s'']\subset [t,t'']\subset[0,1]$, minimal for inclusion among those with the property that $\wc\alpha_1(s) = \wc\alpha_1(s'')$; by construction $\wc\alpha_1|_{[s, s'')}$ is simple. Because $t''-t\notin\Z$ and $t,t''\in[0,1]$, we have $s''-s<1$.

Denote $\wc B$ the set of leaves of $\wc \F$ (the projection of $\wt \F$ to $\wt S/T_1$) crossed by $\wc\alpha_1|_{[s, s'')}$. Because $\wc\alpha_1|_{[s, s'')}$ is simple, this set is an annulus.
We have seen in the beginning of the proof that $\wc\alpha_1|_{(0,1)}$ is disjoint from $\wc\phi_{\wc\alpha_1(0)}$. This implies that $\wc\phi_{\wc\alpha_1(0)}\not\subset\wc B$. 
Denote $s_1 = \max\{t\in [0,s]\mid \wc\alpha_1(t)\notin \wc B\}$ and $s''_1 = \min\{t\in [s'',1]\mid \wc\alpha_1(t)\notin \wc B\}$ (such numbers exist because $\wc\alpha_1(0)\notin\wc B$ and $\wc\alpha_1(1)\notin\wc B$).
We have two cases:
\begin{itemize}
\item Either $\wc\alpha_1(s_1)$ and $\wc\alpha_1(s''_1)$ lie in the same side of $\wc B$. This implies that $\wc\alpha_1|_{[0,1]}$ draws and visits $\wc B$, and so by \cite[Proposition~2.15]{paper1PAF} that $\wc\alpha_1|_{[0,1]}$ has a self $\wc\F$-transverse intersection, contradicting Lemma~\ref{PropPasInterTrans}.
\item Or $\wc\alpha_1(s_1)$ and $\wc\alpha_1(s''_1)$ lie in different sides of $\wc B$. This implies that $\wc\alpha_1|_{[s_1,s''_1]}$ draws and crosses $\wc B$. In particular, one of $\wc\alpha_1(s_1)$ and $\wc\alpha_1(s''_1)$ lie on the same side of $\wc B$ as $\wc\alpha_1(0) = \wc\alpha_1(1)$. Let us treat the case where it is $\wc\alpha_1(s_1)$, the other one being identical. This means that $\wc\alpha_1(1)$ and $\wc\alpha_1(s''_1)$ lie on different sides of $\wc B$, hence $\wc\alpha_1|_{[s''_1, 1]}$ crosses $\wc B$ in the other direction than $\wc\alpha_1|_{[s_1,s''_1]}$. By \cite[Proposition~2.16]{paper1PAF}, this  implies that $\wc\alpha_1|_{[0,1]}$ has a self $\wc\F$-transverse intersection, contradicting Lemma~\ref{PropPasInterTrans}.
\end{itemize}
\end{proof}
%
%
%
%
%

\begin{lemma}\label{LemTrackCrossAxis}
The tracking geodesic $\wt\gamma_{\wt z}$ of $\wt z$ crosses the axis of the deck transformation $T_0$.
\end{lemma}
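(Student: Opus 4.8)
The plan is to argue by contradiction: assume the axis $\wt\gamma_{T_0}$ of $T_0$ does not cross the tracking geodesic $\wt\gamma_{\wt z}$, and derive a contradiction with the fact that $\gamma_z$ is not closed. The key object is the $T_0$-invariant transverse path $\wt\alpha_0$ constructed above (which, by Lemma~\ref{LemAlphaiSimple}, we may assume to be simple, after replacing $T_0$ by a generator $T'_0$ of $\langle T_0\rangle$ — this does not change the axis). Since $\wt\alpha_0$ is $T_0$-invariant and simple, it is a proper line in $\wt S$ whose two ends are fixed by $T_0$; hence its two endpoints on $\partial\wt S$ must be precisely the attracting and repelling fixed points of $T_0$, i.e.\ the endpoints of $\wt\gamma_{T_0}$. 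So $\wt\alpha_0$ stays at bounded (in fact, uniformly bounded, by $T_0$-invariance plus compactness of a fundamental segment) Hausdorff distance from $\wt\gamma_{T_0}$.

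First I would make precise the relation between $\wt\alpha_0$ and $\wt\beta_0 = I^\Z_{\wt\F}(\wt z)$: by construction of the approximation (see \cite[Subsection~2.3]{paper1PAF}), a fundamental segment $\wt\alpha_0|_{[0,1]}$ is $\wt\F$-equivalent to a subpath $I^{[s,s']}_{\wt\F}(\wt z)$ of $\wt\beta_0$, and concatenating the $T_0$-translates recovers (up to $\wt\F$-equivalence) a subpath of $\wt\beta_0$ — more precisely, $\wt\beta_0$ and $\wt\alpha_0$ are equivalent at $+\infty$, because $\wt f^n(\wt z)$ lies in $T_0\wt W$ and the construction is designed so the forward trajectory of $\wt z$ follows the $T_0$-orbit structure. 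I would then invoke Proposition~\ref{PropPasInterTrans}: $\wt\beta_0$ has no $\wt\F$-transverse self-intersection, and since $\wt\alpha_0$ is built from a subpath of $\wt\beta_0$ it inherits enough of this structure (or directly: $\wt\alpha_0$ and $\wt\beta_0$ do not intersect $\wt\F$-transversally, as this would force a transverse self-intersection of $\wt\beta_0$ via the fundamental proposition of forcing).

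The heart of the argument is then to compare the asymptotic directions. If $\wt\gamma_{T_0}$ does not cross $\wt\gamma_{\wt z}$, then the two geodesics either are disjoint or share an endpoint. They cannot be disjoint with positive distance: the forward end of $\wt\beta_0$ tracks $\wt\gamma_{\wt z}$ (Theorem~\ref{DefTrackGeod}), while $\wt\beta_0$ is equivalent at $+\infty$ to the $T_0$-invariant $\wt\alpha_0$, which stays uniformly close to $\wt\gamma_{T_0}$; two transverse paths that are $\wt\F$-equivalent at $+\infty$ must have the same endpoint on $\partial\wt S$ (by \cite[Proposition~2.14]{paper1PAF} combined with properness), so $\omega(\wt\gamma_{\wt z})$ must equal an endpoint of $\wt\gamma_{T_0}$. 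But then $\wt\gamma_{\wt z}$ and $\wt\gamma_{T_0}$ share the endpoint $\omega(\wt\gamma_{\wt z})$; running the same argument for negative times, using that $\wt\alpha_0$ is $T_0$-invariant (hence also $T_0^{-1}$-invariant), would force $\alpha(\wt\gamma_{\wt z})$ to be the other endpoint of $\wt\gamma_{T_0}$, so $\wt\gamma_{\wt z} = \wt\gamma_{T_0}$; this makes $\gamma_z$ closed, contradicting the standing hypothesis that $\Lambda_i$ is minimal non-closed. If instead only the $+\infty$ endpoints agree, I would use \cite[Proposition~2.18]{paper1PAF} as in the proof of Lemma~\ref{LemAlphaNotCross}: equivalence at $+\infty$ with a $T_0$-invariant path forces a sequence $\wt\beta_0(t_k)\to+\infty$ staying at finite distance from $\wt\gamma_{T_0}$, which combined with the tracking property forces $\omega(\wt\gamma_{\wt z})$ to be an endpoint of $\wt\gamma_{T_0}$; and then the same "non-closed" contradiction, since a geodesic sharing one endpoint with $\wt\gamma_{T_0}$ and having bounded... — here I need to be careful — the cleanest route is: the axis $\wt\gamma_{T_0}$ and $\wt\gamma_{\wt z}$ share the endpoint $\omega(\wt\gamma_{\wt z})$, so by Gromov-hyperbolicity they are at bounded distance near that endpoint; since $T_0$ acts by translation along $\wt\gamma_{T_0}$ and $\wt f^n$ tracks $\wt\gamma_{\wt z}$, the measure $\mu$ would have to "see" the closed geodesic $\pr_S(\wt\gamma_{T_0})$ as being tangent to the lamination $\Lambda_i$ at infinity, contradicting Lemma~\ref{LemGeodNotAccuPerOrb} (no periodic-point axis is close to $\wt\gamma_{\wt z}$).

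The main obstacle I anticipate is the bookkeeping in this last step — precisely establishing "$\wt\alpha_0$ equivalent at $+\infty$ to $\wt\beta_0$ $\Rightarrow$ $\omega(\wt\gamma_{\wt z})$ is an endpoint of the axis of $T_0$" and ruling out the degenerate possibility that they share exactly one endpoint without being equal. I expect the resolution to mirror the argument already used in Lemma~\ref{LemAlphaNotCross} (the paragraph invoking \cite[Propositions~2.14 and 2.18]{paper1PAF}): the existence of a $T_0$-invariant equivalent path at $+\infty$ is exactly what those propositions forbid unless the tracking geodesic coincides with the axis of $T_0$ — and coincidence is impossible because $\gamma_z$ is not closed. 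So in fact the contradiction is reached without ever needing the "shared one endpoint" case to be treated separately: equivalence at $+\infty$ already forces $\wt\gamma_{\wt z}$ to be the axis of $T_0$. This reduces the whole lemma to: (1) $\wt\alpha_0$ is $T_0$-invariant and $\wt\F$-equivalent at $+\infty$ to $\wt\beta_0$; (2) apply \cite[Propositions~2.14 and 2.18]{paper1PAF} to conclude $\wt\gamma_{\wt z}$ would be closed unless $\wt\gamma_{T_0}$ crosses it; wait — that is the wrong dichotomy. Let me restate the conclusion of step (2) correctly: if $\wt\gamma_{T_0}$ did \emph{not} cross $\wt\gamma_{\wt z}$, the two would be "parallel" in the sense of bounding a common region, and the tracking property plus $T_0$-invariance of the approximating path would place the forward orbit of $\wt z$ at bounded distance from $\wt\gamma_{T_0}$, hence $\wt\gamma_{\wt z} = \wt\gamma_{T_0}$ (closed), the contradiction. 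If $\wt\gamma_{T_0}$ \emph{does} cross $\wt\gamma_{\wt z}$, there is nothing to prove. That is the argument.
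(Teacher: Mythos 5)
The central claim of your proposal --- that $\wt\alpha_0$ and $\wt\beta_0$ are $\wt\F$-equivalent at $+\infty$ --- is precisely the opposite of what is true, and the paper's proof \emph{opens} by establishing this. The approximation $\wt\alpha_0$ is a $T_0$-invariant closed-up copy of a finite subpath of $\wt\beta_0$; it stays forever at bounded distance from the axis of $T_0$, whereas $\wt\beta_0$ tracks the non-closed geodesic $\wt\gamma_{\wt z}$. By \cite[Proposition~2.18]{paper1PAF}, equivalence of $\wt\beta_0$ at $+\infty$ with a $T_0$-invariant path would force infinitely many iterates $\wt f^{n}(\wt z)$ to lie at bounded distance from the axis of $T_0$, which is incompatible with the tracking property since $\wt\gamma_{\wt z}$ shares no endpoint with any closed geodesic. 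So your step (1) fails, and with it the whole deduction; in fact, as you half-notice yourself (``wait --- that is the wrong dichotomy''), if your step (1) were true the argument would yield a contradiction unconditionally, ruling out the existence of $T_0$ altogether.

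Your attempted repair --- ``if $\wt\gamma_{T_0}$ does not cross $\wt\gamma_{\wt z}$, then the forward orbit of $\wt z$ stays at bounded distance from $\wt\gamma_{T_0}$'' --- is asserted without justification, and it is essentially the conclusion restated: nothing in ``the geodesics are disjoint'' forces the $f$-orbit of $\wt z$ to be trapped near the axis. What is actually needed, and what the paper supplies, is a band-crossing argument. One shows that $\wt\beta_0$ is neither equivalent to $\wt\alpha_0$ at $\pm\infty$ (Proposition~2.18 as above) nor accumulates in it (Proposition~2.4), so $\wt\beta_0$ must exit the band $\wt B_0$ in both time directions. Then the no-self-transverse-intersection property of $\wt\beta_0$ (Proposition~\ref{PropPasInterTrans}) combined with \cite[Propositions~2.15--2.16]{paper1PAF} rules out the visit-and-draw configuration and rules out crossing the band in both directions; the only remaining possibility is that $\wt\beta_0$ crosses $\wt B_0$ once, from one definite side to the other. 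This places the two ends of $\wt\beta_0$ on opposite sides of $\wt\alpha_0$, hence the endpoints of $\wt\gamma_{\wt z}$ on opposite sides of the axis of $T_0$, hence the two geodesics cross or share an endpoint --- and sharing an endpoint is impossible since $\gamma_z$ is a non-closed leaf of a minimal lamination. Your proposal never engages with this band dichotomy, which is where the actual work of the lemma lies.
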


\begin{proof}
By \cite[Proposition~2.18]{paper1PAF}, the trajectories $\wt\alpha_0$ and $\wt\beta_0$ cannot be equivalent (neither at $+\infty$, nor at $-\infty$): if this was true this proposition would imply that there is an infinite number of iterates of $\wt z$ that stay at finite distance to the axis of $T_0$, which is impossible as the tracking geodesic $\wt\gamma_{\wt z}$ of $\wt z$ has no common endpoint with a closed geodesic.

By \cite[Proposition~2.4]{paper1PAF}, the trajectory $\wt\beta_0$  cannot accumulate in $\wt\alpha_0$ (neither at $+\infty$, nor at $-\infty$), otherwise $\beta_0$ would accumulate in itself as $\alpha_0$ is made of pieces of $\beta_0$.

Hence, the trajectory $\wt\beta_0$ has to go out of $\wt B_0$ both in negative and positive times. By Proposition~\ref{PropPasInterTrans}, the trajectory $\wt\beta_0$ has no self-transverse intersection; \cite[Proposition~2.15]{paper1PAF} prevents $\wt\beta_0$ to visit and draw $\wt B_0$.

Hence, the trajectory $\wt\beta_0$ has to cross the band $\wt B_0$, either from left to right or from right to left. Suppose, without losing generality, that it is from left to right.

By \cite[Proposition~2.15]{paper1PAF}, the trajectory $\wt\beta_0$ cannot moreover cross the band $\wt B_0$ from right to left (because, again, by Proposition~\ref{PropPasInterTrans}, the trajectory $\wt\beta_0$ has no self-transverse intersection). Combining these facts, 
we deduce that after the drawing component of $\wt\beta_0$ in $\wt B_0$, one has no other drawing component and we only have visiting components in $\wt B_0$. This implies that there is $t_n\to{n\to +\infty} +\infty$ such that $\beta_0(t_n)\in R(\wt\alpha_0)$. Similarly, there is $t_n\to{n\to -\infty} -\infty$ such that $\beta_0(t_n)\in L(\wt\alpha_0)$. 

In particular, the tracking geodesic $\wt\gamma_{\wt z}$ of $\wt z$ either has a common endpoint with the axis of $T_0$ or crosses the axis of the deck transformation $T_0$. The first case is impossible, as $\gamma_z$ is a geodesic of a minimal lamination without closed leaves, and the axis of $T_0$ is a closed geodesic.
\end{proof}

For $\wt x\in \wt S$, define $d_{\text{alg}}(\wt x, \wt\gamma)$ as the algebraic distance of $\wt x$ to $\wt\gamma$ (its absolute value is equal to the distance between $\wt x$ and $\wt \gamma$, and its sign is negative on the left of $\wt\gamma$ and  positive on the right of $\wt\gamma$).

\begin{lemma}\label{LemHyperbolicGeom}
Let $T\in\G$ be a deck transformation and $\wt\gamma$ a geodesic. Denote $\wt\gamma_T$ the geodesic axis of $T$. Suppose that $\wt\gamma\cap\wt\gamma_T\neq\emptyset$ and that $\wt\gamma\neq\wt\gamma_T$. Then the following properties are equivalent:
\begin{enumerate}[label=(\roman*)]
\item $\wt\gamma_T$ crosses $\wt\gamma$ from left to right;
\item for any $\wt x\in\wt S$, we have $d_{\text{alg}}(T\wt x,\wt\gamma)> d_{\text{alg}}(\wt x,\wt\gamma)$;
\item there exists $\wt x\in\wt S$ such that $d_{\text{alg}}(T\wt x,\wt\gamma)\ge d_{\text{alg}}(\wt x,\wt\gamma)$.
\end{enumerate}
\end{lemma}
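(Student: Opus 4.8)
The plan is to prove Lemma~\ref{LemHyperbolicGeom} as a pure exercise in plane hyperbolic geometry, working in the upper half-plane (or disc) model and exploiting the structure of the $T$-action. First I would reduce everything to statement (i)$\Leftrightarrow$(ii) and (ii)$\Leftrightarrow$(iii); the implication (ii)$\Rightarrow$(iii) is trivial, so the real content is (i)$\Rightarrow$(ii) and (iii)$\Rightarrow$(i). I would also observe at the outset that since $\wt\gamma \cap \wt\gamma_T \neq \emptyset$ and $\wt\gamma\neq\wt\gamma_T$, the two geodesics cross transversally at a single point, so "from left to right" is a well-defined dichotomy for the orientation with which $T$ translates along $\wt\gamma_T$.

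The key computation is the behaviour of the function $\wt x\mapsto d_{\text{alg}}(\wt x,\wt\gamma)$ under $T$. I would set up coordinates so that $\wt\gamma_T$ is the imaginary axis in the half-plane model and $T$ is the hyperbolic translation $\wt x\mapsto \lambda \wt x$ with $\lambda>1$ (the translation length being $\log\lambda$), while $\wt\gamma$ is some other geodesic through a point of $\wt\gamma_T$. The crucial monotonicity fact I want is: for a fixed geodesic $\wt\gamma$ crossing $\wt\gamma_T$, the signed distance $d_{\text{alg}}(T^n\wt x,\wt\gamma)$ is strictly monotone in $n\in\Z$ for every $\wt x$, and its sign of variation depends only on whether $\wt\gamma_T$ crosses $\wt\gamma$ left-to-right or right-to-left (not on $\wt x$). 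This follows from the fact that $T$ pushes every point toward the attracting endpoint $\omega(\wt\gamma_T)$ of $\wt\gamma_T$, hence toward whichever side of $\wt\gamma$ contains $\omega(\wt\gamma_T)$; and "left to right" for the crossing $\wt\gamma_T$ of $\wt\gamma$ is, by definition of left/right (the orientation conventions fixed with $L(\cdot)$ and $R(\cdot)$ and the orientation of $\wt\gamma_T$), exactly the statement that $\omega(\wt\gamma_T)\in R(\wt\gamma)$. I would make this precise by noting that the level sets $\{d_{\text{alg}}(\cdot,\wt\gamma)=c\}$ are the equidistant curves to $\wt\gamma$, that these foliate $\wt S\setminus\wt\gamma$ with $R(\wt\gamma)$ corresponding to $c>0$, and that $T$ moves the endpoint $\alpha(\wt\gamma_T)$ to $\omega(\wt\gamma_T)$; applying $T$ to any point strictly increases $d_{\text{alg}}(\cdot,\wt\gamma)$ iff the geodesic flow along $\wt\gamma_T$ in the positive direction (i.e.\ toward $\omega(\wt\gamma_T)$) increases it, which is a one-line check at the crossing point and propagates to all of $\wt S$ by the fact that $T$ is an isometry commuting with nothing relevant but preserving the two endpoints of $\wt\gamma_T$.

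For (i)$\Rightarrow$(ii): assuming $\wt\gamma_T$ crosses $\wt\gamma$ left to right, I get $\omega(\wt\gamma_T)\in R(\wt\gamma)$, hence for every $\wt x$ the point $T\wt x$ lies strictly closer (in the signed sense) to the $R(\wt\gamma)$ side than $\wt x$ does, i.e.\ $d_{\text{alg}}(T\wt x,\wt\gamma)>d_{\text{alg}}(\wt x,\wt\gamma)$. Strictness comes from $T$ being a nontrivial hyperbolic isometry with axis genuinely crossing $\wt\gamma$ (so $T$ has no fixed point on $\wt\gamma$ and moves the two equidistant foliations nontrivially). For (iii)$\Rightarrow$(i): I argue the contrapositive. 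If $\wt\gamma_T$ does not cross $\wt\gamma$ left to right, then since $\wt\gamma\neq\wt\gamma_T$ and they do cross, $\wt\gamma_T$ crosses $\wt\gamma$ right to left, so by the already-proved case applied to $T^{-1}$ (whose axis is $\wt\gamma_T$ with reversed orientation, now crossing left to right) we get $d_{\text{alg}}(T^{-1}\wt x,\wt\gamma)>d_{\text{alg}}(\wt x,\wt\gamma)$ for all $\wt x$, equivalently $d_{\text{alg}}(T\wt x,\wt\gamma)<d_{\text{alg}}(\wt x,\wt\gamma)$ for all $\wt x$, contradicting (iii). The main obstacle is purely bookkeeping: pinning down the sign conventions so that "crosses from left to right", the orientation of $\wt\gamma_T$ induced by $T$, the identification $\omega(\wt\gamma_T)\in R(\wt\gamma)$, and the sign of $d_{\text{alg}}$ all line up consistently; once the conventions are fixed the geometric content is the elementary fact that a hyperbolic translation pushes every point monotonically toward its attracting fixed point across any equidistant foliation of a transverse geodesic.
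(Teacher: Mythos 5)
Your proof is correct and takes essentially the same route as the paper: half-plane model, hypercycle foliations of $\wt\gamma$ and $\wt\gamma_T$, and the observation that a $T$-orbit crosses the hypercycles of $\wt\gamma$ monotonically, with the sign of the variation read off from the crossing direction. The only differences are cosmetic: you normalize $\wt\gamma_T$ to the imaginary axis (so that $T$ becomes a dilation $x\mapsto\lambda x$) whereas the paper normalizes $\wt\gamma$ there (so that $d_{\text{alg}}$ becomes a monotone function of the argument alone); and where you assert the monotonicity ``propagates to all of $\wt S$'' the paper is a touch more explicit, pointing out that each $T$-orbit lies on a Euclidean circle through $y_-<0<y_+$ and that the argument decreases monotonically along such a circle because the origin lies in its interior.
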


\begin{figure}
\begin{center}

\tikzset{every picture/.style={line width=0.75pt}} 

\begin{tikzpicture}[x=0.75pt,y=0.75pt,yscale=-1,xscale=1]

\draw  [draw opacity=0][fill={rgb, 255:red, 27; green, 159; blue, 0 }  ,fill opacity=0.08 ] (280,110) -- (360,110) -- (320,210) -- cycle ;
\draw  [draw opacity=0][fill={rgb, 255:red, 27; green, 159; blue, 0 }  ,fill opacity=0.08 ] (289.73,210.31) .. controls (284.97,201.73) and (282.25,191.81) .. (282.25,181.24) .. controls (282.25,148.64) and (308.12,122.21) .. (340.03,122.21) .. controls (371.94,122.21) and (397.81,148.64) .. (397.81,181.24) .. controls (397.81,191.73) and (395.13,201.57) .. (390.44,210.1) -- (340.03,181.24) -- cycle ; \draw  [color={rgb, 255:red, 27; green, 159; blue, 0 }  ,draw opacity=1 ] (289.73,210.31) .. controls (284.97,201.73) and (282.25,191.81) .. (282.25,181.24) .. controls (282.25,148.64) and (308.12,122.21) .. (340.03,122.21) .. controls (371.94,122.21) and (397.81,148.64) .. (397.81,181.24) .. controls (397.81,191.73) and (395.13,201.57) .. (390.44,210.1) ;  
\draw  [draw opacity=0][fill={rgb, 255:red, 255; green, 255; blue, 255 }  ,fill opacity=1 ] (289.78,210.71) .. controls (299.97,193.63) and (318.5,182.21) .. (339.66,182.21) .. controls (360.98,182.21) and (379.63,193.81) .. (389.77,211.11) -- (339.66,241.11) -- cycle ; \draw  [color={rgb, 255:red, 27; green, 159; blue, 0 }  ,draw opacity=1 ] (289.78,210.71) .. controls (299.97,193.63) and (318.5,182.21) .. (339.66,182.21) .. controls (360.98,182.21) and (379.63,193.81) .. (389.77,211.11) ;  
\draw [color={rgb, 255:red, 27; green, 159; blue, 0 }  ,draw opacity=1 ]   (280,110) -- (320,210) ;
\draw [color={rgb, 255:red, 27; green, 159; blue, 0 }  ,draw opacity=1 ]   (360,110) -- (320,210) ;
\draw [color={rgb, 255:red, 0; green, 105; blue, 219 }  ,draw opacity=1 ]   (320,110) -- (320,210) ;
\draw  [draw opacity=0] (290.01,209.09) .. controls (290.49,181.9) and (312.69,160) .. (340,160) .. controls (367.61,160) and (390,182.39) .. (390,210) .. controls (390,210.07) and (390,210.14) .. (390,210.21) -- (340,210) -- cycle ; \draw  [color={rgb, 255:red, 0; green, 105; blue, 219 }  ,draw opacity=1 ] (290.01,209.09) .. controls (290.49,181.9) and (312.69,160) .. (340,160) .. controls (367.61,160) and (390,182.39) .. (390,210) .. controls (390,210.07) and (390,210.14) .. (390,210.21) ;  
\draw [line width=1.5]    (260,210) -- (420,210) ;

\draw (320,106.6) node [anchor=south] [inner sep=0.75pt]  [color={rgb, 255:red, 0; green, 102; blue, 221 }  ,opacity=1 ,xscale=1.2,yscale=1.2]  {$\wt{\gamma }$};
\draw (380,167.27) node [anchor=south] [inner sep=0.75pt]  [color={rgb, 255:red, 0; green, 102; blue, 221 }  ,opacity=1 ,xscale=1.2,yscale=1.2]  {$\wt{\gamma }_{T}$};

\end{tikzpicture}

\vspace{-20pt}
\caption{Proof of Lemma~\ref{LemHyperbolicGeom}.}\label{FigLemHyperbolicGeom}
\end{center}
\end{figure}

\begin{proof}
Note that it suffices to prove that if $\wt\gamma_T$ crosses $\wt\gamma$ from left to right, then for any $\wt x\in\wt S$, we have $d_{\text{alg}}(T\wt x,\wt\gamma) > d_{\text{alg}}(\wt x,\wt\gamma)$: this will imply, by symmetry, that if $\wt\gamma_T$ crosses $\wt\gamma$ from right to left, then for any $\wt x\in\wt S$, we have $d_{\text{alg}}(T\wt x,\wt\gamma)\le d_{\text{alg}}(\wt x,\wt\gamma)$.

Let us use the Poincaré half-plane model $\Hy_+$ (see Figure~\ref{FigLemHyperbolicGeom}). By applying an isometry if necessary, one can suppose that $\wt\gamma = i\R_+^*$, and that $\wt\gamma_T$ is a half-circle with center on the real axis, starting at $y_-\in\R_-^*$ and ending at $y_+\in\R_+^*$. Note that the set of points that are equidistant to $\wt\gamma$ (for the algebraic distance $d_{\text{alg}}$), called \emph{hypercycles} or \emph{hypercircles}, are of the type $z\R_+^*$ with $z\in\Hy_+$; in particular the algebraic distance to $\wt\gamma$ grows iff the argument decreases.
Similarly, the set of points that are equidistant to $\wt\gamma_T$ are intersections of circles passing through both $y_-$ and $y_+$ with $\Hy_+$. 

It then suffices to note that browsing the circle passing through both $y_-$ and $y_+$ in the indirect direction (which does the orbit of a point under $T$) the argument of the point decreases, hence the algebraic distance to $\wt\gamma$ increases.
\end{proof}


The following lemma is based on Atkinson's theorem \cite{zbMATH03533870}.

\begin{lemma}\label{LemRtoLandLtoR}
There exist two approximations $\wt\alpha_{0}$ and $\wt\alpha_{1}$ of the trajectory $I^\Z_{\wt \F}(\wt z)$, the first one associated to a deck transformation $T_0$ whose axis $\wt\gamma_0$ crosses $\wt\gamma_{\wt z}$ from left to right and the second one associated to a deck transformation $T_1$ whose axis $\wt\gamma_1$ crosses $\wt\gamma_{\wt z}$ from right to left. Moreover, one can suppose that $\wt\gamma_0$ and $\wt\gamma_1$ are arbitrarily close to $\wt\gamma_{\wt z}$. 
\end{lemma}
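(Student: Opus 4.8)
The plan is to produce the two approximations by applying Atkinson's theorem to the function recording algebraic displacement in the direction transverse to $\wt\gamma_{\wt z}$, and then using Lemma~\ref{LemHyperbolicGeom} to translate that displacement information into a statement about which way the axis of the returning deck transformation crosses $\wt\gamma_{\wt z}$. First I would fix a Lusin set $A$ of positive $\mu$-measure on which $x\mapsto\wt\gamma_{\wt x}$ is continuous, as in the proof of Lemma~\ref{LemReturnDeckTrans}, and a regular fundamental domain $D$ giving a lift $x\mapsto\wt x$. Define the measurable function $\varphi:S\to\R$ by $\varphi(x)=d_{\text{alg}}\big(\wt f(\wt x),\wt\gamma_{\wt x}\big)-d_{\text{alg}}\big(\wt x,\wt\gamma_{\wt x}\big)$; this is a well-defined function on $S$ because changing the lift $\wt x$ by a deck transformation $T$ replaces $\wt\gamma_{\wt x}$ by $T\wt\gamma_{\wt x}$ and both terms transform compatibly. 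The Birkhoff sums of $\varphi$ along the orbit of $\wt z$ measure the algebraic distance of $\wt f^n(\wt z)$ to $\wt\gamma_{\wt z}$ up to a bounded error coming from the (bounded) change of the tracking geodesic when one follows the orbit and re-marks the fundamental domain; more precisely $\sum_{k=0}^{n-1}\varphi(f^k(z))=d_{\text{alg}}(\wt f^n(\wt z),\wt\gamma_{\wt z})+O(1)$ once one has fixed the marking, using the continuity of $\Theta$ on $A$ and the fact that $\wt f^{-m}(A)$ is also a continuity set.

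Next I would check that $\varphi\in L^1(\mu)$ — this follows from the integrability of the displacement $d(\wt z,\wt f(\wt z))$, which is the standard hypothesis behind Lemma~\ref{LemErgoRotSpeed} — and that $\int\varphi\,\dd\mu=0$. The vanishing of the integral is the key point: by the tracking property (Theorem~\ref{DefTrackGeod}) we have $d(\wt f^n(\wt z),\wt\gamma_{\wt z}(n\vartheta_\mu))=o(n)$, so in particular the algebraic distance $d_{\text{alg}}(\wt f^n(\wt z),\wt\gamma_{\wt z})$ stays $o(n)$ (the tracking geodesic is followed, so the orbit does not drift linearly away from it transversally); hence the Birkhoff averages of $\varphi$ tend to $0$ $\mu$-a.e., forcing $\int\varphi\,\dd\mu=0$ by the ergodic theorem. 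Now Atkinson's theorem \cite{zbMATH03533870} applies: for a $\mu$-integrable function of mean zero over an ergodic system, for $\mu$-a.e.\ $x$ the Birkhoff sums $\sum_{k=0}^{n-1}\varphi(f^k(x))$ return infinitely often (for $n\to+\infty$) to every neighbourhood of $0$; in fact they return to within $\varep$ of $0$ while simultaneously the orbit returns to a prescribed small ball, and one gets recurrence both with $S_n\varphi\ge 0$ infinitely often and with $S_n\varphi\le 0$ infinitely often (an ergodic mean-zero cocycle that is not a.e.\ a coboundary oscillates; if it were a coboundary the conclusion is even easier as the displacement would be bounded and a routine argument still produces crossings in both directions from minimality of $\Lambda_\mu$).

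Combining Atkinson recurrence with Lemma~\ref{LemReturnDeckTrans}, I would select times: a large $n_0>0$ such that $\wt f^{n_0}(\wt z)$ is close to $T_0\wt z$ for a deck transformation $T_0$ whose axis is close to $\wt\gamma_{\wt z}$ (that is the content of Lemma~\ref{LemReturnDeckTrans}), and in addition arrange, using Atkinson applied to $\varphi$, that $d_{\text{alg}}(\wt f^{n_0}(\wt z),\wt\gamma_{\wt z})\ge d_{\text{alg}}(\wt z,\wt\gamma_{\wt z})$ — i.e.\ the orbit has moved (weakly) to the right across $\wt\gamma_{\wt z}$ over the interval $[0,n_0]$. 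Since $\wt f^{n_0}(\wt z)$ is close to $T_0\wt z$ and $\wt\gamma_{\wt z}(0)$ is the nearest point, this gives $d_{\text{alg}}(T_0\wt z,\wt\gamma_{\wt z})\ge d_{\text{alg}}(\wt z,\wt\gamma_{\wt z})-\varep$; Lemma~\ref{LemTrackCrossAxis} already guarantees $\wt\gamma_{T_0}$ crosses $\wt\gamma_{\wt z}$, and then Lemma~\ref{LemHyperbolicGeom}(iii)$\Rightarrow$(i) (applied with a negligible correction absorbed by choosing $\varep$ small relative to the crossing, or by noting that the inequality is strict away from a lower-dimensional set and using Atkinson's recurrence to $S_n\varphi>0$) forces $\wt\gamma_{T_0}$ to cross $\wt\gamma_{\wt z}$ from left to right. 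Symmetrically, using the recurrence $S_n\varphi\le 0$ infinitely often produces the second approximation $\wt\alpha_1$ with axis $\wt\gamma_1$ crossing from right to left. The closeness of $\wt\gamma_0$ and $\wt\gamma_1$ to $\wt\gamma_{\wt z}$ is exactly what Lemma~\ref{LemReturnDeckTrans} provides (the axis of the returning deck transformation can be taken arbitrarily close to $\wt\gamma_{\wt z}$ by taking the return ball small and the return time large), and we only need to keep those return times among the ones along which the sign of $S_n\varphi$ is as desired, which is possible because Atkinson recurrence is to a set of times of positive density in the sense needed, or at least to an infinite set, which suffices to intersect with the (syndetic, hence infinite) set of good return times from Lemma~\ref{LemReturnDeckTrans}.

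\textbf{Main obstacle.} The delicate point I expect is the simultaneous control: one needs a single sequence of times $n\to+\infty$ along which (a) the orbit $\wt f^n(\wt z)$ comes back $\varep$-close to a deck translate $T\wt z$ with $T$'s axis $\varep$-close to $\wt\gamma_{\wt z}$, \emph{and} (b) $S_n\varphi$ has the prescribed sign (or is $\varep$-close to a value of the prescribed sign). Atkinson's theorem gives the $S_n\varphi$-recurrence along an infinite (essentially full-density-free) set of $n$, while Lemma~\ref{LemReturnDeckTrans} gives the geometric return along another infinite set; the cleanest way is to run Atkinson on the \emph{joint} system, i.e.\ apply Atkinson's recurrence to $\varphi$ over the ergodic system $(f,\mu)$ while simultaneously invoking Poincaré recurrence to the small ball around $z$, noting that the set of return times to a small ball has positive density and intersects the Atkinson return set. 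One must also be careful that the $O(1)$ discrepancy between $S_n\varphi$ and the actual algebraic distance $d_{\text{alg}}(\wt f^n(\wt z),\wt\gamma_{\wt z})$ — caused by re-marking and by the tracking error — does not swamp the sign; this is handled by choosing the target neighbourhood in Atkinson large compared to that fixed $O(1)$ constant, or rather by replacing $\varphi$ with the exact telescoping increment $d_{\text{alg}}(\wt f^{n+1}(\wt z),\wt\gamma_{\wt z})-d_{\text{alg}}(\wt f^n(\wt z),\wt\gamma_{\wt z})$ read through a fixed lift, which is literally a cocycle and to which Atkinson applies verbatim once integrability is checked.
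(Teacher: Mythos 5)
Your overall strategy is the right one and matches the paper's: build a cocycle recording transverse algebraic displacement, show it has $\mu$-mean zero via the tracking property, apply Atkinson's theorem, and combine with Lemma~\ref{LemReturnDeckTrans}, Lemma~\ref{LemTrackCrossAxis} and Lemma~\ref{LemHyperbolicGeom} to convert the sign of the Birkhoff sum into a crossing direction of the axis. However, there are two genuine gaps that the paper closes with a specific device you did not use. First, the coordination issue you flag as the ``main obstacle'' is real, and your proposed fix does not hold: Poincaré recurrence gives a positive-density set of returns to $W$, but Atkinson's conclusion only produces an \emph{infinite} set of times where $S_n\varphi$ is close to $0$, with no density guarantee, so there is no a priori reason the two sets intersect. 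The paper sidesteps this entirely by running Atkinson over the \emph{first return map} $g$ to the chart $W$ (with the induced measure $\mu_0$); then every Birkhoff-sum index is automatically a return time to $W$, and the associated deck transformation $T_{\wt g^n(\wt z)}$ is precisely the datum Lemma~\ref{LemReturnDeckTrans} and Lemma~\ref{LemTrackCrossAxis} speak about.

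Second, your cocycle measures the orbit displacement $d_{\text{alg}}(\wt f^n(\wt z),\wt\gamma_{\wt z})-d_{\text{alg}}(\wt z,\wt\gamma_{\wt z})$, but Lemma~\ref{LemHyperbolicGeom} compares $d_{\text{alg}}(T\wt x,\wt\gamma)$ with $d_{\text{alg}}(\wt x,\wt\gamma)$ for the \emph{same} point $\wt x$. At a return time, $\wt f^n(\wt z)$ is only $\diam\wt W$-close to $T\wt z$, so you incur an error of order $\diam\wt W$; since Atkinson is applied at the scale $\varep\approx|S_{n_0}\varphi|/2$ and you have no lower bound on $|S_{n_0}\varphi|$, you cannot ``absorb'' the error as you suggest, and $\wt W$ cannot be shrunk after the fact without restarting the whole construction. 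The paper's $\varphi(z)=d_{\text{alg}}(T_{\wt g(\wt z)}\wt x_0,\wt\gamma_{\wt z})-d_{\text{alg}}(T_{\wt z}\wt x_0,\wt\gamma_{\wt z})$ is built precisely so that the Birkhoff sums telescope to the \emph{exact} quantity $d_{\text{alg}}(T_{\wt g^n(\wt z)}\wt x_0,\wt\gamma_{\wt z})-d_{\text{alg}}(\wt x_0,\wt\gamma_{\wt z})$, i.e.\ the deck displacement of a \emph{fixed marked point} $\wt x_0$, which is exactly what Lemma~\ref{LemHyperbolicGeom}(ii)/(iii) tests — there is no residual error at all. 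Finally, you hand-wave the nondegeneracy (``if it were a coboundary the conclusion is even easier''), whereas the paper has to argue explicitly that the sequence $n\mapsto d_{\text{alg}}(T_{\wt g^n(\wt z)}\wt x_0,\wt\gamma_{\wt z})$ is not a.e.\ constant, using the hyperbolic-geometric fact that only finitely many pairwise disjoint geodesics can be tangent to a single circle; this step is essential for producing an $n_0$ with $S_{n_0}\varphi\neq 0$ and cannot be omitted.
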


\begin{proof}
Let $\wt W$ be a small open chart for the foliation $\wt \F$ whose projection $W$ on $S$ has positive $\mu$-measure. Fix $\wt x_0\in \wt W$. 
For any $\wt x \in\wt S$ whose projection $x\in S$ belongs to $W$, define $T_{\wt x}$ as the deck transformation such that $\wt x\in T_{\wt x}\wt W$. Note that 
\begin{equation}\label{EqLemRtoLandLtoR}
d\big(\wt x,\, T_{\wt x} \wt x_0\big) \le \diam(\wt W).
\end{equation}

For $x\in W$, let $\tau_0(x)>0$ be the first returm time of $x$ in $W$: $f^{\tau_0(x)}(x)\in W$; it is well defined $\mu$-almost everywhere. For $x\in W$ whose first return time to $W$ is well defined, and $\wt x$ a lift of $x$ to $S$, define $\wt g(x) = \wt f^{\tau_0(x)}(\wt x)$. 

For $z\in S$ that is $\mu$-typical, and $\wt z$ a lift of $z$ to $\wt S$, define
\[\varphi(z) = d_{\text{alg}}\big(T_{\wt g(\wt z)}(\wt x_0), \wt\gamma_{\wt z}\big) - d_{\text{alg}}\big(T_{\wt z}\wt x_0, \wt\gamma_{\wt z}\big).\]

Note that the value of $\varphi(z)$ is independent of the choice of the lift $\wt z$, as $\wt g$, $\wt x\mapsto T_{\wt x}$ and $\wt x\mapsto \wt\gamma_{\wt x}$ are $\G$-equivariant. Moreover, the map $\varphi$ is Borel, as $\wt x\mapsto T_{\wt x}$ and $\wt z\mapsto \wt\gamma_{\wt z}$ are (see \cite[Theorem B]{alepablo}).
\bigskip

As we will apply Atkinson's theorem, we first have to prove that 
\begin{equation}\label{EqInt0Atk}
\int_S \varphi\dd\mu = 0.
\end{equation}

Remark that  
\[|\varphi(\wt z)| \le  \big|d_{\text{alg}}\big(\wt g(\wt z), \wt\gamma_{\wt z}\big) - d_{\text{alg}}\big(\wt z, \wt\gamma_{\wt z}\big)\Big| + 2\diam(\wt W) \le d\big(\wt g(\wt z), \wt z\big) + 2\diam(\wt W),\]
and that $d\big(\wt g(\wt z), \wt z\big) \le \tau_0(\wt z) d\big(\wt f, \Id_{\wt S}\big)$ is $\mu$-integrable because of Kac's recurrence theorem. 

Let $\mu_0$ be the measure induced by $\mu$ via the first return of $f$ on $W$. By ergodicity of $\mu_0$, and by Birkhoff's ergodic theorem, for $\mu_0$-a.e.~$z\in W$, using the fact that $\wt\gamma_{\wt f(\wt z)} = \wt\gamma_{\wt z}$ and hence that $\wt\gamma_{\wt g(\wt z)} = \wt\gamma_{\wt z}$, 
\begin{align*}
\int_S \varphi\dd\mu_0
& = \lim_{n\to+\infty}\frac 1n\sum_{k=0}^{n-1} \varphi(g^k(z)) \\
& = \lim_{n\to+\infty} \frac 1n\Big(d_{\text{alg}}\big(T_{\wt g^n(\wt z)} \wt x_0 , \wt\gamma_{\wt z}\big) - d_{\text{alg}}\big(T_{\wt z}\wt x_0, \wt\gamma_{\wt z}\big)\Big)\\
& = \lim_{n\to+\infty} \frac 1n d_{\text{alg}}\big(\wt g^n(\wt z), \wt\gamma_{\wt z}\big)
\end{align*}
(the last equality comes from \eqref{EqLemRtoLandLtoR}).
By the definition of tracking geodesic, the last quantity is equal to 0, proving that $\int_S \varphi\dd\mu_0 = 0$ and hence \eqref{EqInt0Atk}. 
\bigskip

Let us prove that $n\mapsto d_{\text{alg}}\big(T_{\wt g^n(\wt z)} \wt x_0, \wt\gamma_{\wt z}\big)$ is not $\mu_0$-a.e.~constant. Suppose it is the case. Then $d\big( \wt x_0,T_{\wt g^n(\wt z)}^{-1} \wt\gamma_{\wt z}\big)$ is constant, in other words the geodesics $T_{\wt g^n(\wt z)}^{-1} \wt\gamma_{\wt z}$ are all tangent to a single circle of center $\wt x_0$. 
But as $\wt g^n(\wt z)$ tends to $\omega(\wt\gamma_{\wt z})$ when $n$ goes to $+\infty$, there is a sequence $(n_k)$ going to $+\infty$ such that the $T_{\wt g^{n_k}(\wt z)}^{-1}$ are pairwise different and hence, because $\wt\gamma_{\wt z}$ is simple, that the $T_{\wt g^{n_k}(\wt z)}^{-1} \wt\gamma_{\wt z}$ are pairwise disjoint. 
This contradicts the fact that $\gamma_{\wt z}$ is simple : there is only a finite number of pairwise disjoint geodesics that are all tangent to a single circle. 

Hence, (changing $z$ by an iterate of it if necessary) there is $n_0>0$ such that $\sum_{k=0}^{n_0-1} \varphi(g^k(z))\neq 0$. 
\bigskip

By Atkinson's theorem \cite{zbMATH03533870}, for any $\varep>0$ and $\mu_0$-a.e.~$z\in\wt W$, there is an infinite number of $n\in\N$ such that 
\[\left|\sum_{k=0}^{n-1} \varphi(g^k(z)) \right| = 
\left|d_{\text{alg}}\big(T_{\wt g^n(\wt z)} \wt x_0, \wt\gamma_{\wt z}\big) - d_{\text{alg}}\big(T_{\wt z}\wt x_0, \wt\gamma_{\wt z}\big)\right|\le\varep.\]

Applying this result to $\varep = \big|\sum_{k=0}^{n_0-1} \varphi(g^k(z))\big|/2$, we deduce that there exists $n_1>n_0$ such that 
\[\left|\sum_{k=0}^{n_1-1} \varphi(g^k(z))\right| \le \frac12 \left|\sum_{k=0}^{n_0-1} \varphi(g^k(z))\right|,\]
in particular 
\[\operatorname{sign}\left(\sum_{k=0}^{n_0-1} \varphi(g^k(z))\right) = -\operatorname{sign}\left(\sum_{k=n_0}^{n_1-1} \varphi(g^k(z))\right),\]
or equivalently
\begin{multline*}
\operatorname{sign}\Big(d_{\text{alg}}\big(T_{\wt g^{n_0}(\wt z)} \wt x_0, \wt\gamma_{\wt z}\big) - d_{\text{alg}}\big(T_{\wt z}\wt x_0, \wt\gamma_{\wt z}\big)\Big)\\
 = -\operatorname{sign}\Big(d_{\text{alg}}\big(T_{\wt g^{n_1}(\wt z)} \wt x_0, \wt\gamma_{\wt z}\big) - d_{\text{alg}}\big(T_{\wt g^{n_0}(\wt z)}\wt x_0, \wt\gamma_{\wt z}\big)\Big).
\end{multline*}

As the times $n_0$ and $n_1$ are return times to $W$, we have seen in Lemma~\ref{LemTrackCrossAxis} that it forces the geodesic axes of $T_{\wt g^{n_0}(\wt z)}$ and $T_{\wt g^{n_0}(\wt z)}^{-1} T_{\wt g^{n_1}(\wt z)}$ to cross $\wt\gamma_{\wt z}$. This allows us to apply Lemma~\ref{LemHyperbolicGeom} that implies that one of the geodesic axes of $T_{\wt g^{n_0}(\wt z)}$ and $T_{\wt g^{n_0}(\wt z)}^{-1} T_{\wt g^{n_1}(\wt z)}$ crosses $\wt\gamma_{\wt z}$ from left to right, and the other one crosses $\wt\gamma_{\wt z}$ from right to left. Note that the deck transformation $T_{\wt g^{n_0}(\wt z)}^{-1} T_{\wt g^{n_1}(\wt z)}$ is associated to the return number $n_1-n_0$ of $g^{n_0}(z)$ in $W$.

To these deck transformations  are associated two approximations $\wt\alpha_{0}$ and $\wt\alpha_{1}$ of the trajectory $I^\Z_{\wt \F}(\wt z)$, the first one being $T_{\wt g^{n_0}(\wt z)}$-invariant and the second one $T_{\wt g^{n_0}(\wt z)}^{-1} T_{\wt g^{n_1}(\wt z)}$-invariant. Without loss of generality, one can suppose that the first one crosses $\wt\gamma_{\wt z}$ from left to right and the second one crosses $\wt\gamma_{\wt z}$ from right to left. 
This proves the lemma.
\end{proof}

\subsection{Proof of Theorem~\ref{TheoBndedDirLam}}\label{SubsecTheoBndedDirLam}

Let $\wt\alpha_0$, $\wt\alpha_1$, $\wt\gamma_0$, $\wt\gamma_1$, $T_0$ and $T_1$ be given by Lemma~\ref{LemRtoLandLtoR}. Note that the distances between $\wt\alpha_0$ and $\wt\gamma_0$, and between $\wt\alpha_1$ and $\wt\gamma_1$, are uniformly bounded.

Recall that $\wt\gamma_0$ and $\wt\gamma_1$ are given by Lemma~\ref{LemRtoLandLtoR}.

\begin{lemma}\label{LemSyndeticInter}
There exists $C_0>0$ such that for any $t\in\R$, there are two deck transformations $R_0$ and $R_1$ such that $\wt\gamma_{\wt z}([t, t+C_0])\cap R_0\wt\gamma_0\neq \emptyset$ and $\wt\gamma_{\wt z}([t, t+C_0])\cap R_1\wt\gamma_1\neq \emptyset$. Moreover, the angle between these pieces of geodesics at the intersection points is bigger (in absolute value) than $C_0^{-1}$.
\end{lemma}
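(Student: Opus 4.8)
The plan is to deduce the statement from minimality of the geodesic lamination $\dot\Lambda_\mu$ together with a compactness argument, run separately for $\wt\gamma_0$ and for $\wt\gamma_1$; I describe the case of $\wt\gamma_0$, the case of $\wt\gamma_1$ being identical. Let $\gamma_0$ denote the closed geodesic of $S$ covered by the axis $\wt\gamma_0$ of $T_0$. By Lemma~\ref{LemRtoLandLtoR} the geodesic $\wt\gamma_{\wt z}$ crosses $\wt\gamma_0$ transversally, say at parameter $s_0$, at some angle $\theta_0>0$. Fix $0<\delta_0<\theta_0$, write $(g^s)$ for the geodesic flow on $T^1 S$, and define $O_0\subset T^1 S$ to be the set of $v$ such that the geodesic with initial velocity $v$ meets, within flow-time $(-1,1)$, some lift of $\gamma_0$ to $\wt S$ at an angle exceeding $\delta_0$. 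Since $\dot{\wt\gamma}_{\wt z}(s_0)$ is a lift of $g^{s_0}\big(\dot\gamma_z(0)\big)$ based on the lift $\wt\gamma_0$ of $\gamma_0$ at angle $\theta_0>\delta_0$, we get $\dot\gamma_z(s_0)\in O_0\cap\dot\Lambda_\mu$; in particular $O_0\cap\dot\Lambda_\mu\neq\emptyset$ (recall $\dot\gamma_z(\R)\subset\dot\Lambda_\mu$).

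Next I would check that $O_0$ is open: since $\delta_0>0$, the crossing witnessing $v\in O_0$ is transverse, so by continuity of $(g^s)$ and transversality a geodesic with initial velocity close to $v$ still crosses the same lift of $\gamma_0$ at a nearby time — still inside $(-1,1)$ — and at an angle still above $\delta_0$. Now $\dot\Lambda_\mu$ is a compact, $(g^s)$-invariant subset of $T^1 S$ which, the class being minimal non-closed, is a minimal geodesic lamination (Theorem~\ref{Thifsimpletheoremintro}). The open set $\bigcup_{s\in\R}g^{-s}(O_0)$ is $(g^s)$-invariant and meets $\dot\Lambda_\mu$, hence contains $\dot\Lambda_\mu$ by minimality; since the sets $\bigcup_{s\in[-N,N]}g^{-s}(O_0)$ increase with $N$ and cover the compact set $\dot\Lambda_\mu$, there is $N_0>0$ with $\dot\Lambda_\mu\subset\bigcup_{s\in[-N_0,N_0]}g^{-s}(O_0)$. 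Applying this to $g^{-a}v$ shows that for every $v\in\dot\Lambda_\mu$ and every $a\in\R$ there is $s\in[a-N_0,a+N_0]$ with $g^s v\in O_0$. Running the same construction for $\gamma_1$ gives $\delta_1>0$ and $N_1>0$ with the analogous property.

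Finally I would set $C_0=\max\big(2N_0+2,\,2N_1+2,\,\delta_0^{-1},\,\delta_1^{-1}\big)$ and conclude. Given $t\in\R$, apply the previous step to $v=\dot\gamma_z(0)\in\dot\Lambda_\mu$ and $a=t+N_0+1$: there is $s\in[t+1,\,t+2N_0+1]$ with $\dot\gamma_z(s)\in O_0$, hence some $s'\in(s-1,s+1)\subset(t,t+C_0)$ at which $\wt\gamma_{\wt z}$ passes through a lift $R_0\wt\gamma_0$ of $\gamma_0$ (for a suitable $R_0\in\G$, using that the lifts of $\gamma_0$ are exactly the $\G$-translates of $\wt\gamma_0$) at angle $>\delta_0$. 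Thus $\wt\gamma_{\wt z}([t,t+C_0])\cap R_0\wt\gamma_0\neq\emptyset$ with crossing angle $>\delta_0\ge C_0^{-1}$, and the same argument with $O_1,N_1,\delta_1$ produces $R_1$.

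I expect the only genuine subtlety to be the bookkeeping ensuring the crossing time falls inside $[t,t+C_0]$ rather than just slightly before $t$: this is handled by building a time-radius $1$ into the definition of $O_0$ and shifting to $a=t+N_0+1$. Openness of $O_0$, its non-emptiness against $\dot\Lambda_\mu$, and the syndeticity of returns of a compact minimal flow to a non-empty open set are all routine, and the argument uses nothing about $f$ beyond the transverse crossings supplied by Lemma~\ref{LemRtoLandLtoR} and the minimality of $\dot\Lambda_\mu$.
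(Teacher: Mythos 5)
Your proof is correct and rests on the same two pillars as the paper's: minimality of $\dot\Lambda_\mu$ as a geodesic flow and compactness. The paper phrases this as $\gamma_0,\gamma_1$ being global sections for $\Lambda_\mu$ with a uniformly bounded first-return time and a uniformly positive crossing angle (each obtained by continuity plus compactness of $\gamma_i\cap\Lambda_\mu$), whereas you bundle both bounds into a single open set $O_0\subset T^1S$ and invoke the standard fact that a minimal flow on a compact space visits every non-empty open set syndetically --- a cleaner and more self-contained packaging that sidesteps the paper's somewhat terse assertion that inclusion in $S_\mu$ already makes $\gamma_i$ a global section. One small slip: in the syndeticity step you apply the finite cover $\bigcup_{s\in[-N_0,N_0]}g^{-s}(O_0)$ to $g^{-a}v$ and claim $s\in[a-N_0,a+N_0]$; the sign is off (apply the cover to $g^{a}v$ instead, or else you land in $[-a-N_0,-a+N_0]$), but this is a harmless typo that does not affect the conclusion or the choice of $C_0$.
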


\begin{proof}
Denote $\gamma_z$, $\gamma_0$ and $\gamma_1$ the respective projections of $\wt\gamma_{\wt z}$, $\wt\gamma_0$ and $\wt\gamma_1$ on $S$.

Recall that $\wt\gamma_0$ and $\wt\gamma_1$ can be chosen arbitrarily close to $\wt\gamma_{\wt z}$. By Lemma~\ref{LemGeodNotAccuPerOrb}, this implies that $\gamma_0$ and $\gamma_1$ are included in the surface $S_\mu$. 
In particular, they are global sections for the lamination $\Lambda_\mu$: for $i=0,1$, for any geodesic $\gamma$ of $\Lambda_\mu$ and any $t\in\R$, there exist $t_i > t$ such that $\gamma(t_i)\in \gamma_i$. This applies in particular for $\gamma(t)\in \gamma_i$, that implies that the first return time for the geodesic flow restricted to $\Lambda_\mu$ to the section $\gamma_i$ is finite, hence (by continuity) locally bounded and hence (by compactness of $\gamma_i\cap\Lambda_\mu$) uniformly bounded. Note also that the angle between $\gamma_i$ and a geodesic of $\Lambda_i$ is nonzero at an intersection point, hence locally bounded away from 0 and hence uniformly bounded away from 0. This proves the lemma.
\end{proof}

\begin{lemma}\label{LemInterAlphaiEmpty}
There is $C_1>0$ an increasing sequence $(t^k)_k\in\R^\Z$ and, for $i=0,1$, sequences $(R_i^k)_k\in\G^\Z$ of deck transformations such that for any $k\in\Z$, we have (recall that we identify a geodesic with its $\R$ parametrization)
\begin{itemize}
\item $2C_1\le t^{k+1}-t^k\le 4C_1$, and
\item $\pr_{\wt\gamma_{\wt z}}\big(R_0^k\wt\alpha_0\cup R_1^k\wt\alpha_1\big) \subset [t^k-C_1, t^k+C_1]$.
\end{itemize}
Moreover, one can take $R_0^0 = R_1^0 = \Id$. 
\end{lemma}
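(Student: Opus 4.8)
The plan is to combine the two structural facts already established for the approximations $\wt\alpha_0$ and $\wt\alpha_1$: each $\wt\alpha_i$ stays at uniformly bounded distance from its axis $\wt\gamma_i$ (noted just before the statement), and the axes $\wt\gamma_0,\wt\gamma_1$ can be taken arbitrarily close to $\wt\gamma_{\wt z}$ and hence project into $S_\mu$ (Lemma~\ref{LemRtoLandLtoR} together with Lemma~\ref{LemGeodNotAccuPerOrb}). Because $\wt\gamma_0$ and $\wt\gamma_1$ project to closed geodesics inside $S_\mu$, each translate $R\wt\gamma_i$ ($R\in\G$) projects to the \emph{same} closed geodesic $\gamma_i$; so Lemma~\ref{LemSyndeticInter} gives, for some $C_0>0$, along every length-$C_0$ subarc of $\wt\gamma_{\wt z}$, a translate of $\wt\gamma_0$ and a translate of $\wt\gamma_1$ meeting it transversally with angle bounded below by $C_0^{-1}$. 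A transversal crossing with angle $\ge C_0^{-1}$ forces the orthogonal projection $\pr_{\wt\gamma_{\wt z}}(R\wt\gamma_i)$ to be a bounded-length arc: two disjoint geodesics in $\wt S$ have projections on $\wt\gamma_{\wt z}$ of length controlled by their distance, and at a point of transverse intersection the projection of $R\wt\gamma_i$ onto $\wt\gamma_{\wt z}$ is a diffeomorphism whose derivative degenerates only as the crossing becomes tangential — so a lower angle bound gives an upper length bound. Let $C_0'$ be this bound.

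**Constructing the sequences.**

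First I would fix $C_1 := C_0 + C_0' + D$, where $D$ is the uniform bound on the distance between $\wt\alpha_i$ and $\wt\gamma_i$ (so that $\pr_{\wt\gamma_{\wt z}}(R\wt\alpha_i)$ lies within $C_0'+D$ of $\pr_{\wt\gamma_{\wt z}}(R\wt\gamma_i)$); one may enlarge $C_1$ at the end to absorb finitely many additive constants. Parametrise $\wt\gamma_{\wt z}$ by $\R$ with $\wt\gamma_{\wt z}(0) = \pr_{\wt\gamma_{\wt z}}(\wt z)$ and set $t^0 := 0$. Since $\wt z$ is at bounded distance from $\wt\gamma_{\wt z}$ and each $\wt\alpha_i$ is (at time $0$) within bounded distance of $\wt z$, after enlarging $C_1$ we may assume $\pr_{\wt\gamma_{\wt z}}(\wt\alpha_0\cup\wt\alpha_1)\subset[-C_1,C_1]$, which gives the required base case with $R_0^0 = R_1^0 = \Id$. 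Inductively, having chosen $t^k$, apply Lemma~\ref{LemSyndeticInter} to the subarc $\wt\gamma_{\wt z}([t^k + 3C_1,\, t^k + 3C_1 + C_0])$: this produces deck transformations $R_0^{k+1}, R_1^{k+1}$ with $R_i^{k+1}\wt\gamma_i$ meeting that subarc transversally with angle $\ge C_0^{-1}$. Define $t^{k+1}$ to be the midpoint of the projection $\pr_{\wt\gamma_{\wt z}}(R_0^{k+1}\wt\gamma_0)$ (or of $R_1^{k+1}\wt\gamma_1$ — either works after enlarging $C_1$ to cover the discrepancy, since both projections meet the same $C_0$-window). By the length bound, $\pr_{\wt\gamma_{\wt z}}(R_i^{k+1}\wt\gamma_i)\subset[t^{k+1}-C_0'/2,\,t^{k+1}+C_0'/2]$, hence $\pr_{\wt\gamma_{\wt z}}(R_i^{k+1}\wt\alpha_i)\subset[t^{k+1}-C_1, t^{k+1}+C_1]$. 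Since the intersection point lies in $[t^k+3C_1, t^k+3C_1+C_0]$ and $t^{k+1}$ is within $C_0'/2$ of it, the spacing $t^{k+1}-t^k$ lies in an interval of the form $[3C_1 - C_0'/2,\, 3C_1 + C_0 + C_0'/2]$; after one more harmless enlargement of $C_1$ (replacing $C_1$ by a suitable multiple), this interval is contained in $[2C_1, 4C_1]$, as required. Negative indices are handled identically using subarcs to the left of $t^0$.

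**The main obstacle.**

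The delicate point is not the combinatorics of the spacing — that is bookkeeping and is repaired by allowing $C_1$ to be a large fixed multiple of $C_0, C_0', D$ — but the quantitative hyperbolic-geometry estimate that a transverse crossing with angle bounded below yields a uniform \emph{upper} bound on the length of the orthogonal projection of the crossing geodesic onto $\wt\gamma_{\wt z}$. One clean way is to argue in the unit tangent bundle: the set of geodesics of $\wt S$ crossing a fixed compact arc of $\wt\gamma_{\wt z}$ with angle in $[C_0^{-1}, \pi - C_0^{-1}]$ is compact (up to the $\G$-action it descends to a compact subset of $T^1S$), and on it the function ``length of projection onto the base geodesic'' is continuous and finite, hence bounded; then one transports this along $\wt\gamma_{\wt z}$ using that $\pr_{\wt\gamma_{\wt z}}$ shifts equivariantly. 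Alternatively one computes directly in the upper half-plane model, exactly as in the proof of Lemma~\ref{LemHyperbolicGeom}, reducing to an explicit estimate on half-circles; but the compactness argument is cleaner and avoids case analysis. Either way, once $C_0'$ is in hand the rest of the proof is the inductive construction sketched above.
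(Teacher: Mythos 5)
Your proof is correct and rests on the same two pillars as the paper's: Lemma~\ref{LemSyndeticInter} for syndetic transverse crossings with angle bounded below, and the elementary hyperbolic estimate that a transverse crossing of $\wt\gamma_{\wt z}$ at angle $\ge C_0^{-1}$ forces $\pr_{\wt\gamma_{\wt z}}$ of the crossing geodesic (hence of the nearby $\wt\alpha_i$) to have uniformly bounded length. The difference is in how the sequence $(t^k)_k$ is assembled. The paper enumerates \emph{all} crossing times $(\tilde t_i^k)_k$ produced by Lemma~\ref{LemSyndeticInter}, matches the two families so that $|\tilde t_0^0-\tilde t_1^0|\le C_0$, and then \emph{extracts} well-separated subsequences satisfying a list of inequalities, taking $t^k = t_0^k$ at the end. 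You instead run a direct inductive construction: having chosen $t^k$, you point Lemma~\ref{LemSyndeticInter} at the shifted window $[t^k+3C_1,\,t^k+3C_1+C_0]$, get $R_0^{k+1}, R_1^{k+1}$ crossing there, and set $t^{k+1}$ to the midpoint of $\pr_{\wt\gamma_{\wt z}}(R_0^{k+1}\wt\gamma_0)$. Both are valid; your version produces the spacing bound $2C_1 \le t^{k+1}-t^k\le 4C_1$ by construction rather than as a consequence of an extraction criterion, which is arguably tidier, at the cost of having to choose $C_1$ correctly from the start so that the windows and the estimates are consistent (which you handle). The one place to be a bit more careful is the base case: the reason $\pr_{\wt\gamma_{\wt z}}(\wt\alpha_0\cup\wt\alpha_1)$ lies in a bounded neighbourhood of $0$ is that each $\wt\alpha_i$ passes through the chart $\wt W\ni \wt z$ (so $\pr_{\wt\gamma_{\wt z}}(\wt\alpha_i)$ contains a point at distance $\le \diam(\wt W)+d(\wt z,\wt\gamma_{\wt z})$ from $0$), combined with the projection-length bound; stating it that way closes the small gap in ``each $\wt\alpha_i$ is (at time $0$) within bounded distance of $\wt z$''. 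Finally, your compactness justification of the projection-length bound is sound, but the transport along $\wt\gamma_{\wt z}$ uses the hyperbolic translations fixing $\wt\gamma_{\wt z}$, not the $\G$-action as you wrote; the resulting function of the crossing angle alone is continuous on the compact interval $[C_0^{-1},\pi-C_0^{-1}]$ and hence bounded, which is what you need.
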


Note that the first condition implies that the sequence $(t^k)_k$ meets any interval of length $4C_1$. Combining both conditions, one gets that for any $k\in\Z$, 
\[\big(R_0^k\wt\alpha_0\cup R_1^k\wt\alpha_1\big) \cap \big(R_0^{k+1}\wt\alpha_0 \cup R_1^{k+1}\wt\alpha_1 \big) = \emptyset.\]

\begin{proof}
By Lemma~\ref{LemSyndeticInter}, there exists two sequences $(R_0^k)_{k\in\Z}$ and $(R_1^k)_{k\in\Z}$ of deck transformations, and two increasing sequences $(\tilde t_0^k)_{k\in\Z}$ and $(\tilde t_1^k)_{k\in\Z}$ of times such that for $i=0,1$ and $k\in\Z$, we have $\wt\gamma_{\wt z}(\tilde t_i^k)\in R_i^k\wt\gamma_i$, and such that for $i=0,1$, the sequence $(\tilde t_i^k)_{k\in\Z}$ meets any interval of length $C_0$. Moreover, the angle between the geodesics $\wt\gamma_{\wt z}$ and $R_i^k\wt\gamma_i$ at the intersection points is bigger than $C_0^{-1}$. We will extract increasing sequences $(t_i^k)_k$ from $(\tilde t_i^k)_{k\in\Z}$ to get the conclusions of the lemma.

First, by shifting the indices of the sequences $(\tilde t_1^k)_k$ and $(R_1^k)_k$ if necessary, we can suppose that $|\tilde t_0^0 - \tilde t_1^0|\le C_0$.
Recall that we have $\wt\gamma_{\wt z}(\tilde t_i^k)\in R_i^k\wt\gamma_i$. By the fact that the angle between geodesics is bigger than $C_0^{-1}$ and the fact that the distance between $\wt\gamma_i$ and $\wt\alpha_i$ is bounded, we deduce that there exists $C'_1>C_0$ such that for any $k$ and for $i=0, 1$, we have$\pr_{\wt\gamma_{\wt z}}(R_i^k\wt\alpha_i) \subset [\tilde t_i^k-C'_1, \tilde t_i^k+C'_1]$. As a consequence, if $|\tilde t_i^k - \tilde t_{i'}^{k'}|\ge 2C'_1$, then $R_i^k\wt\alpha_i \cap R_{i'}^{k'}\wt\alpha_{i'} = \emptyset$.
Extracting recursively sequences $(t_i^k)_k$ from $(\tilde t_i^k)_k$, we get that for any $k$, we have:
\begin{itemize}
\item $|t_0^k - t_1^k|\le C_0$;
\item $\max(t_0^{k+1}, t_1^{k+1}) - \min(t_0^k, t_1^k)\le 2C'_1+2C_0$;
\item $\min(t_0^{k+1}, t_1^{k+1}) - \max(t_0^k, t_1^k)\ge 2C'_1$.
\end{itemize} 
This ensures the conclusions of the lemma, with $C_1 = C'_1+C_0$ and $t^k = t_0^k$.
\end{proof}

Note that in the previous proof, one can choose $R_0^0 = \Id_{\wt S}$. We will use this fact in the sequel.

For $i=0,1$, recall that $\wt B_i$ is the set of leaves met by $\wt\alpha_i$; by Lemma~\ref{LemAlphaiSimple} it is a topological plane.
Let $C_1$ be given by Lemma~\ref{LemInterAlphaiEmpty}. 

\begin{lemma}\label{LemCrossDependOnProj}
There exists $C_2>0$ such that, for any $\wt y\in\wt S$ and $n\in\N$:
\begin{itemize}
\item if $\pr_{\wt\gamma_{\wt z}}\big(\wt f^n(\wt y)\big) - \pr_{\wt\gamma_{\wt z}}(\wt y) \ge C_2$, then there exists $k\in\Z$ such that $I^{[0,n]}_{\wt \F}(\wt y)$ crosses $R_0^k\wt B_0$ from right to left;
\item if $\pr_{\wt\gamma_{\wt z}}\big(\wt f^n(\wt y)\big) - \pr_{\wt\gamma_{\wt z}}(\wt y) \le -C_2$, then there exists $k\in\Z$ such that $I^{[0,n]}_{\wt \F}(\wt y)$ crosses $R_0^k\wt B_0$ from left to right.
\end{itemize}
\end{lemma}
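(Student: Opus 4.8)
The plan is to reduce Lemma~\ref{LemCrossDependOnProj} to a crossing-number count against the approximations $R_0^k\wt\alpha_0$. First I would set up the bookkeeping: by Lemma~\ref{LemInterAlphaiEmpty} the projections $\pr_{\wt\gamma_{\wt z}}(R_0^k\wt\alpha_0)$ sit in the intervals $[t^k-C_1,t^k+C_1]$ with consecutive centres $t^k$ spaced between $2C_1$ and $4C_1$ apart, so the family $\{R_0^k\wt B_0\}_{k\in\Z}$ is a ``laminated'' sequence of pairwise-disjoint topological planes (using the last remark after Lemma~\ref{LemInterAlphaiEmpty}), each of uniformly bounded ``width'' in the $\pr_{\wt\gamma_{\wt z}}$-direction. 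The band $R_0^k\wt B_0$ separates $\wt S$ into a left and a right side; because its axis $R_0^k\wt\gamma_0$ crosses $\wt\gamma_{\wt z}$ from left to right, moving from the left side of $R_0^k\wt B_0$ to its right side means moving in the direction of increasing $\pr_{\wt\gamma_{\wt z}}$ (this is exactly the content of Lemma~\ref{LemHyperbolicGeom}, applied to the deck transformation $T_0$ conjugated by $R_0^k$, together with the uniform bound on $d(\wt\alpha_0,\wt\gamma_0)$).

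Next I would make the key geometric observation: there is a constant $\ell>0$ (depending only on $C_1$, on $d(\wt\alpha_0,\wt\gamma_0)$, and on the hyperbolic geometry) such that if $\wt x$ lies on the left of $R_0^k\wt B_0$ then $\pr_{\wt\gamma_{\wt z}}(\wt x)\le t^k+\ell$, and if $\wt x$ lies on the right then $\pr_{\wt\gamma_{\wt z}}(\wt x)\ge t^k-\ell$. Indeed the band $R_0^k\wt B_0$ stays within bounded distance of the geodesic $R_0^k\wt\gamma_0$, which crosses $\wt\gamma_{\wt z}$ at a point with parameter in $[t^k-C_1-\ell_0,t^k+C_1+\ell_0]$ and with angle bounded below by $C_0^{-1}$, so the $\pr_{\wt\gamma_{\wt z}}$-image of either side of the band is a half-line with endpoint controlled by $t^k$. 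Consequently, if $\pr_{\wt\gamma_{\wt z}}(\wt f^n(\wt y))-\pr_{\wt\gamma_{\wt z}}(\wt y)\ge C_2$ for $C_2$ large enough (say $C_2>8C_1+4\ell+2d(\wt f,\Id)$), then there must be at least one index $k$ with $t^k$ strictly between $\pr_{\wt\gamma_{\wt z}}(\wt y)$ and $\pr_{\wt\gamma_{\wt z}}(\wt f^n(\wt y))$ and with enough room on both sides that $\wt y$ is strictly on the left of $R_0^k\wt B_0$ while $\wt f^n(\wt y)$ is strictly on the right of it; in particular neither $\wt y$ nor $\wt f^n(\wt y)$ lies in $R_0^k\wt B_0$.

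Then I would translate ``the endpoints of $I^{[0,n]}_{\wt\F}(\wt y)$ lie strictly on opposite sides of the band $R_0^k\wt B_0$'' into ``$I^{[0,n]}_{\wt\F}(\wt y)$ crosses $R_0^k\wt B_0$''. Since $R_0^k\wt B_0$ is the set of leaves met by the transverse path $R_0^k\wt\alpha_0$, and $\wt\alpha_0$ is simple (Lemma~\ref{LemAlphaiSimple}), the band has well-defined left and right and one can appeal to the drawing/crossing dichotomy from \cite[Section 2]{paper1PAF}: a transverse path whose endpoints lie off the band but on opposite sides must draw the band, and since it goes from the left to the right it crosses it from left to right or right to left. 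The orientation is pinned down by the previous paragraph: going from the left of $R_0^k\wt B_0$ to the right of it with $\pr_{\wt\gamma_{\wt z}}$ increasing forces the crossing to be from left to right — but wait, I need to be careful about the sign convention. Re-examining: $R_0^k\wt\gamma_0$ crosses $\wt\gamma_{\wt z}$ from left to right means, by Lemma~\ref{LemHyperbolicGeom}(i)$\Leftrightarrow$(ii) with $T=R_0^kT_0R_0^{k-?}$, that $T$ pushes points to larger algebraic distance, i.e. towards the right of $\wt\gamma_{\wt z}$; geometrically the side of $R_0^k\wt B_0$ that faces increasing $\pr_{\wt\gamma_{\wt z}}$ is its \emph{right} side, so moving in the direction of increasing $\pr_{\wt\gamma_{\wt z}}$ one exits the band through its right, i.e. $I^{[0,n]}_{\wt\F}(\wt y)$ crosses $R_0^k\wt B_0$ \emph{from right to left} in the sense that the path passes from the right region to\ldots{} Actually the statement to be proved says precisely ``from right to left'' in the increasing case, so this is consistent with the convention that ``$\wt\alpha_0$ crosses from left to right'' refers to $\wt\alpha_0$'s own orientation while the transverse path crosses the resulting band the opposite way; I would simply fix the convention once, carefully, at the start and carry it through. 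The mirror-image argument with $\le -C_2$ and the left-to-right crossing is identical.

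The main obstacle I expect is this last orientation bookkeeping — reconciling ``the axis $\wt\gamma_0$ crosses $\wt\gamma_{\wt z}$ from left to right'' (a statement about two geodesics), ``the band $\wt B_0$ has a left and a right'' (coming from the orientation of $\wt\alpha_0$), and ``a transverse path crosses $\wt B_0$ from right to left'' (the conclusion), across the conjugating deck transformations $R_0^k$ — and making sure the inequality on $\pr_{\wt\gamma_{\wt z}}$ lines up with the correct crossing direction rather than its opposite. The geometric estimate (the constant $\ell$) is routine hyperbolic trigonometry given the lower bound on the intersection angle, and the drawing/crossing dichotomy is quoted from \cite{paper1PAF}, so the real work is purely in pinning signs. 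I would also need to double-check that $R_0^0=\Id$ (guaranteed by Lemma~\ref{LemInterAlphaiEmpty}) is not secretly used here; it is not needed for this lemma, only the spacing and disjointness of the $R_0^k\wt B_0$ are.
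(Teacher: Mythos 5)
Your overall plan — find $k$ such that $\wt y$ and $\wt f^n(\wt y)$ sit on opposite sides of $R_0^k\wt B_0$ and deduce a crossing — does coincide with the paper's strategy, and your one-sided projection estimates (that, say, $R(\wt\alpha_0)$ is contained in $\{\pr\le t^k+\ell\}$ and $L(\wt\alpha_0)$ in $\{\pr\ge t^k-\ell\}$) are essentially correct. But there is a genuine gap in the crucial step, and a couple of the supporting claims are false as stated.

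The central problem is the inference ``enough room on both sides, so $\wt y$ is \emph{strictly} on the left of $R_0^k\wt B_0$, in particular $\wt y\notin R_0^k\wt B_0$.'' Your projection estimates control the two complementary components $L(R_0^k\wt B_0)$ and $R(R_0^k\wt B_0)$, because they are contained in $L(R_0^k\wt\alpha_0)$ and $R(R_0^k\wt\alpha_0)$, and the \emph{path} $R_0^k\wt\alpha_0$ stays within bounded distance of $R_0^k\wt\gamma_0$. This tells you $\wt y\notin L(R_0^k\wt B_0)$, i.e.\ $\wt y\in R(R_0^k\wt B_0)\cup R_0^k\wt B_0$. It does \emph{not} tell you $\wt y\notin R_0^k\wt B_0$: the band itself is the union of all leaves met by $R_0^k\wt\alpha_0$, and these leaves can run arbitrarily far in either direction along $\wt\gamma_{\wt z}$, so $R_0^k\wt B_0$ has no a priori bounded projection. (Your intermediate assertion that ``the band $R_0^k\wt B_0$ stays within bounded distance of the geodesic $R_0^k\wt\gamma_0$'' is false — only the periodic path $\wt\alpha_0$ does; likewise the remark after Lemma~\ref{LemInterAlphaiEmpty} gives disjointness of the \emph{paths}, not of the bands, which can overlap.) Thus your argument leaves open the possibility that $\wt y$ (or $\wt f^n(\wt y)$) lies inside the band, in which case there is no crossing to extract.

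Closing this gap is precisely where the paper needs the second approximation $\wt\alpha_1$, which you do not use at all. The crux of the paper's proof is the observation that $R_0^k\wt\alpha_0$ is trapped inside the ``quadrilateral''
\[L(R_0^{k-1}\wt\alpha_0)\cap R(R_1^{k-1}\wt\alpha_1)\cap R(R_0^{k+1}\wt\alpha_0)\cap L(R_1^{k+1}\wt\alpha_1),\]
and that any leaf of $\wt\F$ entering this region cannot reach the regions $R(R_0^{k-1}\wt\alpha_0)\cap L(R_1^{k-1}\wt\alpha_1)$ and $L(R_0^{k+1}\wt\alpha_0)\cap R(R_1^{k+1}\wt\alpha_1)$ that contain $\wt y$ and $\wt f^n(\wt y)$. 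This confinement works because $\wt\gamma_0$ and $\wt\gamma_1$ cross $\wt\gamma_{\wt z}$ in \emph{opposite} directions: for the leaf to exit the quadrilateral toward the region of $\wt y$ it would have to cross $R_0^{k-1}\wt\alpha_0$ and $R_1^{k-1}\wt\alpha_1$ in directions that, because of the opposite orientations, are incompatible with leaves crossing positively transverse paths in a single consistent way. With only the family $\{R_0^k\wt\alpha_0\}$, there is no such obstruction, and the band can leak. So the use of $\wt\alpha_1$ is not an optional detail but the load-bearing ingredient of the lemma, and your proposal omits it.
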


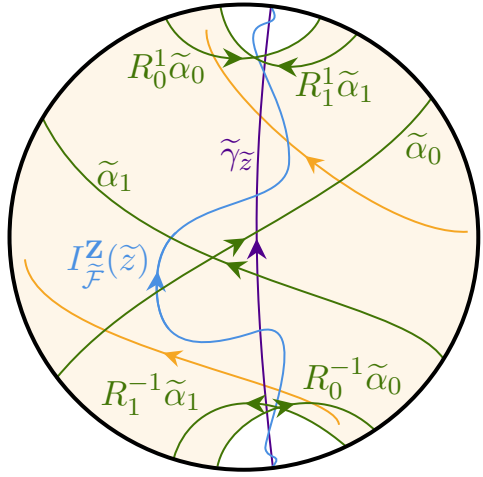
\begin{figure}
\begin{center}
\tikzset{every picture/.style={line width=0.75pt}} 

\begin{tikzpicture}[x=0.75pt,y=0.75pt,yscale=-1.3,xscale=1.3]
\draw  [draw opacity=0][fill={rgb, 255:red, 245; green, 166; blue, 35 }  ,fill opacity=0.1 ] (304.85,62.98) .. controls (306.26,72.93) and (311.8,78.93) .. (317.79,83.44) .. controls (326.88,82.47) and (338.73,78.47) .. (343.5,67.58) .. controls (421.94,97.03) and (423.94,199.03) .. (353.5,234.23) .. controls (340.73,223.7) and (347.65,228.62) .. (326.41,218.6) .. controls (315.19,223.7) and (308.42,228.77) .. (303.85,241.73) .. controls (205.37,232.18) and (191.08,83.03) .. (304.85,62.98) -- cycle ;
\draw [color={rgb, 255:red, 245; green, 166; blue, 35 }  ,draw opacity=1 ]   (229.94,161.6) .. controls (233.37,195.6) and (349.08,209.89) .. (350.51,225.6) ;
\draw [shift={(283.01,197.76)}, rotate = 18.87] [fill={rgb, 255:red, 245; green, 166; blue, 35 }  ,fill opacity=1 ][line width=0.08]  [draw opacity=0] (7.14,-3.43) -- (0,0) -- (7.14,3.43) -- (4.74,0) -- cycle    ;
\draw [color={rgb, 255:red, 245; green, 166; blue, 35 }  ,draw opacity=1 ]   (299.65,72.75) .. controls (297.94,94.18) and (365.37,152.75) .. (399.93,150.9) ;
\draw [shift={(337.45,121.57)}, rotate = 37.99] [fill={rgb, 255:red, 245; green, 166; blue, 35 }  ,fill opacity=1 ][line width=0.08]  [draw opacity=0] (7.14,-3.43) -- (0,0) -- (7.14,3.43) -- (4.74,0) -- cycle    ;
\draw [color={rgb, 255:red, 80; green, 0; blue, 139 }  ,draw opacity=1 ]   (324.4,63.8) .. controls (316.65,127.82) and (317.22,173.25) .. (325.08,241.71) ;
\draw [shift={(318.91,153.6)}, rotate = 89.76] [fill={rgb, 255:red, 80; green, 0; blue, 139 }  ,fill opacity=1 ][line width=0.08]  [draw opacity=0] (8.04,-3.86) -- (0,0) -- (8.04,3.86) -- (5.34,0) -- cycle    ;
\draw [color={rgb, 255:red, 74; green, 144; blue, 226 }  ,draw opacity=1 ]   (324.4,63.8) .. controls (330.04,73.14) and (316.71,66.92) .. (317.6,76.92) .. controls (318.49,86.92) and (332.02,104.2) .. (331.35,123.98) .. controls (330.69,143.76) and (283.74,135.79) .. (280.85,166.23) .. controls (277.97,196.67) and (296.07,197.11) .. (318.73,189.55) .. controls (341.4,182) and (321.38,230.69) .. (322.27,234.91) .. controls (323.16,239.14) and (329.16,236.91) .. (325.08,241.71) ;
\draw [color={rgb, 255:red, 74; green, 144; blue, 226 }  ,draw opacity=1 ]   (285.19,188.46) .. controls (278.02,180.29) and (278.94,158.79) .. (287.92,151.23) ;
\draw [shift={(280.87,166.11)}, rotate = 94.35] [fill={rgb, 255:red, 74; green, 144; blue, 226 }  ,fill opacity=1 ][line width=0.08]  [draw opacity=0] (8.04,-3.86) -- (0,0) -- (8.04,3.86) -- (5.34,0) -- cycle    ;
\draw [color={rgb, 255:red, 65; green, 117; blue, 5 }  ,draw opacity=1 ]   (241.85,207.75) .. controls (279.25,161.75) and (355.25,142.5) .. (386,99.25) ;
\draw [shift={(314.77,153.43)}, rotate = 149.84] [fill={rgb, 255:red, 65; green, 117; blue, 5 }  ,fill opacity=1 ][line width=0.08]  [draw opacity=0] (8.04,-3.86) -- (0,0) -- (8.04,3.86) -- (5.34,0) -- cycle    ;
\draw [color={rgb, 255:red, 65; green, 117; blue, 5 }  ,draw opacity=1 ]   (390.75,200.25) .. controls (373.25,174.25) and (267,170) .. (236.75,107) ;
\draw [shift={(306.79,162.9)}, rotate = 21.87] [fill={rgb, 255:red, 65; green, 117; blue, 5 }  ,fill opacity=1 ][line width=0.08]  [draw opacity=0] (8.04,-3.86) -- (0,0) -- (8.04,3.86) -- (5.34,0) -- cycle    ;
\draw [color={rgb, 255:red, 65; green, 117; blue, 5 }  ,draw opacity=1 ]   (357.35,74.48) .. controls (339.35,95.23) and (309.1,89.73) .. (304.85,62.98) ;
\draw [shift={(326.57,86.66)}, rotate = 4.02] [fill={rgb, 255:red, 65; green, 117; blue, 5 }  ,fill opacity=1 ][line width=0.08]  [draw opacity=0] (8.04,-3.86) -- (0,0) -- (8.04,3.86) -- (5.34,0) -- cycle    ;
\draw [color={rgb, 255:red, 65; green, 117; blue, 5 }  ,draw opacity=1 ]   (284.1,68.48) .. controls (296.1,92.98) and (336.6,84.98) .. (343.5,67.58) ;
\draw [shift={(313.84,83.87)}, rotate = 181.4] [fill={rgb, 255:red, 65; green, 117; blue, 5 }  ,fill opacity=1 ][line width=0.08]  [draw opacity=0] (8.04,-3.86) -- (0,0) -- (8.04,3.86) -- (5.34,0) -- cycle    ;
\draw [color={rgb, 255:red, 65; green, 117; blue, 5 }  ,draw opacity=1 ]   (303.85,241.73) .. controls (308.1,221.21) and (341.85,205.98) .. (364.6,227.98) ;
\draw [shift={(333.49,217.54)}, rotate = 167.84] [fill={rgb, 255:red, 65; green, 117; blue, 5 }  ,fill opacity=1 ][line width=0.08]  [draw opacity=0] (8.04,-3.86) -- (0,0) -- (8.04,3.86) -- (5.34,0) -- cycle    ;
\draw [color={rgb, 255:red, 65; green, 117; blue, 5 }  ,draw opacity=1 ]   (353.5,234.23) .. controls (321.1,205.98) and (293.1,216.98) .. (287.1,238.48) ;
\draw [shift={(314.41,217.3)}, rotate = 2.32] [fill={rgb, 255:red, 65; green, 117; blue, 5 }  ,fill opacity=1 ][line width=0.08]  [draw opacity=0] (8.04,-3.86) -- (0,0) -- (8.04,3.86) -- (5.34,0) -- cycle    ;
\draw  [line width=1.5]  (224,153) .. controls (224,103.29) and (264.29,63) .. (314,63) .. controls (363.71,63) and (404,103.29) .. (404,153) .. controls (404,202.71) and (363.71,243) .. (314,243) .. controls (264.29,243) and (224,202.71) .. (224,153) -- cycle ;

\draw (319.29,121.04) node [anchor=east] [inner sep=0.75pt]  [color={rgb, 255:red, 80; green, 0; blue, 139 }  ,opacity=1 ,xscale=1.2,yscale=1.2]  {$\wt{\gamma }_{\wt{z}}$};
\draw (279.54,163.45) node [anchor=east] [inner sep=0.75pt]  [color={rgb, 255:red, 74; green, 144; blue, 226 }  ,opacity=1 ,xscale=1.2,yscale=1.2]  {$I_{\wt{\F}}^{\Z}(\wt{z})$};
\draw (374.6,111.4) node [anchor=north west][inner sep=0.75pt]  [color={rgb, 255:red, 65; green, 117; blue, 5 }  ,opacity=1 ,xscale=1.2,yscale=1.2]  {$\wt{\alpha }_{0}$};
\draw (255.96,136.72) node [anchor=south west] [inner sep=0.75pt]  [color={rgb, 255:red, 65; green, 117; blue, 5 }  ,opacity=1 ,xscale=1.2,yscale=1.2]  {$\wt{\alpha }_{1}$};
\draw (300.94,78.9) node [anchor=north east] [inner sep=0.75pt]  [color={rgb, 255:red, 65; green, 117; blue, 5 }  ,opacity=1 ,xscale=1.2,yscale=1.2]  {$R_{0}^{1}\wt{\alpha }_{0}$};
\draw (332.1,85.91) node [anchor=north west][inner sep=0.75pt]  [color={rgb, 255:red, 65; green, 117; blue, 5 }  ,opacity=1 ,xscale=1.2,yscale=1.2]  {$R^{1}_{1}\wt{\alpha }_{1}$};
\draw (299.12,223.16) node [anchor=south east] [inner sep=0.75pt]  [color={rgb, 255:red, 65; green, 117; blue, 5 }  ,opacity=1 ,xscale=1.2,yscale=1.2]  {$R^{-1}_{1}\wt{\alpha }_{1}$};
\draw (335,217) node [anchor=south west] [inner sep=0.75pt]  [color={rgb, 255:red, 65; green, 117; blue, 5 }  ,opacity=1 ,xscale=1.2,yscale=1.2]  {$R^{-1}_{0}\wt{\alpha }_{0}$};

\end{tikzpicture}

\caption{Idea of the proof of Lemma~\ref{LemCrossDependOnProj}: leaves (in orange) crossing $\wt\alpha_0$ have to stay in the orange area, hence $\wt B_0$ is included in this area.}\label{FigCrossDependOnProj}
\end{center}
\end{figure}

\begin{proof}
Set $C_2 = 16C_1$. Let us prove the first point, the second being identical. 
Suppose that $\pr_{\wt\gamma_{\wt z}}\big(\wt f^n(\wt y)\big) - \pr_{\wt\gamma_{\wt z}}(\wt y) \ge C_2$. Then Lemma~\ref{LemInterAlphaiEmpty} ensures that there exists $k\in\Z$ such that 
\[\pr_{\wt\gamma_{\wt z}}(\wt y) + 2 C_1 \le t^{k-1} \le \pr_{\wt\gamma_{\wt z}}(\wt y) + 6 C_1 .\]
Applying Lemma~\ref{LemInterAlphaiEmpty} once again, we deduce that 
\[t^{k+1} \le t^{k-1} + 8 C_1 \le \pr_{\wt\gamma_{\wt z}}(\wt y) + 14 C_1 \le \pr_{\wt\gamma_{\wt z}}\big(\wt f^n(\wt y)\big)-2C_1.\]
These inequalities, combined with Lemma~\ref{LemInterAlphaiEmpty}, imply that 
\begin{align*}
\pr_{\wt\gamma_{\wt z}}(\wt y) & < \min \pr_{\wt\gamma_{\wt z}}\Big(R_0^{k-1} \wt\alpha_0 \cup R_1^{k-1} \wt\alpha_1\Big);\\
\pr_{\wt\gamma_{\wt z}}\big(\wt f^n(\wt y)\big) & > \max \pr_{\wt\gamma_{\wt z}}\Big(R_0^{k+1} \wt\alpha_0 \cup R_1^{k+1} \wt\alpha_1\Big), 
\end{align*}
and so (in the sequel, we will make repeated use of Lemma~\ref{LemAlphaiSimple})
\[\wt y \in R(R_0^{k-1}\wt\alpha_0)\cap L(R_1^{k-1} \wt\alpha_1)\qquad \mathrm{and}\qquad  \wt f^n(\wt y)\in L(R_0^{k+1}\wt\alpha_0)\cap R(R_1^{k+1} \wt\alpha_1).\]

On the other hand, by Lemma~\ref{LemInterAlphaiEmpty},
\[R_0^{k} \wt\alpha_0 \subset L(R_0^{k-1}\wt\alpha_0)\cap R(R_1^{k-1} \wt\alpha_1) \cap R(R_0^{k+1}\wt\alpha_0)\cap L(R_1^{k+1} \wt\alpha_1).\]
Note that any leaf of $\wt \F$ intersecting this last set is disjoint from $R(R_0^{k-1}\wt\alpha_0)\cap L(R_1^{k-1} \wt\alpha_1)$ and from $L(R_0^{k+1}\wt\alpha_0)\cap R(R_1^{k+1} \wt\alpha_1)$ (see Figure~\ref{FigCrossDependOnProj}). Hence, $\wt y\notin R_0^k \wt B_0$ and $\wt f^n(\wt y)\notin R_0^k \wt B_0$. As $\wt y \in R(R_0^{k-1}\wt\alpha_0)\subset R(R_0^{k}\wt\alpha_0)$ and $\wt f^n(\wt y)\in L(R_0^{k+1}\wt\alpha_0 \subset L(R_0^{k}\wt\alpha_0)$, we deduce that $\wt y \in R(R_0^{k}\wt B_0)$ and $\wt f^n(\wt y)\in L(R_0^{k}\wt B_0)$.
\end{proof}

\begin{proof}[Proof of Theorem~\ref{TheoBndedDirLam}]
Set $C = 2 C_2 + d(\wt f, \Id_{\wt S})$.

By construction of $\wt B_0$ (it is the band associated to the approximation $\wt\alpha_0$ of $I^\Z_{\wt\F}(\wt z)$), this band is drawn by the transverse trajectory $I^\Z_{\wt\F}(\wt z)$. Moreover, applying Lemma~\ref{LemCrossDependOnProj} to $\wt y = \wt f^{-m}(\wt z)$ and $n=2m$ for $m$ large enough, we deduce that $I^\Z_{\wt\F}(\wt z)$ crosses $\wt B$ from right to left (in fact, we need the fact that $R_0^0 = \Id_{\wt S}$).

Suppose that the theorem is false: there exist $\wt y_0\in \wt S$ and $n_0\in\N$ such that $\pr_{\wt\gamma_{\wt z}}\big(\wt f^{n_0}(\wt y_0)\big) - \pr_{\wt\gamma_{\wt z}}(\wt y_0) \le -C$.
This allows us to apply Lemma~\ref{LemCrossDependOnProj} to $\wt y = \wt y_0$ and $n=n_0$, which implies that there exist $k\in\Z$ such that $I^{[0,n_0]}_{\wt \F}(\wt y_0)$ crosses $R_0^k\wt B_0$ from left to right.
%

By \cite[Proposition~2.16]{paper1PAF}, denoting $T_0\in\G$ the deck transformation associated to the loop $\wt\alpha_0$, we get that there are some $j,j'\in\Z$ such that $(R_0^k)^{-1}I^{[0,m]}_{\wt \F}(\wt y_0)$ and $T_0^j I^{\Z}_{\wt \F}(\wt z)$ intersect $\F$-transversally, and that $(R_0^{k'})^{-1}I^{[m,n_0]}_{\wt \F}(\wt y_0)$ and $T_0^{j'} I^{\Z}_{\wt \F}(\wt z)$ intersect $\F$-transversally. This is a contradiction with Proposition~\ref{LastPropBndDevIrrat}.
\end{proof}

\subsection{Proof of Corollary F}
\begin{proof}
Note that the second hypothesis of the corollary is stronger than the first one, so it suffices to prove both conclusions under the first hypothesis.

Let $\mu\in\Me(f)$ be such that $\rho(\mu) = \rho$. As $\rho\neq 0$, we have $\mu\in\M(f)$. Let $\cl_i$ be the class of $\mu$ for the relation $\sim$. Because $\rho\notin\R H_1(S,\Q)$, the class $\cl_i$ is not a closed class. It cannot be a chaotic class either: in this case, \cite[Theorem D]{alepablo} implies that $\rho$ is accumulated by elements of $\rote(S)\cap H_1(S,\Q)$, which contradicts our hypothesis.

Let us denote by $\Lambda_i$ the minimal orientable (by Theorem~\ref{TheoLaminMinim}) lamination associated to the class $\cl_i$ and given by Theorem~\ref{Thifsimpletheoremintro}. 
By orientability, there exists an embedded segment $J\subset S$ that is positively transverse to $\Lambda_i$. We can moreover suppose that this segment $J$ is included in the surface $S_i$ associated to $\cl_i$ (defined after Theorem~\ref{Thifsimpletheoremintro}). 
Following one leaf $\lambda$ of this minimal lamination, one can find two consecutive intersection points of $\lambda$ with $J$; by concatenating with the enclosed sub-segment of $J$ this gives us a simple closed loop $\beta \subset S_i$ that crosses positively the lamination $\Lambda_i$. Let $\gamma$ be the closed geodesic freely homotopic to $\beta$, this geodesic is positively transverse to $\Lambda_i$ and, by construction, included in $S_i$. 

By \cite[Proposition 4.6]{alepablo}, we have that $\rho(\mu) = [\wt\gamma_{\wt z}]$ for $\mu$-a.e.\ $z\in S$, where $[\wt\gamma_{\wt z}] = \lim_{t\to+\infty}\frac1t [\wt\gamma_{\wt z}|_{[0, t\vartheta_\mu]}]$. Hence, because $\Lambda_i$ is transverse to $\gamma$, we have $\rho(\mu)\wedge[\gamma]>0$. This implies that $\rho\wedge [\gamma]>0$. 

Let $\mu'\in \M(f)$ such that $\rho(\mu')\wedge [\gamma]\neq 0$. 
As before, we have $\rho(\mu') = [\wt\gamma_{\wt z}]$ for $\mu'$-a.e.\ $z\in S$, so $\gamma_z$ intersects $\gamma\subset S_i$. We deduce that $\mu'\in\cl_i$, and in particular $\gamma_z\in\Lambda_i$. We deduce that $[\wt\gamma_{\wt z}]\wedge [\gamma]>0$, hence $\rho(\mu')\wedge [\gamma]>0$. 

What we have done implies that $\rote(f) \subset \{\cdot\wedge [\gamma]\ge 0\}$ and so $\rot(f) \subset \conv(\rote(f) )\subset \{\cdot\wedge [\gamma]\ge 0\}$.

If we suppose that $\rho(\mu')\in \R\rho$, then either $\rho(\mu')\wedge [\gamma] = 0$ and in this case $\rho(\mu') = 0$, or $\rho(\mu')\wedge [\gamma]\neq 0$ and thus $\rho(\mu')\in \R_+\rho$.
\end{proof}

\small
\bibliographystyle{alpha}
\bibliography{Biblio}

\end{document}